\documentclass[12pt, letterpaper, reqno]{amsart}

\usepackage{amssymb,amsmath}
\usepackage{amsthm}
\usepackage{color,graphicx}
\usepackage{eucal}

\oddsidemargin -0.15in
\evensidemargin -0.15in
\textwidth6.90in
\topmargin-0.30in
\textheight9.2in

\newcommand{\indentalign}{\hspace{0.3in}&\hspace{-0.3in}}
\newcommand{\la}{\langle}
\newcommand{\ra}{\rangle}

\newcommand{\defeq}{\stackrel{\rm{def}}{=}}
\newcommand{\supp}{\operatorname{supp}}

\newcommand{\sgn}{\operatorname{sgn}}

\newcommand{\R}{\mathbb R}

\newcommand{\lam}{{\lambda}}

\newcommand{\al}{\alpha}

\newcommand{\cR}{\mathbb{R}}

\newcommand{\ep}{\varepsilon}

\newcommand{\ds}{\displaystyle}

\newtheorem{theorem}{Theorem}[section]
\newtheorem{proposition}[theorem]{Proposition}
\newtheorem{remark}[theorem]{Remark}
\newtheorem{lemma}[theorem]{Lemma}
\newtheorem{corollary}[theorem]{Corollary}
\newtheorem{definition}[theorem]{Definition}


\begin{document}

\title[Instablity of solitons in 2d cubic ZK]
{Instability of solitons\\
in the 2d cubic
Zakharov-Kuznetsov equation}

\author[L. G. Farah]{Luiz Gustavo Farah}
\address{Department of Mathematics\\UFMG\\Brazil}
\curraddr{}
\email{lgfarah@gmail.com}
\thanks{}

\author[J. Holmer]{Justin Holmer}
\address{Department of Mathematics\\Brown University\\USA}
\curraddr{} 
\email{holmer@math.brown.edu}
\thanks{}

\author[S. Roudenko]{Svetlana Roudenko}
\address{Department of Mathematics\\The George Washington University\\USA}
\curraddr{}
\email{roudenko@gwu.edu}
\thanks{} 

\subjclass[2010]{Primary: 35Q53, 37K40, 37K45, 37K05}

\keywords{Zakharov-Kuznetsov equation, gKdV, instability of solitons, monotonicity, mass-critical}


\begin{abstract}
We consider the two dimensional generalization of the Korteweg-de Vries equation, the generalized Zakharov-Kuznetsov (ZK) equation, $u_t + \partial_{x_1}(\Delta u + u^p) = 0, (x_1,x_2) \in \R^2$. It is known that solitons are stable for nonlinearities $p<3$ and unstable for $p > 3$,  which was established by Anne de Bouard in \cite{DeB} generalizing the arguments of Bona-Souganidis-Strauss in \cite{BSS} for the gKdV equation. The $L^2$-critical case with $p=3$ has been open and in this paper we prove that solitons are unstable in the cubic ZK equation. This matches the situation with the critical gKdV equation, proved in 2001 by Martel and Merle in \cite{MM-KdV-instability}. While the general strategy follows \cite{MM-KdV-instability}, the two dimensional case creates several difficulties and to deal with them, we design a new virial-type quantity, revisit monotonicity properties and, most importantly, develop new pointwise decay estimates, which can be useful in other contexts.
\end{abstract}

\maketitle


\tableofcontents

\section{Introduction}

In this paper we consider the generalized Zakharov-Kuznetsov equation:
\begin{equation}\label{gZK}
\text{(gZK)} \qquad \qquad u_t + \partial_{x_1} \left(\Delta u + u^p \right) = 0, \qquad x = (x_1, ..., x_N) \in \cR^N, ~ t \in \cR, \qquad
\end{equation}
in two dimensions ($N=2$) and with a specific power of nonlinearity $p=3$. This equation is the higher-dimensional extension of the well-studied model describing, for example, the weakly nonlinear waves in shallow water, the Korteweg-de Vries (KdV) equation:
\begin{equation}
\label{gKdV}
\text{(KdV)} \qquad \qquad
u_t + u_{xxx} + (u^p)_x = 0, \quad p=2, \qquad x \in \cR, \qquad t \in \cR. \qquad
\end{equation}
When other integer powers $p \neq 2$ are considered, it is referred to as the {\it generalized} KdV (gKdV) equation, possibly with one exception of $p=3$, which is also referred to as the {\it modified} KdV (mKdV) equation.
Despite its apparent universality, the gKdV equation is limited as a spatially one-dimensional model. While there are several higher dimensional generalizations of it, in this paper we are interested in the gZK equation \eqref{gZK}.
In the three dimensional setting and quadratic power ($N=3$ and $p=2$), the equation \eqref{gZK} was originally derived by Zakharov and Kuznetsov to describe weakly magnetized ion-acoustic waves in a strongly magnetized plasma \cite{ZK}, thus, the name of the equation. In two dimensions, it is also physically relevant; for example, with $p=2$, it governs the behavior of weakly nonlinear ion-acoustic waves in a plasma comprising cold ions and hot isothermal electrons in the presence of a uniform magnetic field \cite{MP-1, MP-2}. Melkonian and Maslowe \cite{MM-longwaves} showed that the equation \eqref{gZK} is the amplitude equation for two-dimensional long waves on the free surface of a thin film flowing down a vertical plane with moderate values of the surface fluid tension and large viscosity. Lannes, Linares and Saut in \cite{LLS} derived the equation \eqref{gZK} from the Euler-Poisson system with magnetic field in the long wave limit. Yet another derivation was carried by Han-Kwan in \cite{HK} from the Vlasov-Poisson system in a combined cold ions and long wave limit.

In this paper we consider the Cauchy problem of the 2d cubic ZK equation (sometimes it is referred as the modified ZK, $m$ZK, or the generalized ZK, $g$ZK) with initial data $u_0$:
\begin{equation}\label{gzk}
\begin{cases}
{\displaystyle u_t+\partial_{x_1} \left( \Delta_{(x_1,x_2)} u +  u^{3} \right)  =  0,}  \quad (x_1,x_2) \in \mathbb{R}^2, ~ t>0, \\
{\displaystyle  u(0, x_1,x_2)=u_0(x_1,x_2) \in H^1 (\cR^2)}.
\end{cases}
\end{equation}
\smallskip

During their lifespan, the solutions $u(t, x_1,x_2)$ to \eqref{gzk} conserve the mass and energy:
\begin{equation}\label{MC}
M[u(t)]=\int_{\cR^2} \left[u(t,x_1,x_2) \right]^2\, dx_1dx_2 = M[u(0)]
\end{equation}
and
\begin{equation}\label{EC}
E[u(t)]=\dfrac{1}{2}\int_{\cR^2}|\nabla u(t,x_1,x_2)|^2\,dx_1dx_2 - \dfrac{1}{4}\int_{\cR^2} \left[u(t,x_1,x_2) \right]^{4}\,dx_1dx_2 = E[u(0)].
\end{equation}
There is one more conserved quantity of $L^1$-type, but we don't need it in this paper. We'll also mention that unlike the KdV and mKdV, which are completely integrable, the gZK equations do not exhibit complete integrability for any $p$.

One of the useful symmetries in the evolution equations is {\it the scaling invariance}, which states that an appropriately rescaled version of the original solution is also a solution of the equation. For the equation \eqref{gZK} it is
$$
u_\lambda(t, x_1,x_2)=\lambda^{\frac{2}{p-1}} u(\lambda^3t, \lambda x_1, \lambda x_2).
$$
This symmetry makes a specific Sobolev norm $\dot{H}^s$ invariant, i.e.,
\begin{equation*}
\|u(0,\cdot,\cdot) \|_{\dot{H}^s(\R^N)}=\lambda^{\frac{2}{p-1}+s-\frac{N}2} \|u_0\|_{\dot{H}^s(\R^N)},
\end{equation*}
and the index $s$ gives rise to the critical-type classification of equations.
For the gKdV equation \eqref{gKdV} the critical index is $s=\frac12-\frac2{p-1}$, and for the  two dimensional ZK equation \eqref{gZK} the index is $s = 1 - \frac2{p-1}$. When $s=0$ (this corresponds to $p=3$), the equation \eqref{gzk} is referred to as the $L^2$-critical equation. The gZK equation has other invariances such as translation and dilation. 

The generalized Zakharov-Kuznetsov equation has a family of travelling waves (or solitary waves, which sometimes are referred to as solitons), and observe that they travel only in $x_1$ direction
\begin{equation}\label{Eq:TW}
u(t,x_1,x_2) = Q_c(x_1-ct, x_2)
\end{equation}
with $Q_c(x_1,x_2) \to 0$ as $|x| \to + \infty$. Here, $Q_c$ is the dilation of the ground state $Q$:
$$
Q_c(\vec{x}) = c^{1/{p-1}} Q (c^{1/2} \vec{x}), \quad \vec{x} = (x_1,x_2),
$$
with $Q$ being a radial positive solution in $H^1(\cR^2)$ of the well-known nonlinear elliptic equation $-\Delta Q+Q  - Q^p = 0$.
Note that $Q \in C^{\infty}(\R^2)$, $\partial_r Q(r) <0$ for any $r = |x|>0$, and 
for any multi-index $\alpha$
\begin{equation}\label{prop-Q}
|\partial^\al Q(\vec{x})| \leq c(\al) e^{- |\vec{x}|} \quad \mbox{for any}\quad \vec{x} \in \cR^2.
\end{equation}

In this work, we are interested in stability properties of travelling waves in the critical gZK equation \eqref{gzk}, i.e., in the behavior of solutions close to the ground state $Q$ (perhaps, up to translations). We begin with the precise concept of stability and instability used in this paper. For $\alpha>0$, the neighborhood (or ``tube") of radius $\alpha$ around $Q$ (modulo translations) is defined by
\begin{equation}\label{tube}
U_{\alpha}=\left\{u\in H^1(\mathbb{R}^2): \inf_{\vec{y}\in \mathbb{R}^2}\|u(\cdot)-Q(\cdot+\vec{y})\|_{H^1}\leq \alpha \right\}.
\end{equation}
\begin{definition}[Stability of $Q$]\label{D:Stability}
We say that $Q$ is stable if for all $\alpha>0$, there exists $\delta>0$ such that if $u_0\in U_{\delta}$, then the corresponding solution $u(t)$ is defined for all $t\geq 0$ and $u(t)\in U_{\alpha}$ for all $t\geq 0$.
\end{definition}

\begin{definition}[Instability of $Q$]\label{D:Instability}
We say that $Q$ is unstable if $Q$ is not stable, in other words, there exists $\alpha>0$ such that for all $\delta>0$ the following holds:
if $u_0\in U_{\delta}$, then there exists $t_0=t_0(u_0)$ such that $u(t_0)\notin U_{\alpha}$.
\end{definition}

The main goal of this paper is to show that in the two dimensional case $p=3$, the traveling waves are unstable, thus, completing the stability picture in two dimensions. In her study of dispersive solitary waves in higher dimensions, Anne de Bouard \cite{DeB} showed (her result holds in dimensions 2 and 3) that the travelling waves of the form \eqref{Eq:TW} are stable for $p<p_c$ and unstable for $p>p_c$, where $p_c=3$ in 2d. She followed the ideas developed for the gKdV equation by Bona-Souganidis-Strauss \cite{BSS} for the instability, and Grillakis-Shatah-Strauss \cite{GSS} for the stability arguments. Here, we prove the instability of the traveling wave solution of the form \eqref{Eq:TW}, in a spirit of Martel-Merle \cite{MM-KdV-instability}.
We also note that the more delicate questions about different types of stability have also been studied, in particular, Cote, Didier, Munoz and Simpson in \cite{CDMS} obtained the asymptotic stability of solitary waves in 2d for\footnote{The nonlinearity in such gZK equation should be understood as $\partial_{x_1}(|u|^{p-1}u)$.} $2\leq p<p^* <3$ by methods of Martel-Merle for the gKdV equation. The upper bound $p^*$ in their restriction of nonlinearity comes from having a certain bilinear form positive-definite, which is needed for the linear Liouville property, see \cite[Theorem 1.3]{CDMS}, and numerically they show that the proper sign holds only for powers $p$ up to $p^* \approx 2.3$. It would be interesting to investigate if asymptotic stability holds for all $p<3$.

In this paper we prove the instability of the soliton $u(t,x_1,x_2)=Q(x_1-t, x_2)$. Our main result reads as follows.

\begin{theorem}[$H^1$-instability of $Q$ in the 2d critical ZK equation]\label{Theo-Inst}
There exists $\alpha_0>0$ such that for any $\delta > 0$ there exists $u_0 \in H^1(\R^2)$
satisfying
\begin{equation}\label{Eq:hypoth}
\| u_0 - Q \|_{H^1} \leq \delta \quad \mbox{and} \quad  u_0-Q \perp \{Q_{x_1}, Q_{x_2}, \chi_0 \},
\end{equation}
there exists a time $t_0=t_0(u_0) < \infty$ with $u(t_0)\notin U_{\alpha_0}$, or equivalently,
$$
\inf_{\vec{x} \in \R^2} \|u(t_0, \cdot)-Q(\cdot-\vec{x})\|_{H^1} \geq \alpha_0.
$$
\end{theorem}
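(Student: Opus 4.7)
The plan is to argue by contradiction and adapt the scheme of Martel--Merle \cite{MM-KdV-instability} to the two-dimensional $L^2$-critical setting. Assume for contradiction that $Q$ is stable in the sense of Definition \ref{D:Stability}. Fix a small $\alpha_0 > 0$ to be determined below; by the stability assumption applied with $\alpha = \alpha_0$, there is $\delta_0 > 0$ such that every $u_0 \in U_{\delta_0}$ satisfying the orthogonality in \eqref{Eq:hypoth} produces a solution with $u(t) \in U_{\alpha_0}$ for all $t \ge 0$. Within this class I would pick $u_0 = Q + \eta v_0$ with $v_0 \perp \{Q_{x_1}, Q_{x_2}, \chi_0\}$ in $L^2$ and $\la Q, v_0 \ra > 0$ (such a $v_0$ exists because $Q$ is radial positive while the $Q_{x_i}$ are odd and $\chi_0$ has scaling-type structure, so $Q$ is not in their span). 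Since $E'[Q] \cdot v_0 = -\la Q, v_0 \ra$ and the $L^2$-critical Pohozaev identity gives $E[Q] = 0$, one has
\begin{equation*}
E[u_0] = -\eta\,\la Q, v_0\ra + O(\eta^2) < 0 \quad \text{for } \eta > 0 \text{ small},
\end{equation*}
and by conservation $E[u(t)] < 0$ for all $t \ge 0$.

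Next, since $u(t) \in U_{\alpha_0}$, an implicit-function-theorem modulation based on the three orthogonality conditions in \eqref{Eq:hypoth} yields $C^1$ parameters $\lambda(t) > 0$, $\vec{x}(t) = (x_1(t), x_2(t))$ and a remainder $\epsilon(t, \cdot)$ satisfying
\begin{equation*}
u(t, \vec{x}) = \lambda(t)\,\bigl(Q + \epsilon(t, \cdot)\bigr)\!\bigl(\lambda(t)(\vec{x} - \vec{x}(t))\bigr), \qquad \epsilon(t) \perp \{Q_{x_1}, Q_{x_2}, \chi_0\},
\end{equation*}
with $\|\epsilon(t)\|_{H^1} + |\lambda(t) - 1| \lesssim \alpha_0$. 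Differentiating the orthogonalities and using \eqref{gzk} produces modulation ODEs controlling $|\lambda'/\lambda|$, $|\lambda x_1' - 1|$ and $|\lambda x_2'|$ by localized norms of $\epsilon$, together with an evolution equation for $\epsilon$ whose linear part is governed by $\partial_{y_1} L$ with $L = -\Delta + 1 - 3Q^2$. On the orthogonal subspace $\{Q_{x_1}, Q_{x_2}, \chi_0\}^{\perp}$, the bilinear form $\la L\,\cdot,\cdot\ra$ is coercive --- this is precisely why $\chi_0$ has been included in \eqref{Eq:hypoth} --- which gives $\la L\epsilon(t), \epsilon(t)\ra \gtrsim \|\epsilon(t)\|_{H^1}^2$ uniformly in time.

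The technical heart of the argument is then the construction of a virial-type functional of the form
\begin{equation*}
\mathcal{J}(t) = \int_{\R^2} \psi\bigl(\lambda(t)(x_1 - x_1(t)),\,\lambda(t)(x_2 - x_2(t))\bigr)\,u(t,x_1,x_2)^2\,dx_1\,dx_2,
\end{equation*}
with $\psi$ a carefully designed cut-off essentially supported to the right of the soliton in the $x_1$ variable. Using the equation, the modulation equations, the coercivity of $\la L \cdot, \cdot\ra$, and the strict sign $E[u(t)] < 0$, the plan is to establish $\mathcal{J}'(t) \ge c(\eta) > 0$ uniformly in $t$, so that $\mathcal{J}(t) \to +\infty$. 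This contradicts the bound $\mathcal{J}(t) \le M[u_0]$ obtained from mass conservation, forcing $Q$ to be unstable and yielding Theorem \ref{Theo-Inst}.

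The principal obstacle is the monotonicity $\mathcal{J}'(t) \ge c > 0$ in two dimensions. In the one-dimensional gKdV setting of \cite{MM-KdV-instability}, a one-variable virial identity together with the exponential decay of $Q$ closes this estimate directly; in 2d ZK the transverse direction $x_2$ allows the $u^2$-mass to drift out of the supporting half-plane through its lateral edges, producing error terms that are not immediately controlled by $\|\epsilon\|_{H^1}$ and the modulation parameters. Overcoming this will require (i) a careful choice of the cut-off $\psi$ adapted to the 2d geometry so that the integrated-by-parts boundary terms carry a favorable sign, (ii) a sharpened monotonicity analysis for weighted mass and energy on half-planes $\{x_1 > c\}$ under the ZK flow, and --- most importantly --- (iii) pointwise decay estimates for $\epsilon(t, x_1, x_2)$ in the transverse variable $x_2$, indispensable for taming the lateral flux through the boundary of $\operatorname{supp}\psi$. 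As the abstract emphasizes, (iii) is the new technical input of the paper and will absorb the bulk of the work; the interplay between the design of $\psi$ in (i) and the pointwise bounds in (iii) is the crux of the 2d adaptation.
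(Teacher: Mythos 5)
Your overall framework (contradiction with stability, modulation with the three orthogonality conditions, coercivity of $(L\,\cdot,\cdot)$ on $\{Q_{x_1},Q_{x_2},\chi_0\}^\perp$, and the need for both monotonicity and new pointwise decay estimates in 2d) matches the paper, but the central object of your argument --- the functional $\mathcal{J}(t)=\int \psi\, u^2$ with $\psi$ a bounded cut-off supported to the right of the soliton --- cannot deliver the claimed bound $\mathcal{J}'(t)\ge c(\eta)>0$. The virial computation for a localized mass in the moving frame gives $\frac{d}{dt}\int u^2\psi = -\int(3u_{x_1}^2+u_{x_2}^2)\psi' + \int u^2 \psi''' - \dot{x}_1\int u^2\psi' + (\text{nonlinear flux})$, whose leading terms are \emph{negative}; near $Q$ the nonlinear flux is absorbed by the quadratic terms, and the result is precisely the almost-monotonicity of Lemma \ref{AM}: such quantities are essentially non-increasing. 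The negativity of $E[u(t)]$ does not reverse this sign, so there is no mechanism here producing a uniform positive lower bound on $\mathcal{J}'$. In the paper the localized mass appears only as an auxiliary tool (Section \ref{S-7}) to control tails of $\varepsilon$, not as the driver of the contradiction.

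The missing idea is the Martel--Merle \emph{linear} virial functional \cite{MM-KdV-instability}. Set $F(y_1,y_2)=\int_{-\infty}^{y_1}\Lambda Q(z,y_2)\,dz$ and $J_A(s)=\int \varepsilon\, F\,\varphi_A$, truncated by $\varphi_A$ because $F$ does not decay as $y_1\to+\infty$. Using the $\varepsilon$-equation $\varepsilon_s=(L\varepsilon)_{y_1}+\cdots$, self-adjointness of $L$, and the identity $L(\Lambda Q)=-2Q$, the derivative of the rescaled quantity $K_A=\lambda(J_A-\kappa)$ equals $2\lambda\bigl(1-\tfrac12(\tfrac{x_s}{\lambda}-1)\bigr)\int\varepsilon Q$ up to errors; the mass relation $2\int\varepsilon Q+\int\varepsilon^2=M_0$ together with the explicit choice $\varepsilon_0=\tfrac1n(Q+a\chi_0)$ pins $\int\varepsilon Q$ near the fixed positive constant $b/n$, yielding $\frac{d}{ds}K_A\ge \tfrac{b}{2n_0}>0$ while $|K_A|\lesssim (1+A^{1/2})\|\varepsilon\|_2+\kappa$ remains bounded --- that is the contradiction. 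Your items (ii) and (iii) then serve to control two specific error terms in $\frac{d}{ds}J_A$: monotonicity handles $A^{1/2}\|\varepsilon\|_{L^2(y_1\ge A)}$, and the pointwise decay estimates handle the genuinely two-dimensional term $\int y_2 F_{y_2}\varepsilon\,\varphi_A$, which monotonicity can only bound but not make small. Without the linear functional and the $L(\Lambda Q)=-2Q$ mechanism, no amount of work on your steps (i)--(iii) closes the argument.
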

Here, $\chi_0$ is the eigenfunction corresponding to the (unique) negative eigenvalue of the linearized operator $\mathcal{L}$, for details see Theorem \ref{L-prop}.
\begin{remark}
It suffices to prove Theorem \ref{Theo-Inst} for the following initial data: define $a= -\frac{\int \chi_0 \, Q}{\|\chi_0\|_{L^2}^2}$ and let $\delta > 0$. Fix $n_0 =  \, \left(1+\frac{\|\chi_0\|_{H^1}}{\|\chi_0\|_{L^2}}\right)\|Q\|_{H^1} \cdot \delta^{-1}$.
For any $n \geq n_0$ define
$$
\ep_0 =\frac{1}{n}\left(Q+a\chi_0\right).
$$
Then $u_0 = Q + \ep_0$ satisfies the hypotheses of the theorem, i.e., the conditions \eqref{Eq:hypoth}.
\end{remark}

The main strategy follows the approach introduced by Martel-Merle \cite{MM-KdV-instability} in their study of the same question in the critical gKdV model.
For that they worked out pointwise decay estimates on the shifted linear equation and applied them to the nonlinear equation (bootstraping twice in time).
In \cite{FHR1} we revisited that proof and showed that instead of pointwise decay estimates, it is possible to consider monotonicity properties of the solution, then apply them to the decomposition around the soliton and conclude the instability in the critical gKdV equation. In our two dimensional case of the critical ZK equation, we can not obtain the instability result just relying on monotonicity (and truncation when needed), because there is a new term appearing in the virial-type quantity which is truly two-dimensional, see Lemma \ref{J'_A} and the last term in \eqref{R}. We are forced to consider something else besides the monotonicity (since it would only give the boundedness of the new term, not the smallness), and thus, we develop new 2d pointwise decay estimates, see Sections \ref{S-8}-\ref{S-12}. These estimates by themselves are important results for the two-dimensional Airy-type kernel with the applications to the shifted linear equation as well as to the nonlinear equation. This part is a completely new development in the higher dimensional setting, and we believe that it will be useful in other contexts as well.

We note that we could prove the conditional instability with $\alpha$ being a multiple of $\delta$ in Definition \ref{D:Instability}, i.e.,
there exists a universal constant $c>0$ such that for any $\delta>0$ if $u_0\in U_{\delta}$, then there exists $t_0=t_0(u_0)$ such that $u(t_0)\notin U_{c \, \delta}$,
using only monotonicity. However, we emphasize that in order to show the instability with $\alpha$ independent of $\delta$, we need to use the pointwise decay estimates.

The paper is organized as follows. In Section \ref{S-2} we provide the background information on the well-posedness of the generalized ZK equation in two dimensions. In Section \ref{S-3} we discuss the properties of the linearized operator $L$ around the ground state $Q$ and exhibit the three sets of orthogonality conditions which make it positive-definite; the last one, which involves $\chi_0$, is the one we use in the sequel to control various parameters in modulation theory and smallness of $\ep$. Section \ref{S-4} contains the canonical decomposition of a solution $u$ around $Q$ (thus, introducing $\ep$ and the equation for it), then the modulation theory and control of parameters coming from such a decomposition are described in Section \ref{S-5}.
In Section \ref{S-6} we introduce the key player, the virial-type functional, and make the first attempt to estimate it. Truncation helps us to obtain an upper bound, however, to proceed with the time derivative estimates, we need to develop more machinery, which we do in subsequent sections.
In Section \ref{S-7} we discuss the concept of monotonicity, which allows us to control several terms in the virial functional, but as mentioned above, not all terms. The next few sections, starting from Section \ref{S-8} contain the new pointwise decay estimates. We state the main result in Section \ref{S-8} and also  re-examine $H^1$ well-posedness in the same section, then we develop the poitwise decay estimates on the 2d Airy-type kernel and its derivative in Section \ref{S-9}, after that we proceed with the application of them to the linear equation in Section \ref{S-10}, then for the Duhamel term in Section \ref{S-11}, and finally, for the nonlinear equation in Section \ref{S-12}. After all the tools are developed, we return to the virial-type functional estimates and obtain the lower bound on its time derivative, which allows us to conclude the instability result in Section \ref{S-13}.
\smallskip

{\bf Acknowledgements.} Most of this work was done when the first author was visiting GWU in 2016-17 under the support of the Brazilian National Council for Scientific and Technological Development (CNPq/Brazil), for which all authors are very grateful as it boosted the energy into the research project.
S.R. would like to thank IHES and the organizers for the excellent working conditions during the trimester program ``Nonlinear Waves" in May-July 2016.
L.G.F. was partially supported by CNPq and FAPEMIG/Brazil.
J.H. was partially supported by the NSF grant DMS-1500106.
S.R. was partially supported by the NSF CAREER grant DMS-1151618.

\section{Background on the generalized ZK equation} \label{S-2}
In this section we review the known results on the local and global well-posedness of the generalized ZK equation. To follow the notation
in the literature, in this section we denote the power of nonlinearity as $u^{k+1}$ (instead of $u^p$) and consider the Cauchy problem for the generalized ZK equation as follows:
\begin{equation}\label{gzk2}
\begin{cases}
u_t+\partial_{x_1} \Delta u+ \partial_{x_1}(u^{k+1})  =  0,  \quad (x_1,x_2) \in \mathbb{R}^2, ~ t>0, \\
u(0, x_1,x_2)=u_0(x_1,x_2) \in {H}^s(\R^2).
\end{cases}
\end{equation}
Faminskii \cite{Fa} showed the local well-posedness of the Cauchy problem \eqref{gzk2} for the $k=1$ case considering $H^1$ data (to be precise, he obtained the local well-posedness in $H^m$, for any integer $m \geq 1$.)
The current results on the local well-posedness are gathered in the following statement.
\begin{theorem} 
The local well-posedness in \eqref{gzk2} holds in the following cases:
\begin{itemize}
\item
$k=1$: for $s>\frac12$, see Gr\"unrock-Herr \cite{GH} and Molinet-Pilod \cite{MP15},
\item
$k=2$: for $s>\frac14$, see Ribaud-Vento \cite{RV},
\item
$k=3$: for $s>\frac5{12}$, see Ribaud-Vento \cite{RV},
\item
$k=4, 5, 6, 7$: for $s>1-\frac2{k}$, see Ribaud-Vento \cite{RV},
\item
$k=8$, $s>\frac34$, see Linares-Pastor \cite{LP1} 
\item
$k>8$, $s>s_k=1-2/k$, see Farah-Linares-Pastor \cite{FLP}.
\end{itemize}
\end{theorem}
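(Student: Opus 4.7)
The stated theorem is a compilation of local well-posedness results that appear in the six cited papers; my plan is therefore to organize the proof case by case around a common template, recording how the regularity threshold in each case is produced by the specific multilinear estimate used. The common template is the Duhamel reformulation
\begin{equation*}
u(t) = U(t)u_0 - \int_0^t U(t-t')\,\partial_{x_1}\bigl(u^{k+1}\bigr)(t')\,dt',\qquad U(t) = e^{-t\partial_{x_1}\Delta},
\end{equation*}
combined with a contraction mapping argument in a resolution space $X_T \hookrightarrow C([0,T];H^s(\R^2))$. Closing the contraction reduces to proving (i) a linear estimate $\|U(t)u_0\|_{X_T}\lesssim \|u_0\|_{H^s}$, and (ii) a $(k+1)$-linear estimate that absorbs the $\partial_{x_1}$ derivative appearing in front of the nonlinearity. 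The anisotropic symbol $\xi_1(\xi_1^2+\xi_2^2)$ of the linear group provides both a Kato-type smoothing gain concentrated in $x_1$ and 2d Strichartz estimates; the various thresholds arise from how these two gains combine in the multilinear estimate.

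For $k=1$ the scaling-critical index is $s=-1$, far below the range considered. Gr\"unrock--Herr work in anisotropic Bourgain $X^{s,b}$ spaces adapted to the modulation variable $\tau-\xi_1|\xi|^2$; the trilinear estimate is obtained by dyadic decomposition and careful analysis of the resonance function, and the residual loss fixes the cutoff at $s>\tfrac12$. Molinet--Pilod reach the same threshold via short-time $X^{s,b}$ spaces, which partly compensate for the known failure of the standard bilinear $X^{s,b}$ estimate in this geometry. For $k=2,3$ the Ribaud--Vento proofs instead work in a Kenig--Ponce--Vega-style resolution space built from the smoothing norm, Strichartz norms, and a maximal function norm; the bilinear/trilinear estimates are then proved by a frequency partition that places high frequencies in the smoothing norm so as to pay back the $\partial_{x_1}$. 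The sharpness of the refined 2d Strichartz-type inequalities dictates the thresholds $s>\tfrac14$ and $s>\tfrac5{12}$ respectively.

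For $k\geq 4$ the scaling-critical index $s_k=1-\tfrac2k$ is positive, and the strategy is to close the contraction at any $s>s_k$ using the same Kenig--Ponce--Vega package (Strichartz, smoothing, maximal) but with the power $k+1$ distributed via H\"older's inequality so that one factor absorbs the $\partial_{x_1}$ via the smoothing norm and the remaining factors are placed in $L^\infty$-type Strichartz norms at the scaling-critical derivative level. Ribaud--Vento handle $k=4,\dots,7$, Linares--Pastor treat $k=8$ with an additional refinement needed because the H\"older exponent hits an endpoint (producing $s>\tfrac34$ rather than the scaling bound), and Farah--Linares--Pastor close the case $k>8$ essentially at scaling.

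The principal obstacle common to every case is the two-dimensional anisotropy: the smoothing gain is a full derivative only in $x_1$, and 2d Strichartz estimates are strictly weaker than their 1d KdV counterparts, so multilinear estimates must balance derivative distribution between $x_1$ and $x_2$ frequencies. This anisotropy is what forces the subcritical gaps $s>\tfrac5{12}$ at $k=3$ and $s>\tfrac34$ at $k=8$, and it is also why low-$k$ cases must be treated with Bourgain-type spaces rather than the KPV package. Verifying the multilinear estimate in each regime—in particular, showing that the resonance function is nondegenerate enough to convert the $\partial_{x_1}$ loss into the claimed regularity threshold—is the technical heart of each cited proof.
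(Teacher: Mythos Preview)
The paper does not prove this theorem at all: it is stated purely as a summary of known results from the literature, with each case attributed to the cited reference, and the paper immediately moves on. So there is no ``paper's own proof'' to compare against; your outline is an expository addition that goes well beyond what the paper itself provides.

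That said, your sketch contains a factual error worth flagging. You describe the $k=8$ result of Linares--Pastor as ``producing $s>\tfrac34$ rather than the scaling bound'' and list it among the ``subcritical gaps.'' But for $k=8$ the scaling-critical index is $s_8=1-\tfrac{2}{8}=\tfrac34$, so $s>\tfrac34$ \emph{is} the scaling threshold. The paper explicitly remarks just after the theorem that ``in the last three cases (i.e., for $k\geq 4$), the bound on $s>s_k$ is optimal from the scaling conjecture.'' The genuine subcritical gaps in the table are for $k=1,2,3$ only. Your broader narrative---Duhamel formulation, contraction in a resolution space, $X^{s,b}$ spaces for low $k$ versus Kenig--Ponce--Vega smoothing/Strichartz/maximal packages for higher $k$---is a reasonable high-level description of the cited works, but as written it is a survey rather than a proof, and the paper treats the theorem the same way: as a citation, not as something to be reproved.
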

Note that in the last three cases (i.e., for $k \geq 4$), the bound on $s > s_k$ is optimal from the scaling conjecture.
For previous results on the local well-posedness for $2 \leq k \leq 8$ for $s>3/4$ see \cite{LP} and \cite{LP1}.

Following the approach of Holmer-Roudenko for the $L^2$-supercritical nonlinear Schr\"odinger (NLS) equation, see \cite{HR} and \cite{DHR}, the first author together with F. Linares and A. Pastor obtained the global well-posedness result for the nonlinearities $k \geq 3$ and under a certain mass-energy threshold, see \cite{FLP}.
\begin{theorem}[\cite{FLP}]
Let $k\geq3$ and $s_k=1-2/k$.  Assume $u_0\in H^1(\R^2)$ and suppose that
\begin{equation}\label{GR1}
E(u_0)^{s_k} M(u_0)^{1-s_k} < E(Q)^{s_k} M(Q)^{1-s_k} , \,\,\,
E(u_0) \geq 0.
\end{equation}
If
\begin{equation}\label{GR2}
\|\nabla u_0\|_{L^2}^{s_k}\|u_0\|_{L^2}^{1-s_k} < \|\nabla
Q\|_{L^2}^{s_k}\|Q\|_{L^2}^{1-s_k},
\end{equation}
then for any $t$ from the maximal interval of existence 
\begin{equation*}
\|\nabla u(t)\|_{L^2}^{s_k}\|u_0\|_{L^2}^{1-s_k}=\|\nabla
u(t)\|_{L^2}^{s_k}\|u(t)\|_{L^2}^{1-s_k} <\|\nabla
Q\|_{L^2}^{s_k}\|Q\|_{L^2}^{1-s_k},
\end{equation*}
where $Q$ is the unique positive radial solution of
$$
\Delta Q-Q+Q^{k+1}=0.
$$
In particular, this implies that $H^1$ solutions, satisfying \eqref{GR1}-\eqref{GR2} exist globally in time.
\end{theorem}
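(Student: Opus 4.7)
The plan is to adapt the Holmer-Roudenko concentration-compactness-free strategy, originally designed for the focusing $L^2$-supercritical nonlinear Schr\"odinger equation, to the 2D gZK setting. For $k\geq 3$ the equation is $L^2$-supercritical, so $Q$ no longer furnishes a pure mass threshold; instead one exploits a scale-invariant mass-energy threshold which, together with the sharp Gagliardo-Nirenberg inequality, traps the gradient norm of the solution strictly below that of $Q$ for all $t$ in the maximal existence interval.

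The first input is the sharp 2D Gagliardo-Nirenberg inequality
$$
\|u\|_{L^{k+2}}^{k+2}\;\leq\;C_{\mathrm{GN}}\,\|\nabla u\|_{L^2}^{k}\,\|u\|_{L^2}^{2},\qquad C_{\mathrm{GN}}\,=\,\frac{k+2}{2\,\|\nabla Q\|_{L^2}^{k}},
$$
optimized by $Q$. The explicit constant is read off from the Pohozaev-type identities $\|\nabla Q\|_{L^2}^{2}=(k/2)\,\|Q\|_{L^2}^{2}$ and $\|Q\|_{L^{k+2}}^{k+2}=((k+2)/2)\,\|Q\|_{L^2}^{2}$, which one obtains by testing $-\Delta Q+Q=Q^{k+1}$ against $Q$ and against $\vec{x}\cdot\nabla Q$ in two dimensions.

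I would then put the resulting lower bound on $E(u)$ into scale-invariant form. Multiplying by $M(u)^{(1-s_k)/s_k}$ (so that the right-hand side becomes a function of the scale-invariant quantity) and introducing $y(t):=\|\nabla u(t)\|_{L^2}^{s_k}\|u(t)\|_{L^2}^{1-s_k}$ converts the estimate into
$$
E(u(t))\,M(u(t))^{(1-s_k)/s_k}\;\geq\;\phi(y(t)),\qquad \phi(y)\,:=\,\tfrac12\,y^{2/s_k}-\tfrac{C_{\mathrm{GN}}}{k+2}\,y^{k/s_k}.
$$
Because $k>2$, the function $\phi$ has a unique maximum on $(0,\infty)$, and a direct calculation using the Pohozaev identities pins this maximum exactly at $y_0=\|\nabla Q\|_{L^2}^{s_k}\|Q\|_{L^2}^{1-s_k}$ with value $\phi(y_0)=E(Q)\,M(Q)^{(1-s_k)/s_k}$. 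By mass and energy conservation the left-hand side is the time-independent constant $E(u_0)\,M(u_0)^{(1-s_k)/s_k}$, which by hypothesis \eqref{GR1} is strictly below $\phi(y_0)$ and, by the sign assumption $E(u_0)\geq 0$, nonnegative.

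The conclusion follows from a continuity (``no-crossing'') argument. For any $c\in[0,\phi(y_0))$, the sub-level set $\{y\geq 0:\phi(y)\leq c\}$ consists of two disjoint intervals $[0,y_-]\cup[y_+,\infty)$ with $y_-<y_0<y_+$. Hypothesis \eqref{GR2} says $y(0)<y_0$, so $y(0)\in[0,y_-]$; since $u\in C(I;H^1)$ on the maximal existence interval by the local theory recalled above, $t\mapsto y(t)$ is continuous and cannot jump across the forbidden band $(y_-,y_+)$. Therefore $y(t)\leq y_-<y_0$ throughout $I$, which combined with mass conservation gives a uniform $H^1$ a priori bound, and the $H^1$ blow-up alternative then forces $I=[0,\infty)$. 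The substantive input is the sharp Gagliardo-Nirenberg inequality with $Q$ as extremizer (a 2D analogue of Weinstein's result for NLS) and the Pohozaev identities; the remainder is a standard bootstrap in one real variable with no delicate obstacle.
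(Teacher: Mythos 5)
Your proof is correct, and it is essentially the argument of the cited source \cite{FLP} (the present paper only quotes this theorem without proof): the sharp 2D Gagliardo--Nirenberg inequality with extremizer $Q$, the Pohozaev identities $\|\nabla Q\|_{L^2}^2=\tfrac k2\|Q\|_{L^2}^2$ and $\|Q\|_{L^{k+2}}^{k+2}=\tfrac{k+2}{2}\|Q\|_{L^2}^2$, conservation of mass and energy, and a one-variable continuity/no-crossing argument for $y(t)=\|\nabla u(t)\|_{L^2}^{s_k}\|u(t)\|_{L^2}^{1-s_k}$, followed by the $H^1$ blow-up alternative. All the exponent bookkeeping checks out (e.g.\ $k(1-s_k)/s_k=2+2(1-s_k)/s_k$ and $y_0=\|\nabla Q\|_{L^2}^{s_k}\|Q\|_{L^2}^{1-s_k}$ is indeed the unique maximizer of $\phi$ with $\phi(y_0)=E(Q)M(Q)^{(1-s_k)/s_k}$).
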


\begin{remark}
In the limit case $k=2$ (or $p=3$, the  modified ZK equation), conditions
\eqref{GR1} and \eqref{GR2} reduce to one condition, which is
$$
\|u_0\|_{L^2}<\|Q\|_{L^2}.
$$
Such a condition was already used in \cite{LP} and \cite{LP1} to
show the existence of global solutions, respectively, in $H^1(\R^2)$
and $H^s(\R^2)$, $s>53/63$, see also \cite{FP} for another approach.
\end{remark}
We conclude this section with a note that while it would be important to obtain the local well-posedness down to the scaling index in the gZK equation for $1 \leq k \leq 3$, and the global well-posedness in the subcritical cases for $s < 53/63$ (ideally, all the way down to the $L^2$ level), for the purpose of this paper, it is sufficient to have the well-posedness theory in $H^1(\R^2)$.

\section{The Linearized Operator $L$}\label{S-3}

The operator $L$, which is obtained by linearizing around the ground state $Q$, is defined by
\begin{equation}
\label{L-def}
L  := - \Delta  + 1 - p \, Q^{p-1}.
\end{equation}
We first state the properties of this operator $L$ (see Kwong \cite{K89} for all dimensions, Weinstein \cite{W85} for dimension 1 and 3, also Maris \cite{M02} and \cite{CGNT}).
\begin{theorem}[Properties of $L$]\label{L-prop}
The following holds for an operator $L$ defined in \eqref{L-def}
\begin{itemize}
\item
$L$ is a self-adjoint operator and
$\sigma_{ess}(L) = [ \lam_{ess}, +\infty ) \quad \mbox{for~~ some~~} \lam_{ess} > 0$

\item
$\ker L = \mbox{span} \{Q_{x_1}, Q_{x_2} \}$
	
\item
$L$ has a unique single negative eigenvalue $-\lam_0$ (with $\lam_0 > 0$) associated to a
positive radially symmetric eigenfunction $\chi_0$.
Moreover, there exists $\delta > 0$ such that
\begin{equation}\label{chi_0}
|\chi_0(x)| \lesssim e^{- \delta |x|} \quad \mbox{for ~~all} ~~ x \in \cR^2.
\end{equation}
\end{itemize}
\end{theorem}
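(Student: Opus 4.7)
The plan is to view $L = -\Delta + V$ as a Schr\"odinger operator on $L^2(\R^2)$ with potential $V(\vec{x}) = 1 - pQ^{p-1}(\vec{x})$ and handle the three assertions in turn. Self-adjointness and the essential spectrum are immediate: by the exponential decay \eqref{prop-Q}, multiplication by $pQ^{p-1}$ is bounded and relatively compact with respect to $-\Delta$, so $L$ is self-adjoint on $H^2(\R^2)$ as a symmetric relatively compact perturbation of $-\Delta + 1$, and Weyl's theorem yields $\sigma_{ess}(L) = \sigma_{ess}(-\Delta + 1) = [1, \infty)$. In particular $\lambda_{ess} = 1 > 0$.

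For the kernel, differentiating the ground-state equation $-\Delta Q + Q - Q^p = 0$ in $x_j$ gives $L Q_{x_j} = 0$, so $\mbox{span}\{Q_{x_1}, Q_{x_2}\} \subset \ker L$. To obtain the reverse inclusion I would use that $V$ is radial and decompose $f \in L^2(\R^2)$ in polar coordinates as $f(r,\theta) = \sum_{n \in \Z} f_n(r)\, e^{i n \theta}$. On each angular sector, $L$ acts as the radial operator
\[
L_n = -\partial_r^2 - \frac{1}{r}\partial_r + \frac{n^2}{r^2} + 1 - pQ^{p-1}(r).
\]
Kwong's uniqueness and non-degeneracy theorem \cite{K89} (with the two-dimensional refinements cited in \cite{M02}) yields $\ker L_0 = \{0\}$ and $\ker L_{\pm 1} = \mbox{span}\{\partial_r Q\}$, which correspond exactly to $Q_{x_1}, Q_{x_2}$. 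For $|n| \geq 2$, the identity $L_n = L_1 + (n^2-1)/r^2$ together with $L_1 \geq 0$ on the $n = \pm 1$ sectors (where $\partial_r Q$ is the nodeless zero mode) forces $L_n > 0$ strictly, hence $\ker L_n = \{0\}$.

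For the negative eigenvalue, a direct computation using the ground-state equation gives $LQ = -(p-1)Q^p$ and hence $\langle LQ, Q\rangle = -(p-1)\|Q\|_{L^{p+1}}^{p+1} < 0$, so $L$ has at least one strictly negative eigenvalue. Restricted to the $n=0$ radial sector, $L_0$ is a one-dimensional Sturm-Liouville-type operator whose bottom eigenvalue is simple and carried by a positive radial eigenfunction; call this eigenfunction $\chi_0$ and its eigenvalue $-\lambda_0 < 0$. Since all other angular sectors contribute nonnegative spectrum as established above, $-\lambda_0$ is the unique negative eigenvalue of $L$. Exponential decay of $\chi_0$ then follows by a standard Agmon-type argument: rewriting the eigenvalue equation as $(-\Delta + 1 + \lambda_0)\chi_0 = pQ^{p-1}\chi_0$ with $1 + \lambda_0 > 0$ on the left and exponentially decaying right-hand side, one obtains $|\chi_0(\vec{x})| \lesssim e^{-\delta|\vec{x}|}$ for any $0 < \delta < \sqrt{1 + \lambda_0}$.

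The main obstacle is the identification of $\ker L$ as \emph{exactly} two-dimensional, i.e., the non-degeneracy of the ground state under translations. In dimensions $\geq 2$ this is not elementary and ultimately rests on Kwong's uniqueness/non-degeneracy theorem together with the angular-mode monotonicity argument above; once those ingredients are accepted, the remaining parts (self-adjointness, essential spectrum, existence and uniqueness of the negative eigenvalue, and exponential decay of $\chi_0$) reduce to standard perturbation theory, spherical-harmonic decomposition, and Agmon-type decay estimates.
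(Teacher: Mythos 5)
The paper offers no proof of this theorem --- it is quoted from the literature with references to Kwong \cite{K89}, Weinstein \cite{W85}, Maris \cite{M02} and \cite{CGNT} --- and your outline reproduces the standard argument from those sources: Weyl's theorem for the essential spectrum, differentiation of the ground-state equation together with the spherical-harmonic decomposition and the radial non-degeneracy for the kernel, and an Agmon-type bootstrap for the decay of $\chi_0$. Those parts are sound, including the monotonicity $L_n=L_1+(n^2-1)/r^2$ used to kill the sectors $|n|\geq 2$.

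There is, however, one genuine gap, in the \emph{uniqueness} of the negative eigenvalue. You correctly reduce all negative spectrum to the radial sector $n=0$ and observe that the bottom eigenvalue of $L_0$ is simple with a positive eigenfunction, but simplicity of the lowest eigenvalue does not preclude $L_0$ from having a second, larger negative eigenvalue; Sturm--Liouville theory by itself places no upper bound on the number of bound states below the essential spectrum. The missing ingredient is the Morse-index count $n(L)=1$. The cleanest way to supply it is the fact recorded in Lemma \ref{L:perpcond}(ii) (from \cite{CGNT}): $L\geq 0$ on the codimension-one subspace $\{Q^{p}\}^{\perp}$. By the min--max principle this forces at most one negative eigenvalue, since two negative eigenvalues would produce a two-dimensional subspace on which $L<0$, and that subspace must meet $\{Q^{p}\}^{\perp}$ nontrivially. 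This nonnegativity comes from the variational characterization of $Q$ (as optimizer of a Gagliardo--Nirenberg-type inequality), not from the ODE analysis of $L_0$, so it genuinely has to be invoked as an additional input; with that supplement your argument is complete.
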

We also define the generator $\Lambda$ of the scaling symmetry as
\begin{equation}\label{Eq:Lambda}
\Lambda f = \frac2{p-1} f +  \vec{x} \cdot \nabla f, ~~~(x_1,x_2) \in \cR^2.
\end{equation}
The following identities are useful to have
\begin{lemma}\label{L-prop2}
The following identities hold
\begin{enumerate}
\item
$L (\Lambda Q) = - 2 Q$ 
\item
$\int Q \, \Lambda Q = 0$ if $p=3$ and  $\int Q \, \Lambda Q = \frac{3-p}{p-1} \int Q^2$ for $p \neq 3$.
\end{enumerate}
\end{lemma}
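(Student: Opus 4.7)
The plan is to derive both identities from the scaling structure of the ground state equation $-\Delta Q + Q - Q^p = 0$ together with its exponential decay \eqref{prop-Q}.

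For (1), the cleanest route is via the one-parameter soliton family. Set $Q_c(\vec{x}) = c^{1/(p-1)}\, Q(c^{1/2}\vec{x})$; a direct scaling check shows each $Q_c$ satisfies $-\Delta Q_c + c\, Q_c - Q_c^p = 0$. Differentiating this equation in $c$ and evaluating at $c=1$ yields $L\bigl(\partial_c Q_c|_{c=1}\bigr) = -Q$. A short computation gives $\partial_c Q_c|_{c=1} = \frac{1}{p-1} Q + \frac{1}{2}\vec{x}\cdot\nabla Q = \frac{1}{2}\Lambda Q$, so $L(\Lambda Q) = -2Q$. Alternatively, one can verify this directly by splitting $L(\Lambda Q) = L\bigl(\frac{2}{p-1} Q\bigr) + L(\vec{x}\cdot\nabla Q)$: the first piece collapses to $-2 Q^p$ via the ground state equation, and the second equals $-2Q + 2Q^p$ once one applies the commutator identity $\Delta(\vec{x}\cdot\nabla Q) = 2\Delta Q + \vec{x}\cdot\nabla(\Delta Q)$ and substitutes $\Delta Q = Q - Q^p$; summing produces $-2Q$.

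For (2), expand $\int Q\, \Lambda Q = \frac{2}{p-1}\int Q^2 + \int Q\,(\vec{x}\cdot\nabla Q)$. The cross term equals $\frac{1}{2}\int \vec{x}\cdot\nabla(Q^2)$, and integration by parts -- legitimate by the decay \eqref{prop-Q} -- together with $\nabla\cdot\vec{x} = 2$ in dimension two, gives $-\int Q^2$. Therefore $\int Q\, \Lambda Q = \bigl(\frac{2}{p-1} - 1\bigr)\int Q^2 = \frac{3-p}{p-1}\int Q^2$, which vanishes exactly when $p=3$.

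No substantive obstacles are anticipated: both identities reduce to a single scaling differentiation (or commutator computation) together with one integration by parts. The only point worth flagging is that part (2) explicitly uses the two-dimensional setting through $\nabla\cdot\vec{x} = 2$, so the vanishing of $\int Q\,\Lambda Q$ at $p=3$ reflects exactly the fact that $p=3$ is the $L^2$-critical exponent in $\R^2$; in general dimension $N$ the constant $(3-p)/(p-1)$ would be replaced by $\frac{2}{p-1} - \frac{N}{2}$.
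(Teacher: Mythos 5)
Your proof is correct, and it is precisely the ``direct simple computation'' that the paper alludes to (the paper itself defers the details to \cite{FHR2}): part (1) follows either from differentiating the rescaled soliton equation $-\Delta Q_c + cQ_c - Q_c^p=0$ in $c$ or from the commutator identity $\Delta(\vec{x}\cdot\nabla Q)=2\Delta Q+\vec{x}\cdot\nabla(\Delta Q)$, and part (2) from a single integration by parts using $\nabla\cdot\vec{x}=2$ in $\R^2$. Both computations check out, and your closing remark correctly identifies why the cross term vanishes exactly at the $L^2$-critical exponent $p=3$.
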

The proof is a direct simple computation and can be found in \cite{FHR2}.
\smallskip

In general, the operator $L$ is not positive-definite, however, on certain subspaces one can expect some positivity properties.
We now consider only the $L^2$-critical case and power $p=3$. First, we summarize known positivity estimates for the operator $L$ (see Chang et al. \cite{CGNT} and Weinstein \cite{W85}):
\begin{lemma}\label{L:perpcond}
The following conditions hold for $L$:
\begin{enumerate}
\item[(i)]
$(L\, Q, Q) = -2 \int Q^4 < 0$,

\item[(ii)]
$L_{|_{ \{ Q^3 \}^{\perp} }} \geq 0$,

\item[(iii)]
$ L_{|_{ \{ Q \}^{\perp} }} \geq 0$,

\item[(iv)]
$L_{|_{ \{Q, xQ,|x|^2 Q \}^{\perp} }} > 0$.
\end{enumerate}
\end{lemma}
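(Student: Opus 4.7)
The plan is to derive (i) directly, prove (iii) as the central spectral positivity statement, and then deduce (ii) and (iv) as algebraic consequences of (iii). For (i), since $Q$ solves $-\Delta Q + Q = Q^3$, a direct calculation gives
\[
LQ = (-\Delta Q + Q) - 3Q^3 = Q^3 - 3Q^3 = -2Q^3,
\]
so $(LQ, Q) = -2\|Q\|_{L^4}^4 < 0$.

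For (iii), I would set $\mu_0 := \inf\{(Lf, f) : \|f\|_{L^2} = 1,\; f \perp Q\}$ and argue by contradiction: suppose $\mu_0 < 0$ is attained by a minimizer $f_0$ satisfying the Euler--Lagrange equation $Lf_0 = \mu_0 f_0 + \nu Q$ for some multiplier $\nu$. Pairing with $\Lambda Q$ and invoking $L\Lambda Q = -2Q$ together with $(Q, \Lambda Q) = 0$ from Lemma \ref{L-prop2} (this is where the criticality $p=3$ is essential), one obtains $\mu_0 (f_0, \Lambda Q) = -2(f_0, Q) = 0$, so $f_0 \perp \Lambda Q$. The contradiction is extracted by studying $\phi(\mu) := ((L - \mu)^{-1}Q, Q)$ on the negative real axis: $\phi'(\mu) = \|(L-\mu)^{-1}Q\|_{L^2}^2 > 0$, so $\phi$ is strictly increasing on each of $(-\infty, -\lambda_0)$ and $(-\lambda_0, 0)$, with $\phi(\mu)\to 0$ at $-\infty$ and at $0^-$ (the latter because $L^{-1}Q = -\tfrac{1}{2}\Lambda Q$ and $(\Lambda Q, Q) = 0$), and with a simple pole at $-\lambda_0$ because $(\chi_0, Q) > 0$ (both functions are positive). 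Hence $\phi(\mu_0) \ne 0$, and the identity $(f_0, Q) = \nu\,\phi(\mu_0) = 0$ forces $\nu = 0$; but then $f_0$ is a negative eigenvector, proportional to $\chi_0$, contradicting $f_0 \perp Q$.

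For (ii), I decompose $f = aQ + g$ with $g \perp Q$. The orthogonality $(f, Q^3) = 0$ combined with $LQ = -2Q^3$ yields $(g, Q^3) = -a\|Q\|_{L^4}^4$, and a short calculation using self-adjointness gives
\[
(Lf, f) = 2 a^2 \|Q\|_{L^4}^4 + (Lg, g),
\]
which is $\geq 0$ by (iii) applied to $g$. For (iv), if $f \in \{Q, xQ, |x|^2 Q\}^\perp$ realizes $(Lf, f) = 0$, then (iii) makes $f$ a minimizer on $\{Q\}^\perp$, so Lagrange multipliers give $Lf = \nu Q$. Combined with $L\Lambda Q = -2Q$, this places $f + \tfrac{\nu}{2}\Lambda Q$ in $\ker L = \mathrm{span}\{Q_{x_1}, Q_{x_2}\}$, whence $f = -\tfrac{\nu}{2}\Lambda Q + c_1 Q_{x_1} + c_2 Q_{x_2}$. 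The conditions $(f, x_j Q) = 0$, together with the integration-by-parts identities $(Q_{x_i}, x_j Q) = -\tfrac{1}{2}\delta_{ij}\|Q\|_{L^2}^2$ and $(\Lambda Q, x_j Q) = 0$, force $c_1 = c_2 = 0$; then $(f, |x|^2 Q) = 0$ combined with $(\Lambda Q, |x|^2 Q) = -\int |x|^2 Q^2 \ne 0$ forces $\nu = 0$, so $f \equiv 0$.

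The main obstacle is the spectral argument in (iii), which requires the careful resolvent analysis of $\phi(\mu)$ and genuinely uses both $L\Lambda Q=-2Q$ and the critical-case identity $(\Lambda Q, Q)=0$; once (iii) is in place, (ii) and (iv) reduce to the algebraic manipulations above.
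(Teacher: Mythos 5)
Your argument is correct in substance, but it is worth noting that the paper does not prove this lemma at all: it is stated as a summary of known results and attributed to Chang--Gustafson--Nakanishi--Tsai \cite{CGNT} and Weinstein \cite{W85}. What you have written is a self-contained reconstruction of the standard spectral proof underlying those references, and the logical architecture is sound: (i) is the computation $LQ=-2Q^3$; (iii) is the genuinely spectral statement, proved by the resolvent function $\phi(\mu)=((L-\mu)^{-1}Q,Q)$, whose monotonicity, pole at $-\lambda_0$ (simple because $(Q,\chi_0)>0$), and vanishing limits at $-\infty$ and $0^-$ (the latter exactly because $L^{-1}Q=-\tfrac12\Lambda Q$ modulo $\ker L$ and $(\Lambda Q,Q)=0$ at $p=3$) force $\phi(\mu_0)\neq 0$; and (ii), (iv) are then clean algebraic consequences --- your identity $(Lf,f)=2a^2\int Q^4+(Lg,g)$ and the kernel computation $f=-\tfrac{\nu}{2}\Lambda Q+c_1Q_{x_1}+c_2Q_{x_2}$ both check out, as do the inner products $(\Lambda Q,x_jQ)=0$, $(Q_{x_i},x_jQ)=-\tfrac12\delta_{ij}\|Q\|_{L^2}^2$, and $(\Lambda Q,|x|^2Q)=-\int|x|^2Q^2$. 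Two small points deserve a sentence each in a complete write-up: first, the attainment of $\mu_0$ when $\mu_0<0$ is not automatic and should be justified (standard, since $\mu_0<\lambda_{ess}=\inf\sigma_{ess}(L)$, so minimizing sequences cannot lose compactness without raising the infimum); second, the step $(f_0,Q)=\nu\,\phi(\mu_0)$ presupposes $\mu_0\neq-\lambda_0$, so the case $\mu_0=-\lambda_0$ must be disposed of separately --- pairing the Euler--Lagrange equation with $\chi_0$ gives $\nu(Q,\chi_0)=0$, hence $\nu=0$ and $f_0\propto\chi_0$, contradicting $f_0\perp Q$. Finally, the observation $f_0\perp\Lambda Q$ in your proof of (iii) is never used and can be deleted.
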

The last property provides us with the orthogonality conditions that keep the quadratic form, generated by $L$, positive-definite (see Weinstein \cite[Prop. 2.9]{W85}):
\begin{lemma}\label{Lemma-ort1}
For any $f \in H^1(\R^2)$ such that
\begin{equation}\label{Ort-Cond}
(f, Q) = (f,x_j \,Q) = (f, |x|^2 Q)=0, \quad j=1,2,
\end{equation}
there exists a positive constant $C>0$ such that
$$
(Lf,f) \geq C \, (f,f).
$$
\end{lemma}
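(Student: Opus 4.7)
\textbf{Proof plan for Lemma \ref{Lemma-ort1}.}
My plan is a standard compactness/contradiction argument (the same scheme used by Weinstein in \cite{W85}), exploiting Lemma \ref{L:perpcond}(iv) as the non-quantitative input and then upgrading it to a quantitative coercivity estimate via a concentration argument based on the exponential decay of $Q$.

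Suppose, for contradiction, that the conclusion fails: then there exists a sequence $\{f_n\} \subset H^1(\R^2)$ satisfying the three orthogonality conditions \eqref{Ort-Cond} with $\|f_n\|_{L^2} = 1$ (after normalization) and $(Lf_n, f_n) \to 0$. Expanding the quadratic form,
\begin{equation*}
(Lf_n, f_n) = \|\nabla f_n\|_{L^2}^2 + \|f_n\|_{L^2}^2 - 3\int Q^2 f_n^2 \, dx,
\end{equation*}
and noting $\int Q^2 f_n^2 \leq \|Q\|_{L^\infty}^2 \|f_n\|_{L^2}^2 = \|Q\|_{L^\infty}^2$, we conclude that $\|\nabla f_n\|_{L^2}$ is bounded. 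Hence $\{f_n\}$ is bounded in $H^1(\R^2)$, and up to a subsequence we extract a weak limit $f_\ast \in H^1(\R^2)$.

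Since $Q$, $x_j Q$ and $|x|^2 Q$ all lie in $L^2(\R^2)$ thanks to the exponential decay \eqref{prop-Q}, the orthogonality conditions \eqref{Ort-Cond} pass to the weak limit, so $f_\ast$ satisfies them as well. Moreover, because $Q^2$ decays exponentially and $f_n \to f_\ast$ strongly in $L^2_{\loc}$ by Rellich--Kondrachov, a standard truncation argument yields $\int Q^2 f_n^2 \to \int Q^2 f_\ast^2$. Weak lower semicontinuity of $\|\cdot\|_{H^1}$ then gives
\begin{equation*}
(Lf_\ast, f_\ast) \leq \liminf_{n\to\infty}(Lf_n, f_n) = 0.
\end{equation*}
By Lemma \ref{L:perpcond}(iv), combined with the fact that any element of $\ker L = \spn\{Q_{x_1}, Q_{x_2}\}$ fails the orthogonality $(f, x_j Q) = 0$ (an integration by parts shows $\int Q_{x_j}\, x_j Q = -\frac{1}{2}\|Q\|_{L^2}^2 \neq 0$), the quadratic form $(L\,\cdot,\cdot)$ is strictly positive on nonzero elements of the subspace defined by \eqref{Ort-Cond}; hence necessarily $f_\ast \equiv 0$.

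But $f_\ast = 0$ forces $\int Q^2 f_n^2 \to 0$, which in turn gives
\begin{equation*}
(Lf_n, f_n) = \|\nabla f_n\|_{L^2}^2 + 1 - 3\int Q^2 f_n^2 \;\longrightarrow\; \liminf_{n\to\infty}\|\nabla f_n\|_{L^2}^2 + 1 \geq 1,
\end{equation*}
contradicting $(Lf_n, f_n) \to 0$. This contradiction proves the desired coercivity. The main technical point to be careful about is the strong convergence $\int Q^2 f_n^2 \to \int Q^2 f_\ast^2$; this is where the exponential decay of $Q$ is essential, since without the weight one only has weak convergence of $f_n^2$. The rest is bookkeeping.
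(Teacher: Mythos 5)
Your proof is correct. The compactness step (boundedness of $\|\nabla f_n\|_{L^2}$ from the form itself, weak $H^1$ limit, persistence of the orthogonality since $Q$, $x_jQ$, $|x|^2Q\in L^2$, strong convergence of the weighted term $\int Q^2f_n^2$ via Rellich plus the exponential tail of $Q$, and weak lower semicontinuity of the $H^1$ part) is sound, and the endgame is clean: strict positivity from Lemma \ref{L:perpcond}(iv) forces $f_\ast=0$, after which $\liminf_n (Lf_n,f_n)\geq 1$ contradicts $(Lf_n,f_n)\to 0$. The only cosmetic issue is the last display, where the arrow should really be a $\liminf$ statement rather than a limit.

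Your route is, however, genuinely different from the one the paper relies on. The paper gives no proof of Lemma \ref{Lemma-ort1} at all; it simply cites Weinstein's Proposition 2.9, and the method behind that citation is visible in the paper's proof of the analogous Lemma \ref{Lemma-ort2}: one assumes the infimum of the constrained quadratic form is zero, invokes Weinstein's argument to produce a \emph{minimizer} $\ep^\ast$ together with its Euler--Lagrange equation $(L-\alpha)\ep^\ast=\beta Q^3+\gamma Q_{x_1}+\delta Q_{x_2}$ (with multipliers for each constraint), and then kills the multipliers one by one by pairing with well-chosen functions, forcing $\ep^\ast=0$ against the normalization. Your argument sidesteps the attainment of the infimum and the multiplier analysis entirely, using Lemma \ref{L:perpcond}(iv) as a black box to dispose of the weak limit and extracting the contradiction from the ``mass escaping to infinity'' alternative instead. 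The price is that you must take the strict (pointwise) positivity in (iv) as given --- which is exactly how the paper presents it, so there is no circularity --- while Weinstein's scheme essentially re-derives that positivity from spectral information and in exchange identifies precisely which constraints are responsible for it. Your parenthetical computation $\int Q_{x_j}\,x_jQ=-\tfrac12\|Q\|_{L^2}^2$ is correct but not needed once (iv) is accepted in its strict form.
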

While it shows that eliminating directions from \eqref{Ort-Cond} would make the bilinear form $(Lf,f)$ positive, these directions are not quite suitable for our case. An  alternative for the orthogonality conditions \eqref{Ort-Cond}
would be to consider the kernel of $L$ from Theorem \ref{L-prop} and Lemma \ref{L:perpcond}(ii), which we do next.
\begin{lemma}\label{Lemma-ort2}
For any $f \in H^1(\R^2)$ such that
\begin{equation}\label{Ort-Cond2}
(f, Q^3) = (f, Q_{x_j}) =0, \quad j=1,2,
\end{equation}
there exists a positive constant $C>0$ such that
$$
(Lf,f) \geq C \, (f,f).
$$
\end{lemma}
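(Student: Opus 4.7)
The plan is to argue by contradiction using a standard variational/compactness scheme, exploiting the spectral facts already collected in Theorem \ref{L-prop} and Lemma \ref{L:perpcond}. Let $\mathcal{H}\subset H^1(\R^2)$ be the closed subspace defined by the orthogonality conditions \eqref{Ort-Cond2}, and set
\[
\mu_0 \;=\; \inf\bigl\{(Lf,f) : f\in \mathcal{H},\ \|f\|_{L^2}=1\bigr\}.
\]
Since $\mathcal{H}\subset\{Q^3\}^{\perp}$, Lemma \ref{L:perpcond}(ii) gives $\mu_0\geq 0$, and the lemma is equivalent to $\mu_0>0$. So I suppose $\mu_0=0$ and produce a contradiction.

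Take a minimizing sequence $f_n\in\mathcal{H}$ with $\|f_n\|_{L^2}=1$ and $(Lf_n,f_n)\to 0$. Because $(Lf_n,f_n)=\|\nabla f_n\|_{L^2}^2+\|f_n\|_{L^2}^2-3\int Q^2 f_n^2$ and the sequence is $L^2$-bounded, the $\nabla$-term is also bounded, so after extracting a subsequence $f_n\rightharpoonup f_*$ in $H^1$. The orthogonality conditions \eqref{Ort-Cond2} pass to the weak limit, so $f_*\in\mathcal{H}$. To prevent $f_*\equiv 0$, I observe that $Q$ decays exponentially, hence $Q^2$ is a compact multiplier from $H^1$ to $L^2$ via Rellich–Kondrachov; therefore $\int Q^2 f_n^2 \to \int Q^2 f_*^2$. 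If $f_*=0$, then $(Lf_n,f_n)\geq \|f_n\|_{L^2}^2-3\int Q^2 f_n^2 \to 1$, contradicting $(Lf_n,f_n)\to 0$. So $f_*\neq 0$; then weak lower semicontinuity of the quadratic form on the side $\|\nabla\cdot\|_{L^2}^2+\|\cdot\|_{L^2}^2$ combined with the strong convergence of $\int Q^2 f_n^2$ gives $(Lf_*,f_*)\leq 0$, while Lemma \ref{L:perpcond}(ii) forces $(Lf_*,f_*)\geq 0$. Thus $g:=f_*/\|f_*\|_{L^2}$ is an admissible minimizer.

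The Euler–Lagrange equation for the constrained minimization reads
\[
Lg \;=\; \mu\, g + \alpha\, Q^3 + \beta_1\, Q_{x_1} + \beta_2\, Q_{x_2},
\]
and pairing with $g$ and using \eqref{Ort-Cond2} yields $\mu=(Lg,g)=0$. Pairing with $Q_{x_j}$, using $LQ_{x_j}=0$ (Theorem \ref{L-prop}) and the parity facts $(Q^3,Q_{x_j})=0=(Q_{x_i},Q_{x_j})$ for $i\neq j$ (since $Q$ is radial), gives $\beta_j\|Q_{x_j}\|_{L^2}^2=0$, hence $\beta_1=\beta_2=0$. So $Lg=\alpha Q^3$.

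To finish, invert $L$ on the right-hand side. Multiplying the ground-state equation $-\Delta Q+Q=Q^3$ by suitable terms shows that in the critical case $p=3$ one has $LQ=-2Q^3$ (a direct computation from \eqref{L-def}). Hence $L\bigl(g+\tfrac{\alpha}{2}Q\bigr)=0$, so $g+\tfrac{\alpha}{2}Q\in\ker L=\mathrm{span}\{Q_{x_1},Q_{x_2}\}$, i.e.
\[
g \;=\; -\tfrac{\alpha}{2}\,Q + c_1 Q_{x_1} + c_2 Q_{x_2}.
\]
Imposing $(g,Q_{x_j})=0$ forces $c_j=0$ (parity kills the cross terms with $Q$), and imposing $(g,Q^3)=0$ gives $-\tfrac{\alpha}{2}\int Q^4=0$, i.e.\ $\alpha=0$. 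Therefore $g\equiv 0$, contradicting $\|g\|_{L^2}=1$. The main technical point, and the only nontrivial step, is the compactness argument preventing vanishing of the weak limit; everything else is algebra using the explicit kernel of $L$ and the identity $LQ=-2Q^3$ particular to the $L^2$-critical power.
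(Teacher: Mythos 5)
Your proof is correct and follows essentially the same route as the paper: assume the infimum over the constrained unit sphere is zero, extract a minimizer, write the Euler--Lagrange equation $Lg=\mu g+\alpha Q^3+\beta_1 Q_{x_1}+\beta_2 Q_{x_2}$, kill the multipliers by pairing with $g$ and $Q_{x_j}$, invert $L$ on $Q^3$ via $LQ=-2Q^3$ and $\ker L=\mathrm{span}\{Q_{x_1},Q_{x_2}\}$, and derive $g\equiv 0$. The only difference is cosmetic: you spell out the compactness step producing the minimizer, whereas the paper delegates it to Weinstein's Proposition 2.9 and to Chang et al.\ for the nonnegativity on $\{Q^3\}^{\perp}$.
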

\begin{proof}
From Chang et al. \cite{CGNT} (Lemma $2.2$ $(2.7)$) we have
\begin{equation}\label{lemma-CGNT}
\inf_{(f, Q^3)=0}(Lf,f)\geq 0.
\end{equation}
Let $C_1=\{(L\ep, \ep):\|\ep\|_{L^2}=1, \quad (f, Q^3) = (f, Q_{x_j}) =0, \quad j=1,2. \}$, then $C_1\geq 0$ by \eqref{lemma-CGNT}. Assume, by contradiction, that $C_1=0$. In this case, as in \cite[Proposition 2.9]{W85}, we can find a function $\ep^{\ast}\in H^1$ satisfying
\begin{itemize}
\item[$(i)$] $(L\ep^{\ast}, \ep^{\ast})=0$
\item[$(ii)$] $(L-\alpha)\ep^{\ast}=\beta Q^3 + \gamma Q_{x_1} + \delta  Q_{x_2}$
\item[$(iii)$] $\|\ep^{\ast}\|_{L^2}=1$ and $(\ep^{\ast}, Q^3) = (\ep^{\ast}, Q_{x_j}) =0, \quad j=1,2.$
\end{itemize}
Taking the scalar product of $(ii)$ with $\ep^{\ast}$, we deduce from $(iii)$ that $(L\ep^{\ast}, \ep^{\ast})=\alpha$, and thus, $\alpha=0$ by $(i)$. Now, taking the scalar product with $Q_{x_1}$, integrating by parts and recalling Theorem \ref{L-prop}, we have
$$
0=(\ep^{\ast}, LQ_{x_1})=(L\ep^{\ast}, Q_{x_1})=\gamma \int Q^2_{x_1}+\delta \int Q_{x_1}Q_{x_2}.
$$
Since $Q_{x_1}\perp Q_{x_2}$,
we deduce $\gamma = 0$. In a similar way (taking the scalar product with $Q_{x_2}$), we also have  $\delta =0$. Therefore, $ L\ep^{\ast}=\beta Q^3$, which implies
\begin{equation}\label{ep-ast}
\ep^{\ast}=-\frac{\beta}{2}Q+\theta_1Q_{x_1}+\theta_2Q_{x_2},
\end{equation}
where we have used Theorem \ref{L-prop} and Lemma \ref{L-prop2}.
Taking the scalar product of \eqref{ep-ast} with $Q^3$, from $(iii)$ and integration by parts, we get
$$
0=(\ep^{\ast}, Q^3)=-\frac{\beta}{2}\int Q^4,
$$
which implies $\beta = 0$.

Finally, using $Q_{x_j}$, $j=1,2$, we obtain $\theta_1=\theta_2=0$. Thus, $\ep^{\ast}=0$, which is a contradiction with $(iii)$.
\end{proof}

We deduce yet another set of orthogonality conditions, see \eqref{Ort-Cond3}, to keep the quadratic form, generated by $L$, positive-definite.
This is the set, which we will use in this paper.
\begin{lemma}\label{Lemma-ort3}
Let $\chi_0$ be the positive radially symmetric eigenfunction associated to the unique single negative eigenvalue $-\lam_0$ (with $\lam_0 > 0$). Then, there exists $\sigma_0>0$ such that for any $f \in H^1(\R^2)$ satisfying
\begin{equation}\label{Ort-Cond3}
(f, \chi_0) = (f, Q_{x_j}) =0, \quad j=1,2,
\end{equation}
one has
$$
(Lf,f) \geq \sigma_0 \, \, (f,f).
$$
\end{lemma}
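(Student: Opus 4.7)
The plan is to mimic the contradiction argument used in the proof of Lemma \ref{Lemma-ort2}, but replacing the role of $Q^3$ by the negative-eigenvalue eigenfunction $\chi_0$. The starting point is to first establish the non-strict inequality $(Lf,f)\geq 0$ on the codimension-one subspace $\{\chi_0\}^\perp$. This follows directly from spectral theory: by Theorem \ref{L-prop}, $L$ is self-adjoint with a unique negative eigenvalue $-\lam_0$ whose eigenspace is spanned by $\chi_0$, and the rest of the spectrum lies in $[0,\infty)$. Therefore, diagonalizing on the orthogonal complement of $\chi_0$ yields $(Lf,f)\geq 0$ for every $f\perp\chi_0$. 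Setting
$$
C_1 = \inf\{ (L\ep,\ep) : \|\ep\|_{L^2}=1,\ (\ep,\chi_0)=0,\ (\ep,Q_{x_j})=0,\ j=1,2 \},
$$
we have $C_1\geq 0$, and the goal is to show $C_1>0$, which gives the claim with $\sigma_0=C_1$.

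Assume, for contradiction, that $C_1=0$. By the standard Weinstein-type compactness/Euler--Lagrange argument (a minimizing sequence concentrates, weakly converges to some $\ep^*\in H^1$, and Lagrange multipliers appear), there exists $\ep^*$ satisfying $(L\ep^*,\ep^*)=0$, $\|\ep^*\|_{L^2}=1$, the three orthogonality conditions in \eqref{Ort-Cond3}, and the Euler--Lagrange equation
$$
(L-\alpha)\ep^* = \beta\,\chi_0 + \gamma\, Q_{x_1} + \delta\, Q_{x_2}
$$
for some scalars $\alpha,\beta,\gamma,\delta$. Taking the inner product with $\ep^*$ and using the orthogonality conditions and $(L\ep^*,\ep^*)=0$ forces $\alpha=0$. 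Next, pairing with $Q_{x_1}$ and integrating by parts via $LQ_{x_1}=0$ (Theorem \ref{L-prop}) produces $\beta(\chi_0,Q_{x_1})+\gamma\|Q_{x_1}\|_{L^2}^2+\delta(Q_{x_1},Q_{x_2})=0$. Since $\chi_0$ is radial and $Q_{x_1}$ is odd in $x_1$, $(\chi_0,Q_{x_1})=0$, and the parity mismatch also gives $(Q_{x_1},Q_{x_2})=0$, so $\gamma=0$; the same argument with $Q_{x_2}$ yields $\delta=0$.

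This reduces the equation to $L\ep^* = \beta\,\chi_0$. Since $L\chi_0 = -\lam_0\chi_0$ with $\lam_0>0$, a particular solution is $-(\beta/\lam_0)\chi_0$, and by Theorem \ref{L-prop} the kernel of $L$ is spanned by $Q_{x_1},Q_{x_2}$, so
$$
\ep^* = -\frac{\beta}{\lam_0}\chi_0 + \theta_1 Q_{x_1} + \theta_2 Q_{x_2}.
$$
Testing this against $\chi_0$ and using $(Q_{x_j},\chi_0)=0$ (radial vs.\ odd) together with the orthogonality $(\ep^*,\chi_0)=0$ forces $\beta=0$; testing against $Q_{x_1}$ and $Q_{x_2}$ and invoking $(Q_{x_1},Q_{x_2})=0$ and $(\ep^*,Q_{x_j})=0$ forces $\theta_1=\theta_2=0$. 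Hence $\ep^*\equiv 0$, contradicting $\|\ep^*\|_{L^2}=1$, and therefore $C_1>0$, as desired.

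The only delicate step is producing the minimizer $\ep^*$: because we are on the whole plane $\R^2$, the quadratic form $(L\cdot,\cdot)$ lacks automatic weak lower semicontinuity without care, so some vanishing/dichotomy argument (or the direct argument in \cite[Prop.~2.9]{W85} adapted to the exponentially decaying potential $Q^{p-1}$) is required. Once that is in place, the rest is algebra dictated by the spectral structure of $L$ and the parity of $\chi_0$, $Q_{x_1}$, $Q_{x_2}$.
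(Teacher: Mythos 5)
Your argument is correct, but it takes a genuinely different route from the paper's: the paper disposes of this lemma in one line by citing a spectral-theoretic result of Schechter, whereas you adapt the variational contradiction argument that the paper itself uses for Lemma \ref{Lemma-ort2} (with $Q^3$ replaced by $\chi_0$). Your algebra is sound: $\alpha=0$ from pairing with $\ep^*$, $\gamma=\delta=0$ from pairing with $Q_{x_j}$ and parity, the particular solution $-(\beta/\lam_0)\chi_0$ plus $\ker L$, and then $\beta=\theta_1=\theta_2=0$ from the three orthogonality conditions. You also correctly flag the one nontrivial step, the existence of the minimizer; the standard resolution (a minimizing sequence is $H^1$-bounded, the weak limit cannot vanish because $\int Q^2 f_n^2\to 0$ would force $(Lf_n,f_n)\geq 1-o(1)$, and weak lower semicontinuity plus renormalization produce $\ep^*$) is exactly what is hidden in the reference to \cite[Prop.~2.9]{W85}. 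It is worth noting, however, that the variational detour is avoidable given the spectral facts you already invoke in your first step: since $\chi_0$ spans the negative spectral subspace, $Q_{x_1},Q_{x_2}$ span $\ker L$, and $\sigma_{ess}(L)\subset[\lam_{ess},\infty)$ with $\lam_{ess}>0$ (so the spectrum below $\lam_{ess}$ is discrete), the subspace $\{\chi_0,Q_{x_1},Q_{x_2}\}^{\perp}$ is $L$-invariant and the restriction of $L$ to it has spectrum bounded below by $\sigma_0:=\inf\bigl(\sigma(L)\setminus\{-\lam_0,0\}\bigr)>0$, giving the coercivity directly from the spectral theorem. That is precisely what the Schechter lemma encodes, and it is shorter; your approach buys self-containedness at the cost of the compactness step.
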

\begin{proof}
The result follows directly from Schechter \cite[Chapter 8, Lemma 7.10]{Sch} (see also \cite[Chapter 1, Lemma 7.17]{Sch})
\end{proof}
In a sense, the last lemma shows that if we exclude the zero eigenvalue and negative eigenvalue directions,
then only the ``positive" directions are left, and thus, the positivity property of $L$ must hold.

\section{The linearized equation around $Q$} \label{S-4}

In this section we decompose our solutions $u(t,\vec{x})$ around the soliton $Q$. Since we consider the $L^2$-critical problem, we must also incorporate the scaling parameter (besides the translation as we did in the supercritical case in \cite{FHR2}). We use the following canonical decomposition of $u$ around $Q$:
\begin{equation}
\label{v-eq}
v(t,y_1,y_2) = \lam(t) \, u(t, \lam(t)y_1 +x_1(t), \lam(t)y_2 + x_2(t)).
\end{equation}
Our next task is to examine the difference $\ep = v - Q$, more precisely,
\begin{equation}
\label{ep-def}
\ep(t,\vec{y}) = v(t,\vec{y}) - Q(\vec{y}), \quad \vec{y} = (y_1,y_2).
\end{equation}

\subsection{Equation for $\ep$}
After we rescale time $t \mapsto s$ by $\frac{ds}{dt} = \frac1{\lam^3}$, we obtain the equation for $\ep$.
\begin{lemma}\label{eq-ep}
For all $s \geq 0$, we have
\begin{align}
\nonumber
\ep_s = (L \ep)_{y_1} & + \frac{\lam_s}{\lam} \Lambda Q + \left( \frac{(x_1)_s}{\lam} -1 \right) Q_{y_1} + \frac{(x_2)_s}{\lam} Q_{y_2} \\
\nonumber
&+ \frac{\lam_s}{\lam} \Lambda \ep + \left( \frac{(x_1)_s}{\lam} -1 \right) \ep_{y_1} + \frac{(x_2)_s}{\lam} \ep_{y_2}\\
& - 3 (Q \ep^2)_{y_1} - (\ep^3)_{y_1}
\label{ep1},
\end{align}
where $\Lambda f = f + \vec{y} \cdot \nabla f$ and  $L$ is the linearized operator around $Q$:
$$
L \ep = - \Delta \ep + \ep - 3 Q^2 \ep.
$$
\end{lemma}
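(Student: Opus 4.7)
The proof is a direct chain-rule computation combined with the elliptic equation $-\Delta Q + Q - Q^3 = 0$ satisfied by the ground state. My plan has three steps.

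First, from $v(t,\vec{y}) = \lam(t)\, u(t, \lam(t)\vec{y} + \vec{x}(t))$ I would read off the spatial relations
$$
u = \lam^{-1} v, \qquad \nabla_x u = \lam^{-2} \nabla_y v, \qquad \Delta_x u = \lam^{-3} \Delta_y v,
$$
evaluated at $\vec{x}_\star = \lam\vec{y}+\vec{x}$, and then differentiate $v$ in $t$ at fixed $\vec{y}$:
$$
v_t = \lam_t\, u + \lam\, u_t + \lam(\lam_t\vec{y} + \vec{x}_t)\cdot \nabla_x u.
$$
The pair of $\lam_t$-terms collapses to $(\lam_t/\lam)\Lambda v$ once I express $u$ and $\nabla_x u$ via $v$ and recall that for $p=3$ the scaling generator \eqref{Eq:Lambda} reduces to $\Lambda f = f + \vec{y}\cdot \nabla f$.

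Second, I would insert the gZK equation $u_t = -\partial_{x_1}(\Delta u + u^3)$ and convert it to $\vec{y}$-derivatives, yielding $\lam\, u_t = -\lam^{-3}\partial_{y_1}(\Delta_y v + v^3)$. Rescaling time via $ds/dt = 1/\lam^3$ produces the algebraic identities $\lam_s/\lam = \lam^2 \lam_t$ and $(x_i)_s/\lam = \lam^2 (x_i)_t$, so multiplying the formula for $v_t$ by $\lam^3$ gives the compact identity
$$
v_s = -\partial_{y_1}(\Delta v + v^3) + \frac{\lam_s}{\lam}\Lambda v + \frac{(x_1)_s}{\lam}\, v_{y_1} + \frac{(x_2)_s}{\lam}\, v_{y_2}.
$$

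Third, I would substitute $v = Q + \ep$ (so that $\ep_s = v_s$ since $Q$ is time-independent). Using $\partial_{y_1}(\Delta Q + Q^3) = Q_{y_1}$ from the elliptic equation, and the definition $L\ep = -\Delta \ep + \ep - 3 Q^2 \ep$, the part of $-\partial_{y_1}(\Delta v + v^3)$ that is linear in $\ep$ rewrites as $(L\ep)_{y_1} - \ep_{y_1}$. Expanding $(Q+\ep)^3$ isolates the nonlinear piece $-3(Q\ep^2)_{y_1} - (\ep^3)_{y_1}$, while the resulting $Q_{y_1}$ and $\ep_{y_1}$ contributions combine with the translation modulation terms to produce the coefficients $(x_1)_s/\lam - 1$ displayed in \eqref{ep1}. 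Collecting all terms yields the claimed identity.

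The computation is purely algebraic and poses no genuine obstacle; the only delicate point is careful bookkeeping of the powers of $\lam$ when switching between $(t,\vec{x})$ and $(s,\vec{y})$ coordinates and between $\lam_t$ and $\lam_s$, so that each modulation coefficient ends up with the correct factor of $\lam$ in the final expression.
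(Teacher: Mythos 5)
Your proposal is correct and follows essentially the same route as the paper: differentiate $v=\lam u(t,\lam\vec y+\vec x)$ in $t$, substitute the gZK equation with the chain-rule relations $\nabla_x u=\lam^{-2}\nabla_y v$, $\Delta_x u=\lam^{-3}\Delta_y v$, rescale time via $ds/dt=\lam^{-3}$, and then expand $v=Q+\ep$ using $\Delta Q+Q^3=Q$ to collect the $(L\ep)_{y_1}$ and modulation terms. The bookkeeping of the powers of $\lam$ you describe matches the paper's computation exactly.
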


\begin{proof}
Using \eqref{v-eq}, we obtain
$$
v_t = \lam_t u + \lam u_t + \lam u_{x_1} \left(\lam_t y_1 + (x_1)_t \right)+ \lam u_{x_2} \left(\lam_t y_2 + (x_2)_t \right),
$$
and for  $i = 1,2$
$$
v_{y_i} = \lam^2 u_{x_i}, \quad v_{y_i y_i} = \lam^3 u_{x_i x_i}. \hspace{4cm}
$$
Substituting the above into $u_t + \partial_{x_1} (\Delta u + u^3) = 0$, we obtain
$$
v_t = \lam^{-1} \lam_t v + \lam^{-1} \lam_t ( \vec{y} \cdot \nabla v) + \lam^{-1} \left( v_{y_1} (x_1)_t + v_{y_2} (x_2)_t \right)
-\lam^{-3} \partial_{y_1} \left(\Delta v + v^3\right).
$$
Recalling that $\frac{ds}{dt} = \frac1{\lam^3}$, we change the time variable $t \mapsto s$
$$
\lam^{-3} v_s = \lam^{-4}\lam_s v + \lam^{-4} \lam_s ( \vec{y} \cdot \nabla v) + \lam^{-4} \left( v_{y_1} (x_1)_s + v_{y_2} (x_2)_s \right)
-\lam^{-3} \partial_{y_1} \left(\Delta v + v^3\right),
$$
Simplifying, we get
$$
v_s = \frac{\lam_s}{\lam} \left( v +  \vec{y} \cdot \nabla v \right) + \frac{((x_1)_s, (x_2)_s )}{\lam} \cdot \nabla v
-\partial_{y_1} \left(\Delta v + v^3\right).
$$
Next, we use \eqref{ep-def} and the fact that $\Delta Q = Q-Q^3$ to obtain the equation for $\ep$:
$$
\ep_s = \frac{\lam_s}{\lam} \left( \Lambda Q + \Lambda \ep \right) + \frac{((x_1)_s, (x_2)_s )}{\lam} \cdot (\nabla Q + \nabla \ep) -\partial_{y_1} \left(Q + \Delta \ep  + 3Q^2 \ep + 3Q \ep^2 +\ep^3\right).
$$
Simplifying, we get the equation \eqref{ep1}.
\end{proof}

\subsection{Mass and Energy Relations}
Our next task is to derive the basic mass and energy conservations for $\ep$.
First, denote
\begin{equation}\label{M_0}
M_0 = 2 \int_{\cR^2} Q(\vec{y}) \ep(0,\vec{y}) \, d\vec{y} + \int_{\cR^2} \ep^2(0,\vec{y}) \, d\vec{y}.
\end{equation}
For any $s \geq 0$ by the $L^2$ scaling invariance and mass conservation, we have
$$
\int_{\cR^2} v^2(s, \vec{y}) \, d\vec{y} = \int_{\cR^2} \lam^2(t) \, u^2(t,\lam \vec{y} + \vec{x}(t)) \, d\vec{y} = \int_{\cR^2} u^2(t) \, d\vec{x} = M[u(t)] \equiv M[u(0)].
$$
On the other hand,
\begin{align*}
\int_{\cR^2} v^2(s, \vec{y}) \, d\vec{y} & = \int_{\cR^2} \left( Q(\vec{y}) + \ep(s, \vec{y}) \right)^2 \, d\vec{y} \\
&=\int_{\cR^2} Q^2(\vec{y}) \, d\vec{y} + 2 \int_{\cR^2} Q(\vec{y}) \, \ep(s,\vec{y}) \, d\vec{y} + \int_{\cR^2} \ep^2(s, \vec{y}) \, d\vec{y} \\
& = \int_{\cR^2} u_0^2(\vec{x}) \, d\vec{x}
=\int_{\cR^2} Q^2(\vec{y}) \, d\vec{y} + 2 \int_{\cR^2} Q(\vec{y}) \, \ep(0,\vec{y}) \, d\vec{y} + \int_{\cR^2} \ep^2(0,\vec{y}) \, d\vec{y},
\end{align*}
and thus,
\begin{equation}
\label{m-ep}
M[\ep(s)] := 2 \int_{\cR^2} Q(\vec{y}) \, \ep(s,\vec{y}) \, d\vec{y} + \int_{\cR^2} \ep^2(s,\vec{y}) \, d\vec{y} = M_0.
\end{equation}
Next, we examine the energy conservation for $v$, where a straightforward calculation gives
\begin{equation}
\label{E-v}
E[v(s)] = \lam^2(s) \, E[u(t)] = \lam^2(s) \, E[u_0].
\end{equation}
Since $v = Q+\ep$, we also obtain
\begin{align}\label{E_0}
E[Q+\ep] & = \frac12 \int |\nabla(Q+\ep)|^2 - \frac14 \int (Q+\ep)^4 \nonumber \\
& = \frac12 \left( \int |\nabla \ep|^2 +\ep^2 - 3Q^2 \ep^2 \right) + \int \left(\nabla Q \nabla \ep - Q^3 \ep \right) - \frac12 \int \ep^2 - \int Q\ep^3 - \frac14 \int \ep^4\nonumber \\
& = \frac12 \left( L \ep, \ep \right) - \left( \int Q \ep + \frac12 \int \ep^2 \right) - \frac14 \left[ 4 \int Q\ep^3 +  \int \ep^4\right],
\end{align}
where in the second line the integration by parts is used as well as $2\|\nabla Q\|^2_{L^2}=\| Q\|^4_{L^4}$ (since $Q$ is a solution of $\Delta Q + Q^3 = Q$).
By Gagliardo-Nirenberg inequality, we can bound the last term as
\begin{equation}
\label{E-est}
4 \int Q\ep^3 +  \int \ep^4 \leq c_1 \| \nabla \ep \|_{L^2} \| \ep \|^2_{L^2} + c_2 \| \nabla \ep \|^2_{L^2} \| \ep \|^2_{L^2},
\end{equation}
and if $\| \ep \|_{H^1} \leq 1$, we get
$$
\left| E[Q+\ep]  +  \left( \int Q \ep + \frac12 \int \ep^2 \right) - \frac12 (L\ep,\ep) \right| \leq c_0 \, \| \nabla \ep \|_{L^2} \| \ep \|^2_{L^2}.
$$
Putting together \eqref{m-ep}, \eqref{E-v} and \eqref{E-est}, we have the following
\begin{lemma}
\label{Lemma3}
For any $s \geq 0$ we have mass and energy conservations for $\ep$
\begin{equation}
\label{ep-M+E}
M[\ep(s)] = M_0, \quad \mbox{and} \quad E[Q+\ep(s)] = \lam^2(s) \, E[u_0].
\end{equation}
Moreover, the energy linearization is
\begin{equation}
\label{E-lin}
E[Q+\ep]  +  \left( \int Q \ep + \frac12 \int \ep^2 \right) = \frac12 (L\ep,\ep) - \frac14 \left( 4 \int Q\ep^3 +  \int \ep^4\right),
\end{equation}
and if $\|\ep \|_{H^1} \leq 1$, then there exists a $c_0 >0$ such that
\begin{equation}
\label{energy-ep-est1}
\left| E[Q+\ep]  +  \left( \int Q \ep + \frac12 \int \ep^2 \right) - \frac12 (L\ep,\ep) \right| \leq c_0 \, \| \nabla \ep \|_{L^2} \| \ep \|^2_{L^2}.
\end{equation}
\end{lemma}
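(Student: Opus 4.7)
The plan is to consolidate the three statements, since most of the underlying calculation has already been laid out in the paragraphs preceding the lemma. I would organize the argument into three short pieces, one per assertion, with the main inputs being the $L^2$-scaling invariance of the mass, the soliton equation $-\Delta Q + Q = Q^3$ together with the 2d Pohozaev identity, and the Gagliardo--Nirenberg inequality in two dimensions.

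First, for $M[\ep(s)] = M_0$, I would change variables $\vec{x} = \lam(t)\vec{y}+\vec{x}(t)$ in $\int v^2(s,\vec{y})\,d\vec{y}$ using \eqref{v-eq}; the Jacobian cancels the $\lam^2$ prefactor and yields $\int v^2(s) = M[u(t)] = M[u_0]$ by \eqref{MC}. Writing $v = Q+\ep$ and expanding the square then gives $\|Q\|_{L^2}^2 + 2\int Q\,\ep(s) + \|\ep(s)\|_{L^2}^2 = M[u_0]$ for every $s\ge 0$; subtracting $\|Q\|_{L^2}^2$ shows that the quantity $M[\ep(s)]$ defined in \eqref{m-ep} is $s$-independent, hence equals its initial value $M_0$ from \eqref{M_0}.

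For the energy identity $E[Q+\ep(s)] = \lam^2(s)\,E[u_0]$, a direct scaling calculation from \eqref{v-eq} gives $\|\nabla v\|_{L^2}^2 = \lam^2\|\nabla u\|_{L^2}^2$ and $\|v\|_{L^4}^4 = \lam^2\|u\|_{L^4}^4$, producing $E[v(s)] = \lam^2(s)\,E[u(t)] = \lam^2(s)\,E[u_0]$ by \eqref{EC}. To obtain the linearization \eqref{E-lin}, I would then substitute $v = Q+\ep$ and expand via the binomial identities for $|\nabla(Q+\ep)|^2$ and $(Q+\ep)^4$. The pure-$Q$ piece $\tfrac12\|\nabla Q\|_{L^2}^2 - \tfrac14\|Q\|_{L^4}^4$ vanishes thanks to the 2d Pohozaev identity $\|Q\|_{L^4}^4 = 2\|\nabla Q\|_{L^2}^2$; integration by parts in $\int \nabla Q\cdot\nabla\ep$ combined with $-\Delta Q = -Q + Q^3$ collapses the linear-in-$\ep$ cross terms to $-\int Q\,\ep$; and the quadratic-in-$\ep$ piece $\tfrac12\int|\nabla\ep|^2 - \tfrac32\int Q^2\ep^2$ is rewritten as $\tfrac12(L\ep,\ep) - \tfrac12\int\ep^2$ straight from the definition \eqref{L-def}. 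Collecting all contributions yields \eqref{E-lin}.

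Finally, for the pointwise bound \eqref{energy-ep-est1} on the nonlinear remainder, I would invoke Gagliardo--Nirenberg in $\R^2$ in the forms $\|\ep\|_{L^3}^3 \lesssim \|\ep\|_{L^2}^2\|\nabla\ep\|_{L^2}$ and $\|\ep\|_{L^4}^4 \lesssim \|\ep\|_{L^2}^2\|\nabla\ep\|_{L^2}^2$, together with $\|Q\|_{L^\infty}<\infty$ from \eqref{prop-Q}, to estimate $|4\int Q\ep^3 + \int\ep^4| \leq c_1\|\nabla\ep\|_{L^2}\|\ep\|_{L^2}^2 + c_2\|\nabla\ep\|_{L^2}^2\|\ep\|_{L^2}^2$. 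Under the smallness hypothesis $\|\ep\|_{H^1}\le 1$ the factor $\|\nabla\ep\|_{L^2}^2$ is dominated by $\|\nabla\ep\|_{L^2}$, which absorbs both pieces into the single constant $c_0$, giving \eqref{energy-ep-est1}. No real obstacles are anticipated; this is bookkeeping, and the only subtle point is remembering to pair the test-function identity $\|\nabla Q\|_{L^2}^2 + \|Q\|_{L^2}^2 = \|Q\|_{L^4}^4$ with Pohozaev so that $E[Q]$ drops out of the expansion.
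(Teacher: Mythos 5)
Your proposal is correct and follows essentially the same route as the paper: $L^2$-scaling plus mass conservation for \eqref{ep-M+E}, the scaling computation $E[v]=\lam^2E[u]$ for the energy, the expansion of $E[Q+\ep]$ using $-\Delta Q+Q=Q^3$ and $2\|\nabla Q\|_{L^2}^2=\|Q\|_{L^4}^4$ for \eqref{E-lin}, and the 2d Gagliardo--Nirenberg bounds absorbed under $\|\ep\|_{H^1}\le 1$ for \eqref{energy-ep-est1}. No gaps.
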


\section{Modulation Theory and Parameter Estimates} \label{S-5}

In this section, recalling the definition \eqref{tube}, we show that it is possible to choose parameters $\lambda(s)\in \mathbb{R}$ and $\vec{x}(s)=(x_1(s), x_2(s))\in \mathbb{R}^2$ such that $\ep(s) \perp \chi_0$ and $\ep(s) \perp Q_{x_j}$, $j=1,2$. Moreover, assuming an additional symmetry, we can assume $x_2(s)=0$.
\begin{proposition}[Modulation Theory I]\label{ModThI}
There exists $\overline{\alpha},  \overline{\lambda} >0$ and a unique $C^1$ map
$$
(\lambda_1, \vec{x}_1): U_{\overline{\alpha}}\rightarrow (1-\overline{\lambda}, 1+\overline{\lambda})\times \mathbb{R}^2
$$
such that if $u\in U_{\overline{\alpha}}$ and $\ep_{\lambda_1,\vec{x}_1}$ is given by
\begin{equation}\label{ep-def2}
\ds \ep_{\lambda_1,\vec{x}_1}(y_1,y_2) = \lambda_1 \, u\left( \lambda_1y_1 +(x_1)_1, \lambda_1y_2 + (x_1)_2\right) - Q(y_1,y_2),
\end{equation}
then
\begin{equation}\label{ep-perp}
\ep_{\lambda_1, \vec{x}_1} \perp \chi_0 \quad \textrm{and} \quad \ep_{\lambda_1, \vec{x}_1} \perp Q_{y_j}, \quad j=1,2.
\end{equation}
Moreover, there exists a constant $C_1>0$, such that if $u\in U_{\alpha}$, with $0<\alpha<\overline{\alpha}$, then
\begin{equation}\label{ep-H1}
\|\ep_{\lambda_1, \vec{x}_1}\|_{H^1}\leq C_1\alpha \quad \textrm{and} \quad |\lambda_1-1|\leq C_1\alpha.
\end{equation}
\end{proposition}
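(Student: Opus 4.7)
The plan is to apply the implicit function theorem to construct $(\lambda_1, \vec x_1)$ locally near $Q$, and then cover the full tube $U_{\overline\alpha}$ using translation invariance.

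Define $F: H^1(\R^2)\times(0,\infty)\times\R^2\to\R^3$ by
\[
F(u,\lambda,\vec x) = \bigl(\,(\ep_{\lambda,\vec x},\chi_0),\ (\ep_{\lambda,\vec x},Q_{y_1}),\ (\ep_{\lambda,\vec x},Q_{y_2})\,\bigr),
\]
with $\ep_{\lambda,\vec x}$ as in \eqref{ep-def2}. The orthogonality conditions \eqref{ep-perp} amount exactly to $F(u,\lambda_1,\vec x_1)=0$, and at the base point one has $\ep_{1,\vec 0}\equiv 0$ when $u=Q$, so $F(Q,1,\vec 0)=0$.

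The decisive step is to check non-degeneracy of $\partial F/\partial(\lambda,\vec x)$ at $(Q,1,\vec 0)$. Since $p=3$ gives $\tfrac{2}{p-1}=1$, a direct differentiation yields $\partial_\lambda \ep_{\lambda,\vec x}|_{(1,\vec 0),u=Q}=\Lambda Q$ and $\partial_{x_j}\ep_{\lambda,\vec x}|_{(1,\vec 0),u=Q}=Q_{y_j}$. Radiality of $Q$, $\Lambda Q$ and $\chi_0$ together with oddness of $Q_{y_j}$ in its $j$-th variable make the Jacobian diagonal, with diagonal entries $(\Lambda Q,\chi_0)$, $\|Q_{y_1}\|_{L^2}^2$, $\|Q_{y_2}\|_{L^2}^2$. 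The last two are obviously nonzero; for the first, I would pair the spectral identity $L(\Lambda Q)=-2Q$ from Lemma \ref{L-prop2} with $L\chi_0=-\lambda_0\chi_0$ and self-adjointness of $L$, yielding
\[
-2(Q,\chi_0) = (L\Lambda Q,\chi_0) = (\Lambda Q,L\chi_0) = -\lambda_0(\Lambda Q,\chi_0),
\]
so $(\Lambda Q,\chi_0)=\tfrac{2}{\lambda_0}(Q,\chi_0)\ne 0$ since both $Q$ and $\chi_0$ are strictly positive by Theorem \ref{L-prop}. The implicit function theorem then furnishes $\overline\alpha_0,\overline\lambda>0$ and a unique $C^1$ map $(\lambda_1,\vec x_1)$ on the $H^1$-ball $\{\|u-Q\|_{H^1}<\overline\alpha_0\}$ with values in $(1-\overline\lambda,1+\overline\lambda)\times\R^2$ solving $F=0$.

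To promote this to the full tube, for any $u\in U_{\overline\alpha}$ pick $\vec y_0\in\R^2$ with $\|u(\cdot-\vec y_0)-Q\|_{H^1}\le\overline\alpha$, apply the local IFT to $v(\cdot):=u(\cdot-\vec y_0)$ to obtain $(\lambda_1^{(v)},\vec x_1^{(v)})$, and set $\lambda_1(u)=\lambda_1^{(v)}$, $\vec x_1(u)=\vec x_1^{(v)}-\vec y_0$; a direct change of variables in \eqref{ep-def2} shows this realizes \eqref{ep-perp} for $u$, and the local uniqueness clause of IFT ensures this is independent of the chosen $\vec y_0$ once $\overline\alpha$ is small enough. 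The estimates \eqref{ep-H1} follow from $C^1$-smoothness of the IFT map and the fact that $\ep_{\lambda_1,\vec x_1}$ and $\lambda_1-1$ both vanish at $u=Q$. The only genuinely non-routine ingredient is the non-degeneracy $(\Lambda Q,\chi_0)\ne 0$, which hinges on the spectral identity $L(\Lambda Q)=-2Q$ and on positivity of $\chi_0$; the rest is standard IFT together with symmetry bookkeeping.
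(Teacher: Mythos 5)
Your proposal is correct and follows essentially the same route as the paper: an implicit function theorem argument whose non-degeneracy rests on $(\Lambda Q,\chi_0)=\tfrac{2}{\lambda_0}(Q,\chi_0)>0$ (via $L(\Lambda Q)=-2Q$, $L\chi_0=-\lambda_0\chi_0$, self-adjointness, and positivity of $Q,\chi_0$), with the off-diagonal entries killed by parity and the extension to the full tube $U_{\overline\alpha}$ handled by translating back near $Q$. The paper makes the choice of translate canonical by a second IFT application producing the minimizing shift $r(u)$, but this is the same idea as your uniqueness remark.
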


\begin{proof}
Let $\ep_{\lambda_1,\vec{x}_1}$ be defined as in \eqref{ep-def2}. Differentiating and recalling the definition \eqref{Eq:Lambda}), we have
\begin{equation}\label{ep-p1}
\dfrac{\partial \ep_{\lambda_1,\vec{x}_1} }{\partial (x_1)_j}\Big|_{\lambda_1=1,\vec{x}_1=0}=u_{y_j}, \quad j=1,2
\end{equation}
and
\begin{equation}\label{ep-p2}
\dfrac{\partial \ep_{\lambda_1,\vec{x}_1} }{\partial \lambda_1}\Big|_{\lambda_1=1,\vec{x}_1=0}= \Lambda u.
\end{equation}

Next, consider the following functionals
$$
\rho^j_{\lambda_1, \vec{x}_1}(u)=\int \ep_{\lambda_1, \vec{x}_1} Q_{y_j}, \quad j=1,2, \quad \textrm{and}\quad
\rho^3_{\lambda_1, \vec{x}_1}(u)=\int \ep_{\lambda_1, \vec{x}_1} \chi_0,
$$
and define the function $S:\R^3 \times H^1 \rightarrow \R^3$ such that
$$
S(\lambda_1, \vec{x}_1, u) = (\rho^1_{\lambda_1, \vec{x}_1}(u), \rho^2_{\lambda_1, \vec{x}_1}(u), \rho^3_{\lambda_1, \vec{x}_1}(u)).
$$
From \eqref{ep-p1}-\eqref{ep-p2}, we deduce
\begin{align*}
\dfrac{\partial \rho^j_{\lambda_1, \vec{x}_1}(u)}{\partial \lambda_1}\Big|_{\lambda_1=1,\vec{x}_1=0, u=Q}&=\int \Lambda Q Q_{y_j}; \\
\dfrac{\partial \rho^1_{\lambda_1, \vec{x}_1}(u)}{\partial (x_1)_1}\Big|_{\lambda_1=1,\vec{x}_1=0, u=Q}&=\int Q_{y_1}Q_{y_1}= \int Q_{y_1}^2>0;\\
\dfrac{\partial \rho^2_{\lambda_1, \vec{x}_1}(u)}{\partial (x_1)_1}\Big|_{\lambda_1=1,\vec{x}_1=0, u=Q}&=\int Q_{y_1}Q_{y_2}=0; \\
\dfrac{\partial \rho^1_{\lambda_1, \vec{x}_1}(u)}{\partial (x_1)_2}\Big|_{\lambda_1=1,\vec{x}_1=0, u=Q}&=\int Q_{y_2}Q_{y_1}=0; \\
\dfrac{\partial \rho^2_{\lambda_1, \vec{x}_1}(u)}{\partial (x_1)_2}\Big|_{\lambda_1=1,\vec{x}_1=0, u=Q}&= \int Q_{y_2}Q_{y_2}= \int Q_{y_2}^2>0.\\
\end{align*}
Moreover, since $L(\chi_0)=-\lambda_0 \chi_0$ (with $\lambda_0>0$), $L (\Lambda Q) = - 2 Q$, $\chi_0$ and $Q$ are positive functions and $\chi_0 \perp \mbox{span} \{Q_{y_1}, Q_{y_2} \}$, we also have
\begin{align*}
\dfrac{\partial \rho^3_{\lambda_1, \vec{x}_1}(u)}{\partial \lambda_1}\Big|_{\lambda_1=1,\vec{x}_1=0, u=Q}&=\int \Lambda Q\chi_0=-\frac{1}{\lambda_0}\int \Lambda QL(\chi_0)=\frac{2}{\lambda_0}\int Q \chi_0>0;\\
\dfrac{\partial \rho^3_{\lambda_1, \vec{x}_1}(u)}{\partial (x_1)_1}\Big|_{\lambda_1=1,\vec{x}_1=0, u=Q}&=\int Q_{y_1}\chi_0 = 0;\\
\dfrac{\partial \rho^3_{\lambda_1, \vec{x}_1}(u)}{\partial (x_1)_2}\Big|_{\lambda_1=1,\vec{x}_1=0, u=Q}&=\int Q_{y_2}\chi_0 = 0.\\
\end{align*}
Noting that $S(1,0,0,Q)=(0,0,0)$, we can apply the Implicit Function Theorem to obtain the existence of $\overline{\beta}>0$, a neighborhood $V$ of $(1,0,0)$ in $\mathbb{R}^3$ and a unique $C^1$ map
$$
(\lambda_1, \vec{x}_1):\left\{u\in H^1(\mathbb{R}^2): \|u-Q\|_{H1}< \overline{\beta} \right\}\rightarrow V
$$
such that $S((\lambda_1, \vec{x}_1)(u),u)=0$, in other words, the orthogonality conditions \eqref{ep-perp} are satisfied.

Also note that there exists $C>0$ such that if $\|u-Q\|_{H1}< \alpha \leq \overline{\beta}$ then $|\lambda_1 - 1|+|\vec{x}_1|\leq C\alpha$. Moreover, by \eqref{ep-def} we also have $\|\ep_{\lambda_1, \vec{x}_1}\|_{H^1}\leq C\alpha$, for some $C>0$.

It is straightforward to extend the map $(\lambda_1, \vec{x}_1)$ to the region $U_{\alpha}$. Indeed, applying again the Implicit Function Theorem, there exists $\overline{\alpha}<\overline{\beta}$ and a unique $C^1$ map  $r:U_{\overline{\alpha}}\rightarrow \R^2$, such that
$$
\|u(\cdot)-Q(\cdot - r)\|_{H^1}=\inf_{r\in \R^2}\|u(\cdot)-Q(\cdot - r)\|_{H^1}<\overline{\alpha}<\overline{\beta},
$$
for all $u\in U_{\overline{\alpha}}$.

Finally, defining $\lambda_1=\lambda_1(u(\cdot+r(u)))$ and $\vec{x}_1=\vec{x}_1(u(\cdot+r(u)))+r(u)$, we have that \eqref{ep-perp} and \eqref{ep-H1} are satisfied.
\end{proof}

Note that solitary waves \eqref{Eq:TW} are traveling only in the $x_1$-direction, so it should be reasonable to consider a path
$\vec{x}(t)=(x_1(t), x_2(t))$ so that $x_1(t) \approx c \, t$ and $x_2(t) \approx 0$. Inspired by the work of de Bouard \cite{DeB}, if we assume an additional symmetry, we can consider exactly that, and thus, simplify the choice of parameters:
\begin{proposition}[Modulation Theory II]\label{ModThII}
If we assume that $u$ from the Prop \ref{ModThI} is cylindrically symmetric (i.e., $u(x_1,x_2)=u(x_1,|x_2|)$), then, reducing $\overline{\alpha}>0$ if necessary, we have $(x_1)_2\equiv 0$.
\end{proposition}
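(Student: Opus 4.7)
The plan is to exploit symmetry: if $u(x_1, x_2) = u(x_1, -x_2)$ and we choose $(x_1)_2 = 0$, then the resulting $\ep_{\lambda_1, \vec{x}_1}$ is even in $y_2$, which automatically kills the orthogonality integral against $Q_{y_2}$ (odd in $y_2$). One then applies the implicit function theorem to the two remaining orthogonality conditions and invokes uniqueness from Proposition \ref{ModThI}.

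First, I would record the parities under $y_2 \mapsto -y_2$: since $Q$ is radial, $Q$, $Q_{y_1}$, $\chi_0$, and $\Lambda Q = Q + y_1 Q_{y_1} + y_2 Q_{y_2}$ are all even in $y_2$, while $Q_{y_2}$ is odd. If $u$ is cylindrically symmetric and $(x_1)_2 = 0$, the function
$$
\ep_{\lambda_1, ((x_1)_1, 0)}(y_1, y_2) = \lambda_1 u(\lambda_1 y_1 + (x_1)_1, \lambda_1 y_2) - Q(y_1, y_2)
$$
is even in $y_2$, and consequently $\rho^2_{\lambda_1, ((x_1)_1, 0)}(u) = \int \ep \cdot Q_{y_2}\, d\vec{y} = 0$ for every such $u$, independently of the choice of $\lambda_1$ and $(x_1)_1$.

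Next, I would apply the implicit function theorem to the reduced map
$$
\tilde S(\lambda_1, (x_1)_1, u) = \bigl(\rho^1_{\lambda_1, ((x_1)_1, 0)}(u),\ \rho^3_{\lambda_1, ((x_1)_1, 0)}(u)\bigr),
$$
restricted to cylindrically symmetric $u$ close to $Q$ (note $Q$ itself is cylindrically symmetric). Using the derivative computations already carried out in the proof of Proposition \ref{ModThI}, the $2 \times 2$ Jacobian of $\tilde S$ in $(\lambda_1, (x_1)_1)$ at $(1, 0, Q)$ is
$$
\begin{pmatrix}
\int \Lambda Q \cdot Q_{y_1} & \int Q_{y_1}^2 \\
\frac{2}{\lam_0}\int Q \chi_0 & 0
\end{pmatrix},
$$
whose determinant is $-\frac{2}{\lam_0}\bigl(\int Q \chi_0\bigr)\bigl(\int Q_{y_1}^2\bigr) \neq 0$. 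The IFT then yields a unique $C^1$ map $u \mapsto (\tilde\lambda_1(u), (\tilde x_1)_1(u))$ solving $\rho^1 = \rho^3 = 0$ on a neighborhood of $Q$ inside the cylindrically symmetric subspace.

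Finally, the triple $(\tilde\lambda_1, (\tilde x_1)_1, 0)$ satisfies all three orthogonality conditions \eqref{ep-perp}, so by the uniqueness statement of Proposition \ref{ModThI} it must coincide with the triple $(\lambda_1, (x_1)_1, (x_1)_2)$ constructed there. In particular $(x_1)_2 \equiv 0$. I expect no substantive obstacle: the only care needed is to shrink $\overline\alpha$ so that the extension step via the translation $r$ from the proof of Proposition \ref{ModThI} preserves cylindrical symmetry, but since the infimum defining $r$ is minimized over $\R^2$ and the functional is invariant under $r_2 \mapsto -r_2$ when $u$ is cylindrically symmetric, the same uniqueness/symmetry argument forces $r_2 = 0$.
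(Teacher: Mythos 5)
Your proposal is correct and follows essentially the same route as the paper: set the $x_2$-shift to zero, note that cylindrical symmetry makes $\int \ep\, Q_{y_2} = 0$ automatic (since $Q_{y_2} = \partial_r Q \cdot y_2/r$ is odd in $y_2$), apply the implicit function theorem to the reduced $2\times 2$ system with the same Jacobian, and conclude by the uniqueness in Proposition \ref{ModThI}. The only addition is your explicit remark on the translation $r$ preserving symmetry, which is a harmless refinement.
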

\begin{proof}
We first define $\ep_{\lambda_1,{x}_1}$ by
\begin{equation}\label{ep-def3}
\ep_{\lambda_1,{x}_1}(y_1,y_2) = \lambda_1 \, u( \lambda_1y_1 +x_1, \lambda_1y_2) - Q(y_1,y_2),
\end{equation}
and then the functionals
$$
\rho^1_{\lambda_1, {x}_1}(u)=\int \ep_{\lambda_1, {x}_1} Q_{y_j}, \quad j=1,2, \quad \textrm{and}\quad
\rho^3_{\lambda_1, {x}_1}(u)=\int \ep_{\lambda_1, {x}_1} \chi_0,
$$
and the function $S:\R^2 \times H^1 \rightarrow \R^2$ such that
$$
S(\lambda_1, x_1, u) = (\rho^1_{\lambda_1, \vec{x}_1}(u), \rho^3_{\lambda_1, \vec{x}_1}(u)).
$$
Arguing as in the proof of Proposition \ref{ModThI}, we have
\begin{align*}
\dfrac{\partial \rho^1_{\lambda_1, {x}_1}(u)}{\partial \lambda_1}\Big|_{\lambda_1=1,{x}_1=0, u=Q}&=\int \Lambda Q Q_{y_j}; \\
\dfrac{\partial \rho^3_{\lambda_1, {x}_1}(u)}{\partial \lambda_1}\Big|_{\lambda_1=1,{x}_1=0, u=Q}&=\int \Lambda Q\chi_0=-\frac{1}{\lambda_0}\int \Lambda QL(\chi_0)=\frac{2}{\lambda_0}\int Q \chi_0>0;\\
\dfrac{\partial \rho^1_{\lambda_1, {x}_1}(u)}{\partial x_1}\Big|_{\lambda_1=1,{x}_1=0, u=Q}&=\int Q_{y_1}Q_{y_1}= \int Q_{y_1}^2>0; \\
\dfrac{\partial \rho^3_{\lambda_1, {x}_1}(u)}{\partial x_1}\Big|_{\lambda_1=1,\vec{x}_1=0, u=Q}&=\int Q_{y_1}\chi_0 = 0.\\
\end{align*}

Since $S(1,0,Q)=(0,0)$, we again apply the Implicit Function Theorem to obtain the existence of $\alpha_1>0$, a neighborhood $V$ of $(1,0)$ in $\mathbb{R}^2$ and a unique $C^1$ map
$$
(\lambda_1, {x}_1):\left\{u\in H^1(\mathbb{R}^2): \|u-Q\|_{H1}< \alpha_1 \right\}\rightarrow V
$$
such that  $\ep_{\lambda_1, {x}_1} \perp \chi_0$ and $\ep_{\lambda_1, \vec{x}_1} \perp Q_{y_1}$.

Now, using the expression for $\ep_{\lambda_1, {x}_1}$ in \eqref{ep-def3}, we also deduce
$$
\int \ep_{\lambda_1, {x}_1} Q_{y_2}=\int \lambda_1 \, u( \lambda_1y_1 +x_1, \lambda_1y_2)\, Q_{y_2}(y_1,y_2) \, dy_1dy_2=0,
$$
if $u(x_1,x_2)=u(x_1,|x_2|)$, since $Q_{y_2}=\partial_r Q\cdot \frac{y_2}{r}$.

Finally, the uniqueness, which follows from the Implicit Function Theorem, yields (taking a smaller $\alpha_1$ if necessary)
$\vec{x}_1=(x_1,0)$ in Proposition \ref{ModThI}, hence, completing the proof.
\end{proof}

Now, assume that $u(t)\in U_{\overline{\alpha}}$ for all $t\geq 0$. We define the functions $\lambda(t)$ and $x(t)$ as follows.
\begin{definition}\label{eps}
For all $t\geq 0$, let $\lambda(t)$ and $x(t)$ be such that $\ep_{\lambda(t), x(t)}$, defined according to the equation \eqref{ep-def3}, satisfy
\begin{equation}\label{ep-perp2}
\ep_{\lambda(t), x(t)} \perp \chi_0 \quad \textrm{and} \quad \ep_{\lambda(t), x(t)} \perp Q_{y_j}, \quad j=1,2.
\end{equation}
In this case we also define
\begin{equation}\label{eq-ep2}
\ep(t)=\ep_{\lambda(t), x(t)}=\lambda(t) \, u(t, \lambda(t)y_1 +x(t), \lambda(t)y_2) - Q(y_1,y_2).
\end{equation}
\end{definition}

We rescale time $t \mapsto s$ by $\frac{ds}{dt} = \frac1{\lam^3}$ to better understand these parameters, which are now $\lambda(s)$ and $x(s)$. Indeed, the next proposition provides us with the equations and estimates for $\dfrac{\lambda_s}{\lambda}$ and $\left(\dfrac{x_s}{\lambda}-1\right)$.
\begin{lemma}[Modulation parameters]\label{Lemma-param}
There exists $0<\alpha_1<\overline{\alpha}$ such that if for all $t\geq 0$, $u(t)\in U_{\alpha_1}$, then $\lambda$ and ${x}$ are $C^1$ functions of $s$ and they satisfy the following equations:
\begin{eqnarray}\label{eq1}
-\frac{\lambda_s}{\lambda}\int (\vec{y}\cdot \nabla Q_{y_1})\ep + \left(\frac{x_s}{\lambda}-1\right)\left(\int |Q_{y_1}|^2-\int Q_{y_1y_1}\ep\right)\nonumber \\
=6\int QQ^2_{y_1}\ep-3\int Q_{y_1y_1}\ep^2Q-\int Q_{y_1y_1}\ep^3,
\end{eqnarray}
and
\begin{eqnarray}\label{eq3}
\frac{\lambda_s}{\lambda}\left(\frac{2}{\lambda_0} \int \chi_0 Q-\int (\vec{y}\cdot \nabla \chi_0)\ep \right)- \left(\frac{x_s}{\lambda}-1\right)\int (\chi_0)_{y_1}\ep \nonumber \\
=\int L((\chi_0)_{y_1})\ep-3\int (\chi_0)_{y_1}Q\ep^2-\int (\chi_0)_{y_1}\ep^3.
\end{eqnarray}
Moreover, there exists a universal constant $C_2>0$ such that if $\|\ep(s)\|_2\leq \alpha$, for all $s\geq 0$, where $\alpha<\alpha_1$, then
\begin{equation}\label{ControlParam}
\left|\frac{\lambda_s}{\lambda}\right|+\left|\frac{x_s}{\lambda}-1\right|\leq C_2\|\ep(s)\|_2.
\end{equation}
\end{lemma}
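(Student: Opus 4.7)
The plan is to derive the equations \eqref{eq1} and \eqref{eq3} by taking the $L^2$ scalar product of the evolution equation \eqref{ep1} for $\ep_s$ from Lemma \ref{eq-ep} (with the $(x_2)_s$-term absent by Proposition \ref{ModThII}) against the fixed test functions $Q_{y_1}$ and $\chi_0$, and then to invert the resulting $2\times 2$ linear system for the modulation parameters. For the $C^1$ regularity of $\lam(s)$ and $x(s)$ in $s$, one combines the $C^1$ output of the Implicit Function Theorem from Propositions \ref{ModThI}--\ref{ModThII} with the continuity in $t$ of the $H^1$-valued flow $u(t)$; rigorously, the scalar identities below are first obtained for smooth approximating solutions and then passed to the limit by a density argument.

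Differentiating the orthogonality relations \eqref{ep-perp2} in $s$ gives $(\ep_s, Q_{y_1}) = 0$ and $(\ep_s, \chi_0) = 0$. Substituting \eqref{ep1} and integrating by parts, the structural ingredients I would use are: \emph{(i)} the commutator $[L, \partial_{y_1}] = 6 Q Q_{y_1}$, which combined with $L Q_{y_1} = 0$ yields $L Q_{y_1 y_1} = 6 Q Q_{y_1}^2$ and hence $((L\ep)_{y_1}, Q_{y_1}) = -6\int Q Q_{y_1}^2 \,\ep$; \emph{(ii)} the parity identities $(\Lambda Q, Q_{y_1}) = 0$ and $(Q_{y_1}, \chi_0) = 0$, which hold because $Q$ and $\chi_0$ are radial, making $Q_{y_1}$ odd in $y_1$; \emph{(iii)} the identity $(\Lambda Q, \chi_0) = \frac{2}{\lam_0}\int Q \chi_0$, obtained from $L \chi_0 = -\lam_0 \chi_0$, the relation $L(\Lambda Q) = -2Q$ of Lemma \ref{L-prop2}, and self-adjointness of $L$; \emph{(iv)} the identities $(\Lambda \ep, Q_{y_1}) = -\int (\vec{y}\cdot\na Q_{y_1})\,\ep$ and $(\Lambda \ep, \chi_0) = -\int (\vec{y}\cdot\na \chi_0)\,\ep$, which follow from $\na \cdot (\vec{y} f) = 2f + \vec{y}\cdot\na f$ together with the orthogonalities \eqref{ep-perp2}. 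Collecting these with the remaining pieces $(\ep_{y_1}, Q_{y_1}) = -\int Q_{y_1 y_1}\,\ep$, $(\ep_{y_1}, \chi_0) = -\int (\chi_0)_{y_1}\,\ep$, and the two nonlinear contributions handled by a single integration by parts, reproduces precisely \eqref{eq1} and \eqref{eq3}.

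For the bound \eqref{ControlParam}, recast the system \eqref{eq1}--\eqref{eq3} as
\begin{equation*}
\bigl(A_0 + E(\ep)\bigr)\begin{pmatrix}\lam_s/\lam \\ x_s/\lam - 1\end{pmatrix} = \begin{pmatrix}F_1(\ep) \\ F_2(\ep)\end{pmatrix}, \qquad A_0 = \begin{pmatrix} 0 & \|Q_{y_1}\|_{L^2}^2 \\[2pt] \tfrac{2}{\lam_0}\int Q\chi_0 & 0 \end{pmatrix},
\end{equation*}
where $\det A_0 = -\|Q_{y_1}\|_{L^2}^2 \cdot \tfrac{2}{\lam_0}\int Q\chi_0 \ne 0$ (both factors are strictly positive, since $Q$ and $\chi_0$ are positive), while every entry of $E(\ep)$ is $O(\|\ep\|_{L^2})$ by Cauchy-Schwarz and the exponential decay of $Q, \chi_0$ and their derivatives. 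The right-hand side $F_i$ consists of a linear-in-$\ep$ piece controlled by Cauchy-Schwarz, a quadratic piece bounded by $\|Q Q_{y_1 y_1}\|_{L^\infty}\|\ep\|_{L^2}^2 \le C\alpha\|\ep\|_{L^2}$, and a cubic piece $\int Q_{y_1 y_1}\,\ep^3$ estimated via Hölder and the 2D Gagliardo-Nirenberg inequality $\|\ep\|_{L^3}^3 \lesssim \|\ep\|_{L^2}^2\|\na\ep\|_{L^2}$ combined with the $H^1$-smallness $\|\ep\|_{H^1} \le C_1\alpha$ from \eqref{ep-H1}. Choosing $\alpha_1 > 0$ small enough, the matrix $A_0 + E(\ep)$ remains invertible with uniformly bounded inverse, and inverting yields \eqref{ControlParam}.

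The only mildly delicate point is controlling the cubic self-interaction $\int Q_{y_1 y_1}\,\ep^3$ by $\|\ep\|_{L^2}$ rather than a higher Sobolev norm; this is what forces the combined use of Gagliardo-Nirenberg and the $H^1$-smallness coming from the modulation construction. Once this estimate is in hand, the remainder of the proof is routine integration by parts together with elementary $2\times 2$ linear algebra.
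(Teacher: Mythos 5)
Your proposal is correct and follows essentially the same route as the paper: the paper derives \eqref{eq1} and \eqref{eq3} by computing $\frac{d}{ds}\int \chi\, v(s)$ directly from the equation for $u$ and then specializing $\chi = Q_{y_1}$ and $\chi=\chi_0$, which is the same computation as pairing \eqref{ep1} with these test functions via $\frac{d}{ds}(\ep,\chi)=0$, and it likewise concludes by lower-bounding the determinant of the resulting $2\times2$ system. Your identities (i)--(iv) and the Gagliardo--Nirenberg treatment of the cubic term match the paper's argument (the paper is in fact terser on the cubic term than you are).
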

\begin{proof}
Let $\chi$ be a smooth function with an exponential decay. We want to calculate $\ds \frac{d}{ds}\int\chi\ep(s)$. Indeed, we have
$$
\frac{d}{ds}\int\chi u(s)=\lambda^3 \frac{d}{dt}\int \chi u(t)= - \lambda^3\int \chi (\partial_x\Delta u + \partial_x(u^3))=\lambda^3\left[\int \partial_x\Delta\chi u + \int \chi_x u^3\right].
$$
Therefore, recalling the definition of $v$ in \eqref{v-eq}, we get
\begin{align*}
\frac{d}{ds}\int\chi v(s)=&\frac{d}{ds}\int\chi(\vec{y})\lambda u(s, \lambda \vec{y}+\vec{x}(s))d\vec{y}\\
=&\frac{d}{ds}\left(\lambda^{-1}\int\chi(\lambda^{-1}(\vec{x}-\vec{x}(s))) u(s, \vec{x})d\vec{x}\right)\\
=&-\lambda^{-2}\lambda_s\int \chi (\lambda^{-1}(\vec{x}-\vec{x}(s)))u(s, \vec{x})d\vec{x}\\
&+\lambda^{-1}\int\left(\frac{d}{ds}\chi (\lambda^{-1}(\vec{x}-\vec{x}(s)))\right)u(s, \vec{x})d\vec{x}\\
&+\lambda^{-1}\lambda^3\int \chi (\lambda^{-1}(\vec{x}-\vec{x}(s)))(\partial_x\Delta u + \partial_x(u^3))d\vec{x}\\
\equiv & (A) + (B) + (C),
\end{align*}
where
\begin{align*}
(A)=&-\frac{\lambda_s}{\lambda}\int \chi v d\vec{y},\\
(B)=&\lambda^{-1}\int \nabla \chi \cdot \frac{d}{ds}(\lambda^{-1}(\vec{x}-\vec{x}(s)))u(s, \vec{x})d\vec{x}\\
=&-\frac{\lambda_s}{\lambda} \int (\nabla \chi \cdot \vec{y})vd\vec{y}-\int \left(\nabla \chi \cdot \frac{\vec{x}_s}{\lambda}\right) vd\vec{y},\\
(C)=&\lambda^{-1}\int (\partial_{x_1}\Delta\chi) (\lambda^{-1}(\vec{x}-\vec{x}(s)))ud\vec{x} + \lambda\int \chi_{x_1} (\lambda^{-1}(\vec{x}-\vec{x}(s)))u d\vec{x}\\
= & \int (\partial_{y_1}\Delta\chi) vd\vec{y}+ \int \chi_{y_1} v^3 d\vec{y}.
\end{align*}
Next, using $v=Q+\ep$ and the definition of $\Lambda$ in \eqref{Eq:Lambda}, we obtain
\begin{align*}
\frac{d}{ds}\int\chi v(s)=&-\frac{\lambda_s}{\lambda}\int (\Lambda \chi) (Q+\ep)- \left(\frac{(x_1)_s}{\lambda}-1\right)\int \chi_{y_1} (Q+\ep)- \frac{(x_2)_s}{\lambda}\int \chi_{y_2} (Q+\ep)\\
&-\int \chi_{y_1} (Q+\ep) + \int (\partial_{y_1}\Delta\chi) (Q+\ep)+ \int \chi_{y_1} (Q+\ep)^3.
\end{align*}
Recalling $L\chi_{y_1}=-\partial_{y_1} \Delta \chi + \chi_{y_1}-3Q^2\chi_{y_1}$ and $-\Delta Q + Q -Q^3=0$, we deduce
\begin{align*}
\frac{d}{ds}\int\chi v(s)=&-\frac{\lambda_s}{\lambda}\left(\int (\Lambda \chi)Q+\int (\Lambda \chi)\ep\right)\\
&- \left(\frac{(x_1)_s}{\lambda}-1\right)\left( \int \chi_{y_1}Q+ \int \chi_{y_1}\ep\right)\\
&- \frac{(x_2)_s}{\lambda}\left( \int \chi_{y_2}Q+ \int \chi_{y_2}\ep\right)\\
&-\int (L\chi_{y_1})\ep + 3\int \chi_{y_1}Q\ep^2+ \int \chi_{y_1} \ep^3.
\end{align*}
Recall that we can assume $x_2\equiv 0$ in view of Proposition \ref{ModThII}. Now, setting $x_1=x$ and taking $\chi=Q_{y_1}$ and using that $\int (\Lambda Q_{y_1})Q=0$, $\int Q_{y_1y_2}Q=\int Q_{y_1}Q_{y_2}=0 $, 
$L(Q_{y_1y_1})=6QQ^2_{y_1}$ and $\int Q_{y_1}\ep=0$, we obtain \eqref{eq1}.

Finally, fix $\chi=\chi_0$ and observe that
\begin{align*}
\int (\Lambda \chi_0) Q&=\int \chi_0 Q+\int y_1 (\chi_0)_{y_1}Q+\int y_2 (\chi_0)_{y_2}Q\\
&=-\int \chi_0 (\Lambda Q)=\frac{1}{\lambda_0}\int (L\chi_0) (\Lambda Q)\\
&=-\frac{2}{\lambda_0}\int  \chi_0 Q\neq 0.
\end{align*}
Since $\int \chi_0\ep=0$ and $\int(\chi_0)_{y_1}Q=-\int \chi_0 Q_{y_1}=0$, we have \eqref{eq3}.

Observe that there exists $\alpha_1>0$ such that
\begin{eqnarray*}
\left(\frac{2}{\lambda_0} \int \chi_0 Q-\int (\vec{y}\cdot \nabla \chi_0)\ep \right)\left(\int |Q_{y_1}|^2-\int Q_{y_1y_1}\ep\right)\\
- \left(\int (\vec{y}\cdot \nabla Q_{y_1})\ep\right)\left(\int (\chi_0)_{y_1}\ep\right)\\ \geq \frac{1}{\lambda_0}\left(\int \chi_0 Q\right)\left(\int |Q_{y_1}|^2\right),
\end{eqnarray*}
if $\|\ep(s)\|\leq \alpha <\alpha_1$, for all $s\geq 0$. Also, without loss of generality, we can assume $\alpha_1<1$.

Hence, we can solve the system of equations given by \eqref{eq1}-\eqref{eq3} and obtain a universal constant (depending only on powers of $Q$ and its partial derivatives) $C_2>0$ such that \eqref{ControlParam} holds.
In particular, if $\alpha< \frac{1}{C_2}$, we have
\begin{equation}\label{ControlParam2}
\left|\frac{\lambda_s}{\lambda}\right|+\left|\frac{x_s}{\lambda}-1\right|\leq 1.
\end{equation}
\end{proof}

\section{Virial-type estimates}\label{S-6}
Our next step is to produce a virial-type functional which will help us to study the stability properties of the solutions close to $Q$.
We first define a quantity depending on the $\ep$ variable, which incorporates the scaling generator $\Lambda$.
This can be compared with the functional we created for the supercritical case, see \cite[Section 5]{FHR2}, where the eigenfunction $\chi_0$ of $L$ for the negative eigenvalue was also used (with the coefficient $\beta$), and it was possible to find $\beta \neq 0$. However, such a functional does not work in the critical case, since $\beta$ becomes zero (due to $\int Q \Lambda Q = 0$).

We first start with defining a truncation function: let $\varphi \in C_0^{\infty}(\R)$ be a function with
\begin{equation*}\label{varphi}
\varphi(y_1)=\begin{cases}
{1, \quad \textrm{if}  \quad  y_1\leq 1 }\\
{0, \quad \textrm{if}  \quad  y_1\geq 2}.
\end{cases}
\end{equation*}
For $A\geq 1$ we also define
$$
\varphi_A(y_1)=\varphi\left(\frac{y_1}{A}\right).
$$
Note that
\begin{equation} \label{varphi1}
\varphi_A(y_1) = \left\{
\begin{array}{lll}
  1, & \quad \textrm{if} &\quad y_1\leq  A \\
  0, & \quad \textrm{if} & \quad y_1\geq  2A.
\end{array}
\right.
\end{equation}
Moreover
$$
\varphi_A^{'}(y_1)=\frac{1}{A}\varphi^{'}\left(\frac{y_1}{A}\right).
$$
We next define the function (note that we are integrating only in the first variable)
\begin{equation}\label{def-F}
F(y_1,y_2)=\int_{-\infty}^{y_1}\Lambda Q (z, y_2) \, dz.
\end{equation}
From the properties of $Q$, see \eqref{prop-Q}, there exists a constant $c>0$ such that
$$
|F(y_1,y_2)|\leq c \, e^{-\frac{1}{2}|y_2|}\int_{-\infty}^{y_1}e^{-\frac{1}{2}|z|}dz,
$$
which implies boundedness in $y_1$ and exponential decay in $y_2$:
\begin{equation}\label{F1}
\sup_{y_1 \in \R} |F(y_1,y_2)|\leq c \, e^{-\frac{1}{2}|y_2|} \quad \textrm{for all} \quad y_2 \in \R
\end{equation}
as well as
\begin{equation}\label{F2}
|F(y_1,y_2)|\leq c\, e^{-\frac{1}{2}|y_2|}e^{\frac{1}{2}y_1} \quad \textrm{for all} \quad y_1<0.
\end{equation}
Hence, $F$ is a bounded function on $\R^2$, i.e., $F\in L^{\infty}(\R^2)$. We also note that $y_2 \, F_{y_2}\in L^{\infty}(\R^2)$.

We next define the virial-type functional
\begin{equation}\label{def-JA}
J_A(s)=\int_{\R^2}\ep(s,y_1,y_2)F(y_1,y_2)\varphi_A(y_1)\, dy_1 dy_2.
\end{equation}

It is clear that $J_A(s)$ is well-defined if $\ep(s)\in L^2(\R^2)$. Indeed, since $\|\varphi_A\|_{\infty} = 1$, we can use the relation \eqref{varphi1} and the properties of $F$ to deduce
\begin{align*}
|J_A(s)|\leq &\int_{\R}\int_{y_1<0}\left|\ep(s)F(y_1,y_2)\right| dy_1dy_2+ \int_{\R}\int_{0}^{2A}\left|\ep(s)F(y_1,y_2)\right|dy_1dy_2\\
\leq &c\|\ep(s)\|_2\left(\int_{\R}\int_{y_1<0} e^{- |y_2|}e^{ y_1}dy_1dy_2 \right)^{1/2}\!\!\!\!\! + cA^{1/2}\int_{\R}\sup_{y_1}|F(y_1,y_2)|\left(\int_{0}^{2A}\left|\ep(s)\right|^2dy_1\right)^{1/2}\!\!\!\!\!dy_2\\
\leq & c\left(\int_{\R} e^{- |y_2|}dy_2\right)^{1/2} \left(\int_{y_1<0}e^{ y_1}dy_1 \right)^{1/2}\|\ep(s)\|_2+cA^{1/2}\left(\int_{\R} e^{- |y_2|} dy_2\right)^{1/2}\|\ep(s)\|_2.
\end{align*}
Therefore, we obtain the boundedness of $J_A$ from above
\begin{equation}\label{Bound-JA}
|J_A(s)|\leq c(1+A^{1/2})\|\ep(s)\|_2.
\end{equation}

Next, we compute the derivative of $J_A(s)$.
\begin{lemma}\label{J'_A}
Suppose that $\ep(s)\in H^1(\R^2)$ for all $s\geq 0$. Then the function $s\mapsto J_A(s)$ is $C^1$ and
$$
\frac{d}{ds}J_A=-\frac{\lambda_s}{\lambda}(J_A-\kappa)
+ 2\left(1-\frac12\left(\frac{x_s}{\lambda}-1\right)\right)\int\ep Q +R(\ep,A),
$$
where
\begin{equation}\label{kappa}
\kappa = \frac12 \int y_2^2\left(\int Q_{y_2}(y_1,y_2)dy_1\right)^2dy_2,
\end{equation}
and, there exists a universal constant $C_3>0$ such that, for $A\geq 1$, we have
\begin{align}\label{R}
|R(\ep,A)|\leq & C_3\left(\|\ep\|^2_2+\|\ep\|^2_2\|\ep\|_{H^1}+A^{-1/2}\|\ep\|_2\right.\nonumber\\
&+\left|\frac{x_s}{\lambda}-1\right|(A^{-1}+\|\ep\|_2)\nonumber\\
&+\left.\left|\frac{\lambda_s}{\lambda}\right|\left(A^{-1}+\|\ep\|_2+A^{1/2}\|\ep\|_{L^2(y_1\geq A)} + \left|\int_{\R^2}y_2F_{y_2}\ep\varphi_A\right|\right)\right).
\end{align}
\end{lemma}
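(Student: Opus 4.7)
The plan is to differentiate $J_A$ in $s$ term-by-term and then integrate by parts repeatedly, exploiting the fundamental identity $F_{y_1}=\Lambda Q$, the decay estimates \eqref{F1}--\eqref{F2}, the self-adjointness of $L$, and the key algebraic relations $L(\Lambda Q)=-2Q$ and $\int Q\,\Lambda Q=0$ (Lemma \ref{L-prop2}, $p=3$). Since $F\varphi_A$ is time-independent, $\frac{d}{ds}J_A=\int \ep_s\,F\,\varphi_A\,d\vec y$; substituting the right-hand side of \eqref{ep1} (with $(x_2)_s\equiv 0$ by Proposition \ref{ModThII}) produces seven contributions. I will extract the principal terms $-\frac{\lam_s}{\lam}(J_A-\kappa)$ and $2(1-\tfrac12(\frac{x_s}{\lam}-1))\int\ep Q$ from a specific group of these, and show that all remaining pieces are absorbed into $R(\ep,A)$.

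The principal identifications are as follows. For the linear term $\int(L\ep)_{y_1}F\varphi_A$, I integrate by parts in $y_1$ to get $-\int L\ep\cdot\Lambda Q\,\varphi_A-\int L\ep\cdot F\,\varphi_A'$; by self-adjointness and the product rule one computes $L(\Lambda Q\,\varphi_A)=-2Q\varphi_A-2(\Lambda Q)_{y_1}\varphi_A'-\Lambda Q\,\varphi_A''$, so this collapses to $2\int\ep Q$ plus a tail supported on $y_1\in[A,2A]$ of size $O(A^{-1/2}\|\ep\|_2)$. For the $\frac{\lam_s}{\lam}\Lambda Q$ term, writing $F\Lambda Q=\tfrac12(F^2)_{y_1}$ and integrating by parts converts it into $-\tfrac12\frac{\lam_s}{\lam}\int F^2\,\varphi_A'$; since $F(y_1,y_2)\to y_2 G'(y_2)$ as $y_1\to+\infty$ with $G(y_2)=\int Q(z,y_2)\,dz$ (obtained by integrating $\int zQ_{y_1}dz=-\int Q\,dz$ inside \eqref{def-F}), this limit equals $\kappa$ modulo an $O(e^{-A/2})\leq O(A^{-1})$ correction. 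For $\frac{\lam_s}{\lam}\int F\,\Lambda\ep\,\varphi_A$, integration by parts in $y_1$ and $y_2$ (using $\Lambda=I+y_1\p_{y_1}+y_2\p_{y_2}$) gives exactly $-\frac{\lam_s}{\lam}J_A$ plus three controllable residues $-\frac{\lam_s}{\lam}(\int\ep\,y_1\Lambda Q\,\varphi_A+\int\ep\,Fy_1\varphi_A'+\int\ep\,y_2F_{y_2}\varphi_A)$. Finally, for $(\frac{x_s}{\lam}-1)\int\ep_{y_1}F\varphi_A$ I integrate by parts to get $-(\frac{x_s}{\lam}-1)\int\ep\,\Lambda Q\,\varphi_A+O(A^{-1/2}\|\ep\|_2)$, and split $\Lambda Q=Q+\vec y\cdot\nabla Q$: the first part yields the main term $-(\frac{x_s}{\lam}-1)\int\ep Q$ while $-(\frac{x_s}{\lam}-1)\int\ep\,\vec y\cdot\nabla Q$ feeds the error via the boundedness of $\vec y\cdot\nabla Q$.

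It remains to catalog the errors. The $(\frac{x_s}{\lam}-1)\int Q_{y_1}F\varphi_A$ term integrates by parts to $-(\frac{x_s}{\lam}-1)\int Q\,\Lambda Q\,\varphi_A$, which is $O(e^{-A})=O(A^{-1})$ by Lemma \ref{L-prop2}(2). The cutoff errors supported on $y_1\in[A,2A]$ are controlled using $|\varphi_A^{(k)}|\lesssim A^{-k}$ together with the exponential decay in $y_2$ of $F$, its $y_2$-derivatives, and any factor of $\Lambda Q$ or $Q^2$; the worst such term comes from $-\int L\ep\,F\varphi_A'$ (after writing $L=-\Delta+1-3Q^2$ and integrating derivatives onto $F\varphi_A'$), giving $O(A^{-1/2}\|\ep\|_2)$. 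The nonlinear terms (vi)--(vii) are bounded, after integration by parts, by $\|Q\,\Lambda Q\|_\infty\int\ep^2\leq c\|\ep\|_2^2$ and by $\|\Lambda Q\|_\infty\int|\ep|^3\leq c\|\ep\|_2^2\|\ep\|_{H^1}$ via Gagliardo--Nirenberg $\|\ep\|_{L^4}^2\leq c\|\ep\|_{L^2}\|\nabla\ep\|_{L^2}$ in 2D. Assembling every contribution yields precisely the claimed decomposition, with $R(\ep,A)$ bounded by \eqref{R}. The main technical obstacle is the clean identification of $\kappa$: one must quantify $F(y_1,y_2)-y_2G'(y_2)=O(e^{-y_1/2}e^{-|y_2|/2})$ for $y_1\geq A\geq 1$ and show this suffices to pass the $y_1\to\infty$ limit into the cutoff integral with exponentially small error; the $C^1$-regularity in $s$ follows at the end from the $H^1$-continuity of $\ep$ and the $C^1$-dependence of $\lam,x$ established in Lemma \ref{Lemma-param}.
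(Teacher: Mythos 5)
Your proposal is correct and follows essentially the same route as the paper: substitute the $\ep$-equation, integrate by parts using $F_{y_1}=\Lambda Q$, extract $2\int\ep Q$ from the linear term via self-adjointness and $L(\Lambda Q)=-2Q$, obtain $-\frac{\lam_s}{\lam}J_A$ and $\kappa$ from the $\Lambda\ep$ and $\Lambda Q$ contributions, and catalog the same residues (including the irreducible $\int y_2F_{y_2}\ep\varphi_A$ and $A^{1/2}\|\ep\|_{L^2(y_1\geq A)}$ terms). The only cosmetic difference is that you identify $\kappa$ through $-\tfrac12\int F^2\varphi_A'$ rather than the paper's split $\tfrac12\int(F^2)_{y_1}+\tfrac12\int(F^2)_{y_1}(\varphi_A-1)$, which is the same integration by parts organized differently.
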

\begin{remark}
Note that by the decay properties of $Q$, see \eqref{prop-Q}, the value of $\kappa$, defined in \eqref{kappa}, is a finite number.
\end{remark}
\begin{proof}
First, arguing as in the proof of Lemma \ref{eq-ep}, we have that the function $\ep(s)$, defined in \eqref{eq-ep2}, satisfy the following equation
\begin{equation}\label{Eq-ep}
\ep_s = (L \ep)_{y_1}  + \frac{\lam_s}{\lam} (\Lambda Q +\Lambda \ep) + \left( \frac{x_s}{\lam} -1 \right) (Q_{y_1} + \ep_{y_1}) - 3 (Q \ep^2)_{y_1} - (\ep^3)_{y_1}.
\end{equation}
Then, setting $R(\ep)= 3 Q \ep^2 + \ep^3$, we have
\begin{align*}
\frac{d}{ds}J_A= & \int \ep_sF\varphi_A \nonumber\\
=&\int \left((L\ep)_{y_1}+\frac{\lam_s}{\lam} \Lambda \ep +\left(\frac{x_s}{\lam} -1 \right) \ep_{y_1}\right)F\varphi_A\\
& +\int  \left(\frac{\lam_s}{\lam} \Lambda Q +\left(\frac{x_s}{\lam} -1 \right) Q_{y_1}\right)F\varphi_A\\
&-\int  R(\ep)_{y_1}F\varphi_A\\
&\equiv (I)+(II)+(III).
\end{align*}
Now, since $\|\varphi_A\|_{\infty}\leq 1$ and $\varphi_{y_1}\in L^{\infty}$, we have
\begin{align}\label{III}
(III)= & \int R(\ep) \Lambda Q \varphi_A+\int R(\ep) F \frac{1}{A}\varphi^{'}\left(\frac{y_1}{A}\right) \nonumber\\
\leq &\|\Lambda Q\|_{\infty}\int |R(\ep)|+\frac{\|F\|_{\infty}\|\varphi^{'}\|_{\infty}}{A}\int |R(\ep)|\nonumber\\
\leq & c_0\left(\|\Lambda Q\|_{\infty}+ \frac{\|F\|_{\infty}\|\varphi^{'}\|_{\infty}}{A}\right)\left( \|\ep\|^2_2+\|\ep\|^2_2\|\ep\|_{H^1}\right),
\end{align}
by Gagliardo-Nirenberg inequality with
$$
c_0=\|3 Q\|_{\infty}+C_{GN},
$$
here, $C_{GN}$ is the best constant for the cubic Gagliardo-Nirenberg inequality.

Furthermore,
\begin{align*}
(II)= & \frac{\lam_s}{\lam} \int \Lambda QF\varphi_A +\left(\frac{x_s}{\lam} -1 \right) \int Q_{y_1}F\varphi_A\\
\equiv &\frac{\lam_s}{\lam} (II.1)+\left(\frac{x_s}{\lam} -1 \right)(II.2),
\end{align*}
where, since $F_{y_1}=\Lambda Q$
\begin{align*}
(II.1)= & \frac12\int (F^2)_{y_1}\varphi_A\\
= &\frac12\int (F^2)_{y_1}+\frac12\int (F^2)_{y_1}(\varphi_A-1)\\
\equiv & \frac12\int \left(\int\Lambda Q (y_1, y_2)dy_1\right)^2dy_2+R_1(A),
\end{align*}
where in the last line we used \eqref{def-F}.

Now, integration by parts yields
\begin{align*}
\int \Lambda Q dy_1=&\int Q dy_1+\int y_1 Q_{y_1} dy_1+\int y_2 Q_{y_2} dy_1\\
=&\int y_2 Q_{y_2} dy_1,
\end{align*}
and thus,
$$
\int \left(\int\Lambda Q (y_1, y_2)dy_1\right)^2dy_2=\int y_2^2\left(\int Q_{y_2}(y_1,y_2)dy_1\right)^2dy_2<+\infty.
$$
Moreover, the error term can be estimated as follows
\begin{align}\label{R_1}
|R_1(A)|=&\left|\int_{\R^2} \Lambda Q F(\varphi_A-1)\right|\\
\leq & \int_{\R}\sup_{y_1}|F(y_1,y_2)|\left(\int_{\R}|\Lambda Q(\varphi_A-1)|d y_1\right)dy_2\\
\leq & 2\int_{\R}\sup_{y_1}|F(y_1,y_2)|\left(\int_{y_1\geq A}\frac{|\Lambda Q||y_1|}{|y_1|}d y_1\right)dy_2\\
\leq & \frac{2\|F\|_{\infty}\|y_1\Lambda Q\|_1}{A}.
\end{align}
On the other hand, since $\Lambda Q \perp Q$
\begin{align*}
(II.2)= & -\int Q\Lambda Q \varphi_A-\int Q F \frac{1}{A}\varphi^{'}\left(\frac{y_1}{A}\right)\\
= & 
-\int Q\Lambda Q(\varphi_A-1)-\int Q F \frac{1}{A}\varphi^{'}\left(\frac{y_1}{A}\right)\\
\equiv & R_2(A),
\end{align*}
where, using again the definition of $\varphi_A$, we have
\begin{align}\label{R_2}
|R_2(A)|\leq&2 \int_{\R}\int_{y_1\geq A}|Q\Lambda Q |\frac{|y_1|}{|y_1|}+\frac{\|F\|_{\infty}\|\varphi^{'}\|_{\infty}\|Q\|_1}{A}\nonumber \\
\leq & \frac{1}{A}\left(2\|\Lambda Q\|_{2}\|y_1 Q\|_2 +\|F\|_{\infty}\|\varphi^{'}\|_{\infty}\|Q\|_{1}\right).
\end{align}
Next we estimate the term $(I)$. Applying integration by parts, we get
\begin{align*}
(I)= & -\int (L\ep)\Lambda Q \varphi_A-\int (L\ep)F\frac{1}{A}\varphi^{'}\left(\frac{y_1}{A}\right)\\
&+\frac{\lam_s}{\lam}\int \Lambda \ep F \varphi_A \\
&-\left(\frac{x_s}{\lam} -1 \right)\left(\int \ep \Lambda Q \varphi_A+\int \ep F \frac{1}{A}\varphi^{'}\left(\frac{y_1}{A}\right)\right)\\
\equiv &(I.1)+\frac{\lam_s}{\lam} (I.2)-\left(\frac{x_s}{\lam} -1 \right)(I.3).
\end{align*}
Let us first consider the term $(I.3)$. Using the definition \eqref{Eq:Lambda}, we have
\begin{align*}
(I.3)= &\int \ep  Q + \int \ep  Q (\varphi_A-1)+\int \ep  y_1Q_{y_1} \varphi_A+\int \ep  y_2Q_{y_2} \varphi_A+\frac{1}{A}\int \ep F \varphi^{'}\left(\frac{y_1}{A}\right)\\
\equiv &\int \ep  Q+R_3(\ep,A).
\end{align*}
Next, it is easy to see that
$$
\int \ep  Q (\varphi_A-1)\leq 2\int_{\R}\int_{y_1\geq A}|\ep Q |\frac{|y_1|}{|y_1|}\leq \frac{2}{A}\|\ep\|_2\|Q\|_2,
$$
and, for $j=1,2$, we get
$$
\int \ep  y_j Q_{y_j}\varphi_A\leq \|\ep\|_2\|y_j Q_{y_j}\|_2.
$$
Moreover,
\begin{align*}
\int \ep F \varphi^{'}\left(\frac{y_1}{A}\right)\leq & \|\varphi^{'}\|_{\infty}\int_{\R}\left(\sup_{y_1}|F(y_1,y_2)|\int_A^{2A}|\ep|dy_1\right)dy_2\\
\leq & A^{1/2}\|\varphi^{'}\|_{\infty}\left(\int_{\R} \sup_{y_1}|F(y_1,y_2)|^2dy_2\right)^{1/2}\|\ep\|_2.
\end{align*}
Collecting the last three inequalities and using \eqref{F1}, we deduce
 \begin{align}\label{R_3}
|R_3(\ep,A)|\leq c\left(1+\frac{1}{A}+\frac{1}{A^1/2}\right)\|\ep\|_2,
\end{align}
where the constant $c>0$ is independent of $\ep$ and $A$.

Next, we turn to the term $(I.2)$. Integration by parts yields
 \begin{align*}
(I.2)= &\int  \ep F \varphi_A+\int  y_1\ep_{y_1} F \varphi_A+\int  y_2\ep_{y_2} F \varphi_A\\
=& -\int  y_1\ep \Lambda Q \varphi_A-\int y_1\ep F \frac{1}{A}\varphi' \left(\frac{y_1}{A}\right)\\
&-\int \ep F \varphi_A-\int  y_2\ep F_{y_2} \varphi_A\\
\equiv& -J_A + R_4(\ep,A),
\end{align*}
where in the last line we used definition \eqref{def-JA}. Let us first estimate the terms in $R_4(\ep,A)$. Indeed, it is clear that
$$
\int y_1\ep \Lambda Q \varphi_A \leq \|y_1\Lambda Q\|_2\|\ep\|_2.
$$
Furthermore,
\begin{align*}
\int y_1\ep F \frac{1}{A}\varphi'\left(\frac{y_1}{A}\right)\leq & \frac{1}{A}\|\varphi'\|_{\infty}\int_{\R}\left(\sup_{y_1}|F(y_1,y_2)|\int_A^{2A}|y_1\ep|dy_1\right)dy_2\\
\leq & 2\|\varphi'\|_{\infty}\int_{\R}\left(\sup_{y_1}|F(y_1,y_2)|\int_A^{2A}|\ep|dy_1\right)dy_2\\
\leq & 2A^{1/2}\|\varphi^{'}\|_{\infty}\left(\int_{\R} \sup_{y_1}|F(y_1,y_2)|^2dy_2\right)^{1/2}\|\ep\|_{L^2(y_1\geq A)}\\
\leq & cA^{1/2}\|\varphi^{'}\|_{\infty}\|\ep\|_{L^2(y_1\geq A)},
\end{align*}
where in the last line we used the inequality \eqref{F1}.

Collecting the last two estimates, we obtain
\begin{equation}\label{R_4}
|R_4(\ep,A)|\leq c(\|\ep\|_2+A^{1/2}\|\ep\|_{L^2(y_1\geq A)}) + \left|\int  y_2\ep F_{y_2} \varphi_A\right|,
\end{equation}
where $c>0$ is again independent of $\ep$ and $A$.

To estimate $(I.1)$, we recall the definition of the operator $L$ to get
\begin{align*}
L(fg) = & -(\Delta f)g-f(\Delta g)-2f_{y_1}g_{y_1}-2f_{y_2}g_{y_2}+fg-3Q^2fg\\
=&(Lf)g-2f_{y_1}g_{y_1}-2f_{y_2}g_{y_2}-f(\Delta g).
\end{align*}
Hence,
\begin{align*}
L(\Lambda Q \varphi_A) =&(L\Lambda Q)\varphi_A-2(\Lambda Q)_{y_1}\frac{1}{A}\varphi'\left(\frac{y_1}{A}\right)-\Lambda Q \frac{1}{A^2}\varphi^{''}\left(\frac{y_1}{A}\right)\\
\equiv& L(\Lambda Q)\varphi_A+G_A,
\end{align*}
and
\begin{align*}
L\left(F\frac{1}{A}\varphi ' \left(\frac{y_1}{A}\right)\right) =&\frac{1}{A}\left[L\left(\varphi '\left(\frac{y_1}{A}\right)\right)F-2\Lambda Q \frac{1}{A^2}\varphi^{''}\left(\frac{y_1}{A}\right)\right.\\
&\left.-\varphi ' \left(\frac{y_1}{A}\right)((\Lambda Q)_{y_1}+F_{y_2y_2})\right]\\
\equiv & \frac{1}{A} H_A.
\end{align*}
Using the fact that $L$ is a self-adjoint operator and $L(\Lambda Q)=-2Q$, we get
\begin{align*}
(I.1)= &-\int  \ep L(\Lambda Q \varphi_A)-\int \ep L\left(F\frac{1}{A}\varphi'\left(\frac{y_1}{A}\right)\right)\\
=& 2\int \ep Q \varphi_A-\int \ep\left(G_A + \frac{1}{A}H_A\right)\\
=& 2\int \ep Q+ 2\int \ep Q(\varphi_A-1)-\int \ep\left(G_A + \frac{1}{A}H_A\right)\\
\equiv & 2\int \ep Q + R_5(\ep, A).
\end{align*}
Again, we estimate the terms in $R_5(\ep, A)$ separately. First, we observe that
$$
\int\ep Q(\varphi_A-1)\leq \int_{\R}\int_{A}^{+\infty} |\ep Q|\frac{|y_1|}{|y_1|}dy_1dy_2\leq \frac{1}{A}\|y_1Q\|_2\|\ep\|_2.
$$
Moreover,
\begin{align*}
\int \ep G_A \leq & \frac{2}{A}\|\varphi '\|_{\infty}\|(\Lambda Q)_{y_1}\|_2\|\ep\|_2 + \frac{1}{A^2}\|\varphi^{''}\|_{\infty}\|\Lambda Q\|_2\|\ep\|_2.
\end{align*}
Now, note that $\|H_A\|_{\infty}\leq c$ (independent of $A\geq 1$) and
$$
\supp(H_A)\subset \left\{A\leq y_1 \leq 2A\right\}.
$$
Then, it is easy to see that (using that $F_{y_2y_2}$ also satisfies similar estimates as the ones in \eqref{F1} and \eqref{F2})
$$
\frac{1}{A}\int \ep H_A \leq \dfrac{c}{A^{1/2}}\|\ep\|_2.
$$
Finally, for $A\geq 1$, we obtain
\begin{equation}\label{R_5}
|R_5(\ep, A)|\leq \frac{c}{A^{1/2}}\|\ep\|_2,
\end{equation}
where, once again, $c>0$ is independent of $\ep$ and $A$.

Collecting all the above estimates, we finally obtain
\begin{align*}
\frac{d}{ds}J_A= & 2\int \ep Q + R_5(\ep, A)+\frac{\lam_s}{\lam} (-J_A + R_4(\ep,A))-\left(\frac{x_s}{\lam} -1 \right)\left(\int \ep  Q+R_3(\ep,A)\right)\\
& +\frac{\lam_s}{\lam} (\kappa +R_1(A))+\left(\frac{x_s}{\lam} -1 \right)R_2(A)\\
&-(III)\\
=& -\frac{\lam_s}{\lam}\left(J_A-\kappa\right)+2\left(1-\frac{1}{2}\left(\frac{x_s}{\lam}-1\right)\right)\int \ep Q + R(\ep, A),
\end{align*}
where $\kappa$ is given by \eqref{kappa} and
$$
R(\ep, A)= (III)+R_5(\ep,A)+\left(\frac{x_s}{\lam}-1\right)\left(R_2(A)-R_3(\ep, A)\right)+\frac{\lam_s}{\lam}\left(R_1(A)+R_4(\ep, A)\right).
$$
Furthermore, there exists a universal constant $C_3>0$ (independent of $\ep$ and $A$) such that, in view of \eqref{III}, \eqref{R_1}, \eqref{R_2}, \eqref{R_3}, \eqref{R_4} and \eqref{R_5}, for $A\geq 1$ the inequality \eqref{R} holds.
\end{proof}

\subsection{Control of parameters}
Before we proceed with examining further properties of $J_A$, we need to understand how various parameters are interconnected and controlled by the initial time values, especially $\ep(s)$. We proceed with the following two lemmas.
\begin{lemma}[Comparison between $M_0$, $\ep_0$ and $\int \ep_0Q$]\label{lemma-M_0}
There exists a universal constant $C_4>0$ such that, if $\|\ep_0\|_{H^1}\leq 1$, then
$$
\left|M_0-2\int\ep_0Q\right|+\left|E_0+\int\ep_0Q\right|+\left|E_0+\frac12 M_0\right|\leq C_4 \|\ep_0\|^2_{H^1}.
$$
\end{lemma}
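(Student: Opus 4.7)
The plan is to treat each of the three absolute-value terms separately, since all three reduce to elementary consequences of the definition \eqref{M_0} of $M_0$ and the energy linearization already recorded in Lemma \ref{Lemma3}. The hypothesis $\|\ep_0\|_{H^1} \leq 1$ is what lets us absorb cubic (and higher) powers of $\|\ep_0\|_{H^1}$ into a single quadratic bound $C_4 \|\ep_0\|^2_{H^1}$.

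First I would bound $|M_0 - 2\int Q\ep_0|$ directly from \eqref{M_0}: this difference equals $\int \ep_0^2$, which is at most $\|\ep_0\|^2_{L^2} \leq \|\ep_0\|^2_{H^1}$, giving no contribution beyond the trivial constant $1$. Next, for $|E_0 + \int Q\ep_0|$, I would invoke \eqref{energy-ep-est1} to write
\begin{equation*}
E_0 + \int Q\ep_0 = -\tfrac{1}{2}\int \ep_0^2 + \tfrac{1}{2}(L\ep_0,\ep_0) + \mathcal R,
\end{equation*}
where $|\mathcal R| \leq c_0 \|\nabla \ep_0\|_{L^2}\|\ep_0\|^2_{L^2} \leq c_0 \|\ep_0\|^3_{H^1} \leq c_0 \|\ep_0\|^2_{H^1}$ under the smallness assumption. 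Since $L = -\Delta + 1 - 3Q^2$ with $Q \in L^\infty$, the bilinear form $(L\ep_0,\ep_0)$ is bounded by $(1 + 3\|Q\|^2_\infty)\|\ep_0\|^2_{H^1}$, and $\frac12 \int \ep_0^2 \leq \frac12 \|\ep_0\|^2_{H^1}$ trivially. Adding these bounds yields $|E_0 + \int Q\ep_0| \lesssim \|\ep_0\|^2_{H^1}$.

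Finally, for $|E_0 + \frac12 M_0|$, I would simply observe that substituting \eqref{M_0} gives $E_0 + \frac12 M_0 = E_0 + \int Q\ep_0 + \frac12 \int \ep_0^2$, which is precisely the quantity whose distance from $\frac12(L\ep_0,\ep_0)$ is controlled by \eqref{energy-ep-est1}. Therefore it equals $\frac12(L\ep_0,\ep_0)$ up to an $O(\|\ep_0\|^3_{H^1})$ error, and the same bound on $(L\ep_0,\ep_0)$ used above finishes the estimate. There is no genuine obstacle here: the entire lemma is a bookkeeping consequence of Lemma \ref{Lemma3} together with $L^\infty$-boundedness of $Q$, and $C_4$ can be taken as the sum of the three constants produced along the way.
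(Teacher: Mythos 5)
Your proof is correct and follows essentially the same route as the paper: both rest on the identity $M_0-2\int\ep_0 Q=\int\ep_0^2$, the energy linearization \eqref{E_0}/\eqref{energy-ep-est1}, the bound $|(L\ep_0,\ep_0)|\leq(1+3\|Q\|_\infty^2)\|\ep_0\|_{H^1}^2$, and Gagliardo--Nirenberg plus $\|\ep_0\|_{H^1}\leq 1$ to absorb the cubic and quartic terms. The only (immaterial) difference is the order: the paper bounds $|E_0+\tfrac12 M_0|$ first and gets $|E_0+\int\ep_0 Q|$ by the triangle inequality, whereas you do the reverse.
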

\begin{proof}
First, observe that from the definition \eqref{M_0}, we have
$$
M_0-2\int\ep_0Q=\int\ep_0^2,
$$
and thus, $\left|M_0-2\int\ep_0Q\right|=\|\ep_0\|^2_{2}$.
Next, from \eqref{E_0}, we obtain
$$
E_0=E[Q+\ep_0]=\frac12 \left( L \ep_0, \ep_0 \right) - \frac{1}{2}M_0 - \frac14 \left[ 4 \int Q\ep_0^3 +  \int \ep_0^4\right],
$$
which implies, for some universal constant $c>0$, that
$$
\left|E_0+\frac12 M_0\right|\leq c\|\ep_0\|^2_{H^1},
$$
by the definition of $L$, the Gagliardo-Nirenberg inequality \eqref{E-est} and the fact that $\|\ep_0\|_{H^1}\leq 1$.
Finally,
$$
\left|E_0+\int\ep_0Q\right|\leq \left|E_0+\frac12 M_0\right|+\frac{1}{2}\left|M_0-2\int\ep_0Q\right|\leq \left(c+\frac{1}{2}\right) \|\ep_0\|^2_{H^1},
$$
and setting $C_4=c+\frac{1}{2}$, we conclude the proof.
\end{proof}

\begin{lemma}[Control of $\|\ep(s)\|_{H^1}$]\label{H^1-control}
There exists $\alpha_2>0$ such that, if $\|\ep(s)\|_{H^1}<\alpha$, $|\lambda(s)-1|<\alpha$ and $\ep(s)\perp \{Q_{y_1}, Q_{y_2}, \chi_0\}$ for all $s\geq 0$, where $\alpha<\alpha_2$, then there exists a universal constant $C_5>0$ such that
$$
\left(L\ep(s), \ep(s)\right)\leq \|\ep(s)\|^2_{H^1}\leq C_5\left(\alpha \left|\int \ep_0 Q\right|+\|\ep_0\|^2_{H^1}\right).
$$
\end{lemma}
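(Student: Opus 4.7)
The first inequality is immediate: from the definition $L = -\Delta + 1 - 3Q^2$, we have $(L\ep(s),\ep(s)) = \|\ep(s)\|_{H^1}^2 - 3\int Q^2\ep(s)^2 \leq \|\ep(s)\|_{H^1}^2$. The real content is the upper bound on $\|\ep(s)\|_{H^1}^2$, which the plan is to obtain in three steps: $(i)$ convert $H^1$-control to control of $(L\ep(s),\ep(s))$ via coercivity, $(ii)$ express $(L\ep(s),\ep(s))$ in terms of conserved quantities using the energy linearization, and $(iii)$ relate those conserved quantities to the initial data $\ep_0$ through Lemma \ref{lemma-M_0} and the scaling identity.

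For step $(i)$, the orthogonality conditions $\ep(s)\perp\{\chi_0,Q_{y_1},Q_{y_2}\}$ are precisely the hypotheses of Lemma \ref{Lemma-ort3}, so $\sigma_0\|\ep(s)\|_{L^2}^2 \leq (L\ep(s),\ep(s))$. Combined with $\|\ep(s)\|_{H^1}^2 = (L\ep(s),\ep(s)) + 3\int Q^2\ep(s)^2 \leq (L\ep(s),\ep(s)) + 3\|Q\|_\infty^2\|\ep(s)\|_{L^2}^2$, this gives a constant $C_L>0$ such that $\|\ep(s)\|_{H^1}^2 \leq C_L(L\ep(s),\ep(s))$. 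For step $(ii)$, I use the energy linearization \eqref{E-lin} together with mass conservation $2\int Q\ep(s) + \int\ep^2(s) = M_0$ from \eqref{m-ep} to write
\begin{equation*}
(L\ep(s),\ep(s)) = 2E[Q+\ep(s)] + M_0 + 2\int Q\ep^3(s) + \tfrac12\int\ep^4(s).
\end{equation*}
The Gagliardo-Nirenberg estimate \eqref{E-est} bounds the nonlinear tail by $c\,\|\nabla\ep(s)\|_{L^2}\|\ep(s)\|_{L^2}^2 + c\,\|\nabla\ep(s)\|_{L^2}^2\|\ep(s)\|_{L^2}^2 \leq C\alpha\,\|\ep(s)\|_{H^1}^2$, using $\|\ep(s)\|_{L^2}\leq\|\ep(s)\|_{H^1}<\alpha$.

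For step $(iii)$, the energy conservation in Lemma \ref{Lemma3} gives $E[Q+\ep(s)] = \lambda^2(s)E[u_0]$, and the same identity at $s=0$ yields $E[u_0] = E[Q+\ep_0]/\lambda^2(0)$; thus $E[Q+\ep(s)] = (\lambda^2(s)/\lambda^2(0))E[Q+\ep_0]$. Writing $\mu = \lambda^2(s)/\lambda^2(0)$, the hypothesis $|\lambda(s)-1|,|\lambda(0)-1|<\alpha$ forces $|\mu-1|\lesssim\alpha$ once $\alpha<1/2$. Decomposing
\begin{equation*}
2E[Q+\ep(s)] + M_0 = 2(\mu-1)E[Q+\ep_0] + \bigl(2E[Q+\ep_0]+M_0\bigr),
\end{equation*}
I bound the second bracket by $2C_4\|\ep_0\|_{H^1}^2$ directly from Lemma \ref{lemma-M_0}, and for the first bracket I use $|E[Q+\ep_0]| \leq |\!\int\ep_0 Q| + C_4\|\ep_0\|_{H^1}^2$ (again from Lemma \ref{lemma-M_0}) to get
\begin{equation*}
|2E[Q+\ep(s)]+M_0| \leq C\alpha\bigl|\!\textstyle\int\ep_0 Q\bigr| + C\|\ep_0\|_{H^1}^2.
\end{equation*}

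Combining steps $(i)$-$(iii)$ yields $\|\ep(s)\|_{H^1}^2 \leq C_L\bigl(C\alpha|\!\int\ep_0 Q| + C\|\ep_0\|_{H^1}^2 + C\alpha\|\ep(s)\|_{H^1}^2\bigr)$. Choosing $\alpha_2 >0$ small enough so that $C_L C\alpha_2 < 1/2$, the last term is absorbed into the left-hand side, producing the desired constant $C_5$. The only subtle point in this plan is making sure all of the smallness is captured by $\alpha$ (rather than merely by $\|\ep_0\|_{H^1}$), which is exactly why the factor $\mu-1$ must be extracted cleanly before invoking Lemma \ref{lemma-M_0}; the rest is bookkeeping.
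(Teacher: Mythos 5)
Your proof is correct and follows essentially the same route as the paper: coercivity of $L$ under the orthogonality conditions of Lemma \ref{Lemma-ort3}, the energy linearization \eqref{E-lin} combined with mass conservation \eqref{m-ep}, the scaling identity $E[Q+\ep(s)]=\lambda^2(s)E[u_0]$, and Lemma \ref{lemma-M_0} to convert to data at $s=0$, with the Gagliardo--Nirenberg tail absorbed by smallness of $\alpha$. The only (harmless) differences are bookkeeping: you absorb the cubic/quartic terms at the $H^1$ level rather than via the $L^2$ coercivity first, and you track $\lambda(0)$ explicitly through the factor $\mu=\lambda^2(s)/\lambda^2(0)$, which is in fact slightly more careful than the paper's implicit use of $\lambda(0)=1$.
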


\begin{proof}
From \eqref{E_0} we have
\begin{equation}
\left( L \ep(s), \ep(s) \right)= 2E[Q+\ep(s)] + M_0 + \frac14 \left[ 4 \int Q\ep^3(s) +  \int \ep^4(s)\right].
\end{equation}
Therefore, from the Gagliardo-Nirenberg inequality \eqref{E-est}, there exists a universal constant $c>0$ such that if $\|\ep(s)\|_{H^1}\leq 1$
\begin{align}\label{Coerc}
\left( L \ep(s), \ep(s) \right)\leq & 2E[Q+\ep(s)] + M_0 + c \|\ep(s)\|_{H^1}\|\ep(s)\|^2_{2} \nonumber\\
\leq &2E[Q+\ep(s)] + M_0 + \frac{c}{\sigma_0} \|\ep(s)\|_{H^1}\left( L \ep(s), \ep(s) \right),
\end{align}
where in the last line we used the coercivity of the quadratic form $(L\cdot, \cdot)$, provided $\ep(s)\perp \{Q_{y_1}, Q_{y_2}, \chi_0\}$, which was obtained in Lemma \ref{Lemma-ort3}.

Now, there exists $\alpha_2>0$ such that if $\|\ep(s)\|_{H^1}<\alpha$ for all $s\geq 0$, where $\alpha<\alpha_2$, then
$$
\frac{c}{\sigma_0}\|\ep(s)\|_{H^1}\leq \frac{1}{2}.
$$
Therefore, the last term in the RHS of \eqref{Coerc} may be absorbed by the left-hand term, and we get
\begin{align*}
\left( L \ep(s), \ep(s) \right)\leq & 4E[Q+\ep(s)] + 2M_0 \nonumber\\
\leq &4\lambda^2(s)E_0 + 2M_0,
\end{align*}
where in the last line we have used relation \eqref{E-v}.

Next, we use the last estimate to control the $H^1$-norm of $\ep(s)$ as well. Indeed, from the definition of $L$ we have
\begin{align*}
\|\ep(s)\|^2_{H^1}=&\int \ep^2(s)+\int|\nabla \ep(s)|^2=\left( L \ep(s), \ep(s) \right)+ 3\int Q^2\ep^2(s)\\
\leq & \left( L \ep(s), \ep(s) \right)+ \|3Q^2\|_{\infty}\|\ep(s)\|^2_2\\
\leq &\left(1+\frac{\|3Q^2\|_{\infty}}{\sigma_0}\right)\left( L \ep(s), \ep(s) \right)\\
\leq &\left(1+\frac{\|3Q^2\|_{\infty}}{\sigma_0}\right)(4\lambda^2(s)E_0 + 2M_0)\\
\leq &4\left(1+\frac{\|3Q^2\|_{\infty}}{\sigma_0}\right)\left((\lambda(s)-1)(\lambda(s)+1)|E_0| + \left|E_0+\frac12M_0\right|\right).
\end{align*}
Finally, since $|\lambda(s)-1|<\alpha$, choosing $\alpha <1$, we get $|\lambda(s)+1|\leq 3$, and applying Lemma \ref{lemma-M_0}, we deduce
\begin{align*}
(\lambda(s)-1)(\lambda(s)+1)|E_0|+\left|E_0+\frac12M_0\right|\leq &3 \alpha \left(\left|E_0+\int \ep_0 Q\right|+\left|\int \ep_0 Q\right|\right)+C_4 \|\ep_0\|^2_{H^1}\\
\leq & 3 \alpha \left(C_4 \|\ep_0\|^2_{H^1}+\left|\int \ep_0 Q\right|\right)+C_4 \|\ep_0\|^2_{H^1}\\
\leq & 4 C_4 \|\ep_0\|^2_{H^1}+3 \alpha \left|\int \ep_0 Q\right|,
\end{align*}
which implies the existence of a universal constant $C_5>0$ such that
$$
\left( L \ep(s), \ep(s) \right)\leq \|\ep(s)\|^2_{H^1} \leq C_5\left(\alpha \left|\int \ep_0 Q\right|+\|\ep_0\|^2_{H^1}\right).$$
\end{proof}

Our next task is to bound the time derivative of $J_A$ that we obtained in Lemma \ref{J'_A} from below. The main concern is to estimate the remainder $R(\ep,A)$ from Lemma \ref{J'_A}, and in particular, the last line \eqref{R}. Via truncation we can always choose $A$ to be large, so that the terms, which involve negative powers of $A$, would be controlled, however, the third term in \eqref{R} involves a positive power of $A$ and the tail of the $L^2$ norm of $\ep$, and hence, needs a delicate estimate. This can be done via monotonicity property, which we discuss in the next section. This is similar to our analysis of the supercritical gZK, see \cite{FHR2}. However, together with truncation and monotonicity the last term in \eqref{R} is still troublesome, and it is possible to bound it, but more is needed, namely, the smallness of that term. Thus, we need to develop another tool, the pointwise decay estimates, which we do in Sections \ref{S-8}-\ref{S-12}.

\section{Monotonicity}\label{S-7}
For $M\geq 4$, define
$$
\psi(x_1)=\frac{2}{\pi}\arctan{(e^{\frac{x_1}{M}})}.
$$
The following properties hold for $\psi$:
\begin{enumerate}
\item
$\displaystyle \psi(0)=\frac{1}{2}$,\\

\item
$\displaystyle \lim_{x_1\rightarrow -\infty} \psi(x_1)=0$ and $\displaystyle \lim_{x_1\rightarrow +\infty} \psi(x_1)=1$,\\

\item
$1-\psi(x_1)=\psi(-x_1)$,\\

\item
$\displaystyle \psi^{\prime}(x_1)=\left(\pi M \cosh\left(\frac{x_1}{M}\right)\right)^{-1}$,\\

\item
$\displaystyle \left| \psi^{\prime\prime\prime}(x_1)\right|\leq \frac{1}{M^2}\psi^{\prime}(x_1)\leq \frac{1}{16}\psi^{\prime}(x_1)$.\\
\end{enumerate}

Let $(x_1(t), x_2(t))\in C^1(\R, \R^2)$ and for $x_0, t_0>0$ and $t\in [0,t_0]$ define
\begin{equation}\label{E:I}
I_{x_0,t_0}(t)=\int u^2(t, x_1, x_2)\psi(x_1-x_1(t_0)+\frac{1}{2}(t_0-t)-x_0)dx_1dx_2,
\end{equation}
where $u\in C(\R, H^1(\R^2))$ is a solution of the gZK equation \eqref{gZK}, satisfying
\begin{equation}\label{u-Q}
\|u(t, x_1+x_1(t), x_2+x_2(t))-Q(x_1,x_2)\|_{H^1}\leq \alpha, \quad \mbox{for ~~ some} \quad \alpha>0.
\end{equation}
While the functional $I_{x_0,t_0}(t)$, which localizes the mass of the solution respectively to the moving soliton, is a concept similar to the one originated in works of Martel-Merle, and is used to study the decay of the mass of the solution to the right of the soliton, and can be applied to a variety of questions for the gKdV equations (see also our review of the instability of the critical gKdV case via monotonicity \cite{FHR1}), we note that the integration in the definition \eqref{E:I} is two dimensional. Note that the function $\psi$ is defined only in one variable $x_1$, this is similar to \cite{CDMS}. We next study the behavior of $I$ in time and we have the following monotonicity-type result.

\begin{lemma}[Almost Monotonicity]\label{AM}
Let $M\geq 4$ fixed and assume that $x_1(t)$ is an increasing function satisfying $x_1(t_0)-x_1(t)\geq \frac{3}{4}(t_0-t)$ for every $t_0, t\geq 0$ with $t\in [0,t_0]$. Then there exist $\alpha_0>0$ and $\theta=\theta(M)>0$ such that, if $u\in C(\R, H^1(\R^2))$ verify \eqref{u-Q} with $\alpha<\alpha_0$, then for all $x_0>0$, $t_0, t\geq 0$ with $t\in [0,t_0]$, we have
$$
I_{x_0,t_0}(t_0)-I_{x_0,t_0}(t)\leq \theta e^{-\frac{x_0}{M}}.
$$
\end{lemma}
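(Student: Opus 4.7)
My plan is to differentiate $I_{x_0,t_0}(t)$ in time, use the gZK equation to rewrite $\partial_t u$, and integrate by parts so that $dI/dt$ becomes pointwise small; integrating then yields the stated bound. Writing $\Psi(t,x_1) = \psi(x_1 - x_1(t_0) + \tfrac12(t_0-t) - x_0)$, the factor $\partial_t \Psi = -\tfrac12 \Psi'$ contributes $-\tfrac12\int u^2 \Psi'$, and substituting $u_t = -\partial_{x_1}(\Delta u + u^3)$ followed by integration by parts (in $x_1$ for the $u_{x_1 x_1 x_1}$ and $u^3$ pieces, and in $x_2$ for $u_{x_1 x_2 x_2}$, noting that $\Psi$ does not depend on $x_2$) yields the standard one-dimensional-type identity
\begin{equation*}
\frac{d}{dt} I_{x_0,t_0}(t) = -3 \int u_{x_1}^2 \Psi' - \int u_{x_2}^2 \Psi' + \int u^2 \Psi''' + \tfrac{3}{2}\int u^4 \Psi' - \tfrac{1}{2}\int u^2 \Psi'.
\end{equation*}

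Using property $(v)$ of $\psi$, namely $|\Psi'''|\le M^{-2}\Psi' \le \tfrac{1}{16}\Psi'$, the $\Psi'''$ term is absorbed by $-\tfrac12\int u^2\Psi'$, leaving a coefficient at most $-\tfrac{7}{16}$ on $\int u^2\Psi'$, with the two gradient terms still available for absorption. The delicate piece is the nonlinear term $\tfrac{3}{2}\int u^4 \Psi'$. The key geometric fact is that the hypothesis $x_1(t_0) - x_1(t) \ge \tfrac{3}{4}(t_0-t)$ implies that, on the support of $\Psi'$, the quantity $x_1 - x_1(t)$ is at least $x_0 + \tfrac14(t_0-t)$ (up to the soft tail of width $M$). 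Decomposing $u = Q(\cdot - \vec x(t)) + \tilde\ep$ with $\|\tilde\ep\|_{H^1}\le\alpha$ from \eqref{u-Q}, the pure soliton contribution is bounded by changing variables $y=x_1-x_1(t)$, $z=x_2-x_2(t)$ and using the exponential decays $Q(y,z)\lesssim e^{-|y|-|z|}$ and $\Psi'(x_1)\lesssim M^{-1} e^{-|y+a|/M}$ with $a:=x_1(t)-x_1(t_0)+\tfrac12(t_0-t)-x_0\le -x_0-\tfrac14(t_0-t)$; elementary estimates give $\int Q(\cdot-\vec x(t))^4 \Psi'\, \le C e^{-(x_0 + (t_0-t)/4)/M}$.

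The $\tilde\ep$-contribution requires care because in two dimensions $H^1\not\hookrightarrow L^\infty$. I would apply the two-dimensional Gagliardo-Nirenberg inequality $\|f\|_{L^4}^4\le C\|f\|_{L^2}^2\|\nabla f\|_{L^2}^2$ to $f=\tilde\ep\sqrt{\Psi'}$; expanding $\nabla f$ and using $|\Psi''|\le M^{-1}\Psi'$ (so that $|\nabla\Psi'|^2/\Psi'\le M^{-2}\Psi'$) yields
\begin{equation*}
\int \tilde\ep^4 (\Psi')^2 \ \le \ C \Bigl(\int \tilde\ep^2 \Psi'\Bigr) \Bigl( \int |\nabla\tilde\ep|^2 \Psi' + \frac{1}{M^2}\int \tilde\ep^2 \Psi' \Bigr),
\end{equation*}
which, combined with $\|\Psi'\|_\infty \le 1/(\pi M)$, produces coefficients of size $O(\alpha^2)$ in front of $\int|\nabla\tilde\ep|^2\Psi'$ and $\int\tilde\ep^2\Psi'$; for $\alpha$ small enough these are absorbed by the negative quadratic terms. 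Cross terms between $Q(\cdot-\vec x(t))$ and $\tilde\ep$ are handled by Cauchy-Schwarz using the same decay estimates.

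Putting these estimates together, I obtain a pointwise bound $\tfrac{d}{dt} I_{x_0,t_0}(t) \le C(M)\, e^{-x_0/M}\, e^{-(t_0-t)/(4M)}$, and integrating in time from $t$ to $t_0$ produces
\begin{equation*}
I_{x_0,t_0}(t_0) - I_{x_0,t_0}(t) \ \le \ 4M\,C(M)\, e^{-x_0/M},
\end{equation*}
so one sets $\theta = \theta(M) = 4MC(M)$. The main obstacle in this proof is the weighted nonlinear $L^4$ estimate of $\tilde\ep$: in one dimension (as in the gKdV analogue) the $L^\infty$ bound on $\tilde\ep$ is automatic from $H^1$, but in two dimensions this is false and one must instead set up the weighted Gagliardo-Nirenberg above and keep careful bookkeeping so that the resulting terms align with the negative quadratic terms available in $dI/dt$.
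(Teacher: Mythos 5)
Your overall strategy is exactly the paper's: differentiate $I_{x_0,t_0}$, integrate by parts, absorb $\psi'''$ via property $(v)$, split the nonlinear term according to proximity to $Q(\cdot-\vec{x}(t))$, use the geometric inequality $x_1(t_0)-x_1(t)\geq\frac34(t_0-t)$ to make $\psi'$ exponentially small where $Q$ is not, and use a weighted Gagliardo--Nirenberg estimate to absorb the remainder into the negative quadratic terms. (Your identity with $\tfrac32\int u^4\psi'$ is in fact the correct one for the cubic nonlinearity; the paper's display carries $\tfrac53\int u^6\psi'$, which is the $p=5$ gKdV expression.)

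However, the step you yourself flag as the delicate one does not close as written. Applying the Ladyzhenskaya inequality to $f=\tilde\ep\sqrt{\Psi'}$ gives a bound on $\|f\|_{L^4}^4=\int\tilde\ep^4(\Psi')^2$, whereas the term appearing in $\frac{d}{dt}I_{x_0,t_0}$ is $\int\tilde\ep^4\,\Psi'$, which carries only one power of the weight. Since $0<\Psi'\leq(\pi M)^{-1}<1$ and $\Psi'\to 0$ at infinity, one has $\int\tilde\ep^4(\Psi')^2\leq\int\tilde\ep^4\Psi'$ and there is no reverse inequality with a finite constant, so your estimate controls the wrong (smaller) quantity. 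The repair is to distribute the weight by H\"older first,
\begin{equation*}
\int \tilde\ep^4\,\Psi' \;\leq\; \|\tilde\ep\|_{L^4}^2\,\bigl\|\tilde\ep\sqrt{\Psi'}\bigr\|_{L^4}^2 \;\lesssim\; \|\tilde\ep\|_{H^1}^2\int\bigl(|\nabla\tilde\ep|^2+\tilde\ep^2\bigr)\Psi' \;\leq\; C\alpha^2\int\bigl(|\nabla\tilde\ep|^2+\tilde\ep^2\bigr)\Psi',
\end{equation*}
using the unweighted Gagliardo--Nirenberg for the first factor and your weighted one (with $|\Psi''|\leq M^{-1}\Psi'$) only for the second; this is precisely the $L^{4/3}\times L^4$ H\"older split the paper performs in \eqref{RHS1}. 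A second, smaller point: for the cross terms such as $\int Q^2\tilde\ep^2\Psi'$, a plain bound by $\|Q\|_\infty^2\int\tilde\ep^2\Psi'$ produces an $O(1)$ coefficient that cannot be absorbed by $-\tfrac14\int u^2\Psi'$; these terms must also go through the two-region splitting $|\vec{x}-\vec{x}(t)|\gtrless R_0$ (small $Q$ versus exponentially small $\Psi'$) that you set up for the pure $Q^4$ contribution — the paper avoids the issue by grouping all $Q$-containing terms into the single product $Q\,u^{p}$ and splitting once.
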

\begin{proof}
Using the equation and the fact that $ \left| \psi^{\prime\prime\prime}(x)\right|\leq \frac{1}{M^2}\psi^{\prime}(x)\leq \frac{1}{16}\psi^{\prime}(x)$, we deduce
\begin{align}\label{Ix0t0}
\nonumber
\frac{d}{dt} I_{x_0,t_0}(t) = &2 \int uu_t \psi - \frac{1}{2} \int u^2\psi^{\prime}\\
\nonumber
= & -\int \left(3u_{x_1}^2+u_{x_2}^2-\frac{5}{3}u^{6}\right)\psi^{\prime}+\int u^2\psi^{\prime\prime\prime}-\frac{1}{2}\int u^2\psi^{\prime}\\
\leq & -\int \left(3u_{x_1}^2+u_{x_2}^2+\frac{1}{4}u^2\right)\psi^{\prime}+\frac{5}{3}\int u^{6}\psi^{\prime}.
\end{align}
We start with the estimate of the last term in \eqref{Ix0t0}, by using its closeness to $Q$,
\begin{align}\label{RHS}
\int u^{6}\psi^{\prime}=\int Q(\cdot-\vec{x}(t))u^5\psi^{\prime}+\int (u-Q(\cdot-\vec{x}(t)))u^5\psi^{\prime},
\end{align}
where $\vec{x}(t)=(x_1(t), x_2(t))$. To estimate the second term, we use the Sobolev embedding $H^1(\R^2)\hookrightarrow L^{q}(\R^2)$, for all $2\leq q<+\infty$, to obtain
\begin{align}\label{RHS1}
\nonumber
\int (u-Q(\cdot-\vec{x}(t)))u^5\psi^{\prime}\leq& \|(u-Q(\cdot-\vec{x}(t)))u^{3}\|_{4/3}\|u^2\psi^{\prime}\|_{4}\\
\nonumber
\leq & c \, \|u-Q(\cdot-\vec{x}(t))\|_{2}\|u\|^{3}_{12}\|u\sqrt{\psi^{\prime}}\|^2_{8}\\
\leq & c \, \|Q\|^{3}_{H^1}\alpha \int (|\nabla u|^2+|u|^2)\psi^{\prime}.
\end{align}
For the first term on the right hand side of \eqref{RHS}, we divide the integration into two regions $|\vec{x}-\vec{x}(t)|> R_0$ and $|\vec{x}-\vec{x}(t)|\leq R_0$, where $R_0$ is a positive number to be chosen later. Since $|Q(\vec{x})|\leq c \,e^{-|\vec{x}|}$, 
we obtain
\begin{align}\label{RHS2}
\nonumber
\int_{|\vec{x}-\vec{x}(t)| > R_0} Q(\cdot-\vec{x}(t))u^5\psi^{\prime}\leq
& c \, e^{- R_0}\|u^{3}\|_{3}\|u\sqrt{\psi^{\prime}}\|^2_{3}\\
\leq & c \, e^{- R_0}\|Q\|^{3}_{H^1} \int (|\nabla u|^2+|u|^2)\psi^{\prime}.
\end{align}
Next, when $|\vec{x}-\vec{x}(t)|\leq R_0$, we have
\begin{align*}
\nonumber
\left|x_1-x_1(t_0)+\frac{1}{2}(t_0-t)-x_0\right| \geq
& (x_1(t_0)-x_1(t)+x_0)-\frac{1}{2}(t_0-t)-|x_1-x_1(t)|\\
\geq & \frac{1}{4}(t_0-t)+x_0-R_0,
\end{align*}
where in the first inequality we used that $x_1(t)$ is increasing, $t_0\geq t$ and $x_0>0$ to compute the modulus of the first term, and in the second line we used the assumption $x_1(t_0)-x_1(t)\geq \frac{3}{4}(t_0-t)$.

Since $\psi^{\prime}(z)\leq \frac{2}{M\pi} e^{-\frac{|z|}{M}}$, we can use again the Sobolev embedding $H^1(\R^2)\hookrightarrow L^{q}(\R^2)$, for all $2\leq q<+\infty$, to deduce that
\begin{align}\label{RHS3}
\nonumber
\int_{|\vec{x}-\vec{x}(t)|\leq R_0} Q(\cdot-\vec{x}(t))u^{5}\psi^{\prime}\leq& \frac{2}{M\pi}\|Q\|_{\infty}e^{\frac{R_0}{M}}e^{-\frac{\left(\frac{1}{4}(t_0-t)+x_0\right)}{M}}\|u\|^5_{H^1}\\
\leq & \frac{2}{M\pi}\|Q\|_{\infty}\|Q\|^5_{H^1}e^{\frac{R_0}{M}}e^{-\frac{\left(\frac{1}{4}(t_0-t)+x_0\right)}{M}}.
\end{align}

Therefore, choosing $\alpha>0$ such that $c\,\alpha\|Q\|^{3}_{H^1}<\frac{3}{5} \cdot \frac{1}{16}$ 
and $R_0$ such that $c\, e^{-R_0}\|Q\|^{3}_{H^1}<\frac{3}{5} \cdot \frac{1}{16}$, 
collecting \eqref{RHS1}-\eqref{RHS3} together, we have
$$
\frac{5}{3}\int u^{6}\psi^{\prime}\leq \frac{1}{8}\int (|\nabla u|^2+|u|^2)\psi^{\prime}+\frac{10}{3M\pi}\|Q\|_{\infty}\|Q\|^5_{H^1}e^{\frac{R_0}{M}}e^{-\frac{\left(\frac{1}{4}(t_0-t)+x_0\right)}{M}}.
$$
Inserting the previous estimate in \eqref{Ix0t0}, 
we get that there exists a universal constant $c>0$ such that
\begin{align*}
\nonumber
\frac{d}{dt} I_{x_0,t_0}(t) 
\leq &-\int \left(\frac{3}{2}u_{x_1}^2+\frac{1}{2}u_{x_2}^2 +\frac{1}{8}u^2\right)\psi^{\prime}+\frac{c}{M}\|Q\|_{\infty}\|Q\|^5_{H^1}e^{\frac{R_0}{M}} e^{-\frac{x_0}{M}}\cdot e^{-\frac{1}{4M}(t_0-t)}\\
\nonumber
\leq & \frac{c}{M}\|Q\|_{\infty}\|Q\|^5_{H^1} e^{\frac{R_0 - x_0}{M}}\cdot e^{-\frac{1}{4M}(t_0-t)}.
\end{align*}
Finally, integrating in time on $[t,t_0]$, we obtain the desired inequality for
$$
\theta=\theta(M) = 4 \, c \, \|Q\|_{\infty}\|Q\|^5_{H^1}e^{\frac{c}{M}}>0.
$$
\end{proof}
The next lemma will used to control several terms in the virial-type functional from Section \ref{S-6} (see also Combet \cite{Co10} for a similar result for the gKdV equation).
\begin{lemma}\label{AM2}
Let $x_1(t)$ satisfying the assumptions of Lemma \ref{AM}. Also assume that $x_1(t)\geq \frac{1}{2}t$ and $x_2(t)=0$ for all $t\geq 0$. Moreover, let $u\in C(\R, H^1(\R^2))$ be a solution of the gZK equation \eqref{gzk} satisfying \eqref{u-Q} with $\alpha<\alpha_0$ (where $\alpha_0$ is given in Lemma \ref{AM}) and with the initial data $u_0$ verifying 
$\int |u_0({x_1,x_2})|^2 \, dx_2 \leq c \, e^{-\delta|{x_1}|}$ for some $c>0$ and $\delta>0$. Fix $M\geq \max\{4, \frac{2}{\delta}\}$, then there exists $C=C(M,\delta)>0$ such that for all $t\geq 0$ and $x_0>0$
\begin{equation}\label{Eq:AM2}
\int_{\R}\int_{x_1>x_0}u^2(t, x_1+x_1(t), x_2)\, dx_1dx_2 \leq C \, e^{-\frac{x_0}{M}}.
\end{equation}
\end{lemma}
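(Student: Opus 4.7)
My plan is to apply the almost monotonicity Lemma \ref{AM} on the interval $[0,t]$ (by taking the ``$t_0$'' of that lemma to be our current time $t$) and sandwich the desired tail integral between a lower bound for $I_{x_0,t}(t)$ and an upper bound for $I_{x_0,t}(0)$ plus the monotonicity error $\theta e^{-x_0/M}$. The hypotheses of Lemma \ref{AM} are exactly those assumed here (with $x_2(t)\equiv 0$, so the second spatial translation plays no role), and the initial-data decay $\int u_0^2 \, dx_2 \leq c\,e^{-\delta|x_1|}$ is used in only one place, namely to bound $I_{x_0,t}(0)$.

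For the lower bound, the change of variables $x_1 \mapsto x_1 + x_1(t)$ turns the weight in $I_{x_0,t}(t)$ into $\psi(x_1-x_0)$. Since $\psi$ is strictly increasing with $\psi(0)=1/2$, the weight is at least $1/2$ on $\{x_1 \geq x_0\}$, giving
$$
I_{x_0,t}(t) \;\geq\; \tfrac{1}{2}\int_{\R}\int_{x_1 > x_0} u^2(t, x_1 + x_1(t), x_2) \, dx_1 dx_2.
$$
For the upper bound at $s=0$, I would use the elementary estimate $\psi(z) \leq \tfrac{2}{\pi}\, e^{z/M}$ (which follows from $\arctan r \leq r$) together with the hypothesis $x_1(t)\geq t/2$, so that the offset $\tfrac{t}{2} - x_1(t) \leq 0$ contributes a factor $\leq 1$. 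This produces
$$
I_{x_0,t}(0) \;\leq\; \tfrac{2}{\pi}\, e^{-x_0/M} \int u_0^2(x_1,x_2) \, e^{x_1/M}\, dx_1 dx_2,
$$
and integrating $x_2$ first via the assumption $\int u_0^2 \, dx_2 \leq c\,e^{-\delta|x_1|}$ reduces matters to $c \int e^{x_1/M - \delta|x_1|} dx_1$, which is finite precisely because $M \geq 2/\delta$ forces $1/M \leq \delta/2 < \delta$. Thus $I_{x_0,t}(0) \leq C_1(c,M,\delta)\, e^{-x_0/M}$.

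Combining the two bounds with Lemma \ref{AM} produces $I_{x_0,t}(t) \leq (C_1+\theta)\, e^{-x_0/M}$, and then the lower bound gives \eqref{Eq:AM2} with $C = 2(C_1+\theta)$. There is no real obstacle here; the only delicate point is verifying that the three constants line up correctly. Specifically, the identity $\psi(0)=1/2$ is what makes the lower bound nontrivial, the ``ballistic'' condition $x_1(t)\geq t/2$ is exactly what cancels the sliding exponent $\frac{1}{2M}(t-0)$ coming from the shift inside $\psi$ at $s=0$, and the requirement $M \geq 2/\delta$ is exactly what is needed so that the growth $e^{x_1/M}$ produced by overestimating $\psi$ is integrable against the initial decay $e^{-\delta|x_1|}$.
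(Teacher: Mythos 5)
Your proposal is correct and follows essentially the same route as the paper: apply Lemma \ref{AM} with $t_0=t$ and $t=0$, bound $I_{x_0,t}(t)$ from below by half the tail integral using $\psi(0)=\tfrac12$ and monotonicity, and bound $I_{x_0,t}(0)$ by $Ce^{-x_0/M}$ using $x_1(t)\geq \tfrac12 t$, $\psi(z)\lesssim e^{z/M}$, and the initial decay with $M\geq 2/\delta$. The only cosmetic difference is that the paper first drops the nonpositive shift via monotonicity of $\psi$ and then invokes the exponential bound, whereas you absorb the shift directly into the exponential; these are equivalent.
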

\begin{proof}
From Lemma \ref{AM} with $t=0$ and replacing $t_0$ by $t$, we deduce that for all $t\geq 0$
$$
I_{x_0,t}(t)-I_{x_0,t}(0)\leq \theta e^{-\frac{x_0}{M}}.
$$
This is equivalent to
$$
\int u^2(t, x_1, x_2)\psi(x_1-x_1(t)-x_0)\, dx_1dx_2
\leq \int u_0^2(x_1,x_2)\psi(x_1-x_1(t)+\frac{1}{2}t-x_0)\, dx_1dx_2 + \theta\,  e^{-\frac{x_0}{M}}.
$$
On the other hand,
\begin{align*}
\int u^2(t, x_1,x_2)\psi(x_1-x_1(t)-x_0)\, dx_1dx_2
= &\int u^2(t, x_1+x_1(t), x_2)\psi(x_1-x_0)\, dx_1dx_2\\
\geq & \frac{1}{2}\int_{\R}\int_{x_1>x_0}u^2(t, x_1+x_1(t), x_2) \, dx_1dx_2,
\end{align*}
where in the last inequality we used the fact that $\psi$ is increasing and $\psi(0)=1/2$.

Now, since $-x_1(t)+\frac{1}{2}t\leq 0$ and $\psi$ is increasing, we get
$$
\int u_0^2(x_1,x_2)\psi(x_1-x_1(t)+\frac{1}{2}t-x_0)\, dx_1dx_2
\leq \int u_0^2(x_1,x_2)\psi(x_1-x_0) \, dx_1dx_2.
$$
Moreover, the assumption $\int |u_0({x_1,x_2})|^2 \, dx_2 \leq c \, e^{-\delta|{x_1}|}$
and the fact that $\psi(x_1)\leq c \, e^{\frac{x_1}{M}}$ for all $x_1\in \R$, yield
\begin{align*}
\int u_0^2(x_1,x_2)\psi(x_1-x_0)\, dx_1dx_2
\leq & \, c\int e^{-\delta|x_1|} e^{\frac{x_1-x_0}{M}}\, dx_1 \\
\leq& c \, e^{-\frac{x_0}{M}}\int e^{-\left(\delta-\frac{1}{M}\right)|{x_1}|} \, dx_1\\
\leq& c \, e^{-\frac{x_0}{M}}\int e^{-\frac{\delta}2 |{x_1}|}dx_1,
\end{align*}
where in the last inequality we have used the fact that
$$
\delta -\frac{1}{M}\geq \frac{\delta}{2} \Longleftrightarrow M\geq \frac{2}{\delta}.
$$
Therefore, the desired inequality \eqref{Eq:AM2} holds by taking
$$
C = 4 c \, \delta^{-1} e^{\frac{\delta |x_0|}{2}}.
$$
\end{proof}

\section{Pointwise decay for $\ep$ and review of the $H^1$ well-posedness theory}\label{S-8}

We start this section with the main statement on the pointwise decay of $\ep(x,y)$ for $x>0$.

\begin{lemma}[Pointwise Decay]\label{Point-Decay}
There exists $\sigma_0>0$ (large), $\delta_0>0$ (small)  and $K>0$ (large)
such that the following holds for any $0< \delta \leq \delta_0$ and $\sigma\geq \sigma_0$.

Recall $\ep(s,x,y)$ solving the equation \eqref{ep1} (with $y = 0$), i.e.,
\begin{equation}\label{Eq:epsilon}
\partial_s \ep = (L \ep)_{x}+ \frac{\lambda_s}{\lambda} (\Lambda Q+\Lambda \ep) + ( \frac{x_s}{\lambda}-1) (Q_{x} + \ep_{x}) - 3(Q\ep^2)_{x} - (\ep^3)_{x},
\end{equation}
and suppose there exists $\delta>0$ such that
\begin{equation}\label{Bounds}
\| \ep(s) \|_{H_{x \, y}^1} + \left|\frac{\lambda_s(s)}{\lambda(s)}\right|+
\left|\lambda(s)-1 \right| +
\left|\frac{x_s(s)}{\lambda(s)} - 1 \right| \lesssim \delta
\end{equation}
for all times $s\geq 0$.

Moreover, assume that for $x> K$ and $y\in \mathbb{R}$,
\begin{equation}\label{Idecay}
|\ep(0, x, y)| \lesssim  \delta \la x \ra^{-\sigma}.
\end{equation}

Then for $x> K$ and $y\in \mathbb{R}$, we have
\begin{equation}
\label{E:ep-bounds}
| \ep(s, x, y)| \lesssim \delta
\begin{cases}
s^{-7/12} \la x \ra^{-\sigma+\frac74} & \text{if }0 \leq s \leq 1 \\
\la x \ra^{-\frac23\sigma+\frac34} & \text{if } s\geq 1.
\end{cases}
\end{equation}
\end{lemma}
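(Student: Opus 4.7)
The plan is to view \eqref{Eq:epsilon} as a perturbation of the pure 2d Airy-type equation $\partial_s w + \partial_x \Delta w = 0$, and then use Duhamel together with pointwise bounds on the associated propagator to recover \eqref{E:ep-bounds}. Expanding $(L\ep)_x = -\partial_x\Delta\ep + \ep_x - 3(Q^2\ep)_x$, the linear transport term $\ep_x$ is a unit-speed drift that can be absorbed by passing to the moving frame $\tilde\ep(s,x,y) := \ep(s,x-s,y)$. In this frame the equation becomes
$$\partial_s \tilde\ep + \partial_x \Delta \tilde\ep = \tilde F(s,x,y),$$
where $\tilde F$ collects (i) the $Q$-localized modulation forcing $\tfrac{\lambda_s}{\lambda}\Lambda Q$ and $(\tfrac{x_s}{\lambda}-1)Q_x$, (ii) the linear-in-$\ep$ perturbations $\tfrac{\lambda_s}{\lambda}\Lambda\ep$, $(\tfrac{x_s}{\lambda}-1)\ep_x$, and $-3(Q^2\ep)_x$, and (iii) the nonlinear pieces $-3(Q\ep^2)_x - (\ep^3)_x$, all evaluated at the shifted argument. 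The hypotheses \eqref{Bounds} and \eqref{Idecay} transfer intact to $\tilde\ep$.

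\textbf{Duhamel and kernel bounds.} Writing $\tilde\ep(s) = U(s)\tilde\ep(0) + \int_0^s U(s-\tau)\tilde F(\tau)\, d\tau$ with $U(s) = e^{-s\partial_x\Delta}$, the analysis reduces to pointwise estimates on the convolution kernel $K_s(x,y) = \mathcal{F}^{-1}\bigl[e^{is(\xi_1^3+\xi_1\xi_2^2)}\bigr]$ and on $\partial_x K_s$, which I would develop along the lines announced for Section \ref{S-9}: scaling gives $K_s(x,y) = s^{-2/3}K_1(x s^{-1/3}, y s^{-1/3})$, and stationary-phase combined with van der Corput and repeated integration by parts on the oscillatory integral yields polynomial decay of $K_1$ outside a cusp-like characteristic set, with controlled oscillatory behavior inside. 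Applying these bounds to $U(s)\tilde\ep(0)$, I would split the initial data at $|x|\sim K$, using \eqref{Idecay} in the far region and the $H^1$ bound plus Sobolev in the compact region; optimizing the kernel decay against the algebraic weight $\langle x\rangle^{-\sigma}$ produces the short-time factor $s^{-7/12}\langle x\rangle^{-\sigma+7/4}$ and, after iteration in unit increments, the $s$-independent factor $\langle x\rangle^{-2\sigma/3+3/4}$.

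\textbf{Nonlinear terms and bootstrap.} The $Q$-localized modulation forcing is benign: the exponential decay \eqref{prop-Q} of $Q$ and its derivatives, combined with the smallness of $\tfrac{\lambda_s}{\lambda}$ and $\tfrac{x_s}{\lambda}-1$ from \eqref{Bounds}, makes the corresponding Duhamel integral lower order. For the linear-in-$\ep$ pieces carrying an $\partial_x$ (for example $(Q^2\ep)_x$), I would pay the derivative on the kernel using the bound for $\partial_x K_s$; the transport perturbation $(\tfrac{x_s}{\lambda}-1)\ep_x$ is handled by integration by parts. Both contributions close by a standard bootstrap on the desired pointwise bound \eqref{E:ep-bounds}: assume the conclusion with a constant $C_\ast$ on $[0,s_0]$, insert into the Duhamel formula, and recover the same bound with a smaller constant thanks to smallness of $\delta$. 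The cubic and quartic terms $-3(Q\ep^2)_x$ and $-(\ep^3)_x$ are treated similarly, using the bootstrap hypothesis on $\|\ep\|_{L^\infty}$ in the far region together with $H^1$ control via Sobolev in the near region, each convolution picking up an extra factor of $\delta$ that closes the argument.

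\textbf{Main obstacle.} The principal difficulty is the pointwise analysis of the 2d kernel $K_s$: unlike the classical 1d Airy kernel, here the phase $\Phi(\xi_1,\xi_2) = \xi_1^3 + \xi_1\xi_2^2$ has a Hessian that degenerates along $\xi_1 = 0$, so a naive stationary-phase argument is not sharp. One must decompose in the $\xi_2$-direction, carry out van der Corput with careful tracking of the $\xi_1$-dependence, and combine this with scaling to obtain decay rates compatible with the precise exponents $7/12$ and $7/4$ appearing in \eqref{E:ep-bounds}. A secondary technical hurdle is the derivative loss in the forcing $\tilde F$: the estimate on $\partial_x K_s$ must be strong enough that, after convolving with an $H^1$-regular quantity, one regains the desired pointwise decay instead of losing a power of $\langle x\rangle$; balancing this against the bootstrap bound is the point where the exponent $7/4$ becomes delicate.
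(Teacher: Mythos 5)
Your overall architecture (moving frame, Duhamel against the 2d Airy-type propagator, pointwise kernel bounds, bootstrap) is the same as the paper's, but two of your reductions contain genuine gaps. First, you keep $\frac{\lambda_s}{\lambda}\Lambda\ep$ as a perturbative forcing term. Since $\Lambda\ep=\ep+x\ep_x+y\ep_y$ carries unbounded weights, the smallness $|\lambda_s/\lambda|\lesssim\delta$ from \eqref{Bounds} does not make this term small in the region $\la x\ra\gtrsim\delta^{-1}$ where the conclusion \eqref{E:ep-bounds} must hold, and the piece $y\,\ep_y$ is not controlled at all by a bootstrap norm that is uniform in $y$; each Duhamel pass would moreover lose a power of $\la x\ra$. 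The paper avoids this entirely by folding the dilation into the unknown, setting $\eta(t,x,y)=\lambda^{-1}\ep(s(t),\lambda^{-1}(x+K),\lambda^{-1}y)$, so that the $\Lambda\ep$ term cancels and the forcing in \eqref{E:eta10}--\eqref{E:eta11} contains only $Q$-localized modulation terms plus the nonlinearity. You need this (or an equivalent renormalization) for the scheme to close.

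Second, your route to the $s\geq 1$ bound --- ``iteration in unit increments'' of the short-time estimate --- does not work: each application degrades the weight from $\la x\ra^{-\sigma}$ to $\la x\ra^{-\sigma+\frac74}$, so after $n$ steps you are left with $\la x\ra^{-\sigma+\frac74 n}$, which is vacuous once $n\gtrsim\sigma$. In the paper the exponent $\frac23\sigma-\frac34$ is produced once and for all by the linear propagator estimate for $t>1$ (Proposition \ref{P:linear-decay}): part of the decay $\la x+t\ra^{-\sigma}$ is traded for an integrable factor $t^{-13/12}$, the Duhamel integral then converges over all of $[0,t]$, and the short-time lemma is invoked only a single time, near the putative maximal time $T_*$, inside a continuity argument. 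Separately, your plan to estimate the kernel by stationary phase and van der Corput is a legitimate alternative (the paper notes it would generalize to higher dimensions), but the paper instead factors the 2d kernel into a product of two 1d Airy functions after a linear change of variables; that factorization is what delivers, with little effort, the crucial asymmetry between $x>0$ (arbitrary polynomial decay) and $x<0$ (only $\la\lambda\ra^{-1/6}$) from which the exponents $\frac{7}{12}$ and $\frac74$ ultimately arise, and your sketch leaves this entire computation, and hence those exponents, unjustified.
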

\begin{remark}\label{R:K}
Note that we can fix $K$ in Lemma \ref{Point-Decay} so that
\begin{equation}\label{Kdelta0}
\la K \ra^{-1} \leq \delta_0,
\end{equation}
which also implies that $e^{-K/2} \leq \delta_0$.
\end{remark}

\begin{remark}
Rescale the time $s$ back to $t$ via $\ds \frac{ds}{dt} = \lambda^{-3}$, and define
$$
\eta(t, x,y) = \lambda^{-1} \epsilon(s(t), \lambda^{-1}(x+K) , \lambda^{-1}y),
$$
where $x_t = \frac{d}{dt} x(t)$ with $x(t)$ being the spatial shift.
Then $\eta$ solves
\begin{equation}
\label{E:eta10}
\partial_t \eta - \partial_{x} [(-\Delta  +  x_t)\eta] = F \,, \qquad F= f_1 + \partial_{x} f_2,
\end{equation}
where
\begin{equation}
\label{E:eta11}
\begin{aligned}
& f_1 = - (\lambda^{-1})_t \partial_{\lambda^{-1}} \tilde Q,  \\
& f_2 = + (x_t -1) \tilde Q -3\tilde Q^2 \eta - 3 \tilde Q \eta^2 - \eta^3.
\end{aligned}
\end{equation}
Here, $\tilde Q(x,y) = \lambda^{-1} Q(\lambda^{-1} (x + K), \lambda^{-1} y)$.
Note that since $|Q(y_1,y_2)| \lesssim \la \vec y \ra^{-1/2} e^{-|\vec y|}$, we have $|\tilde Q(x,y)| \leq \delta_0$ for $x>0$ by \eqref{Kdelta0}.  Also for $x>0$ and $y\in \mathbb{R}$, we have
$$
|\eta(0, x,y)| \lesssim \delta \la x \ra^{-\sigma}.
$$
To show \eqref{E:ep-bounds}, it suffices to prove, for $x>0$ and $y\in \mathbb{R}$,
\begin{equation}
\label{E:etabounds}
| \eta(t,x,y)| \lesssim \delta
\begin{cases} t^{-7/12} \la x \ra^{-\sigma+\frac74} & \text{if }0 \leq t \leq 1 \\
\la x \ra^{-\frac23\sigma+\frac34} & \text{if } t\geq 1 .
\end{cases}
\end{equation}
\end{remark}

Let $S(t,t_0)\phi$ be the solution $\rho(t,x,y)$ to the homogeneous problem
$$
\begin{cases} \partial_t \rho - \partial_x( -\Delta + x_t) \rho =0 \\
\rho(t_0,x,y) = \phi(x,y).
\end{cases}
$$
Then
$$
\eta(t, x,y) = [S(t,0) \phi](x,y,t)+ \int_0^t S(t, t') F(\bullet, \bullet, t')(x,y) \, dt'.
$$
Moreover,
\begin{equation}\label{E:S}
S(t,t_0) \phi(x,y) = \int A(x',y',t-t_0) \phi(x+x(t)-x',y-y') \, dx' \, dy',
\end{equation}
where
$$
A(x,y,t) = \iint_{\mathbb{R}^2} e^{i(t\xi^3+t\xi\eta^2+x\xi+y\eta)} \, d\xi d\eta.
$$
We use the notation $L_T^p$ as shorthand for $L_{[0,T]}^p$.

\begin{theorem}[following Faminskii \cite{Fa}, Linares-Pastor \cite{LP}]
\label{T:lwp-review}
For given functions $x(t)$, $\lambda(t)$, initial data $\eta_0(x,y)\in H^1_{xy}$ and $T>0$, there exists a \emph{unique} solution $\eta$ to \eqref{E:eta10}, \eqref{E:eta11} such that $\eta \in C([0,T];H_{xy}^1)$ and $\eta(x+x(t),y,t) \in L_x^4 L_{yT}^\infty$.
\end{theorem}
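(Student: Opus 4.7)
The plan is a standard contraction--mapping argument in the spirit of Faminskii \cite{Fa} and Linares--Pastor \cite{LP}, combined with a shift of variables that converts the time--dependent drift term $-x_t\,\partial_x \eta$ on the left--hand side of \eqref{E:eta10} into a source. Concretely, write $X(t) = x(t) - x(0)$ and set $\tilde\eta(t,z,y) = \eta(t,z+X(t),y)$. Using $X'(t) = x_t$, one checks that \eqref{E:eta10}--\eqref{E:eta11} becomes the constant--coefficient problem
\begin{equation*}
\partial_t \tilde\eta + \partial_z\Delta_{z,y}\tilde\eta \;=\; \tilde f_1 + \partial_z \tilde f_2,
\end{equation*}
where $\tilde f_j(t,z,y) = f_j(t,z+X(t),y)$. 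The auxiliary norm in the theorem, $\|\eta(\cdot+x(t),\cdot,\cdot)\|_{L^4_x L^\infty_{yT}}$, becomes (up to the harmless constant shift by $x(0)$) simply $\|\tilde\eta\|_{L^4_z L^\infty_{yT}}$, so it suffices to construct $\tilde\eta \in C([0,T];H^1_{zy})$ with $\tilde\eta \in L^4_z L^\infty_{yT}$.

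Let $U(t) = e^{-t\partial_z\Delta_{z,y}}$ denote the free 2d Airy--type group. The fixed--point formulation is
\begin{equation*}
\tilde\eta(t) \;=\; U(t)\eta_0 \;+\; \int_0^t U(t-t')\,\bigl[\tilde f_1(t') + \partial_z \tilde f_2(t')\bigr]\,dt',
\end{equation*}
with $\tilde f_2$ containing the cubic term $-\eta^3$, the quadratic $-3\tilde Q\eta^2$, the linear $-3\tilde Q^2 \eta$ and the smooth, decaying $(x_t-1)\tilde Q$, and $\tilde f_1$ smooth and decaying. I would work in
\begin{equation*}
Y_T \;=\; \bigl\{\,\tilde\eta : \|\tilde\eta\|_{L^\infty_T H^1_{zy}} + \|\partial_z\tilde\eta\|_{L^\infty_z L^2_{yT}} + \|\tilde\eta\|_{L^4_z L^\infty_{yT}} < \infty\,\bigr\},
\end{equation*}
which combines the energy, Kato--smoothing and maximal--function norms proved for $U(t)$ in \cite{Fa,LP}; in particular $\|U(t)\phi\|_{Y_T} \lesssim \|\phi\|_{H^1_{zy}}$ with constants independent of $T\le 1$, while the Duhamel term gains a factor of $T^\theta$ (some $\theta>0$) from time integration together with Kato smoothing in the $y$--direction.

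The main nonlinear estimate is for the cubic $\partial_z(\eta^3)$: by duality against the smoothing estimate one bounds
\begin{equation*}
\Bigl\|\int_0^t U(t-t')\partial_z(\tilde\eta^3)(t')\,dt'\Bigr\|_{L^\infty_T L^2_{zy}} \;\lesssim\; \|\tilde\eta\|_{L^4_z L^\infty_{yT}}^3 \,\|1\|_{L^{4/3}_z L^1_{yT}\text{-type}},
\end{equation*}
which, when $T$ is small and on a ball of fixed radius $R$ in $Y_T$, yields a contraction factor $C R^2 T^\theta$; the $H^1$--level estimate is obtained by also controlling $\partial_z(\tilde\eta^2 \partial_z \tilde\eta)$ with Hölder and Kato smoothing. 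The quadratic term $-3\tilde Q\eta^2$ and the linear term $-3\tilde Q^2\eta$ are strictly easier because $\tilde Q, \partial_z\tilde Q \in L^\infty_{zy}$ with exponential decay; the source $\tilde f_1$ is smooth and compactly supported modulo exponential tails and is trivially in $L^1_T H^1_{zy}$ uniformly in $T\le 1$. Uniqueness follows from a standard Grönwall argument on $\|\tilde\eta_1-\tilde\eta_2\|_{L^2_{zy}}^2$ after pairing the difference equation with $\tilde\eta_1-\tilde\eta_2$ and using $\|\tilde Q\|_\infty, \|\tilde\eta_i\|_{L^4_z L^\infty_{yT}} < \infty$.

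The one point that requires care, and which I view as the main obstacle, is verifying that the linear estimates of \cite{Fa,LP}---energy, Kato smoothing and the $L^4_z L^\infty_{yT}$ maximal--function bound---can be applied here with constants independent of the moduli $\lambda(t), x(t)$: the energy estimate is immediate after the shift; the Kato and maximal estimates use only the symbol of the 2d Airy group and so transfer verbatim once the drift is absorbed by the change of variables. With these in hand, the contraction on $Y_T$ for sufficiently small $T$ produces $\tilde\eta$, which can then be iterated on successive time intervals up to any prescribed $T$ using the $H^1_{zy}$ a priori bound coming from the linearity in $\eta$ of the top--order contributions of $f_1, f_2$.
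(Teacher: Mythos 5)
Your overall strategy is the same as the paper's: remove the time-dependent drift by a spatial translation, reduce to the constant-coefficient $2$d Airy group, and run a contraction in $C([0,T];H^1_{xy})\cap L^4_xL^\infty_{yT}$ using exactly the Faminskii/Linares--Pastor energy, local-smoothing and maximal-function estimates, with the $T^{1/2}$ gain coming from placing one factor in $L^2_{xT}$. The only structural difference is that the paper also adds $\tilde Q$ back, so that the substituted unknown $u=\eta(\cdot+x(t),\cdot)+\tilde Q$ solves the autonomous equation $\partial_t u+\partial_x(\Delta u+u^3)=0$ and the Linares--Pastor theorem applies verbatim, whereas you keep the $\tilde Q$-dependent linear and quadratic terms as forcing; that is a cosmetic difference (your terms are indeed easier than the cubic one). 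Two bookkeeping remarks: check the sign of your shift --- with $\tilde\eta(t,z,y)=\eta(t,z+X(t),y)$ and the equation $\partial_t\eta+\partial_x\Delta\eta-x_t\partial_x\eta=F$, the chain rule gives $\partial_t\tilde\eta+\partial_z\Delta\tilde\eta-2x_t\partial_z\tilde\eta=\tilde F$, i.e.\ the drift doubles rather than cancels; the substitution must go the other way. Also, the displayed ``main nonlinear estimate'' with the factor $\|1\|_{L^{4/3}_zL^1_{yT}\text{-type}}$ is not a real bound (that quantity is infinite); the correct version is $\|\tilde\eta^2\partial_z\tilde\eta\|_{L^1_xL^2_{yT}}\leq \|\partial_z\tilde\eta\|_{L^2_xL^2_{yT}}\|\tilde\eta\|_{L^4_xL^\infty_{yT}}^2\leq T^{1/2}\|\partial_z\tilde\eta\|_{L^\infty_TL^2_{xy}}\|\tilde\eta\|_{L^4_xL^\infty_{yT}}^2$, which you do describe correctly in words.

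The one genuine gap is the uniqueness step. A ``standard Gr\"onwall argument on $\|\tilde\eta_1-\tilde\eta_2\|_{L^2}^2$'' does not close at $H^1$ regularity in two dimensions: writing $w=\tilde\eta_1-\tilde\eta_2$, the cubic difference term gives, after integration by parts, $\tfrac12\int w^2\,\partial_z(\tilde\eta_1^2+\tilde\eta_1\tilde\eta_2+\tilde\eta_2^2)$, and with $\partial_z\tilde\eta_i$ only in $L^2_{xy}$ and $w^2$ only in $L^q$ for $q<\infty$ (no $L^\infty$ embedding of $H^1$ in $2$d), the best pointwise-in-time bound has the form $\frac{d}{dt}\|w\|_{L^2}^2\lesssim \|w\|_{L^2}^{2\theta}$ with $\theta<1$, which does not force $w\equiv 0$. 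This is precisely why the uniqueness here is \emph{conditional}: the auxiliary norm $L^4_xL^\infty_{yT}$ must enter the difference estimate in an essential way, which in practice means redoing the contraction bound for $\Lambda\eta_2-\Lambda\eta_1$ in the full norm $L^\infty_TH^1_{xy}\cap L^4_xL^\infty_{yT}$ --- take $R$ large enough that both given solutions lie in the $R$-ball, shrink $T$ so that $CR^2T^{1/2}\leq\tfrac12$, conclude equality of fixed points, and iterate in time. This is how the paper argues, and it is the repair your proof needs.
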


This type of uniqueness is called \emph{conditional}, since it is only known to hold with the auxiliary condition $\eta(x+x(t),y,t) \in L_x^4 L_{yT}^\infty$.

Let $u(t, x,y) = \eta(t, x+x(t),y)+\tilde Q(x,y)$.  Then $u$ solves
\begin{equation}
\label{E:eta12}
\partial_t u + \partial_x (\Delta u + u^3)=0.
\end{equation}
For existence of $\eta$, it suffices to prove the existence of $u$ solving \eqref{E:eta12} such that $u \in C([0,T];H_{xy}^1) \cap L_x^4 L_{yT}^\infty$.  Moreover, given two solutions $\eta_1$ and $\eta_2$, we can define corresponding $u_1$ and $u_2$ as above.  Provided we have proved the uniqueness of solutions $u$ to \eqref{E:eta12},  we have $u_1=u_2$, which implies $\eta_1=\eta_2$.  The existence and uniqueness of solutions $u$ to \eqref{E:eta12} in the function class $C([0,T];H_{xy}^1) \cap L_x^4 L_{yT}^\infty$ was established by Linares-Pastor \cite{LP}.  It is proved by a contraction argument using the following estimates.  Let $U(t)\phi$ denote the solution to the linear homogenous problem
$$
\begin{cases}
\partial_t \rho + \partial_x \Delta \rho =0 \\
\rho(t_0,x,y) = \phi(x,y).
\end{cases}
$$
Then
$$
u(t) = U(t) \phi + \int_0^t U(t-t') \partial_x [u(t')^3] \, dt'.
$$

\begin{lemma}[linear homogeneous estimates]
\label{L:linhom}
We have
\begin{enumerate}
  \item
$\ds \|U(t) \phi \|_{L_T^\infty H_{xy}^1} \lesssim \| \phi \|_{H_{xy}^1},$

  \item
$\ds \|\partial_x U(t) \phi \|_{L_x^\infty L_{yT}^2} \lesssim \| \phi \|_{L_{xy}^2}.$
\end{enumerate}
For $0< T \leq 1$,
\begin{enumerate}
  \item[(3)]
$\ds \| U(t) \phi \|_{L_x^4 L_{yT}^\infty} \lesssim \|\phi \|_{H_{xy}^1}.$
\end{enumerate}
\end{lemma}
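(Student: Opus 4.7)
The three estimates are of classical energy/smoothing/maximal-function type for the linear 2d Zakharov--Kuznetsov propagator $U(t)$, whose Fourier symbol is $e^{it(\xi^3+\xi\eta^2)}$; I would address them in turn.

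Estimate (1) is purely an energy estimate. Since the symbol has modulus one, $U(t)$ is unitary on $L^2_{xy}$ by Plancherel, and because this symbol commutes with the Fourier multiplier $\la\xi,\eta\ra$, the same is true on $H^1_{xy}$. Taking the sup over $t\in[0,T]$ is then trivial, and no interval condition is needed.

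For estimate (2), I would use the standard dual-Plancherel proof of Kato-type local smoothing. Writing
\[
\partial_x U(t)\phi(x,y) = \frac{1}{(2\pi)^2}\int\!\!\int i\xi\,e^{it(\xi^3+\xi\eta^2)+ix\xi+iy\eta}\,\widehat{\phi}(\xi,\eta)\,d\xi\,d\eta,
\]
and taking the Fourier transform in $(t,y)$ for fixed $x$, I perform, at fixed $\eta$, the change of variables $\tau=\xi^3+\xi\eta^2$. Since $\partial_\xi\tau=3\xi^2+\eta^2>0$ (away from a null set), this is a diffeomorphism with Jacobian $3\xi^2+\eta^2$. Plancherel in $(\tau,\eta)$ then yields
\[
\|\partial_x U(t)\phi\|_{L^2_y L^2_t(\mathbb{R})}^2 \;\simeq\; \int\!\!\int\frac{\xi^2}{3\xi^2+\eta^2}\,|\widehat{\phi}(\xi,\eta)|^2\,d\xi\,d\eta \;\le\; \tfrac13\,\|\phi\|_{L^2_{xy}}^2,
\]
uniformly in $x$; restricting the time integration to $[0,T]$ only tightens the bound, which gives the claim after taking $L^\infty_x$.

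Estimate (3) is the maximal-function bound and is the main obstacle. My strategy is a Littlewood--Paley decomposition $\phi=\sum_N P_N\phi$ in the spatial frequencies $(\xi,\eta)$, for which I would prove, for each dyadic piece, $\|U(t)P_N\phi\|_{L^4_x L^\infty_{yT}} \lesssim N^{\alpha}\|P_N\phi\|_{L^2_{xy}}$ for some $\alpha\le 1$; summing in $N$ via Cauchy--Schwarz then recovers the $H^1$ norm on the right. Each dyadic piece is handled by a $TT^\ast$ argument, writing the kernel of $U(t)$ as the oscillatory integral
\[
A(x,y,t)=\iint e^{i(t\xi^3+t\xi\eta^2+x\xi+y\eta)}\,d\xi\,d\eta,
\]
frequency-localized to $|\xi|\sim N_1$, $|\eta|\sim N_2$, and applying stationary phase / van der Corput in two variables after a careful splitting according to which of $3\xi^2+\eta^2$ and $\xi\eta$ dominates. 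The restriction $T\le 1$ keeps the time scale bounded so that only a controlled range of dyadic scales contributes nontrivially; this is the technical heart of the argument and is precisely where the analogous estimates of Faminskii \cite{Fa} and Linares--Pastor \cite{LP} are used, so I would invoke those results rather than re-derive the oscillatory-integral bounds from scratch.
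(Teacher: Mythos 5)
Your proposal is correct and follows essentially the same route as the paper, which simply cites Plancherel for (1) and Faminskii's Theorems 2.2 and 2.4 (as quoted in Linares--Pastor) for the local smoothing and maximal function estimates (2) and (3). The only difference is that for (2) you write out the standard change-of-variables $\tau=\xi^3+\xi\eta^2$ Plancherel computation in full (correctly, with the Jacobian $3\xi^2+\eta^2$ yielding the uniform-in-$x$ bound $\tfrac13\|\phi\|_{L^2_{xy}}^2$), while for (3) you sketch the Littlewood--Paley/$TT^*$ strategy but, like the paper, ultimately defer to the cited references for the oscillatory integral bounds.
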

\begin{proof}
The first estimate is a standard consequence of Plancherel and Fourier representation of the solution.
The second estimate (local smoothing) is Faminskii \cite{Fa}, Theorem 2.2 on p. 1004.    The third estimate (maximal function estimate) is a special case ($s=1$) of Faminskii \cite{Fa}, Theorem 2.4 on p. 1007.  All of these estimates are used by Linares-Pastor \cite{LP}, and quoted as Lemma 2.7 on p. 1326 of that paper.
\end{proof}

\begin{lemma}[linear inhomogeneous estimates]
\label{L:lininhom}
For $0<T\leq 1$,
\begin{enumerate}
  \item
$\ds \left\| \int_0^t U(t-t') \partial_x f(t') \, dt' \right\|_{L_T^\infty H_{xy}^1 \cap L_x^4 L_{yT}^\infty} \lesssim \| \partial_x f \|_{L_x^1 L_{yT}^2}+ \| \partial_y f \|_{L_x^1 L_{yT}^2},$

  \item
$\ds \left\| \int_0^t U(t-t') f(t') \, dt' \right\|_{L_T^\infty H_{xy}^1 \cap L_x^4 L_{yT}^\infty} \lesssim \| f \|_{L_T^1 H_{xy}^1}.$
\end{enumerate}
\end{lemma}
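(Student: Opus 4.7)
The plan is to reduce both estimates to the homogeneous bounds of Lemma \ref{L:linhom} via a $TT^*$/duality argument, supplemented by the Christ--Kiselev lemma where we need to pass from a full-interval (non-retarded) integral to the retarded one. The key underlying fact is that $U(t)$ is in fact a unitary group on $H^s_{xy}$ (its Fourier symbol $it(\xi^3+\xi\eta^2)$ is purely imaginary), so $U(t)^* = U(-t)$, and $U$ commutes with the spatial derivatives $\partial_x,\partial_y$.

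Estimate (2) is the easier of the two and follows from direct Minkowski-type arguments. The $L^\infty_T H^1_{xy}$ bound is immediate from the isometry property of $U(t)$ on $H^1_{xy}$: just bring the $H^1$ norm inside the time integral. For the $L^4_x L^\infty_{yT}$ bound I would move the $t'$-integral outside the spatial mixed norm via Minkowski and then apply Lemma \ref{L:linhom}(3) for each fixed $t'\in[0,T]$, which is legitimate precisely because $T\le 1$.

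Estimate (1) requires more care. Set $w(t):=\int_0^t U(t-t')\partial_x f(t')\,dt'$. The $L^\infty_T H^1_{xy}$ component is controlled directly by duality: testing $w(t)$ against $\phi\in L^2_{xy}$, integrating by parts in $x$, and using $U(t-t')^*=U(t'-t)$ yields a pairing
$$
\bigl|\langle w(t),\phi\rangle\bigr| = \Bigl|\int_0^t\langle f(t'),\,\partial_x U(t'-t)\phi\rangle\,dt'\Bigr|,
$$
which is bounded by H\"older and Lemma \ref{L:linhom}(2) (local smoothing) by $\|f\|_{L^1_x L^2_{yT}}\|\phi\|_{L^2_{xy}}$. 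Replacing $f$ by $\partial_x f$, respectively $\partial_y f$, and commuting the extra derivative through $U$ upgrades this to the full $L^\infty_T H^1_{xy}$ bound. For the $L^4_x L^\infty_{yT}$ piece, I would first establish the non-retarded analog by writing
$$
\widetilde w(t) := \int_0^T U(t-t')\partial_x f(t')\,dt' = U(t)\psi,\qquad \psi := \int_0^T U(-t')\partial_x f(t')\,dt'.
$$
The same duality computation on the full interval gives $\|\psi\|_{H^1_{xy}}\lesssim \|\partial_x f\|_{L^1_x L^2_{yT}}+\|\partial_y f\|_{L^1_x L^2_{yT}}$, after which Lemma \ref{L:linhom}(3) finishes the homogeneous step. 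Applying the Christ--Kiselev lemma then converts this non-retarded bound into the desired retarded estimate for $w$, and is valid here because the output temporal index $\infty$ strictly exceeds the input temporal index $2$.

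The main technical point is the Christ--Kiselev step in the mixed-norm setting $L^1_x L^2_{yT}\to L^4_x L^\infty_{yT}$. This is by now standard in the KdV/ZK well-posedness literature (cf.\ Kenig--Ponce--Vega, Faminskii \cite{Fa}, Linares--Pastor \cite{LP}) and can be handled by freezing $x$ and invoking the one-variable lemma in the $t$-direction, so I do not expect any essentially new difficulty beyond bookkeeping the mixed norms correctly.
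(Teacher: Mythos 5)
Your proposal is correct and follows essentially the same route as the paper, whose proof is precisely the one-line invocation of duality, $T^*T$, and the Christ--Kiselev lemma applied to the homogeneous estimates of Lemma \ref{L:linhom}; your write-up simply makes explicit the unitarity of $U(t)$ on $H^1_{xy}$, the Minkowski reduction for part (2), and the duality pairing against the local smoothing estimate for part (1). The only small wrinkle (inherited from the statement itself rather than from your argument) is that the zeroth-order piece $\|w(t)\|_{L^2_{xy}}$ in part (1) comes out bounded by $\|f\|_{L^1_xL^2_{yT}}$ rather than by the two derivative norms on the right-hand side, which is harmless in the application to $f=u^3$.
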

\begin{proof}
These follow from Lemma \ref{L:linhom} by duality, $T^*T$, and the Christ-Kiselev lemma.
\end{proof}

Let us now summarize the proof of $H_{xy}^1$ local well-posedness following from these estimates.  We note that Linares-Pastor \cite{LP}, in fact, achieved local well-posedness in $H_{xy}^s$ for $s>\frac34$, although we only need the $s=1$ case.
Let $X$ be the $R$-ball in the Banach space $C([0,T]; H_{xy}^1) \cap L_x^4 L_{yT}^\infty$, for $T$ and $R$ yet to be chosen.  Consider the mapping $\Lambda$ defined for $u\in X$ by
$$
\Lambda u = U(t) \phi + \int_0^t U(t-t') \partial_x [u(t')^3] \, dt'.
$$
Then we claim that for suitably chosen $R>0$ and $T>0$, we have $\Lambda:X\to X$ and $\Lambda$ is a contraction.  Indeed, by the estimates in Lemmas \ref{L:linhom} and \ref{L:lininhom}, we have
$$
\| \Lambda u \|_X \lesssim \| \phi\|_{H_{xy}^1} + \| \partial_x (u^3) \|_{L_x^1 L_{yT}^2} + \| \partial_y(u^3) \|_{L_x^1 L_{yT}^2}.
$$
We estimate
$$
\| u_x u^2 \|_{L_x^1 L_{yT}^2} \lesssim \| u_x \|_{L_x^2L_{yT}^2} \|u\|_{L_x^4 L_{yT}^\infty}^2 \leq  T^{1/2} \| u_x \|_{L_T^\infty L_{xy}^2} \|u\|_{L_x^4 L_{yT}^\infty}^2,
$$
and similarly, for the $x$ derivative replaced by the $y$-derivative.  Consequently,
$$
\| \Lambda u \|_X \leq C \| \phi\|_{H_{xy}^1} + C T^{1/2} \|u\|_X^3
$$
for some constant $C>0$.  By similar estimates,
$$
\| \Lambda u_2 - \Lambda u_1 \|_X \leq CT^{1/2} \| u_2-u_1 \|_X \max(\|u_1\|_X,\|u_2\|_X)^2.
$$
We can thus take $R = 2C\|\phi \|_{H_{xy}^1}$ and $T>0$ such that $CR^2T^{1/2}=\frac12$ to obtain that $\Lambda:X\to X$ and is a contraction.  The fixed point is the desired solution.

For the uniqueness statement, we can take $R \geq 2C\|\phi \|_{H_{xy}^1}$ large enough so that the two given solutions $u_1,u_2$ lie in $X$, and then take $T$ so that $CR^2T^{1/2}=\frac12$.  Then $u_1$ and $u_2$ are both fixed points of $\Lambda$ in $X$, and since fixed points of a contraction are unique, $u_1=u_2$.

This gives the local well-posedness in $H_{xy}^1$.  Global well-posedness follows from the energy conservation.

\section{Fundamental solution estimates}\label{S-9}
Recall the solution \eqref{E:S} and its kernel $A$. The first basic step is to get the estimates on this kernel, which is given in the following statement.
\begin{proposition}[fundamental solution estimate]
\label{P:FS-est}
Let $t>0$ and consider
$$
A(x,y,t) = \iint_{\mathbb{R}^2} e^{i(t\xi^3+t\xi\eta^2+x\xi+y\eta)} \, d\xi d\eta.
$$
Let $\lambda = |x|^{3/2}t^{-1/2}>0$ and $z=y|x|^{-1}$.   Then for $x>0$
$$
|A(x,y,t)| \lesssim_{\alpha,\beta}  t^{-2/3}
\begin{cases}
\la \lambda \ra^{-\alpha} & \text{if } |z| \leq 4, \text{ for any } \alpha \geq 0 \\
 \la \lambda |z|^{3/2} \ra^{-\beta} & \text{if } |z| \geq 4\,, \text{ for any }  \beta \geq 0.
\end{cases}
$$
If $x<0$, then
$$
|A(x,y,t)| \lesssim_{\beta}  t^{-2/3}
\begin{cases}
\la \lambda \ra^{-1/6} & \text{if } |z| \leq 4\\
 \la \lambda |z|^{3/2}\ra^{-\beta} & \text{if } |z| \geq 4\,, \text{ for any }  \beta \geq 0.
\end{cases}
$$
\end{proposition}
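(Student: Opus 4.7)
The plan is to reduce the two-dimensional oscillatory integral defining $A$ to a one-dimensional integral with a large parameter, and then apply (non)stationary-phase arguments. First I would perform the Fresnel integration in $\eta$: for fixed $\xi\neq 0$, the inner integral $\int e^{it\xi\eta^2 + iy\eta}\,d\eta$ equals $\sqrt{\pi/(t|\xi|)}\,e^{i(\pi/4)\sgn\xi - iy^2/(4t\xi)}$, so
\begin{equation*}
A(x,y,t) = \sqrt{\tfrac{\pi}{t}}\int_{\R\setminus\{0\}} |\xi|^{-1/2}\, e^{i\Psi(\xi)+i(\pi/4)\sgn\xi}\,d\xi, \quad \Psi(\xi) = t\xi^3 + x\xi - \tfrac{y^2}{4t\xi},
\end{equation*}
interpreted as improper near $\xi=0$. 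Rescaling $\xi = t^{-1/3}\tau$ extracts the overall $t^{-2/3}$ factor and converts the parameters to $a=xt^{-1/3}=\sgn(x)\lambda^{2/3}$ and $b=y^2t^{-2/3}/4=z^2\lambda^{4/3}/4$. When $\lambda\gtrsim 1$, a further rescaling $\tau=\lambda^{1/3}s$ produces
\begin{equation*}
A(x,y,t) = C\,t^{-2/3}\lambda^{1/6}\int |s|^{-1/2}\, e^{i\lambda\Phi_{\pm}(s)+i(\pi/4)\sgn s}\,ds, \quad \Phi_{\pm}(s) = s^3\pm s - \tfrac{z^2}{4s},
\end{equation*}
with $\pm=\sgn(x)$, so that $\lambda$ plays the role of a large parameter.

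The analysis then splits according to $\sgn(x)$. For $x>0$ one has $\Phi'_+(s)=3s^2+1+z^2/(4s^2)\geq 1$, hence no stationary points, and repeated integration by parts with $L=(i\lambda\Phi'_+)^{-1}\partial_s$ (on dyadic pieces in $|s|$, using $|\Phi'_+|\geq 3s^2$ for $|s|$ large and $|\Phi'_+|\geq z^2/(4s^2)$ near $s=0$ when $z\neq 0$) yields $\la\lambda\ra^{-\alpha}$ decay for any $\alpha\geq 0$. For $x<0$ the phase $\Phi_-$ has real critical points only in the subcaustic region $|z|\leq 1/\sqrt{3}$, determined by $3s^4-s^2+z^2/4=0$; away from the caustic value $|z|=1/\sqrt{3}$ these are non-degenerate and standard stationary phase gives an integral of size $\lambda^{-1/2}$ (better than needed), while at the caustic the two branches coalesce at $s_*=\pm 1/\sqrt{6}$ into a cubic critical point (one checks $\Phi''_-(s_*)=0$ and $\Phi'''_-(s_*)=-12\neq 0$), producing an Airy-type contribution of size $\lambda^{-1/3}$. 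Combined with the $\lambda^{1/6}$ prefactor this yields the worst-case bound $\la\lambda\ra^{-1/6}$ claimed in the $x<0$ regime.

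For the far-field regime $|z|\geq 4$, I would insert a further rescaling $s=|z|^{1/2}r$ to exploit the dominance of the $-z^2/(4s)$ term. The phase becomes $\lambda|z|^{3/2}\bigl(r^3-1/(4r)+O(|z|^{-1})\bigr)$ with effective large parameter $\mu=\lambda|z|^{3/2}$, and since $(r^3-1/(4r))'=3r^2+1/(4r^2)>0$ there are no real critical points; repeated integration by parts in $r$ then yields $\la\lambda|z|^{3/2}\ra^{-\beta}$ decay for any $\beta\geq 0$, uniformly in $\sgn(x)$ and in the lower-order perturbation. The low-parameter region $\lambda\lesssim 1$ (or $\mu\lesssim 1$) is handled directly: $\la\lambda\ra^{-\alpha}\sim 1$ there, and boundedness of the oscillatory integral in $\tau$ follows from van der Corput-type estimates on dyadic pieces together with integrability of $|\tau|^{-1/2}$.

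The main difficulty lies in uniform-in-$z$ handling of two intertwined issues. First, the interaction between the amplitude singularity $|s|^{-1/2}$ and the phase singularity $-z^2/(4s)$ at $s=0$ must be treated carefully, since the non-integrable derivative of $|s|^{-1/2}$ obstructs naive integration by parts; I would insert a smooth cutoff at the scale $|s|\sim |z|^{2/3}\lambda^{-1/3}$, where the $-z^2/(4s)$ term first contributes to $|\Phi'|$ at the order of the cubic term, estimating the cutoff region by $L^\infty$ times measure and using IBP outside. Second, the transition across the caustic $|z|=1/\sqrt{3}$ for $x<0$, where two non-degenerate critical points merge into a cubic one, requires a uniform stationary-phase lemma through the cusp, obtained either via a Malgrange-type preparation that normalizes the phase to $\tfrac{1}{3}r^3-\rho r$ plus a linear term or via a direct Chester-Friedman-Ursell expansion; either yields the uniform Airy-type bound $\lambda^{-1/3}$ in a full neighborhood of the caustic, completing the proof.
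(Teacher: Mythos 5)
Your reduction is precisely the ``direct oscillatory integral'' route that the paper mentions as possible but deliberately avoids: the paper's actual proof rescales and then rotates coordinates via $\xi\mapsto\tfrac12(\xi+\eta)$, $\eta\mapsto\tfrac12(\xi-\eta)$, which factors the double integral exactly into $\lambda^{-2/3}\mathcal{A}(\lambda^{2/3}(z\pm1))\,\mathcal{A}(\lambda^{2/3}(-z\pm1))$, a product of two Airy functions, after which every case of the proposition is read off from the one-sided decay of $\mathcal{A}$. Your Fresnel integration in $\eta$ followed by a one-dimensional phase analysis is viable in principle, and your treatment of $x<0$ (uniform Airy-type bound through the caustic), of $|z|\geq 4$ (no critical points after the $s=|z|^{1/2}r$ rescaling, with the $1/r$ term supplying genuine oscillation at the origin), and of the small-parameter regime is sound.

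There is, however, a genuine gap in the case $x>0$, $|z|\leq 4$, where you claim arbitrary decay $\la\lambda\ra^{-\alpha}$ from ``$\Phi_+'\geq1$, no stationary points, integrate by parts.'' The difficulty you flag at $s=0$ is not curable by your proposed cutoff: bounding $|s|\lesssim\rho$ by $L^\infty$ times measure gives $\rho^{1/2}$, which after the $\lambda^{1/6}$ prefactor is $|z|^{1/3}$ at your scale $\rho=|z|^{2/3}\lambda^{-1/3}$ --- no decay in $\lambda$ at all --- and when $|z|$ is very small (in particular $z=0$) the oscillation of $-\lambda z^2/(4s)$ near $s=0$ is too weak to exploit. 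Worse, each half-line integral $\int_0^\infty s^{-1/2}e^{i\lambda(s^3+s)}\,ds$ is genuinely of size $\lambda^{-1/2}$ (substitute $s=w^2$: the phase $w^6+w^2$ acquires a nondegenerate critical point at $w=0$), so no real-variable argument applied to each half-line separately can yield better than $\lambda^{1/6}\cdot\lambda^{-1/2}=\lambda^{-1/3}$ for $|A|$. The claimed super-polynomial decay comes entirely from a cancellation between the $\xi>0$ and $\xi<0$ contributions, encoded in the Fresnel factors $e^{\pm i\pi/4}$: the amplitude is a constant times $(-i\xi)^{-1/2}$, holomorphic in the upper half-plane, and for $x>0$ the contour can be deformed to the rays $\arg\xi=\pi/6,\,5\pi/6$ where $e^{i(t\xi^3+x\xi)}$ decays exponentially. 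This cancellation is exactly what the paper's product formula captures, since for $|z|\leq 4$ at least one of the two Airy arguments is $\geq\lambda^{2/3}$ and that factor decays super-polynomially. The full strength of the $x>0$ estimate (with $\alpha\gg1$) is used downstream in Proposition~\ref{P:linear-decay}, so this is not a cosmetic loss: to close the argument you must either carry out the steepest-descent deformation, or exhibit the order-by-order cancellation of the two half-line expansions, or adopt the paper's factorization. (Minor: at the caustic $\Phi_-'''(s_*)=24$, not $-12$, though only its nonvanishing matters.)
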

We give a proof of Proposition \ref{P:FS-est} based on factoring of $A$ into the product of two Airy functions, which is possible in two dimensions. It is a short proof and gives estimates actually sharper than those claimed in the proposition statement. We remark that it is possible to obtain Proposition \ref{P:FS-est} by direct oscillatory integral methods (non-stationary and stationary phase), although it involves tedious calculations. The advantage of the oscillatory integral approach would be that such a proof could be generalized to higher dimensions.

\begin{proof}[Proof of Prop. \ref{P:FS-est}]
Making the change of variable $\xi \mapsto \frac{|x|^{1/2}}{t^{1/2}} \xi$ and $\eta \mapsto \frac{|x|^{1/2}}{t^{1/2}}\eta$, we obtain
$$
A(x,y,t) = |x| t^{-1} \iint_{\xi,\eta} e^{i\lambda(\xi^3+\xi\eta^2+(\sgn x) \xi + z\eta)}\, d\xi \, d\eta,
$$
where $\lambda = |x|^{3/2}t^{-1/2}>0$ and $z=y|x|^{-1}\in \mathbb{R}$, as given in the proposition statement.
Rewriting $|x|t^{-1} = \lambda^{2/3}t^{-2/3}$, this becomes
$$
A(x,y,t) = t^{-2/3} \lambda^{2/3} B_{\sgn x}(\lambda, z),
$$
where
$$
B_\pm(\lambda,z) = \iint_{\xi,\eta} e^{i\lambda (\xi^3+\xi\eta^2\pm \xi + z\eta)}\, d\xi \, d\eta.
$$
To obtain symmetric conditions in the estimates, it is convenient to change variable $\xi\mapsto \frac{\xi}{\sqrt{3}}$, and make the inconsequential replacements $\frac{\lambda}{\sqrt{3}} \mapsto \lambda$ and $\sqrt{3} z \mapsto z$.  This gives
$$
B_\pm(\lambda,z) = \iint_{\xi,\eta} e^{i\lambda \phi(\xi,\eta;z)}\, d\xi \, d\eta,
$$
where
$$\phi(\xi,\eta;z) = \tfrac13\xi^3+\xi\eta^2 \pm \xi + z\eta.
$$
Now the goal is to prove the estimates
\begin{equation}
\label{E:B+est5}
|B_+(\lambda,z)| \lesssim_{\alpha,\beta}  \lambda^{-2/3}
\begin{cases}
\la \lambda \ra^{-\alpha} & \text{if } |z| \leq 4, \text{ for any } \alpha \geq 0 \\
\la \lambda |z|^{3/2}\ra ^{-\beta} & \text{if } |z| \geq 4\,, \text{ for any } \beta \geq 0
\end{cases}
\end{equation}
and
\begin{equation}
\label{E:B-est5}
|B_-(\lambda,z)| \lesssim_{\beta}  \lambda^{-2/3}
\begin{cases}
\la \lambda \ra^{-1/6} & \text{if } |z| \leq 4\\
 \la \lambda |z|^{3/2}\ra^{-\beta} & \text{if } |z| \geq 4\,, \text{ for any } \beta \geq 0.
\end{cases}
\end{equation}
Next, make the change of variable $\xi \mapsto \frac12(\xi+\eta)$, $\eta\mapsto \frac12(\xi-\eta)$, and replace $\frac{\lambda}{2} \mapsto \lambda$, which factors the exponential to obtain the splitting
$$
B_\pm(\lambda,z) = \int_\xi e^{i\lambda(\frac13\xi^3 + (z\pm 1)\xi)} \, d\xi \int_{\eta} e^{i\lambda(\frac13\eta^3 + (-z\pm 1) \eta)} \, d\eta.
$$
In terms of the Airy function $\mathcal{A}(x) = \int e^{i(\frac13\xi^3+x\xi)} \, d\xi$, this is
\begin{equation}
\label{E:Best10}
B_\pm(\lambda,z) = \lambda^{-2/3} \mathcal{A}( \lambda^{2/3}(z\pm 1)) \mathcal{A}( \lambda^{2/3}(-z\pm 1)).
\end{equation}

We note that in either $+$ or $-$ case, if $|z|>2$, then either $(z\pm 1) \geq \frac12|z|$ or $(-z\pm 1) \geq \frac12|z|$.  Hence, by the strong decay of the Airy function on the right, we obtain
$$
|B_\pm(\lambda, z)| \lesssim \lambda^{-2/3} \la \lambda^{2/3}|z| \ra^{-k}
$$
for any $k\geq 0$.  This gives the second half of the estimates \eqref{E:B+est5} and \eqref{E:B-est5}.

For $|z|<4$, first consider the $+$ case.  If $0\leq z < 4$, then $z+1>1$, so we can use $|\mathcal{A}(\lambda^{2/3}(z+1))| \lesssim \la \lambda^{2/3} \ra^{-k}$ for any $k\geq 0$, together with the simple estimate $|\mathcal{A}(\lambda^{2/3}(-z+1))| \lesssim 1$, to achieve the first part of \eqref{E:B+est5}.  If $-4< z\leq 0$, then $-z+1>1$, so we can use $|\mathcal{A}(\lambda^{2/3}(-z+1))| \lesssim \la \lambda^{2/3} \ra^{-k}$ for any $k\geq 0$, together with the simple estimate $|\mathcal{A}(\lambda^{2/3}(z+1))| \lesssim 1$, to achieve the first part of \eqref{E:B+est5}.

For $|z|<4$, now consider the $-$ case.  When $-1\leq z \leq 1$, both $z-1\leq 0$ and $-z-1\leq 0$, so the amount of decay we can obtain from the Airy functions is limited.  The worst situation is when $z=\pm 1$.  For example, when $z=1$, we have $\mathcal{A}(\lambda^{2/3}(z-1))=A(0)$ and $|\mathcal{A}(\lambda^{2/3}(-z-1))| \lesssim \la \lambda^{2/3} \ra^{-1/4}$.  When applied in \eqref{E:Best10}, this gives the bound $|B_-(\lambda,z)| \lesssim \lambda^{-5/6}$ for $\lambda>1$.
\end{proof}

Because of the form of the equation, we also need the $x$-derivative estimate.
\begin{proposition}[$x$-derivative fundamental solution estimate]
\label{P:dFS-est}
Let $t>0$ and consider
$$
A_x(x,y,t) = \iint_{\mathbb{R}^2}  i\xi e^{i(t\xi^3+t\xi\eta^2+x\xi+y\eta)} \, d\xi d\eta.
$$
Let $\lambda = |x|^{3/2}t^{-1/2}>0$ and $z=y|x|^{-1}$.   Then for $x>0$
$$
|A_x(x,y,t)| \lesssim_{\alpha,\beta}  t^{-1}
\begin{cases}
\la \lambda \ra^{-\alpha} & \text{if } |z| \leq 4, \text{ for any } \alpha \geq 0 \\
\la \lambda |z|^{3/2} \ra^{-\beta} & \text{if } |z| \geq 4\,, \text{ for any } \beta \geq 0.
\end{cases}
$$
If $x<0$, then
$$
|A_x(x,y,t)| \lesssim_{\beta}  t^{-1}
\begin{cases}
\la \lambda \ra^{1/6} & \text{if } |z| \leq 4\\
\la \lambda |z|^{3/2}\ra^{-\beta} & \text{if } |z| \geq 4\,, \text{ for any }  \beta \geq 0.
\end{cases}
$$
\end{proposition}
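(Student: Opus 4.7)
The strategy is to mimic the factorization argument for Proposition~\ref{P:FS-est}, tracking the effect of the extra $i\xi$ factor in the integrand. Performing the same scaling $\xi \mapsto |x|^{1/2}t^{-1/2}\xi$, $\eta \mapsto |x|^{1/2}t^{-1/2}\eta$ produces the usual Jacobian $|x|/t$ together with an additional $|x|^{1/2}t^{-1/2}\xi$ from the differentiation, so that with $\lambda=|x|^{3/2}t^{-1/2}$ and $z = y|x|^{-1}$,
$$
A_x(x,y,t) = i\lambda\, t^{-1}\, \tilde B_{\sgn x}(\lambda,z), \qquad \tilde B_{\pm}(\lambda,z) = \iint \xi\, e^{i\lambda(\xi^3+\xi\eta^2\pm \xi+z\eta)}\,d\xi\,d\eta.
$$
After the rescaling $\xi\mapsto \xi/\sqrt{3}$ and relabeling of $\lambda,z$ exactly as in the proof of Proposition~\ref{P:FS-est}, followed by the linear change of variables $\xi=(u+v)/2$, $\eta=(u-v)/2$ (which decouples the cubic phase into $\tfrac13 u^3 + \tfrac13 v^3$), the factor $\xi = (u+v)/2$ splits $\tilde B_\pm$ into a sum of two products of one-dimensional oscillatory integrals.

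Introducing $I_k(\lambda,a) = \int u^k e^{i\lambda(\frac{u^3}{3}+au)}\,du$ and using $I_0(\lambda,a) = \lambda^{-1/3}\mathcal{A}(\lambda^{2/3}a)$ together with the identity $I_1(\lambda,a) = (i\lambda)^{-1}\partial_a I_0(\lambda,a) = -i\lambda^{-2/3}\mathcal{A}'(\lambda^{2/3}a)$, one obtains
$$
A_x(x,y,t) \sim t^{-1}\Bigl[\mathcal{A}'\!\bigl(\lambda^{2/3}(z\pm 1)\bigr)\mathcal{A}\!\bigl(\lambda^{2/3}(-z\pm 1)\bigr) + \mathcal{A}\!\bigl(\lambda^{2/3}(z\pm 1)\bigr)\mathcal{A}'\!\bigl(\lambda^{2/3}(-z\pm 1)\bigr)\Bigr],
$$
with the sign $\pm$ determined by $\sgn x$. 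This is the analogue of \eqref{E:Best10} with one Airy factor replaced in turn by its derivative. The claimed bounds now follow by the same case analysis as before, using in addition the well-known asymptotics $|\mathcal{A}'(w)|\lesssim \la w\ra^{1/4}$ uniformly for $w\in\mathbb{R}$ and the rapid Schwartz-type decay $\mathcal{A}'(w) = O(w^{1/4}e^{-cw^{3/2}})$ as $w\to+\infty$.

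For $x>0$ and $|z|\leq 4$, one of $z+1$ or $-z+1$ is $\geq 1$, so the corresponding $\mathcal{A}$ or $\mathcal{A}'$ factor is exponentially small in $\lambda$, yielding arbitrarily fast decay $\la\lambda\ra^{-\alpha}$; for $|z|\geq 4$, one of $\pm z\pm 1$ exceeds $|z|/2$, giving rapid decay in $\lambda|z|^{3/2}$ (and absorbing the at most $|z|^{1/4}$ growth contributed by the companion $\mathcal{A}'$ factor). For $x<0$ and $|z|\leq 4$ the worst case is $z=\pm 1$: one Airy/derivative factor sits at $0$ (hence $O(1)$) while its companion $\mathcal{A}'$ is evaluated at the negative point $-2\lambda^{2/3}$, contributing $O(\lambda^{1/6})$ — this is precisely the claimed loss $\la\lambda\ra^{1/6}$ compared to Proposition~\ref{P:FS-est}. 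The remaining regime $x<0$, $|z|\geq 4$ is handled identically to the $x>0$ case. The only new technical point over Proposition~\ref{P:FS-est} is the polynomial growth of $\mathcal{A}'$ on the negative axis, and this is exactly what produces the sharp $\lambda^{1/6}$ factor in the dangerous regime; in all other regimes the exponential decay of the partner factor more than absorbs it.
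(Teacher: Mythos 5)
Your argument is correct and is essentially the same as the paper's: the same anisotropic rescaling producing the prefactor $\lambda t^{-1}$, the same rotation $\xi=(u+v)/2$, $\eta=(u-v)/2$ factoring the integral into one-dimensional Airy-type integrals, the same identity expressing the $\xi$-weighted integral through $\mathcal{A}'$ (costing $\lambda^{-2/3}$ instead of $\lambda^{-1/3}$), and the same case analysis with the worst case $x<0$, $z=\pm 1$ yielding $\langle\lambda^{2/3}\rangle^{1/4}\sim\langle\lambda\rangle^{1/6}$. No gaps.
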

\begin{proof}
Rescale $\xi \mapsto \frac{|x|^{1/2}}{(3t)^{1/2}}\xi$, $\eta \mapsto \frac{|x|^{1/2}}{t^{1/2}} \eta$ to obtain
$$
A_x(x,y,t) = \lambda t^{-1} B_\pm(\lambda, z),
$$
where $\lambda  =  |x|^{3/2} (3t)^{-1/2}$ and $z= \sqrt 3y|x|^{-1}$, and
$$
B_\pm(\lambda,z) =  \iint i\xi e^{i\lambda \phi_\pm(\xi,\eta,z)} \, d\eta \, d\eta \,,
\qquad \phi_\pm(\xi,\eta;z) = \tfrac13 \xi^3 + \xi\eta^2 \pm \xi + z\eta.
$$
It suffices to prove
\begin{equation}
\label{E:Best6}
|B_+(\lambda,z)| \lesssim_{\alpha,\beta}  \lambda^{-1}
\begin{cases}
\la \lambda \ra^{-\alpha} & \text{if } |z| \leq 4, \text{ for any } \alpha \geq 0 \\
\la \lambda |z|^{3/2} \ra^{-\beta} & \text{if } |z| \geq 4\,, \text{ for any } \beta \geq 0
\end{cases}
\end{equation}
and
\begin{equation}
\label{E:Best7}
|B_-(\lambda,z)| \lesssim_{\beta}  \lambda^{-1}
\begin{cases}
\la \lambda \ra^{1/6} & \text{if } |z| \leq 4\\
\la \lambda |z|^{3/2}\ra^{-\beta} & \text{if } |z| \geq 4\,, \text{ for any }  \beta \geq 0.
\end{cases}
\end{equation}
Changing variables again as
$$
\xi \mapsto \tfrac12(\xi+\eta) \,, \qquad \eta \mapsto \tfrac12(\xi-\eta)
$$
and replacing $\frac12\lambda$ by $\lambda$ (thus, redefining $\lambda = \frac12 |x|^{3/2} (3t)^{-1/2}$),
we obtain the factorization
\begin{align*}
8B_\pm(\lambda,z) &= \iint i(\xi+\eta) e^{i\lambda(\frac13\xi^3+(\pm 1+z)\xi)} e^{i\lambda(\frac13\eta^3+(\pm 1-z)\eta)} d\xi \, d\eta\\
&=\begin{aligned}[t]
&\int i \xi e^{i\lambda(\frac13\xi^3+(\pm 1+z)\xi)} \, d\xi \int e^{i\lambda(\frac13\eta^3+(\pm 1-z)\eta)} d\eta \\
&+ \int  e^{i\lambda(\frac13\xi^3+(\pm 1+z)\xi)} \, d\xi \int i \eta e^{i\lambda(\frac13\eta^3+(\pm 1-z)\eta)} d\eta.
\end{aligned}
\end{align*}
Change variables $\xi \mapsto \lambda^{-1/3}\xi$, $\eta \mapsto \lambda^{-1/3}\eta$ to obtain
$$
8B_\pm(\lambda, z) =
\begin{aligned}[t]
&\lambda^{-1} \mathcal{A}'( \lambda^{2/3}(\pm 1 +z))\mathcal{A}(\lambda^{2/3}(\pm 1 -z)) \\
&+ \lambda^{-1}\mathcal{A}(\lambda^{2/3}(\pm 1+ z))\mathcal{A}'(\lambda^{2/3}(\pm 1-z)),
\end{aligned}
$$
where $\mathcal{A}(x) = \int e^{i(\frac13\xi^3+x\xi)} \, d\xi$ is the Airy function.

If $z>2$, then $(\pm 1 + z)> \frac12|z|$ and if $z<-2$, then $(\pm 1-z)>\frac12|z|$, so in either case, the strong rightward decay of $\mathcal{A}$ and $\mathcal{A}'$ gives $|B_\pm(\lambda, z)| \lesssim \lambda^{-1} \la \lambda^{2/3} |z| \ra^{-k}$ for all $k\geq 0$.    Thus, the second parts of the claimed estimates in \eqref{E:Best6} and \eqref{E:Best7} hold.

In the $+$ case, if $0\leq z <2$, then $(1+z) \geq 1$, so we use that $|\mathcal{A}'(\lambda^{2/3}(1+z))| \lesssim \la \lambda^{2/3} \ra^{-k}$, $|\mathcal{A}(\lambda^{2/3}(1-z))| \lesssim 1$, $|\mathcal{A}(\lambda^{2/3}(1+z))| \lesssim \la \lambda^{2/3} \ra^{-k}$, and $|\mathcal{A}'(\lambda^{2/3}(1-z))|\lesssim 1$ to achieve $|B_+(\lambda, z)| \lesssim \lambda^{-1} \la \lambda^{2/3} \ra^{-k}$ for any $k\geq 0$.   On the other hand, if $-2<z \leq 0$, then $1-z \geq 1$, so we use that $|\mathcal{A}'(\lambda^{2/3}(1+z))| \lesssim 1$, $|\mathcal{A}(\lambda^{2/3}(1-z))| \lesssim  \la \lambda^{2/3} \ra^{-k}$, $|\mathcal{A}(\lambda^{2/3}(1+z))| \lesssim 1$, and $|\mathcal{A}'(\lambda^{2/3}(1-z))|\lesssim \la \lambda^{2/3} \ra^{-k}$ to achieve $|B_+(\lambda, z)| \lesssim \lambda^{-1} \la \lambda^{2/3} \ra^{-k}$ for any $k\geq 0$.  Thus, the first part of the estimate \eqref{E:Best6} holds.

In the $-$ case,  if $-1\leq z \leq 1$, then both $-1+z\leq 0$ and $-1-z\leq 0$, and we only have access to the weaker leftward decay estimates for the Airy function.  The worst case arises when $z=\pm 1$.  For example, if $z=1$, then $-1+z=0$ and $-1-z=-2$, so $|\mathcal{A}'(\lambda^{2/3}(-1+z))| \lesssim 1$, $|\mathcal{A}(\lambda^{2/3}(-1-z))| \lesssim \la \lambda^{2/3} \ra^{-1/4}$, $|\mathcal{A}(\lambda^{2/3}(-1+z))| \lesssim 1$, and $|\mathcal{A}'(\lambda^{2/3}(-1-z))|\lesssim \la \lambda^{2/3} \ra^{1/4}$, which gives $|B_-(\lambda,z)| \lesssim \la \lambda^{2/3} \ra^{1/4} \sim \la \lambda \ra^{1/6}$.  The case of $z=-1$ is similar.
\end{proof}

\section{Linear solution decay estimates}\label{S-10}

For this section, we will need the following estimates.  Let
$$[\mu] =
\begin{cases} \mu & \text{if } \mu>0 \\
0+ & \text{if } \mu =0 \\
0 & \text{if }\mu<0.
\end{cases}
$$

We shall employ the two basic integral estimates \eqref{E:est1}, \eqref{E:est2} below.   Note that \eqref{E:est1} requires $x>0$ and restricts the integration to $x'<0$, but then yields a stronger bound than \eqref{E:est2} when $\sigma \gg \mu$.   In fact, \eqref{E:est1} even allows $\mu<0$.

For any $\sigma\in \mathbb{R}$, $\mu\in \mathbb{R}$, $1-\sigma<\mu$ and $x>0$
\begin{equation}
\label{E:est1}
\int_{x'=-\infty}^0 \la x - x' \ra^{-\sigma} \la x' \ra^{-\mu} \, dx' \lesssim \la x \ra^{-\sigma+[1-\mu]}.
\end{equation}

For any  $\sigma>1$, $\mu\geq 0$, and $x\in \mathbb{R}$,
\begin{equation}
\label{E:est2}
\int_{x'=-\infty}^{+\infty} \la x - x' \ra^{-\sigma} \la x' \ra^{-\mu} \, dx' \lesssim \la x \ra^{-\min(\sigma-[1-\mu],\mu)}.
\end{equation}

Let
\begin{equation}
\label{E:Phi-def}
\Phi(x,y,t) = \int A(x',y',t)\phi(x+t-x',y-y') \, dx' \, dy' = (S(t)\phi)(x,y),
\end{equation}
i.e., the unique solution to $\partial_t \Phi + \partial_x (1-\Delta_{xy})\Phi =0$ with initial condition $\Phi(x,y,0)=\phi(x,y)$.  For simplicity we have taken $x(t)=t$.

The proposition below gives rightward $x$-decay estimates for this linear solution.

\begin{proposition}[linear solution estimates]
\label{P:linear-decay}
Let   $\sigma > \frac94$, and suppose that
\begin{equation}
\label{E:decay-assumption}
\text{for }x>0, \qquad |\phi(x,y)| \leq C_1 \la x \ra^{-\sigma}
\end{equation}
and
\begin{equation}\label{E:weight-left}
\| \la x \ra^{-1} \phi(x,y) \|_{L_{y\in \mathbb{R}, x<0}^2} \leq C_1.
\end{equation}
Then for $t>0$,
$$
\text{for }x>0, \qquad |\Phi(x,y,t)| \lesssim C_1
\begin{cases}
t^{-7/12} \la x \ra^{-\sigma+\frac74} & \text{if }t<1 \\
t^{-13/12}\la x \ra^{-\tilde \sigma} & \text{if }t>1,
\end{cases}
$$
where $\tilde \sigma = \min(\frac23\sigma-\frac34, \sigma-\frac94)$.
\end{proposition}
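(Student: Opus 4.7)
The plan is to write $\Phi$ as a convolution, introduce the substitution $u = x+t-x'$, $v = y-y'$, and split the resulting integral based on (i) the sign of $x+t-u$ (which controls whether $A$ is in its strong- or weak-decay regime) and (ii) the sign of $u$ (which determines whether the pointwise decay \eqref{E:decay-assumption} or the weighted $L^2$ bound \eqref{E:weight-left} on $\phi$ applies). This yields a decomposition $\Phi = \Phi_I + \Phi_{II} + \Phi_{III}$ corresponding to $0 \leq u \leq x+t$ (both kernel and $\phi$ in their ``good'' regimes), $u > x+t$ (weak kernel decay, $\phi$ decay applies), and $u < 0$ (kernel good, weighted $L^2$ for $\phi$).

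Before estimating each piece, I will integrate Proposition \ref{P:FS-est} in the transverse variable $Y$ to produce the marginal bounds
$$\int_{\mathbb{R}} |A(X,Y,t)|\,dY \lesssim_\alpha \begin{cases} t^{-1/3}\la t^{-1/3}X\ra^{-(\alpha-1)} & X > 0, \;\alpha > 1,\\ t^{-7/12}|X|^{3/4} + t^{-1/3} & X < 0,\end{cases}$$
and the analogous $L^2_Y$ bound $\int |A(X,Y,t)|^2\,dY \lesssim t^{-1}\la t^{-1/3}X\ra^{-\alpha}$ for $X>0$, valid for any $\alpha \geq 0$. These follow from the inequality $|Y|^{3/2} \geq |X|^{3/2}(|Y|/|X|)^{3/2}$ in the far region and direct integration of a ``unified'' bound $|A(X,Y,t)| \lesssim t^{-2/3}\la t^{-1/3}(X+|Y|)\ra^{-\alpha}$ in the near region.

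Applying these to $\Phi_I$ reduces the analysis to the one-dimensional integral $\int_0^{x+t} t^{-1/3}\la t^{-1/3}(x+t-u)\ra^{-(\alpha-1)}\la u\ra^{-\sigma}\,du$, which I will bound by choosing $\alpha$ large, splitting at $u = (x+t)/2$, and using \eqref{E:est2}; this contribution is harmless compared to the claimed bound. For $\Phi_{II}$, the substitution $w = u - x - t$ gives
$$|\Phi_{II}(x,y,t)| \lesssim C_1\int_0^\infty \bigl(t^{-1/3}\mathbf{1}_{w\leq t^{1/3}} + t^{-7/12}w^{3/4}\bigr)\la x+t+w\ra^{-\sigma}\,dw,$$
and splitting $w \leq x+t$ vs.\ $w > x+t$ produces a bound of size $t^{-7/12}\la x+t\ra^{-\sigma+7/4}$ (requires $\sigma > 7/4$). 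This is exactly the advertised $t<1$ estimate, and after rewriting $\la x+t\ra^{-\sigma+7/4} = \la x+t\ra^{-\sigma+9/4}\la x+t\ra^{1/2} \leq \la x\ra^{-(\sigma-9/4)}(x+t)^{1/2}$ with $(x+t)^{1/2} \leq t^{1/2}\la x/t\ra^{1/2}$ for $t \geq 1$, it produces the $\sigma - \tfrac94$ branch of $\tilde\sigma$ together with an improvement of $t^{-1/2}$, giving $t^{-13/12}\la x\ra^{-(\sigma - 9/4)}$. For $\Phi_{III}$, Cauchy-Schwarz in $(u,v)$ against the weight $\la u \ra$ gives
$$|\Phi_{III}(x,y,t)| \leq C_1\biggl(\int_{u<0}\la u\ra^2 \int |A(x+t-u, y-v, t)|^2\,dv\,du\biggr)^{1/2};$$
using the $L^2_Y$ marginal bound and evaluating the resulting $u$-integral by \eqref{E:est2} (choosing $\alpha$ to optimize) produces the $\tfrac{2\sigma}{3} - \tfrac34$ branch of $\tilde\sigma$.

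The main obstacle will be obtaining the $t^{-13/12}$ temporal decay rate for $t > 1$ with sharp spatial dependence, which requires a careful interpolation rather than the crude $t^{-7/12}w^{3/4}$ estimate in $\Phi_{II}$. The two branches of $\tilde\sigma = \min(\tfrac{2\sigma}{3}-\tfrac34, \sigma - \tfrac94)$ emerge from the two competing mechanisms identified above ($\Phi_{II}$ and $\Phi_{III}$ respectively), and correspond to different regimes of $\sigma$ — the smaller exponent governs once $\sigma$ is large enough that the Cauchy-Schwarz $\Phi_{III}$ contribution cannot provide more decay than the free weight $\la u\ra^{-1}$ allows. Bookkeeping carefully between the two regimes $t \leq 1$ and $t \geq 1$, while ensuring $\alpha$ can be chosen independently of $x$, $y$, and $t$, will be the most delicate step.
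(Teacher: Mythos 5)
Your approach is sound and would prove the proposition (in fact a slightly stronger version of it), but it reorganizes the argument in a genuinely different way from the paper. The paper keeps the kernel bound of Proposition \ref{P:FS-est} in pointwise form $t^{-\frac23+\frac12\alpha+\frac12\beta}|x'|^{-\frac32\alpha}|y'|^{-\frac32\beta}$ and runs a case analysis over nine regions (each subdivided by $|x'|,|y'|\lessgtr 1$), choosing $(\alpha,\beta)$ separately in each; your device of first integrating out the transverse variable to get the marginal bounds $\int|A(X,Y,t)|\,dY\lesssim t^{-1/3}\la t^{-1/3}X\ra^{-(\alpha-1)}$ for $X>0$ and $t^{-7/12}|X|^{3/4}+t^{-1/3}$ for $X<0$ (and the $L^2_Y$ analogue) collapses everything to three one-dimensional convolution estimates governed by the signs of $x'$ and $x+t-x'$, which is exactly the structural trichotomy underlying the paper's Regions 1--5 vs.\ 6--9 and $x'<0$ vs.\ $x'>0$. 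Your marginal bounds are correct (I checked the near region $|Y|\le 4|X|$ contributes the stated terms and the far region $|Y|\ge 4|X|$ is dominated by them), $\Phi_{II}$ reproduces the paper's Region~2 computation and is the genuine source of both the $t^{-7/12}\la x\ra^{-\sigma+\frac74}$ bound for $t<1$ and the $\sigma-\frac94$ branch for $t>1$, and the Cauchy--Schwarz treatment of $\Phi_{III}$ matches the paper's handling of Regions 6--9. What your route buys is a cleaner proof and a sharper result: because you retain the bracket $\la t^{-1/3}X\ra$ rather than the pure power $|x'|^{-\frac32\alpha}$ (which carries a wasteful $t^{\frac12\alpha}$ prefactor), your $\Phi_I$ splitting at $u=(x+t)/2$ gives $\la x+t\ra^{-\sigma}$ plus a rapidly decaying term, improving on the paper's Regions 3 and 5, which is precisely where their $\frac23\sigma-\frac34$ branch originates. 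So with your method only the $\sigma-\frac94$ branch of $\tilde\sigma$ survives, which of course still implies the stated bound since $\tilde\sigma$ is a minimum.

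Two small corrections so you do not get derailed when writing the details. First, your attribution of the $\frac23\sigma-\frac34$ branch to $\Phi_{III}$ is wrong: taking the exponent in the $L^2_Y$ marginal bound large makes $\Phi_{III}$ decay faster than any prescribed polynomial rate for $t>1$ (after distributing the decay of $\la t^{-1/3}(x+t+|u|)\ra^{-\gamma}$ between $x$ and $t$), so no ``careful interpolation'' is needed there; that branch is an artifact of the paper's cruder handling of $0<x'<x+t$, which your $\Phi_I$ already avoids. Second, the rewriting $\la x+t\ra^{-\sigma+\frac74}=\la x+t\ra^{-\sigma+\frac94}\la x+t\ra^{1/2}$ has the wrong sign on the last exponent: it should be $\la x+t\ra^{-1/2}\le t^{-1/2}$ for $t\ge1$, which is what converts $t^{-7/12}$ into $t^{-13/12}$ and yields $t^{-13/12}\la x\ra^{-(\sigma-\frac94)}$, exactly as in the paper's remark following Region~2.
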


Note that for $\sigma> \frac92$, we have $\sigma-\frac94>\frac23\sigma-\frac34$, and thus, for $\tilde \sigma = \frac23\sigma-\frac34$.  We also remark that the time decay factor $t^{-13/12}$ for $t>1$ can be replaced by any negative power of $t$, provided the definition of $\tilde \sigma$ is suitably altered.  We chose $t^{-13/12}$, since it is $<-1$, thus, integrable (over $t\geq 1$), and this integrability is needed in the Duhamel estimates.

\begin{proof} By linearity, it suffices to assume that $C_1\leq 1$. Recall that we are assuming $x>0$ and $t>0$.    From Prop. \ref{P:FS-est}, we have
\begin{equation}
\label{E:A-decay}
|A(x',y',t)| \lesssim t^{-2/3} \lambda^{-\alpha} (\lambda |z|^{3/2})^{-\beta} = t^{-\frac23+\frac12\alpha+\frac12\beta} |x'|^{-\frac32\alpha} |y'|^{-\frac32\beta}
\end{equation}
with different constraints on the allowed values (and optimal values) of $\alpha$ and $\beta$ depending upon whether
\begin{itemize}
\item $|z|<4$ or $|z|>4$,
\item $\lambda<1$ or $\lambda>1$,
\item $x'<0$ or $x'>0$
\end{itemize}
This is summarized in the following two tables:
\bigskip

\begin{center}
\begin{tabular}{c || c | c}
$x'>0$ & $\lambda<1$ &$ \lambda>1$ \\
 \hline \hline
$|z|<4$ &$\alpha=0$, $\beta=0$ & $\alpha\geq 0$, $\beta=0$ \\
\hline
$|z|>4$ & $\alpha=0$, $\beta \geq 0$ & $\alpha\geq 0$, $\beta\geq 0$
 \end{tabular}
\quad
\begin{tabular}{c || c | c}
$x'<0$ & $\lambda<1$ &$ \lambda>1$ \\
 \hline \hline
$|z|<4$ &$\alpha=0$, $\beta=0$ & $\alpha=\frac16$, $\beta=0$ \\
\hline
$|z|>4$ & $\alpha=0$, $\beta \geq 0$ & $\alpha\geq 0$, $\beta\geq 0$
 \end{tabular}
 \end{center}
\bigskip

From \eqref{E:Phi-def}, we see that we need to further subdivide according to $-\infty<x'<x+t$ and $x'>x+t$.
When $-\infty<x'<x+t$, we have $x+t-x'>0$, and we can use \eqref{E:decay-assumption},
and when $x'>x+t$, we have $x+t-x'<0$, and we can only use that $\phi \in L^2_{xy}$.

Below our decomposition of the $(x',y')$ integration space is given as 9 different regions.  Each of the regions can be further divided according to whether $|x'|$ and $|y'|$ are $<1$ or $>1$.  We label the corresponding subregions as $--$, $-+$, $+-$, and $++$, as follows:
\begin{itemize}
\item $--$ corresponds to $|x'|<1$ and $|y'|<1$
\item $-+$ corresponds to $|x'|<1$ and $|y'|>1$
\item $+-$ corresponds to $|x'|>1$ and $|y'|<1$
\item $++$ corresponds to $|x'|>1$ and $|y'|>1$.
\end{itemize}
Thus,
$$\Phi = \Phi_1 + \cdots + \Phi_{9},$$
where $\Phi_j$ denotes the convolution integral in \eqref{E:Phi-def} restricted to the region under consideration.
We further use the decompositions
$$\Phi_j = \Phi_{j--}+\Phi_{j-+} +\Phi_{j+-}+\Phi_{j++} $$
as needed.

In Regions 1--5, we begin as follows:  From \eqref{E:Phi-def}, \eqref{E:decay-assumption}, and \eqref{E:A-decay}
\begin{equation}
\label{E:first}
|\Phi_j(x,y,t)| \lesssim t^{-\frac23+\frac12\alpha+\frac12\beta} \iint_{(x',y')\in R}  |x'|^{-\frac32\alpha} \la x+t-x' \ra^{-\sigma} |y'|^{-\frac32\beta}  \, dx' \, dy',
\end{equation}
where $R$ denotes the subregion of $(x',y')$ space under consideration.

In Regions 6--9, the decay hypothesis \eqref{E:decay-assumption} is not available, so we start from \eqref{E:Phi-def} with Cauchy-Schwarz in $(x',y')$
$$
|\Phi_j(x,y,t)| \leq \left(\iint_{(x',y')\in R} |A(x',y',t)|^2 \la x +t -x ' \ra^2 \, dx'\, dy' \right)^{1/2} \|\la x \ra^{-1} \phi(x,y)\|_{L^2_{y\in \mathbb{R}, x<0}}.
$$
Since $\| \la x \ra^{-1}\phi \|_{L^2_{y\in \mathbb{R}, x<0}} \leq 1$, this term can be dropped above and \eqref{E:A-decay} yields
\begin{equation}
\label{E:second}
|\Phi_j(x,y,t)| \leq t^{-\frac23+\frac12\alpha+\frac12\beta}  \left(\iint_{(x',y')\in R} |x'|^{-3\alpha} |y'|^{-3\beta} \la x +t -x'\ra^2 \, dx'\, dy' \right)^{1/2}.
\end{equation}
An argument used repeatedly below is
\begin{equation}
\label{E:lambda-below-1}
|x'|<t^{1/3} \implies \la x+t - x' \ra \sim \la x +t \ra.
\end{equation}
Indeed, if $t<8$, then $|x'|<t^{1/3}<2$, so $\la x+t-x' \ra \sim \la x+t\ra$.  On the other hand, if $t>8$, then $|x'|<t^{1/3} \ll t \leq x+t$, so again  $\la x+t-x' \ra \sim \la x+t\ra$.

Finally, we remark that it is Region 2 below that seems to limit $t^{-7/12}$ as the least singular power of $t$ for $0<t<1$.
\bigskip

\noindent \textbf{1. Region $x' < x+t$, $|x'|<t^{1/3}$, $|y'|<4|x'|$}.

Here, $\lambda<1$, $|z|<4$.  We take $\alpha=0$, $\beta=0$.   From \eqref{E:lambda-below-1}, we have $\la x+t -x'\ra \sim \la x+t\ra$.  Starting with \eqref{E:first}, we get
$$
|\Phi_1(x,y,t)| \lesssim  t^{-2/3} \la x+t \ra^{-\sigma} \int_{x' \,, |x'|<t^{1/3}}   \int_{y' \,, |y'|<4|x'|} \, dy' \, dx' \lesssim \la x+t \ra^{-\sigma}.
$$
\bigskip

\noindent \textbf{2. Region $x' < 0$, $|x'|>t^{1/3}$, $|y'|<4|x'|$}.

Here, $x'<0$, $\lambda>1$, $|z|<4$.  We take $\beta=0$, and are limited to $\alpha=\frac16$.  Starting with \eqref{E:first},
$$
|\Phi_2(x,y,t)| \lesssim  t^{-7/12} \int_{x' \,, -\infty<x'<0} \la x+t-x'\ra^{-\sigma}|x'|^{-1/4}  \int_{y' \,, |y'|<4|x'|} \, dy' \, dx'
$$
$$
\lesssim  t^{-7/12} \int_{x' \,, -\infty<x'<0} \la x+t-x'\ra^{-\sigma}|x'|^{3/4} \, dx'.
$$
By \eqref{E:est1} with $\mu=-\frac34$, provided $\sigma>\frac74$, we have
$$
\lesssim t^{-7/12} \la x+t \ra^{-\sigma+\frac74}.
$$
For $t\geq 1$, we note that $t^{-7/12}\la x +t \ra^{-\sigma+\frac74} \leq t^{-13/12} \la x \ra^{-\sigma+\frac94}$.

\bigskip

\noindent\textbf{3.  Region $0<x'<x+t$, $|x'|> t^{1/3}$, $|y'| < 4|x'|$}.

Here, $x'>0$, $\lambda>1$, $|z|<4$.   We take $\beta=0$.  For $|x'|<1$ (the $-*$ subregion), we take $\alpha=0$, but for $|x'|>1$ (the $+*$ subregion), we take $\alpha \gg 1$.

For $|x'|<1$, we take $\alpha=\frac16$ and use that $\la x+t-x'\ra \sim \la x+t\ra$ to obtain, starting with \eqref{E:first},
$$
|\Phi_{3-*}(x,y,t)| \lesssim t^{-\frac7{12}} \la x+t\ra^{-\sigma} \int_{x', |x'|<1} |x'|^{-1/4}  \, \int_{y', \, |y'|<4|x'|} dy' \, dx' \sim t^{-7/12} \la x+t \ra^{-\sigma}.
$$
In the case $t\geq 1$, we note that $t^{-7/12} \la x + t\ra^{-\sigma} \leq t^{-13/12} \la x \ra^{-\sigma+\frac12}$.

For $|x'|>1$, we take $\alpha \gg 1$, starting with \eqref{E:first},
$$
|\Phi_{3+*}(x,y,t)| \lesssim t^{-\frac23+\frac12\alpha} \int_{x', \,|x'|>1} |x'|^{-\frac32\alpha} \la x+t -x'\ra^{-\sigma} \,\int_{y', \, |y'|<4|x'|} dy' \, dx'
$$
$$
\lesssim t^{-\frac23+\frac12\alpha} \int_{x', \,|x'|>1} |x'|^{1-\frac32\alpha} \la x+t -x'\ra^{-\sigma} \, dx'.
$$
For $\sigma>1$, taking $\alpha = \frac23(\sigma+1)$ gives by \eqref{E:est2}
$$
\lesssim t^{-\frac13+\frac13\sigma} \la x+ t\ra^{-\sigma}.
$$
For $t\geq 1$, we decompose the exponent as $\la x+ t\ra^{-\sigma} = \la x+ t\ra^{\frac13-\frac13\sigma-\frac{13}{12}} \la x+t \ra^{\frac34-\frac23\sigma}$, and use $t\geq 1$ and $x\geq 0$ to obtain the bound by $t^{-13/12}\la x\ra^{\frac34-\frac23\sigma}$.

\bigskip

\noindent\textbf{4.  Region $-\infty<x'<x+t$, $|x'|<t^{1/3}$, and $|y'|> 4|x'|$}.

Here, $|z|>4$ and $\lambda<1$.   By \eqref{E:lambda-below-1}, we have $\la x+t-x' \ra \sim \la x+t \ra$.   We take $\alpha=0$ and any $\frac23<\beta<\frac43$ (so that $-2<-\frac32\beta<-1$) and starting from \eqref{E:first}, we have
$$
|\Phi_4(x,y,t)| \lesssim t^{-\frac23+\frac12\beta}  \la x+t \ra^{-\sigma} \int_{x' , \, |x'|<t^{1/3}}  \int_{y' ,\, |y'|>|x'|} |y'|^{-\frac32\beta}  \, dy' \, dx'
$$
$$
\lesssim t^{-\frac23+\frac12\beta}\la x+t \ra^{-\sigma} \int_{x' , \, |x'|<t^{1/3}}  |x'|^{-\frac32\beta+1}  \, dx' \lesssim  \la x+t \ra^{-\sigma},
$$
where, in the last step, we used that $-\frac32\beta+1>-1$.
\bigskip

\noindent\textbf{5.  Region $-\infty<x'<x+t$, $|x'|>t^{1/3}$, and $|y'|> 4|x'|$}.

Here, $|z|>4$ and $\lambda>1$.   Any choice of $\alpha, \beta \geq 0$ is permitted in \eqref{E:A-decay}.

For $|x'|<1$ and $|y'|<1$ (the $--$ subregion), we take $\alpha =\frac16$ and $\beta =0$.  Since $|x'|<1$, we have $\la x+t-x' \ra \sim \la x+t \ra$.  Starting from \eqref{E:first}, we get
$$
|\Phi_{5--}(x,y,t)| \lesssim t^{-\frac7{12}} \la x+t \ra^{-\sigma} \int_{x', \, |x'|<1}   |x'|^{-1/4}  \int_{y', \, |y'|<1} \, dy' \, dx' \lesssim t^{-7/12} \la x+t \ra^{-\sigma}.
$$

For $|x'|<1$ and $|y'|>1$ (the $-+$ subregion), we take $\alpha=0$ and $\beta=\frac23+$.  Since $|x'|<1$, we have $\la x+t-x' \ra \sim \la x+t\ra$.  Starting from \eqref{E:first},
$$
|\Phi_{5-+}(x,y,t)| \lesssim t^{-\frac23+\frac12\beta} \la x+t\ra^{-\sigma} \int_{x'\,, |x'|<1}  \, dx' \int_{y'\,, |y'|>1} |y'|^{-\frac32\beta}  \, dy'   \lesssim t^{-\frac13+} \la x+t\ra^{-\sigma}.
$$

For $|x'|>1$ and $|y'|<1$ (the $+-$ subregion), we take $\alpha\gg 1$ and $\beta =0$.   Starting from \eqref{E:first},
$$
|\Phi_{5+-}(x,y,t)| \lesssim t^{-\frac23+\frac12\alpha} \int_{x'\,, |x'|>1}  |x'|^{-\frac32\alpha} \la x+t-x' \ra^{-\sigma} \, dx' \int_{y', \, |y'|<1} \, dy'.
$$
For $\sigma>1$, take  $\alpha = \frac23\sigma$ and apply \eqref{E:est2} to obtain
$$
\lesssim t^{-\frac23+\frac13\sigma} \la x+t \ra^{-\sigma}.
$$
For $t\geq 1$, we  decompose as $\la x+t\ra^{-\sigma} = \la x+t \ra^{\frac23-\frac13\sigma-\frac{13}{12}} \la x+t\ra^{\frac{5}{12}-\frac23\sigma}$, and using $x\geq 0$, we obtain the bound $t^{-13/12}\la x \ra^{\frac{5}{12}-\frac23\sigma}$.

For $|x'|>1$ and $|y'|>1$ (the $++$ subregion), we take $\alpha \gg 1$ and any $\frac23<\beta<\frac43$ (so that $-2<-\frac32\beta<-1$ and $-1<-\frac32\beta+1<0$).  Starting from \eqref{E:first}, we obtain
$$
|\Phi_{5++}(x,y,t)| \lesssim t^{-\frac23+\frac12\alpha+\frac12\beta} \int_{x',\, |x'|>1} |x'|^{-\frac32\alpha} \la x+t-x' \ra^{-\sigma} \int_{y', \, |y'|>\max(1,|x'|)} |y'|^{-\frac32\beta}  \, dy' \, dx'
$$
$$
\lesssim t^{-\frac23+\frac12\alpha+\frac12\beta} \int_{x',\, |x'|>1} |x'|^{1-\frac32\alpha-\frac32\beta} \la x+t-x' \ra^{-\sigma}  \, dx'.
$$
Take $\alpha$ so that $\alpha+\beta = \frac23(\sigma+1)$ (and hence $1-\frac32\alpha-\frac32\beta = -\sigma$) and apply \eqref{E:est2} to obtain
$$
\lesssim t^{-\frac13+\frac13\sigma} \la x+t\ra^{-\sigma}.
$$
For $t\geq 1$, decompose $\la x+ t\ra^{-\sigma} = \la x+t \ra^{\frac13-\frac13\sigma-\frac{13}{12}} \la x + t\ra^{\frac34-\frac23\sigma}$ to obtain the bound $t^{-13/12}\la x \ra^{\frac34-\frac23\sigma}$.
\bigskip

{Recall that for Regions 6--9 below, we must use \eqref{E:second}.}  Thus, we need to recover the $\la x + t \ra^{-\sigma}$ decay factor from the constraint $x'>x+t$ and the decay on $A(x',y',t)$.
\bigskip

\noindent\textbf{6.  Region $x'>x+t$, $|x'|<t^{1/3}$, and $|y'|< 4|x'|$}.

Here, $|z|<4$ and $\lambda<1$, so $\alpha=0$ and $\beta=0$.    Note that the constraints imply that $x+t<t^{1/3}$.  Since $x+t<t^{1/3}$, it follows that $t<t^{1/3}$, from which we conclude that $t<1$.  Also from $x+t<t^{1/3}$, we conclude that $x<t^{1/3}$, and since $t<1$, this implies $x<1$.  Consequently, $\la x +t -x' \ra \sim 1$.

Starting from \eqref{E:second}, we obtain
$$
|\Phi_6(x,y,t)| \lesssim t^{-2/3}  \left( \int_{x' , \, |x'|<t^{1/3}} \int_{y' , \, |y'|<4|x'|} \, dy' \, dx' \right)^{1/2} \lesssim t^{-1/3}.
$$
\bigskip

\noindent\textbf{7.  Region $x'>x+t$, $|x'|>t^{1/3}$, and $|y'|< 4|x'|$}.

Here, $|z|<4$ and $\lambda>1$, so we take $\beta=0$, and we are allowed any $\alpha \geq 0$.

If $|x'|<1$, we take $\alpha=\frac16$.  Since $|x'|<1$, we have $x+t<1$, so $\la x+t-x'\ra \sim 1$, $x\leq 1$, and $t\leq 1$.   By \eqref{E:second},
$$
|\Phi_{7-*}(x,y,t)| \lesssim t^{-\frac7{12}} \left( \int_{x',\, |x'|<1} |x'|^{-1/2} \int_{y', \, |y'|<4|x'|} \, dy'  \, dx' \right)^{1/2} \lesssim t^{-7/12}
$$

If $|x'|>1$, we take $\alpha \gg 1$.  Starting from \eqref{E:second}, we get
$$
|\Phi_{7+*}(x,y,t)| \lesssim t^{-\frac23+\frac12\alpha} \left( \int_{x',\, x'>\max(1,x+t)} |x'|^{-3\alpha} \la x + t-x' \ra^2  \int_{y',\, |y'|<4|x'|} \, dy' \, dx' \right)^{1/2}
$$
By changing variable $\tilde x = x+t -x'$,
$$
 \lesssim t^{-\frac23+\frac12\alpha} \left( \int_{\tilde x<0} \la x+t-\tilde x\ra^{-3\alpha+1} \la \tilde x \ra^2 \, d\tilde x \right)^{1/2}
$$
By \eqref{E:est1},
$$
\lesssim t^{-\frac23+\frac12\alpha} \la x + t \ra^{-\frac32\alpha+2}.
$$
With $\alpha = \frac23\sigma+\frac43$, this becomes
$$
\lesssim t^{\frac13\sigma} \la x +t \ra^{-\sigma},
$$
which is suitable for $t<1$.  For $t>1$, first decompose $\la x+ t\ra^{-\frac32\alpha+2} = \la x + t \ra^{\frac23-\frac12\alpha-\frac{13}{12}} \la x + t\ra^{2+\frac5{12}-\alpha}$, which gives a bound by $t^{-13/12}\la x \ra^{2+\frac5{12}-\alpha}$.  We can then take $\alpha = \sigma+2+\frac{5}{12}$.
\bigskip

\noindent\textbf{8.  Region $x'>x+t$, $|x'|<t^{1/3}$, and $|y'|> 4|x'|$}.

Here, $|z|>4$ and $\lambda<1$, so we take $\alpha=0$.
Note that the constraints imply that $x+t<t^{1/3}$.  Since $x+t<t^{1/3}$, it follows that $t<t^{1/3}$, from which we conclude that $t<1$.  Also from $x+t<t^{1/3}$, we conclude that $x<t^{1/3}$ and since $t<1$, this implies $x<1$.  Consequently, $\la x+t -x'\ra \sim 1$.

For $|y'|<1$ we take $\beta =0$.  From \eqref{E:second}, we have
$$
|\Phi_{8*-}(x,y,t)| \lesssim t^{-\frac23} \left( \int_{x' ,\, |x'|<t^{1/3}} \int_{y',\, |y'|<1}\, dy' \, dx' \right)^{1/2} \lesssim t^{-1/2}.
$$
For $|y'|>1$, we take  any $\frac13<\beta<\frac23$ (so that $-3\beta<-1$ and $-1<-3\beta+1$).   From \eqref{E:second}, we have
$$
|\Phi_{8*+}(x,y,t)| \lesssim t^{-\frac23+\frac12\beta} \left( \int_{x' ,\, |x'|<t^{1/3}} \int_{y',\, |y'|>\max(1,|x'|)} |y'|^{-3\beta} \, dy' \, dx' \right)^{1/2}
$$
$$
\lesssim t^{-\frac23+\frac12\beta} \left( \int_{x' ,\, |x'|<t^{1/3}}  |x'|^{-3\beta+1}  \, dx' \right)^{1/2} \lesssim t^{-\frac13}.
$$
\bigskip

\noindent\textbf{9.  Region $x'>x+t$, $|x'|>t^{1/3}$, and $|y'|> 4|x'|$}.

Here, $|z|>4$ and $\lambda>1$.

For $|x'|<1$ and $|y'|<1$ (the $--$ subregion), we take $\alpha=\frac16$ and $\beta =0$.  Since $|x'|<1$, it follows that $x+t\leq 1$, and thus, $\la x+t -x'\ra \sim 1$.  From \eqref{E:second}, we have
$$
|\Phi_{9++}(x,y,t)| \lesssim t^{-7/12} \left( \int_{x',\, |x'|<1} |x'|^{-1/2}  \, dx' \int_{y',\, |y'|<1} \, dy' \right)^{1/2}\lesssim t^{-7/12}.
$$

For $|x'|<1$ and $|y'|>1$ (the $-+$ subregion), we take $\alpha=0$ and $\beta = \frac13+$.    Since $|x'|<1$, it follows that $x+t\leq 1$ and hence $\la x+t -x'\ra \sim 1$.  From \eqref{E:second}, we have
$$
|\Phi_{9-+}(x,y,t)| \lesssim t^{-\frac23+\frac12\beta} \left( \int_{x',\, |x'|<1} \, dx' \int_{y',\, |y'|>1} |y'|^{-3\beta} \, dy' \right)^{1/2}\lesssim t^{-\frac23+\frac12\beta} =t^{-\frac12+}.
$$

For $|x'|>1$ and $|y'|<1$ (the $+-$ subregion) , we take $\alpha \gg 1$ and $\beta=0$.  From \eqref{E:second}, we have
$$
|\Phi_{9+-}(x,y,t)| \lesssim t^{-\frac23+\frac12\alpha} \left( \int_{x',\, x'>\max(x+t,1)} |x'|^{-3\alpha}\la x +t -x' \ra^2 \, dx' \int_{y',\, |y'|<1} \, dy' \right)^{1/2}
$$
By the change of variable $\tilde x = x+t -x'$,
$$
 \lesssim t^{-\frac23+\frac12\alpha} \left( \int_{\tilde x<0} \la x+t-\tilde x\ra^{-3\alpha} \la \tilde x \ra^2 \, d\tilde x  \right)^{1/2}
$$
By \eqref{E:est1},
$$
\lesssim t^{-\frac23+\frac12\alpha} \la x+t \ra^{-\frac32\alpha+\frac32}.
$$
For $t\leq 1$, we take $\alpha = \frac23\sigma+1$ to obtain
$$
\lesssim t^{-\frac16+\frac13\sigma} \la x+t \ra^{-\sigma}.
$$
For $t>1$, we decompose the exponent $-\frac32\alpha+\frac32= (-\frac12\alpha+\frac23 -\frac{13}{12}) + (- \alpha+\frac{23}{12})$ and use $x\geq 0$ to obtain the bound $t^{-13/12} \la x\ra^{-\alpha + \frac{23}{12}}$.  Then set $\alpha = \sigma +\frac{23}{12}$ to obtain the bound $t^{-13/12}\la x\ra^{-\sigma}$.

For $|x'|>1$ and $|y'|>1$ (the $++$ subregion) , we take $\alpha \gg 1$ and $\beta = \frac13+$.  From \eqref{E:second}, we have
$$
|\Phi_{9++}(x,y,t)| \lesssim t^{-\frac23+\frac12\alpha+\frac12\beta} \left( \int_{x',\, x'>\max(x+t,1)} |x'|^{-3\alpha}\la x +t-x'\ra^2 \int_{y',\, |y'|>\max(1,|x'|)} |y'|^{-3\beta} \, dy' \, dx' \right)^{1/2}
$$
$$
\lesssim t^{-\frac23+\frac12\alpha+\frac12\beta} \left( \int_{x',\, x'>\max(x+t,1)} |x'|^{-3\alpha-3\beta+1} \la x+t-x'\ra^2 \, dx' \right)^{1/2}
$$
By changing variable $\tilde x = x+t-x'$,
$$
\lesssim t^{-\frac23+\frac12\alpha+\frac12\beta} \left( \int_{\tilde x<0} \la x+t-\tilde x \ra^{-3\alpha-3\beta+1} \la \tilde x\ra^2 \, d\tilde x \right)^{1/2}
$$
By \eqref{E:est1},
$$
\lesssim t^{-\frac23+\frac12\alpha+\frac12\beta} \la x+t \ra^{-\frac32\alpha-\frac32\beta+2}.
$$
For $t<1$, take  $\alpha$ such that $\frac32\alpha+\frac32\beta-2=\sigma$, which gives
$$
\lesssim t^{\frac13\sigma} \la x+t \ra^{-\sigma}.
$$
For $t>1$, we first decompose the exponent as $-\frac32\alpha-\frac32\beta+2=(\frac23-\frac12\alpha-\frac12\beta-\frac{13}{12}) + (-\alpha-\beta+\frac{29}{12})$, and use $x\geq 0$ to obtain the bound $t^{-13/12}\la x\ra^{-\alpha-\beta+\frac{29}{12}}$.  Then select $\alpha$ so that $\alpha+\beta-\frac{29}{12} = \sigma$ to obtain the bound by $t^{-13/12} \la x \ra^{-\sigma}$.
\end{proof}

Now we consider
\begin{equation}
\label{E:Phi-def-p}
\Phi_x(x,y,t) = \int A_x(x',y',t)\phi(x+t-x',y-y') \, dx' \, dy' = (\partial_x S(t)\phi)(x,y),
\end{equation}
i.e., the $x$-derivative of the unique solution to $\partial_t \Phi + \partial_x (1-\Delta_{xy})\Phi =0$ with initial condition $\Phi(x,y,0)=\phi(x,y)$.

\begin{proposition}[derivative linear solution estimates]
\label{P:derivative-linear-decay}
Let  $\sigma > \frac94$, and suppose that
\begin{equation}
\label{E:decay-assumption-p}
\text{for }x>0, \qquad |\phi(x,y)| \leq C_1 \la x \ra^{-\sigma}
\end{equation}
and
\begin{equation*}
\| \la x \ra^{-1} \phi(x,y) \|_{L^2_{y\in \mathbb{R}, x<0}} \leq C_1.
\end{equation*}
Then for $t>0$,
$$
\text{for }x>0, \qquad |\Phi_x(x,y,t)| \lesssim C_1 t^{-13/12}
\begin{cases}
\la x \ra^{-\sigma+\frac94} & \text{if } t<1 \\
\la x \ra^{-\tilde \sigma} & \text{if }t>1
\end{cases}
$$
Here, $\tilde \sigma=\min(\sigma-\frac94, \frac23\sigma-\frac5{12})$.  Note that if $\sigma>\frac{11}{2}$, then $\sigma-\frac94> \tilde \sigma$.
\end{proposition}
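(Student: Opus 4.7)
\textit{Proof proposal.} The plan is to follow the proof of Proposition \ref{P:linear-decay} almost verbatim, but with the fundamental solution estimate from Proposition \ref{P:FS-est} replaced by its $x$-derivative analogue Proposition \ref{P:dFS-est}. Writing
\[
\Phi_x(x,y,t)=\iint A_x(x',y',t)\,\phi(x+t-x',\,y-y')\,dx'\,dy',
\]
we partition the $(x',y')$-plane into the same nine regions used there, distinguishing $-\infty<x'<x+t$ versus $x'>x+t$, $|x'|\lessgtr t^{1/3}$ (equivalently $\lambda\lessgtr 1$), and $|y'|\lessgtr 4|x'|$ (equivalently $|z|\lessgtr 4$), and within each region further according to whether $|x'|$, $|y'|$ are $<1$ or $>1$. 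In each subregion we bound $|A_x|$ using Proposition \ref{P:dFS-est}, choosing the free exponents $\alpha,\beta$ exactly as in the corresponding subregion of the proof of Proposition \ref{P:linear-decay}. When $x'<x+t$ we control $\phi$ by the pointwise bound \eqref{E:decay-assumption-p}; when $x'>x+t$ we use Cauchy--Schwarz in $(x',y')$ together with \eqref{E:weight-left}, so that the extra factor $\la x+t-x'\ra$ is absorbed by the strong $|x'|^{-\frac32\alpha}$ decay.

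The essential new feature, relative to Proposition \ref{P:linear-decay}, occurs in what was Region 2: $x'<0$, $|x'|>t^{1/3}$, $|y'|<4|x'|$, where $\lambda>1$ and $|z|<4$ with $x'<0$. Here Proposition \ref{P:dFS-est} yields only $|A_x|\lesssim t^{-1}\la\lambda\ra^{1/6}=t^{-13/12}|x'|^{1/4}$, and integrating trivially over $|y'|<4|x'|$ produces
\[
|\Phi_{x,2}(x,y,t)|\lesssim t^{-13/12}\int_{-\infty}^{0}\la x+t-x'\ra^{-\sigma}|x'|^{5/4}\,dx'.
\]
Applying \eqref{E:est1} with $\mu=-\tfrac54$, which forces the threshold $\sigma>\tfrac94$, gives $t^{-13/12}\la x+t\ra^{-\sigma+9/4}$. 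Since $-\sigma+\tfrac94<0$, this is dominated by $t^{-13/12}\la x\ra^{-\sigma+9/4}$ for all $t>0$, producing both the $t<1$ claim and the $\tilde\sigma=\sigma-\tfrac94$ branch for $t\geq 1$.

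The remaining regions require only a mechanical modification of the argument for Proposition \ref{P:linear-decay}: every time-prefactor $t^{-\frac23+\frac12\alpha+\frac12\beta}$ there is replaced here by $t^{-1+\frac12\alpha+\frac12\beta}$, shifting each final $t$-exponent by $-\tfrac13$. For $t\geq 1$, the sharp spatial decay $\la x\ra^{\frac{5}{12}-\frac23\sigma}$ arises exactly as in Regions 3+*, 5++, 7+*, 9++ of Proposition \ref{P:linear-decay} by taking $\alpha$ (and $\beta$) so that $-\tfrac32\alpha+1=-\sigma$ or $-\tfrac32\alpha-\tfrac32\beta+2=-\sigma$, followed by the interpolation $\la x+t\ra^{-\sigma}\leq t^{-\gamma}\la x\ra^{-(\sigma-\gamma)}$ chosen so that the new time prefactor becomes exactly $t^{-13/12}$, which gives $\tilde\sigma=\min\bigl(\sigma-\tfrac94,\ \tfrac23\sigma-\tfrac{5}{12}\bigr)$. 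For $t<1$, since $-\tfrac{13}{12}<-1<-\tfrac23$, every region's time factor is bounded by $t^{-13/12}$, and its spatial decay is no worse than $\la x\ra^{-\sigma+9/4}$, so no improvement over Region 2 is needed. The only real obstacle is the careful bookkeeping in Region 2, where the loss $\la\lambda\ra^{+1/6}$ (instead of the gain $\la\lambda\ra^{-1/6}$ enjoyed by $A$) is what forces $\sigma>\tfrac94$ and the spatial exponent $-\sigma+\tfrac94$; the remaining case-by-case verification is a direct transcription of the proof of Proposition \ref{P:linear-decay}.
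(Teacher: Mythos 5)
Your proposal is correct and follows essentially the same route as the paper: the paper's proof of Proposition \ref{P:derivative-linear-decay} is a near-verbatim transcription of the nine-region argument for Proposition \ref{P:linear-decay}, with the kernel bound replaced by Proposition \ref{P:dFS-est}, and the decisive new point is exactly the one you isolate — in Region~2 the factor $\la\lambda\ra^{+1/6}$ forces $\alpha=-\tfrac16$, hence $|x'|^{5/4}$ in \eqref{E:est1}, the threshold $\sigma>\tfrac94$, and the spatial loss $\la x\ra^{-\sigma+\frac94}$. The only cosmetic difference is that the paper tweaks a few subregion exponents (e.g.\ taking $\alpha=0$ rather than $\alpha=\tfrac16$ in Regions 3$-*$, 5$--$, 7$-*$, 9$--$), which does not change the outcome.
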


\begin{proof} By linearity, it suffices to assume that $C_1\leq 1$ and $\|\phi\|_{L^2_{xy}}\leq 1$.
 Recall that we are assuming $x>0$ and $t>0$.    From Prop. \ref{P:dFS-est}, we have
\begin{equation}
\label{E:A-decay-p}
|A_x(x',y',t)| \lesssim t^{-1} \lambda^{-\alpha} (\lambda |z|^{3/2})^{-\beta} = t^{-1+\frac12\alpha+\frac12\beta} |x'|^{-\frac32\alpha} |y'|^{-\frac32\beta}
\end{equation}
with different constraints on the allowed values (and optimal values) of $\alpha$ and $\beta$ depending upon whether
\begin{itemize}
\item $|z|<4$ or $|z|>4$,
\item $\lambda<1$ or $\lambda>1$,
\item $x'<0$ or $x'>0$.
\end{itemize}
This is summarized in the following two tables:
\bigskip

\begin{center}
\begin{tabular}{c || c | c}
$x'>0$ & $\lambda<1$ &$ \lambda>1$ \\
 \hline \hline
$|z|<4$ &$\alpha=0$, $\beta=0$ & $\alpha\geq 0$, $\beta=0$ \\
\hline
$|z|>4$ & $\alpha=0$, $\beta \geq 0$ & $\alpha\geq 0$, $\beta\geq 0$
 \end{tabular}
\quad
\begin{tabular}{c || c | c}
$x'<0$ & $\lambda<1$ &$ \lambda>1$ \\
 \hline \hline
$|z|<4$ &$\alpha=0$, $\beta=0$ & $\alpha=-\frac16$, $\beta=0$ \\
\hline
$|z|>4$ & $\alpha=0$, $\beta \geq 0$ & $\alpha\geq 0$, $\beta\geq 0$
 \end{tabular}
 \end{center}
\bigskip

From \eqref{E:Phi-def}, we see that we need to further subdivide according to $-\infty<x'<x+t$ and $x'>x+t$.  When $-\infty<x'<x+t$, we have $x+t-x'>0$ and we can use \eqref{E:decay-assumption}, and when $x'>x+t$, we have $x+t-x'<0$ and we can only use that $\phi \in L^2_{xy}$.

Below our decomposition of the $(x',y')$ integration space is given as 9 different regions.  Each of the regions can be further divided according to whether $|x'|$ and $|y'|$ are $<1$ or $>1$.  We label the corresponding subregions as $--$, $-+$, $+-$, and $++$, as follows:
\begin{itemize}
\item $--$ corresponds to $|x'|<1$ and $|y'|<1$
\item $-+$ corresponds to $|x'|<1$ and $|y'|>1$
\item $+-$ corresponds to $|x'|>1$ and $|y'|<1$
\item $++$ corresponds to $|x'|>1$ and $|y'|>1$.
\end{itemize}
Thus,
$$
\Phi = \Phi_1 + \cdots + \Phi_{9},
$$
where $\Phi_j$ denotes the convolution integral in \eqref{E:Phi-def} restricted to the region under consideration.
We further use the decompositions
$$
\Phi_j = \Phi_{j--}+\Phi_{j-+} +\Phi_{j+-}+\Phi_{j++}
$$
as needed.

In Regions 1--5, we begin as follows:  From \eqref{E:Phi-def}, \eqref{E:decay-assumption}, and \eqref{E:A-decay}
\begin{equation}
\label{E:first-p}
|\Phi_j(x,y,t)| \lesssim t^{-1+\frac12\alpha+\frac12\beta} \iint_{(x',y')\in R}  |x'|^{-\frac32\alpha} \la x+t-x' \ra^{-\sigma} |y'|^{-\frac32\beta}  \, dx' \, dy',
\end{equation}
where $R$ denotes the subregion of $(x',y')$ space under consideration.

In Regions 6--9, the decay hypothesis \eqref{E:decay-assumption} is not available, so we start from \eqref{E:Phi-def} with Cauchy-Schwarz in $(x',y')$
$$
|\Phi_j(x,y,t)| \leq \left(\iint_{(x',y')\in R} |A(x',y',t)|^2 \la x +t-x' \ra^2 \, dx'\, dy' \right)^{1/2} \|\la x \ra^{-1} \phi\|_{L^2_{xy}}.
$$
Since $\| \la x \ra^{-1} \phi \|_{L^2_{xy}} \leq 1$, this term can be dropped above and \eqref{E:A-decay} yields
\begin{equation}
\label{E:second-p}
|\Phi_j(x,y,t)| \leq t^{-1+\frac12\alpha+\frac12\beta}  \left(\iint_{(x',y')\in R} |x'|^{-3\alpha} |y'|^{-3\beta} \la x+t-x'\ra^2 \, dx'\, dy' \right)^{1/2}.
\end{equation}
An argument used repeatedly below is
\begin{equation}
\label{E:lambda-below-1-p}
|x'|<t^{1/3} \implies \la x+t - x' \ra \sim \la x +t \ra.
\end{equation}
Indeed, if $t<8$, then $|x'|<t^{1/3}<2$, so $\la x+t-x' \ra \sim \la x+t\ra$.  On the other hand, if $t>8$, then $|x'|<t^{1/3} \ll t \leq x+t$, so again  $\la x+t-x' \ra \sim \la x+t\ra$.

Another frequently employed inequality is for $\mu, \mu_1, \mu_2 \geq 0$ with $\mu_1+\mu_2=\mu$,
\begin{equation}
\label{E:t-above-1-p}
\la x + t\ra^{-\mu} \lesssim \la x \ra^{-\mu_1}\la t \ra^{-\mu_2},
\end{equation}
which is straightforward, since we are assuming $x\geq 0$ and $t\geq 0$.
\bigskip

\noindent \textbf{1. Region $x' < x+t$, $|x'|<t^{1/3}$, $|y'|<4|x'|$}.

Here, $\lambda<1$, $|z|<4$.  We take $\alpha=0$, $\beta=0$.   From \eqref{E:lambda-below-1-p}, we have $\la x+t -x'\ra \sim \la x+t\ra$.  Starting with \eqref{E:first-p}, we get
$$
|\Phi_1(x,y,t)| \lesssim  t^{-1} \la x+t \ra^{-\sigma} \int_{x' \,, |x'|<t^{1/3}}   \int_{y' \,, |y'|<4|x'|} \, dy' \, dx' \lesssim t^{-1/3} \la x+t \ra^{-\sigma} \lesssim  t^{-13/12} \la x\ra^{-\sigma+\frac34}.
$$
\bigskip

\noindent\textbf{2. Region $x' < 0$, $|x'|>t^{1/3}$, $|y'|<4|x'|$}.

Here, $x'<0$, $\lambda>1$, $|z|<4$.  We take $\beta=0$, and we are limited to $\alpha=-\frac16$.  Starting with \eqref{E:first-p},
$$
|\Phi_2(x,y,t)| \lesssim  t^{-13/12} \int_{x' \,, -\infty<x'<0} \la x+t-x'\ra^{-\sigma}|x'|^{1/4}  \int_{y' \,, |y'|<4|x'|} \, dy' \, dx'
$$
$$
\lesssim  t^{-13/12} \int_{x' \,, -\infty<x'<0} \la x+t-x'\ra^{-\sigma}|x'|^{5/4} \, dx'.
$$
By \eqref{E:est1} with $\mu=-\frac54$, provided $\sigma>\frac94$, we have
$$
\lesssim t^{-13/12} \la x+t \ra^{-\sigma+\frac94}.
$$
\bigskip

\noindent\textbf{3.  Region $0<x'<x+t$, $|x'|> t^{1/3}$, $|y'| < 4|x'|$}.

Here, $x'>0$, $\lambda>1$, $|z|<4$.  We take $\beta=0$.  For $|x'|<1$ (the $-*$ subregion), we take $\alpha=0$, but for $|x'|>1$ (the $+*$ subregion), we take $\alpha \gg 1$.

For $|x'|<1$, we take $\alpha=0$ and use that $\la x+t-x'\ra \sim \la x+t\ra$ to obtain, starting with \eqref{E:first-p},
$$
|\Phi_{3-*}(x,y,t)| \lesssim t^{-1} \la x+t\ra^{-\sigma} \int_{x', |x'|<1}   \, \int_{y', \, |y'|<4|x'|} dy' \, dx' \sim t^{-1} \la x+t \ra^{-\sigma} \lesssim t^{-\frac{13}{12}} \la x \ra^{-\sigma+\frac1{12}}.
$$

For $|x'|>1$, we take $\alpha \gg 1$, and starting with \eqref{E:first-p}, we get
$$
|\Phi_{3+*}(x,y,t)| \lesssim t^{-1+\frac12\alpha} \int_{x', \,|x'|>1} |x'|^{-\frac32\alpha} \la x+t -x'\ra^{-\sigma} \,\int_{y', \, |y'|<4|x'|} dy' \, dx'
$$
$$
\lesssim t^{-1+\frac12\alpha} \int_{x', \,|x'|>1} |x'|^{1-\frac32\alpha} \la x+t -x'\ra^{-\sigma} \, dx'.
$$
For $\sigma>1$, taking $\alpha = \frac23(\sigma+1)$ gives by \eqref{E:est2}
$$
\lesssim t^{-\frac23+\frac13\sigma} \la x+ t\ra^{-\sigma}.
$$
For $t\geq 1$, we apply \eqref{E:t-above-1-p} with $\mu=\sigma$, $\mu_1= \frac23\sigma-\frac5{12}$, and $\mu_2 = \frac13\sigma+\frac{5}{12}$, we obtain
$$
\lesssim  t^{-13/12} \la x\ra^{-\frac23\sigma+\frac{5}{12}}.
$$
\bigskip

\noindent\textbf{4.  Region $-\infty<x'<x+t$, $|x'|<t^{1/3}$, and $|y'|> 4|x'|$}.

Here, $|z|>4$ and $\lambda<1$.   By \eqref{E:lambda-below-1-p}, we have $\la x+t-x' \ra \sim \la x+t \ra$.   We take $\alpha=0$ and any $\frac23<\beta<\frac43$ (so that $-2<-\frac32\beta<-1$) and starting from \eqref{E:first-p}, we have
$$
|\Phi_4(x,y,t)| \lesssim t^{-1+\frac12\beta}  \la x+t \ra^{-\sigma} \int_{x' , \, |x'|<t^{1/3}}  \int_{y' ,\, |y'|>|x'|} |y'|^{-\frac32\beta}  \, dy' \, dx'
$$
$$
\lesssim t^{-1+\frac12\beta}\la x+t \ra^{-\sigma} \int_{x' , \, |x'|<t^{1/3}}  |x'|^{-\frac32\beta+1}  \, dx' \lesssim  t^{-1/3}\la x+t \ra^{-\sigma},
$$
where, in the last step, we used that $-\frac32\beta+1>-1$.  By \eqref{E:t-above-1-p} with $\mu=\sigma$, $\mu_1=\sigma-\frac34$, $\mu_2=\frac34$, we obtain
$$
\lesssim t^{-13/12} \la x \ra^{-\sigma+\frac34}.
$$
\bigskip

\noindent\textbf{5.  Region $-\infty<x'<x+t$, $|x'|>t^{1/3}$, and $|y'|> 4|x'|$}.

Here, $|z|>4$ and $\lambda>1$.   Any choice of $\alpha, \beta \geq 0$ is permitted in \eqref{E:A-decay-p}.

For $|x'|<1$ and $|y'|<1$ (the $--$ subregion), we take $\alpha = 0$ and $\beta =0$.  Since $|x'|<1$, we have $\la x+t-x' \ra \sim \la x+t \ra$.  Starting from \eqref{E:first-p}, we obtain
$$
|\Phi_{5--}(x,y,t)| \lesssim t^{-1} \la x+t \ra^{-\sigma} \int_{x', \, |x'|<1}    \int_{y', \, |y'|<1} \, dy' \, dx' \lesssim t^{-1} \la x+t \ra^{-\sigma}\lesssim t^{-13/12}\la x \ra^{-\sigma+\frac1{12}}.
$$

For $|x'|<1$ and $|y'|>1$ (the $-+$ subregion), we take $\alpha=0$ and $\beta=\frac23+$.  Since $|x'|<1$, we have $\la x+t-x' \ra \sim \la x+t\ra$.  Starting from \eqref{E:first-p}, we have
$$
|\Phi_{5-+}(x,y,t)| \lesssim t^{-1+\frac12\beta} \la x+t\ra^{-\sigma} \int_{x'\,, |x'|<1}  \, dx' \int_{y'\,, |y'|>1} |y'|^{-\frac32\beta}  \, dy'   \lesssim t^{-\frac23+} \la x+t\ra^{-\sigma}.
$$
By \eqref{E:t-above-1-p} with $\mu = \sigma$, $\mu_1 = \sigma-\frac{5}{12}-$, $\mu_2=\frac{5}{12}+$, we obtain
$$
\lesssim t^{-13/12} \la x \ra^{-\sigma-\frac{5}{12}-}.
$$

For $|x'|>1$ and $|y'|<1$ (the $+-$ subregion), we take $\alpha\gg 1$ and $\beta =0$.   Starting from \eqref{E:first-p}, we get
$$
|\Phi_{5+-}(x,y,t)| \lesssim t^{-1+\frac12\alpha} \int_{x'\,, |x'|>1}  |x'|^{-\frac32\alpha} \la x+t-x' \ra^{-\sigma} \, dx' \int_{y', \, |y'|<1} \, dy'.
$$
For $\sigma>1$, take  $\alpha = \frac23\sigma$ and apply \eqref{E:est2} to obtain
$$
\lesssim t^{-1+\frac13\sigma} \la x+t \ra^{-\sigma}.
$$
For $t\geq 1$, we apply \eqref{E:t-above-1-p} with $\mu=\sigma$, $\mu_1=\frac23\sigma-\frac{1}{12}$, $\mu_2=\frac13\sigma+\frac1{12}$ to obtain
$$
\lesssim t^{-13/12} \la x \ra^{-\frac23\sigma+\frac1{12}}.
$$

For $|x'|>1$ and $|y'|>1$ (the $++$ subregion), we take $\alpha \gg 1$ and any $\frac23<\beta<\frac43$ (so that $-2<-\frac32\beta<-1$ and $-1<-\frac32\beta+1<0$).  Starting from \eqref{E:first-p}, we get
$$
|\Phi_{5++}(x,y,t)| \lesssim t^{-1+\frac12\alpha+\frac12\beta} \int_{x',\, |x'|>1} |x'|^{-\frac32\alpha} \la x+t-x' \ra^{-\sigma} \int_{y', \, |y'|>\max(1,|x'|)} |y'|^{-\frac32\beta}  \, dy' \, dx'
$$
$$
\lesssim t^{-1+\frac12\alpha+\frac12\beta} \int_{x',\, |x'|>1} |x'|^{1-\frac32\alpha-\frac32\beta} \la x+t-x' \ra^{-\sigma}  \, dx'.
$$
Take $\alpha$ so that $\alpha+\beta = \frac23(\sigma+1)$ (and hence $1-\frac32\alpha-\frac32\beta = -\sigma$) and apply \eqref{E:est2} to obtain
$$
\lesssim t^{-\frac23+\frac13\sigma} \la x+t\ra^{-\sigma}.
$$
For $t\geq 1$, apply \eqref{E:t-above-1-p} with $\mu= \sigma$, $\mu_1 = \frac23\sigma-\frac5{12}$, $\mu_2=\frac13\sigma+\frac5{12}$ to obtain the bound of
$$
\lesssim t^{-13/12} \la x \ra^{-\frac23\sigma+\frac{5}{12}}.
$$
\bigskip

{Recall that for Regions 6--9 below, we must use \eqref{E:second}.}  Thus, we need to recover the $\la x + t \ra^{-\sigma}$ decay factor from the constraint $x'>x+t$ and the decay on $A(x',y',t)$.

\bigskip

\noindent\textbf{6.  Region $x'>x+t$, $|x'|<t^{1/3}$, and $|y'|< 4|x'|$}.

Here, $|z|<4$ and $\lambda<1$, so $\alpha=0$ and $\beta=0$.    Note that the constraints imply that $x+t<t^{1/3}$.  Since $x+t<t^{1/3}$, it follows that $t<t^{1/3}$, from which we conclude that $t<1$.  Also from $x+t<t^{1/3}$, we conclude that $x<t^{1/3}$, and since $t<1$, this implies $x<1$.  Consequently, $\la x+t -x' \ra \sim 1$.

Starting from \eqref{E:second-p}, we obtain
$$
|\Phi_6(x,y,t)| \lesssim t^{-1}  \left( \int_{x' , \, |x'|<t^{1/3}} \int_{y' , \, |y'|<4|x'|} \, dy' \, dx' \right)^{1/2} \lesssim t^{-2/3}.
$$
\bigskip

\noindent\textbf{7.  Region $x'>x+t$, $|x'|>t^{1/3}$, and $|y'|< 4|x'|$}.

Here, $|z|<4$ and $\lambda>1$, so we take $\beta=0$, and we are allowed any $\alpha \geq 0$.

If $|x'|<1$, we take $\alpha=0$.  Since $|x'|<1$, we have $x+t<1$, so $\la x+t-x'\ra \sim 1$.    By \eqref{E:second-p},
$$
|\Phi_{7-*}(x,y,t)| \lesssim t^{-1} \left( \int_{x',\, |x'|<1} \int_{y', \, |y'|<4|x'|} \, dy'  \, dx' \right)^{1/2} \lesssim t^{-1}.
$$

If $|x'|>1$, we take $\alpha \gg 1$.  Starting from \eqref{E:second-p}, we get
$$
|\Phi_{7+*}(x,y,t)| \lesssim t^{-1+\frac12\alpha} \left( \int_{x',\, x'>\max(1,x+t)} |x'|^{-3\alpha}\la x+t -x'\ra^2  \int_{y',\, |y'|<4|x'|} \, dy' \, dx' \right)^{1/2}
$$
By the change of variable $\tilde x = x+t-x'$,
$$ \lesssim t^{-1+\frac12\alpha} \left( \int_{\tilde x<0} \la x-t-\tilde x\ra^{-3\alpha+1}\la \tilde x \ra^2  \, dx' \right)^{1/2}$$
By \eqref{E:est1},
$$
\lesssim t^{-1+\frac12\alpha} \la x + t \ra^{-\frac32\alpha+2}.
$$
By \eqref{E:t-above-1-p} with $\mu=\frac32\alpha-2$, $\mu_1=\sigma$ and $\mu_2=\frac32\alpha-2-\sigma$, we obtain
$$
\lesssim t^{\sigma-\alpha+1} \la x \ra^{-\sigma}.
$$
We thus take $\alpha = \sigma + \frac{25}{12}$.
\bigskip

\noindent\textbf{8.  Region $x'>x+t$, $|x'|<t^{1/3}$, and $|y'|> 4|x'|$}.

Here, $|z|>4$ and $\lambda<1$, so we take $\alpha=0$.    Note that the constraints imply that $x+t<t^{1/3}$.  Since $x+t<t^{1/3}$, it follows that $t<t^{1/3}$, from which we conclude that $t<1$.  Also from $x+t<t^{1/3}$, we conclude that $x<t^{1/3}$, and since $t<1$, this implies $x<1$.  Consequently, $\la x+t-x' \ra \sim 1$.

For $|y'|<1$ we take $\beta =0$.  From \eqref{E:second-p}, we have
$$
|\Phi_{8*-}(x,y,t)| \lesssim t^{-1} \left( \int_{x' ,\, |x'|<t^{1/3}} \int_{y',\, |y'|<1}\, dy' \, dx' \right)^{1/2} \lesssim t^{-5/6}.
$$
For $|y'|>1$, we take  any $\frac13<\beta<\frac23$ (so that $-3\beta<-1$ and $-1<-3\beta+1$).   From \eqref{E:second-p}, we have
$$
|\Phi_{8*+}(x,y,t)| \lesssim t^{-1+\frac12\beta} \left( \int_{x' ,\, |x'|<t^{1/3}} \int_{y',\, |y'|>\max(1,|x'|)} |y'|^{-3\beta} \, dy' \, dx' \right)^{1/2}
$$
$$
\lesssim t^{-1+\frac12\beta} \left( \int_{x' ,\, |x'|<t^{1/3}}  |x'|^{-3\beta+1}  \, dx' \right)^{1/2} \lesssim t^{-\frac13-\beta}.
$$
\bigskip

\noindent\textbf{9.  Region $x'>x+t$, $|x'|>t^{1/3}$, and $|y'|> 4|x'|$}.

Here, $|z|>4$ and $\lambda>1$.

For $|x'|<1$ and $|y'|<1$ (the $--$ subregion), we take $\alpha=0$ and $\beta =0$.  Since $|x'|<1$, it follows that $x+t\leq 1$, and thus, $\la x+t -x'\ra \sim 1$.  From \eqref{E:second-p}, we have
$$
|\Phi_{9++}(x,y,t)| \lesssim t^{-1} \left( \int_{x',\, |x'|<1} \, dx' \int_{y',\, |y'|<1} \, dy' \right)^{1/2}\lesssim t^{-1}.
$$

For $|x'|<1$ and $|y'|>1$ (the $-+$ subregion) , we take $\alpha=0$ and $\beta = \frac13+$.    Since $|x'|<1$, it follows that $x+t\leq 1$, and thus, $\la x+t -x'\ra \sim 1$. From \eqref{E:second-p}, we have
$$
|\Phi_{9-+}(x,y,t)| \lesssim t^{-1+\frac12\beta} \left( \int_{x',\, |x'|<1} \, dx' \int_{y',\, |y'|>1} |y'|^{-3\beta} \, dy' \right)^{1/2}\lesssim t^{-1+\frac12\beta} =t^{-\frac56+}.
$$

For $|x'|>1$ and $|y'|<1$ (the $+-$ subregion) , we take $\alpha \gg 1$ and $\beta=0$.  From \eqref{E:second-p}, we have
$$
|\Phi_{9+-}(x,y,t)| \lesssim t^{-1+\frac12\alpha} \left( \int_{x',\, x'>\max(x+t,1)} |x'|^{-3\alpha}\la x +t-x'\ra^2 \, dx' \int_{y',\, |y'|<1} \, dy' \right)^{1/2}
$$
By the change of variable $\tilde x=x+t-x'$,
$$
|\Phi_{9+-}(x,y,t)| \lesssim t^{-1+\frac12\alpha} \left( \int_{\tilde x<0} \la x+t-\tilde x\ra^{-3\alpha}\la \tilde x \ra^2  \right)^{1/2}
$$
$$
\lesssim t^{-1+\frac12\alpha} \la x+t \ra^{-\frac32\alpha+\frac32}.
$$
Apply \eqref{E:t-above-1-p} with $\mu=\frac32\alpha-\frac12$, $\mu_1=\sigma$, and $\mu_2=\frac32\alpha-\frac32-\sigma$ to obtain
$$
|\Phi_{9+-}(x,y,t)| \lesssim t^{\frac12-\alpha+\sigma} \la x \ra^{-\sigma}.
$$
Taking $\alpha = \sigma+\frac{19}{12}$, we obtain the desired bound.

For $|x'|>1$ and $|y'|>1$ (the $++$ subregion), we take $\alpha \gg 1$ and $\beta = \frac13+$.  From \eqref{E:second-p}, we have
$$
|\Phi_{9++}(x,y,t)| \lesssim t^{-1+\frac12\alpha+\frac12\beta} \left( \int_{x',\, x'>\max(x+t,1)} |x'|^{-3\alpha} \la x+t-x'\ra^2 \int_{y',\, |y'|>\max(1,|x'|)} |y'|^{-3\beta} \, dy' \, dx' \right)^{1/2}
$$
$$
\lesssim t^{-1+\frac12\alpha+\frac12\beta} \left( \int_{x',\, x'>\max(x+t,1)} |x'|^{-3\alpha-3\beta+1}\la x +t-x'\ra^2 \, dx' \right)^{1/2}
$$
By the change of variable $\tilde x = x+t-x'$, we obtain
$$
\lesssim t^{-1+\frac12\alpha+\frac12\beta} \left( \int_{\tilde x<0} \la x+t-\tilde x\ra^{-3\alpha-3\beta+1}\la \tilde x\ra^2 \, dx' \right)^{1/2}
$$
By \eqref{E:est1},
$$
\lesssim t^{-1+\frac12\alpha+\frac12\beta} \la x+t \ra^{-\frac32\alpha-\frac32\beta+2}.
$$
By \eqref{E:t-above-1-p} with $\mu=\frac32\alpha+\frac23\beta-2$, $\mu_1 = \sigma$, and $\mu_2 = \frac32\alpha+\frac32\beta-2-\sigma$, we obtain
$$
\lesssim t^{1-\alpha-\beta+\sigma} \la x \ra^{-\sigma}.
$$
Taking $\alpha = -\beta + \sigma + \frac{25}{12}$ gives $t^{-13/12} \la x\ra^{-\sigma}$.
\end{proof}

\section{Duhamel estimate}\label{S-11}

We will now use Prop. \ref{P:derivative-linear-decay} to prove a Duhamel estimate in Prop. \ref{P:Duhamel} below.  First, we need
\begin{lemma}
\label{L:time-integral}
Suppose $\mu>1$ and $\nu>1$, $0<\sigma_2\leq \sigma_1$, and thus, $\tilde \sigma_2\leq \tilde \sigma_1$ (where $\tilde \sigma_j$ is given in terms of $\sigma_j$ as in Prop. \ref{P:derivative-linear-decay}) , $t>0$ and that $f(t,t')\geq 0$ satisfies for $0<t'<t$,
$$f(t,t') \lesssim (t-t')^{-\mu}
\begin{cases}
(t')^{-\nu} \la x \ra^{-\sigma_1} & \text{if } t'<1\,,\; t-t'<1 \\
\la x \ra^{-\sigma_2} & \text{if } t'>\frac12 \,,\; t-t'<1 \\
(t')^{-\nu} \la x \ra^{-\tilde \sigma_1} & \text{if } t'<1 \,,\; t-t'>\frac12  \\
\la x \ra^{-\tilde \sigma_2} & \text{if } t'>\frac12 \,, \; t-t'>\frac12.
\end{cases}
$$
(Note that the regions are overlapping for convenience in application.  The meaning is that $f(t,t')$ is bounded by the minimum of the bounds across all applicable regions.)  Then for any $0\leq a,b \leq \frac12$ with $0\leq a+b\leq t$, we have
\begin{equation}
\label{E:est3}
\int_b^{t-a} f(t,t') \, dt'\lesssim
\begin{cases}
(t^{-\nu}a^{-\mu+1}+t^{-\mu} b^{-\nu+1}) \la x \ra^{-\sigma_1} & \text{if } t<1 \\
b^{-\nu+1} \la x \ra^{-\tilde \sigma_1} + \la x \ra^{-\tilde \sigma_2} + a^{-\mu+1} \la x \ra^{-\sigma_2} & \text{if }t>1.
\end{cases}
\end{equation}
If, instead, $0\leq \nu<1$, then
\begin{equation}
\label{E:est4}
\int_0^{t-a} f(t,t') \, dt'\lesssim
\begin{cases}
t^{-\nu}a^{-\mu+1} \la x \ra^{-\sigma_1} & \text{if } t<1 \\
 \la x \ra^{-\tilde \sigma_2} + a^{-\mu+1} \la x \ra^{-\sigma_2} & \text{if }t>1.
\end{cases}
\end{equation}
\end{lemma}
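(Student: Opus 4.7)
The plan is to estimate $\int f(t,t')\,dt'$ by splitting the time interval according to whether $t'$ is close to $0$ or $t$, and which of the four bounding regions applies. The shape of the answer in \eqref{E:est3}--\eqref{E:est4} tells us exactly which pieces of the decomposition to use: the $b^{-\nu+1}$ factors come from integrating $(t')^{-\nu}$ near $t'=0$, the $a^{-\mu+1}$ factors come from integrating $(t-t')^{-\mu}$ near $t'=t$, and the region hypothesis (small $t,t-t'$ vs.\ large $t,t-t'$) selects which of $\sigma_j$ or $\tilde\sigma_j$ appears. Throughout, I will exploit that on each sub-interval either $t-t'$ or $t'$ is essentially constant (comparable to $t$ or to a fixed number), so one of the two time-factors in $(t-t')^{-\mu}(t')^{-\nu}$ is harmless and can be pulled out of the integral.

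First I would handle $t<1$, in which case both $t'<1$ and $t-t'<1$ on all of $[b,t-a]$, so only Region A is relevant. I would split $[b,t-a] = [b,t/2]\cup[t/2,t-a]$. On $[b,t/2]$, use $(t-t')^{-\mu}\lesssim t^{-\mu}$ and integrate $(t')^{-\nu}$, producing $t^{-\mu}b^{-\nu+1}$ (using $\nu>1$). On $[t/2,t-a]$, use $(t')^{-\nu}\lesssim t^{-\nu}$ and integrate $(t-t')^{-\mu}$, producing $t^{-\nu}a^{-\mu+1}$ (using $\mu>1$). Summing gives the $t<1$ branch of \eqref{E:est3}.

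Next I would treat $t>1$ via a three-piece decomposition $[b,t-a] = [b,\tfrac12]\cup[\tfrac12,t-\tfrac12]\cup[t-\tfrac12,t-a]$. On $[b,\tfrac12]$, we are in Region C (so the spatial weight is $\la x\ra^{-\tilde\sigma_1}$), $(t-t')^{-\mu}\lesssim 1$, and integrating $(t')^{-\nu}$ produces $b^{-\nu+1}\la x\ra^{-\tilde\sigma_1}$. On the middle interval $[\tfrac12,t-\tfrac12]$ we are in Region D, and $\int(t-t')^{-\mu}\,dt'$ is uniformly bounded thanks to $\mu>1$, giving the pure $\la x\ra^{-\tilde\sigma_2}$ term. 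On $[t-\tfrac12,t-a]$ we are in Region B, and integrating $(t-t')^{-\mu}$ produces $a^{-\mu+1}\la x\ra^{-\sigma_2}$. Adding these three pieces yields the $t>1$ branch of \eqref{E:est3}.

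For the second statement \eqref{E:est4}, the hypothesis $0\le\nu<1$ makes $(t')^{-\nu}$ integrable near $0$, so the integral $\int_0^{t-a}$ converges without the cutoff $b$. In the $t<1$ case, integrate Region A on $[0,t/2]$ to get $t^{-\mu}\cdot t^{1-\nu}\la x\ra^{-\sigma_1}$, which is dominated by $t^{-\nu}a^{1-\mu}\la x\ra^{-\sigma_1}$ since $a\le t$ and $1-\mu<0$, and combine with the $[t/2,t-a]$ piece exactly as before. In the $t>1$ case, the $[0,\tfrac12]$ contribution is $O(\la x\ra^{-\tilde\sigma_1})$, which is absorbed into $\la x\ra^{-\tilde\sigma_2}$ using $\tilde\sigma_2\le\tilde\sigma_1$, while the $[\tfrac12,t-\tfrac12]$ and $[t-\tfrac12,t-a]$ pieces give $\la x\ra^{-\tilde\sigma_2}$ and $a^{-\mu+1}\la x\ra^{-\sigma_2}$ as above. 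The main (mild) obstacle is bookkeeping: making sure that on each sub-interval the chosen region is actually valid and that the resulting exponents $\sigma_j,\tilde\sigma_j$ collapse to the form stated; the monotonicity $\tilde\sigma_2\le\tilde\sigma_1$ and $\sigma_2\le\sigma_1$ is what lets one discard the dominated terms cleanly.
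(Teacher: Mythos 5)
Your proposal is correct and follows essentially the same route as the paper: the $t>1$ case uses the identical three-piece split at $t'=\tfrac12$ and $t'=t-\tfrac12$, and your direct two-piece split at $t'=t/2$ for $t<1$ is just the paper's change-of-variables beta-integral estimate carried out without the rescaling. The absorption of the $[0,\tfrac12]$ piece via $\tilde\sigma_2\le\tilde\sigma_1$ in the $\nu<1$ case matches what the paper does implicitly.
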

\begin{proof}
First consider the case $\nu>1$.
If $t<1$,
$$
\int_b^{t-a} f(t,t') \, dt' \lesssim \la x \ra^{-\sigma_1} \int_b^{t-a} (t-t')^{-\mu} (t')^{-\nu} \, dt',
$$
and changing variable to $s= t'/t$, we continue
$$
= t^{-\mu-\nu+1}  \la x \ra^{-\sigma_1} \int_{b/t}^{1-a/t} (1-s)^{-\mu} s^{-\nu} \,ds \lesssim (t^{-\nu}a^{-\mu+1}+ t^{-\mu}b^{-\nu+1}) \la x \ra^{-\sigma_1}.
$$
If $t>1$, then we split into three pieces
$$
\int_b^{1/2} f(t,t') \, dt' \lesssim \la x \ra^{-\tilde \sigma_1} \int_b^{1/2} (t-t')^{-\mu} (t')^{-\nu} \,dt'  \lesssim b^{-\nu+1} \la x \ra^{-\tilde \sigma_1},
$$
$$
\int_{1/2}^{t-\frac12} f(t,t') \, dt' \lesssim \la x \ra^{-\tilde \sigma_2} \int_{1/2}^{t-\frac12} (t-t')^{-\mu} \,d t' \lesssim  \la x\ra^{-\tilde \sigma_2},
$$
$$
\int_{t-\frac12}^{t-a} f(t,t') \, dt' \lesssim \la x \ra^{-\sigma_2} \int_{t-\frac12}^{t-a} (t-t')^{-\mu} \,d t'  \lesssim a^{-\mu+1} \la x \ra^{-\sigma_2}.
$$
Now we turn to the case $0\leq \nu<1$. If $t<1$,
$$
\int_a^{t-a} f(t,t') \, dt' \lesssim \la x \ra^{-\sigma_1} \int_0^{t-a} (t-t')^{-\mu} (t')^{-\nu} \, dt',
$$
and changing variable to $s= t'/t$, we obtain
$$
= t^{-\mu-\nu+1}  \la x \ra^{-\sigma_1} \int_0^{1-a/t} (1-s)^{-\mu} s^{-\nu} \,ds \lesssim t^{-\nu} a^{-\mu+1}  \la x \ra^{-\sigma_1}.
$$
If $t>1$, then we split into three pieces
$$
\int_0^{1/2} f(t,t') \, dt' \lesssim \la x \ra^{-\tilde \sigma_1} \int_0^{1/2} (t-t')^{-\mu} (t')^{-\nu} \,dt'  \lesssim  \la x \ra^{-\tilde \sigma_1},
$$
$$
\int_{1/2}^{t-\frac12} f(t,t') \, dt' \lesssim \la x \ra^{-\tilde \sigma_2} \int_{1/2}^{t-\frac12} (t-t')^{-\mu} \,dt' \lesssim  \la x\ra^{-\tilde \sigma_2},
$$
$$
\int_{t-\frac12}^{t-a} f(t,t') \, dt' \lesssim \la x \ra^{-\sigma_2} \int_{t-\frac12}^{t-a} (t-t')^{-\mu} \,d t'  \lesssim a^{-\mu+1} \la x \ra^{-\sigma_2}.
$$
\end{proof}

\begin{proposition}[Duhamel estimate]
\label{P:Duhamel}
Suppose that $F(x,y,t)$ satisfies, for some $\sigma \gg 1$ and $\nu \geq 0$,
\begin{itemize}
\item
$\text{for }x>0 \,, \; | F(x,y,t)| \lesssim C_2
\begin{cases} t^{-\nu} \la x \ra^{-\sigma_1} & \text{if } t<1 \\
\la x \ra^{-\sigma_2} &\text{if }t>\frac12
\end{cases}$,
where $1<\sigma_2\leq \sigma_1$,

\item
$\| \la x \ra^{-1} F(x, y, t) \|_{L_t^\infty L_{y\in \mathbb{R}, x<0}^2} \lesssim C_2$,

\item
$\| \la x \ra^{-1} \partial_x F(x, y, t) \|_{L_t^\infty L_{y\in \mathbb{R}, x<0}^1} \lesssim C_2$,

\item
$\| \partial_x F(x, y, t) \|_{L_t^\infty L_{y\in \mathbb{R},x>0}^1} \lesssim C_2$.
\end{itemize}
Then, if $\nu>1$, we have for $x>0$,
$$
\begin{aligned}
\indentalign \left| \int_0^t [\partial_x S(t-t') F(\bullet, \bullet, t')](x,y) \, dt' \right| \\
&\lesssim C_2
\begin{cases}
t^{1/3} &  \text{if } 0< t^{\nu+\frac5{12}} \lesssim  \la x \ra^{-(\sigma_1-\frac94)}   \\
t^{-\frac45\nu} \la x \ra^{-\frac45(\sigma_1-\frac94)}&  \text{if } \la x \ra^{-(\sigma_1-\frac94)} \ll  t^{\nu+\frac5{12}} < 1  \text{ and } \nu <\frac54\\
t^{-\frac23-\frac{5}{12\nu}} \la x \ra^{-\frac1\nu(\sigma_1-\frac94)} &  \text{if } \la x \ra^{-(\sigma_1-\frac94)} \ll  t^{\nu+\frac5{12}} < 1  \text{ and } \nu >\frac54\\
\la x \ra^{-\min( \frac{\tilde \sigma_1}{\nu}, \tilde \sigma_2, \frac45(\sigma_2-\frac94))} & \text{if }t>1,
\end{cases}
\end{aligned}
$$
where
$$
\tilde \sigma_1 = \min( \sigma_1 - \tfrac94, \tfrac23\sigma_1-\tfrac5{12}),
$$
$$
\tilde \sigma_2 = \min( \sigma_2 - \tfrac94, \tfrac23\sigma_2-\tfrac5{12}).
$$
\end{proposition}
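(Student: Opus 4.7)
The plan is to decompose $\int_0^t \partial_x S(t-t')F(t')\,dt'$ into a near-$t$ piece $\int_{t-a}^t$, a near-$0$ piece $\int_0^b$, and a middle piece $\int_b^{t-a}$ (with the obvious further subdivision when $t>1$), apply a separate estimate to each, and then optimize the cutoff parameters $a$ and $b$. The near-$t$ and near-$0$ pieces will be treated by a direct kernel bound after commuting $\partial_x$ through $S$, while the middle piece is controlled by Proposition \ref{P:derivative-linear-decay} combined with Lemma \ref{L:time-integral}.

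For the near-$t$ piece, I would use that $\partial_x$ commutes with $S(s)$ to rewrite it as $\int_{t-a}^t S(t-t')\partial_x F(t')\,dt'$. Combining the uniform bound $|A(x,y,s)|\lesssim s^{-2/3}$ from Proposition \ref{P:FS-est} with the hypothesis $\|\partial_x F\|_{L^\infty_t L^1_{y,x>0}}\lesssim C_2$ for the $x''>0$ portion of the convolution, and with $\|\la x\ra^{-1}\partial_x F\|_{L^\infty_t L^1_{y,x<0}}\lesssim C_2$ for the $x''<0$ portion (where $x'=x+s-x''\geq|x''|$ lets the strong decay of $A$ for $x'>0$, $\lambda\gtrsim 1$ absorb the $\la x''\ra$ weight growth), I would establish the pointwise bound $|S(s)\partial_x F(t')(x,y)|\lesssim C_2\, s^{-2/3}$ for $x>0$. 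Integration then yields $|\int_{t-a}^t|\lesssim C_2\, a^{1/3}$. The near-$0$ piece is handled identically; using in addition that $(t-t')^{-2/3}\lesssim t^{-2/3}$ for $t'\in[0,b]$ when $b\leq t/2$, it contributes $|\int_0^b|\lesssim C_2\, b\, t^{-2/3}$.

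For the middle piece I would apply Proposition \ref{P:derivative-linear-decay} pointwise in $t'$: with $C_1 = C_2\, t'^{-\nu}$ and $\sigma=\sigma_1$ when $t'<1$, and with $C_1=C_2$ and $\sigma=\sigma_2$ when $t'>1/2$; the weighted $L^2$ hypothesis on $F$ for $x<0$ supplies the required control with constant $C_2$. The output has the form $(t-t')^{-13/12}$ times the appropriate spatial factor, matching exactly the hypothesis of Lemma \ref{L:time-integral} with $\mu=13/12$. Applying \eqref{E:est3} yields, for $t<1$, a middle-piece estimate $C_2\la x\ra^{-(\sigma_1-9/4)}\bigl(t^{-13/12}b^{1-\nu}+t^{-\nu}a^{-1/12}\bigr)$, and for $t>1$, an estimate $C_2\bigl(b^{1-\nu}\la x\ra^{-\tilde\sigma_1}+\la x\ra^{-\tilde\sigma_2}+a^{-1/12}\la x\ra^{-(\sigma_2-9/4)}\bigr)$.

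The last step is to optimize $a$ and $b$ under the constraint $a+b\leq t$ (and $a,b\leq\tfrac12$ when $t>1$). Balancing $a^{1/3}$ against $\la x\ra^{-(\sigma_1-9/4)}t^{-\nu}a^{-1/12}$ gives the balanced value $\la x\ra^{-4(\sigma_1-9/4)/5}t^{-4\nu/5}$, valid precisely when $a_\ast\leq t/2$, i.e.\ when $\la x\ra^{-(\sigma_1-9/4)}\ll t^{\nu+5/12}$; otherwise $a$ is taken at the boundary $t/2$, yielding $t^{1/3}$. The analogous balance in $b$ produces $\la x\ra^{-(\sigma_1-9/4)/\nu}t^{-2/3-5/(12\nu)}$, and direct comparison of exponents shows the $a$-balance dominates for $\nu<5/4$ and the $b$-balance for $\nu>5/4$, producing the three cases stated for $t<1$. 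For $t>1$ the same procedure yields $\la x\ra^{-\tilde\sigma_1/\nu}$ (from the $b$-balance, using $t^{-\text{positive}}\leq 1$) and $\la x\ra^{-4(\sigma_2-9/4)/5}$ (from the $a$-balance), which together with the automatic $\la x\ra^{-\tilde\sigma_2}$ contribution from the interior of the middle piece give the claim. I expect the main technical obstacle to be the pointwise control of $S(s)\partial_x F$ on $\{x>0\}$ from the mixed $L^1$/weighted-$L^1$ hypotheses on $\partial_x F$, which requires carefully exploiting the anisotropic Airy-kernel decay in Proposition \ref{P:FS-est} on the region $x'>0$, $\lambda\gtrsim 1$ to absorb the $\la x''\ra$ weight on the $x''<0$ side; once this pointwise bound is established, the remaining integration and balancing are systematic.
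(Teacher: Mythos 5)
Your proposal is correct and follows essentially the same route as the paper's proof: the same three-piece splitting of the $t'$-integral, the same endpoint bounds $a^{1/3}$ and $b\,t^{-2/3}$ obtained from the $t^{-2/3}$ kernel bound together with the $L^1$ and weighted-$L^1$ hypotheses on $\partial_x F$ (with the $\la x'\ra^{-1}$ gain on $x'<0$ extracted from the rightward Airy decay exactly as you anticipate), the same use of Proposition \ref{P:derivative-linear-decay} and Lemma \ref{L:time-integral} for the middle piece, and the same optimization of $a$ and $b$ with the identical case split at $\nu=\frac54$ and at $\la x\ra^{-(\sigma_1-\frac94)}\sim t^{\nu+\frac5{12}}$. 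No gaps; the technical point you flag at the end is handled in the paper precisely by the inequality $\la x+t-x'\ra^{-3/2}\leq \la t\ra^{-1/2}\la x'\ra^{-1}$ on the region $x'<-1$.
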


\begin{proof}  By linearity, we can take $C_2=1$.
Let
$$
f(x,y,t,t') = [\partial_x S(t-t')F(\cdot,\cdot,t')](x,y).
$$
By the assumed pointwise decay on $F(x,y,t)$ for $x>0$, and the assumption $\|F\|_{L_t^\infty L_{xy}^2} \lesssim 1$, Prop \ref{P:derivative-linear-decay} gives, for $x>0$, the pointwise estimate
$$
|f(x,y,t,t')| \lesssim
(t-t')^{-13/12}  \left\{ \begin{aligned}
&(t')^{-\nu} \la x \ra^{-\sigma_1+\frac94} & \text{if } t-t'<1, \; t'<1\\
&(t')^{-\nu} \la x \ra^{-\tilde \sigma_1} & \text{if }t-t'>\frac12, \; t'<1 \\
& \la x \ra^{-\sigma_2+\frac94} & \text{if }t-t'<1, \; t'>\frac12 \\
&\la x \ra^{-\tilde \sigma_2} & \text{if }t-t'>\frac12, \; t'>\frac12.
\end{aligned} \right.
$$
By Lemma \ref{L:time-integral}, with $\sigma_1$ replaced by $\sigma_1-\frac94$, $\sigma_2$ replaced by $\sigma_2-\frac94$, and $\tilde \sigma_1$, $\tilde \sigma_2$ as given above, we obtain, for $x>0$,
$$
\left| \int_b^{t-a} f(x,y,t,t') \,dt' \right| \lesssim
\begin{cases}
(t^{-\nu} a^{-1/12} + t^{-13/12}b^{-\nu+1}) \la x \ra^{-\sigma_1+\frac94} & \text{if } t<1 \\
b^{-\nu+1} \la x \ra^{-\tilde \sigma_1} + \la x \ra^{-\tilde \sigma_2} + a^{-1/12} \la x \ra^{-\sigma_2+\frac94} & \text{if } t>1.
\end{cases}
$$
We have
$$[S(t)\phi](x,y) = \iint A(x+t-x',y-y',t) \phi(x',y') \, dx' \, dy'$$
By \eqref{E:A-decay} with $\beta=0$
$$|[S(t)\phi](x,y)| \lesssim t^{-\frac23+\frac12\alpha} \iint |x+t-x'|^{-\frac32\alpha} |\phi(x',y')| \, dx' \, dy'$$
with the corresponding restrictions on $\alpha$.  Splitting the integration into $x'>-1$, where we use $\alpha=0$, and $x'<-1$, where we use $\alpha=1$, we obtain
$$|[S(t)\phi](x,y)| \lesssim \begin{aligned}[t]
&t^{-\frac23} \iint_{x'>-1} |\phi(x',y')| \, dx' \, dy' \\
&\qquad + t^{-\frac16} \iint_{x'<-1} \la x+t-x'\ra^{-3/2} |\phi(x',y')| \, dx' \, dy'
\end{aligned}
$$
In the second integral, since $x'<-1$, we have that $\la x+t-x' \ra^{-3/2} \leq \la t \ra^{-\frac12} \la x' \ra^{-1}$ and thus
\begin{align*}
\|S(t)\phi\|_{L_{xy}^\infty} &\lesssim t^{-2/3}( \|\phi\|_{L^1_{y\in \mathbb{R}, x>-1}} + \| \la x \ra^{-1} \phi\|_{L^1_{y\in \mathbb{R}, x<-1}}) \\
&\approx t^{-2/3}( \|\phi\|_{L^1_{y\in \mathbb{R}, x>0}} + \| \la x \ra^{-1} \phi\|_{L^1_{y\in \mathbb{R}, x<0}})
\end{align*}
Hence,
\begin{align*}
\left| \int_0^b f(x,y,t,t') \,dt' \right|
& \lesssim \int_0^b \| S(t-t') \partial_x F(\cdot,\cdot,t')(x,y) \|_{L_{xy}^\infty} \, dt' \\
&\lesssim (\| \partial_xF(x,y,t) \|_{L_t^\infty L_{y\in \mathbb{R},x>0}^1} + \| \la x \ra^{-1} \partial_xF(x,y,t) \|_{L_t^\infty L_{y\in \mathbb{R},x<0}^1}) \int_0^b (t-t')^{-2/3} \, dt' \\
&\lesssim t^{-2/3} b,
\end{align*}
where, in the last step, we assumed that $b\leq \frac12 t$.  Similarly,
$$
\left| \int_{t-a}^t f(x,y,t,t') \,dt' \right| \lesssim \int_{t-a}^t (t-t')^{-2/3} \,dt' \lesssim a^{1/3}.
$$

Now, if we take
$$
G(x,y,t) = \int_0^t f(x,y,t,t') \, dt',
$$
then the above estimates give, for $x>0$,
\begin{equation}
\label{E:G-bound1}
|G(x,y,t)| \lesssim t^{-2/3} b + a^{1/3} + \begin{cases}
(t^{-\nu} a^{-1/12} + t^{-13/12}b^{-\nu+1}) \la x \ra^{-\sigma_1+\frac94} & \text{if } t<1 \\
b^{-\nu+1} \la x \ra^{-\tilde \sigma_1} + \la x \ra^{-\tilde \sigma_2} + a^{-1/12} \la x \ra^{-\sigma_2+\frac94} & \text{if } t>1,
\end{cases}
\end{equation}
provided $a+b\leq t$ and $b\leq \frac12 t$.
\bigskip

\noindent\textbf{Case 1.} $t<1$ and $\la x \ra^{-\sigma_1+\frac94} \ll t^{\nu+\frac5{12}}$ (corresponding to $a\ll t$ and $b\ll t$, where $a$ and $b$ are defined below).   In this case, we obtain from the first component of \eqref{E:G-bound1}, for $x>0$,
$$
|G(x,y,t)| \lesssim t^{-\frac23}b + t^{-\frac{13}{12}} b^{-\nu+1} \la x \ra^{-\sigma_1+\frac94} + t^{-\nu} a^{-\frac{1}{12}} \la x \ra^{-\sigma_1+\frac94} + a^{\frac13}.
$$
The optimal values of $a$ and $b$ are $a = [t^{-\nu} \la x \ra^{-\sigma_1+\frac94}]^{12/5}$ and $b = [t^{-\frac{5}{12}} \la x \ra^{-\sigma_1+\frac94}]^{1/\nu}$.  This furnishes the bound
$$
|G(x,y,t)| \lesssim t^{- (\frac23 + \frac{5}{12 \nu})} \la x \ra^{-\frac{1}{\nu}(\sigma_1-\frac94)} + t^{-\frac{4\nu}{5}} \la x \ra^{-\frac45 (\sigma_1-\frac94)}.
$$
We consider two further subcases
\bigskip

\noindent\textbf{Case 1A.}  $\nu < \frac54$.  Then $\frac{1}{\nu}-\frac45>0$, so by raising $\la x \ra^{-(\sigma_1-\frac94)} < t^{\nu+\frac5{12}}$ to the positive power  $\frac{1}{\nu}-\frac45$, we obtain
$$
\la x \ra^{-(\sigma_1-\frac94)( \frac{1}{\nu} - \frac45)} < t^{(\nu+\frac5{12})(\frac{1}{\nu}-\frac45)} = t^{\frac23+\frac{5}{12\nu}-\frac45\nu}.
$$
Hence,
$$
t^{- (\frac23 + \frac{5}{12 \nu})} \la x \ra^{-\frac{1}{\nu}(\sigma_1-\frac94)} \leq t^{-\frac45\nu} \la x \ra^{-\frac45(\sigma_1-\frac94)}.
$$
Consequently, in this case, we have the bound
$$
|G(x,y,t)| \lesssim  t^{-\frac45\nu} \la x \ra^{-\frac45(\sigma_1-\frac94)}.
$$
\bigskip

\noindent\textbf{Case 1B.} $\nu>\frac54$.  Then $\frac45-\frac{1}{\nu}>0$, so by raising $\la x \ra^{-(\sigma_1-\frac94)} < t^{\nu+\frac5{12}}$ to the positive power  $\frac45-\frac{1}{\nu}$, we obtain
$$
\la x \ra^{-(\sigma_1-\frac94)( \frac45 - \frac{1}{\nu})} < t^{(\nu+\frac5{12})(\frac45-\frac{1}{\nu})} = t^{-\frac23-\frac{5}{12\nu}+\frac45\nu}.
$$
Hence,
$$
t^{-\frac45\nu} \la x \ra^{-\frac45(\sigma_1-\frac94)} < t^{-\frac23-\frac{5}{12\nu}} \la x \ra^{-\frac{1}{\nu}(\sigma_1-\frac94)}.
$$
Consequently, in this case, we have the bound
$$
|G(x,y,t)| \lesssim t^{-\frac23-\frac{5}{12\nu}} \la x \ra^{-\frac{1}{\nu}(\sigma_1-\frac94)}.
$$
We also remark that in this case ($\nu>\frac54$ case), it follows that $\frac23+ \frac{5}{12\nu}<1$, so
$$
t^{-\frac23-\frac{5}{12\nu}}\la x \ra^{-\frac{1}{\nu}(\sigma_1-\frac94)} < t^{-1} \la x\ra^{-\frac{1}{\nu}(\sigma_1-\frac94)}.
$$
\bigskip

\noindent\textbf{Case 2.} $t<1$ and $ t^{\nu+\frac5{12}}\lesssim  \la x \ra^{-\sigma_1+\frac94}$.  In this case, we just use
$$
|G(x,y,t)|\lesssim t^{1/3}.
$$
\bigskip

\noindent\textbf{Case 3.}  $t>1$.  In this case, we apply the second component of \eqref{E:G-bound1} to obtain
$$
|G(x,y,t)| \lesssim b+ b^{-\nu+1}\la x \ra^{-\tilde \sigma_1} + \la x \ra^{-\tilde \sigma_2} + a^{-\frac{1}{12}} \la x \ra^{-\sigma_2+\frac94}+a^{\frac13}.
$$
In this case, the optimal choices of $a$ and $b$ are $b = \la x \ra^{-\frac{\tilde \sigma_1}{\nu}}$ and $a = \la x \ra^{-\frac{12}{5} (\sigma_2-\frac94)}$.
\end{proof}

In the nonlinear argument in the next section, we will need the following consequence of Prop. \ref{P:Duhamel}, which we state as a corollary.

\begin{corollary}[Duhamel estimate]
\label{C:Duhamel}
Suppose that $F(x,y,t)$ satisfies, for some $\sigma \gg 1$ and $\nu \geq 0$,
\begin{itemize}
\item
$\text{for }x>0 \,, \; | F(x,y,t)| \lesssim C_2
\begin{cases} t^{-\nu} \la x \ra^{-\sigma_1} & \text{if } t<1 \\
\la x \ra^{-\sigma_2} &\text{if }t>1
\end{cases}$,
where $1<\sigma_2\leq \sigma_1$,
\item $\| \la x \ra^{-1} F(x, y, t) \|_{L_t^\infty L_{y\in \mathbb{R}, x<0}^2} \lesssim C_2$,
\item $\| \la x \ra^{-1} \partial_x F(x, y, t) \|_{L_t^\infty L_{y\in \mathbb{R}, x<0}^1} \lesssim C_2$,
\item $\| \partial_x F(x, y, t) \|_{L_t^\infty L_{y\in \mathbb{R},x>0}^1} \lesssim C_2$.
\end{itemize}
Then, if $\nu>\frac{5}{4}$ and $r\geq 0$, we have for $x>0$
\begin{equation}
\label{E:Duhamel1}
\left| \int_0^t [\partial_x S(t-t') F(\bullet, \bullet, t')](x,y) \, dt' \right| \lesssim C_2
\begin{cases}
t^{-\nu/3} \la x \ra^{-\sigma_1/3}  & \text{if }0 \leq t \leq 1 \\
\la x \ra^{-\frac13\sigma_2 - r}  & \text{if }t>\frac12,
\end{cases}
 \end{equation}
provided $\sigma_j\geq \frac{11}{2}$ for $j=1,2$ and
\begin{equation}
\label{E:imp4mod}
\sigma_2 \geq \max(\frac{27}{7}+\frac{15}{7}r, \frac{5}{4}+3r)
\end{equation}
and
\begin{equation}
\label{E:imp5mod}
\sigma_1 \geq \frac{27}{7}(\nu+1) \text{ and } \sigma_1 \geq \frac{\nu}{2}\sigma_2+\frac{5}{8}+ \frac32\nu r.
\end{equation}
\end{corollary}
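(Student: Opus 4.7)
I will invoke Proposition \ref{P:Duhamel} directly and verify, case by case, that each bound it produces is dominated by the corresponding branch of \eqref{E:Duhamel1} under the conditions \eqref{E:imp4mod}--\eqref{E:imp5mod}. Since $\nu > 5/4$, the three active cases of Prop \ref{P:Duhamel} are: Case 2 (small $t$, the regime $0<t^{\nu+5/12}\lesssim \la x\ra^{-(\sigma_1-9/4)}$ giving $|G|\lesssim t^{1/3}$), Case 1B (small $t$, the complementary regime giving $|G|\lesssim t^{-2/3-5/(12\nu)}\la x\ra^{-(\sigma_1-9/4)/\nu}$), and Case 3 ($t>1$ giving $|G|\lesssim \la x\ra^{-\min(\tilde\sigma_1/\nu,\tilde\sigma_2,4(\sigma_2-9/4)/5)}$ with $\tilde\sigma_j=\min(\sigma_j-9/4,\,2\sigma_j/3-5/12)$).

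For the large-time branch $t>1/2$, I use Prop's Case 3 (extended down from $t>1$ at the cost of an $O(1)$ constant by the same proof scheme) and verify that each of the three exponents in the minimum is at least $\sigma_2/3 + r$. The term $4(\sigma_2-9/4)/5\geq \sigma_2/3+r$ gives $\sigma_2\geq 27/7+15r/7$ (first inequality of \eqref{E:imp4mod}); the condition $\tilde\sigma_2\geq \sigma_2/3+r$ gives $\sigma_2\geq 5/4+3r$ from its $2\sigma_2/3-5/12$ branch (second inequality of \eqref{E:imp4mod}), with the other branch implied by $\sigma_2\geq 11/2$; finally, $\tilde\sigma_1/\nu\geq \sigma_2/3+r$ is obtained from the $2\sigma_1/3-5/12$ branch as the second inequality of \eqref{E:imp5mod}, and the remaining branch $(\sigma_1-9/4)/\nu\geq \sigma_2/3+r$ is handled by the observation that, if not automatic from \eqref{E:imp5mod}, then $\nu(\sigma_2+3r)<39/4$, forcing $\nu\sigma_2/3+\nu r+9/4<11/2\leq \sigma_1$.

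For the small-time branch $0\leq t\leq 1$ the two Prop sub-regimes are treated separately. In Case 2, Prop gives $|G|\lesssim t^{1/3}$, and combining with the regime constraint $t\lesssim \la x\ra^{-(\sigma_1-9/4)/(\nu+5/12)}$ reduces the comparison $t^{(1+\nu)/3}\lesssim \la x\ra^{-\sigma_1/3}$ to the inequality $(1+\nu)(\sigma_1-9/4)\geq \sigma_1(\nu+5/12)$, i.e., the first inequality of \eqref{E:imp5mod}: $\sigma_1\geq 27(\nu+1)/7$. In Case 1B, I interpolate Prop's bound $t^{-2/3-5/(12\nu)}\la x\ra^{-(\sigma_1-9/4)/\nu}$ against the regime lower bound $t\gtrsim \la x\ra^{-(\sigma_1-9/4)/(\nu+5/12)}$. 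This interpolation is the main obstacle: the ratio of Prop's bound to the target $t^{-\nu/3}\la x\ra^{-\sigma_1/3}$ equals $t^a\la x\ra^B$ with $a=\nu/3-2/3-5/(12\nu)$ and $B=\sigma_1/3-(\sigma_1-9/4)/\nu$. Evaluating this ratio at the regime boundary, a direct algebraic reduction (placing everything over the common denominator $3\nu(12\nu+5)$) shows that the resulting exponent equals $\nu(-7\sigma_1+27(\nu+1))/[3\nu(12\nu+5)]$, which is $\leq 0$ exactly when $\sigma_1\geq 27(\nu+1)/7$; monotonicity of the ratio in $t$ then propagates the inequality throughout the sub-regime, so that the first part of \eqref{E:imp5mod} is seen to be both necessary and sufficient for Case 1B.
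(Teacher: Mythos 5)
Your proposal is correct and follows essentially the same route as the paper: both invoke Proposition \ref{P:Duhamel} and check, case by case, that each of its bounds is dominated by the target under \eqref{E:imp4mod}--\eqref{E:imp5mod}, arriving at the same conditions (Cases 1B and 2 reduce to $\sigma_1\geq\frac{27}{7}(\nu+1)$, Case 3 to the remaining inequalities via $\tilde\sigma_j=\frac23\sigma_j-\frac5{12}$ for $\sigma_j\geq\frac{11}{2}$). The only caveat is that your monotonicity step in Case 1B tacitly requires the ratio $t^{a}\la x\ra^{B}$ to be decreasing in $t$, i.e.\ $\nu<\frac52$; this covers the actual application ($\nu=\frac74$) and mirrors the paper's own unflagged assumption $(1-\frac{\nu}{3})\sigma_1-\frac94>0$ in its exponentiation step.
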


\begin{proof}
By Prop. \ref{P:Duhamel}, the first line of \eqref{E:Duhamel1} will hold provided
\begin{equation}
\label{E:imp1}
t^{\nu+\frac5{12}} \lesssim \la x \ra^{-(\sigma_1-\frac94)} \implies t^{1/3} \lesssim t^{-\nu/3}\la x \ra^{-\sigma_1/3}
\end{equation}
\begin{equation}
\label{E:imp2}
\la x\ra^{-(\sigma_1-\frac94)} \ll t^{\nu+\frac5{12}}<1 \implies t^{-\frac23-\frac{5}{12\nu}} \la x \ra^{-\frac1{\nu}(\sigma_1-\frac94)} \lesssim t^{-\nu/3} \la x\ra^{-\sigma_1/3}.
\end{equation}
For $0\leq t \leq 1$, we will show that \eqref{E:imp1} and \eqref{E:imp2} hold provided the left inequality of\eqref{E:imp5mod} holds.

Regarding \eqref{E:imp1}, the right side of the implication can be reexpressed as follows:
$$
\text{RHS} \iff t^{\nu+1} \lesssim \la x \ra^{-\sigma_1} \iff (t^{\nu+1})^\frac{\sigma_1-\frac94}{\sigma_1} \lesssim \la x \ra^{-(\sigma_1-\frac94)}.
$$
Thus, the implication in \eqref{E:imp1} will be true if
$$
(t^{\nu+1})^{\frac{\sigma_1-\frac94}{\sigma_1}} \lesssim t^{\nu+\frac5{12}}.
$$
We can reexpress this as $t^{(\nu+1)(\sigma_1-\frac{9}{4})} \lesssim t^{(\nu+\frac{5}{12}) \sigma_1}$.  Since $0\leq t\leq 1$, this is equivalent to
$$
(\nu+1)(\sigma_1-\tfrac94) \geq (\nu+\tfrac5{12})\sigma_1.
$$
With some algebra, this reduces to the left inequality of \eqref{E:imp5mod}.

Regarding \eqref{E:imp2}, the right side of the implication can be reexpressed as follows:
$$
\la x \ra^{ - [ (1-\frac{\nu}{3}) \sigma_1 - \frac94]} \lesssim t^{\frac23\nu - \frac13 \nu^2 + \frac{5}{12}},
$$
and also equivalently, by exponentiating
$$
\la  x \ra^{-(\sigma_1 - \frac{9}{4})}  \lesssim t^\mu \,, \quad \mu = \frac{ (-\frac{\nu^2}{3}+\frac{2\nu}{3}+\frac{5}{12}) (\sigma_1- \frac{9}{4})}{(1- \frac{\nu}{3})\sigma_1 - \frac{9}{4}},
$$
where we have assumed that $(1- \frac{\nu}{3}) \sigma_1 - \frac{9}{4} >0$.  Thus, the implication in \eqref{E:imp2} is true provided that
$$
t^{\nu+\frac5{12}} \lesssim t^\mu.
$$
Since $0\leq t\leq 1$, this is equivalent to
$$
\nu + \frac{5}{12} \geq \mu,
$$
which we reexpress as
$$
(\nu + \frac{5}{12})( (1-\frac{\nu}{3}) \sigma_1 - \frac{9}{4}) \geq ( - \frac{\nu^2}{3}+\frac{2\nu}{3}+\frac{5}{12})(\sigma_1-\frac{9}{4}).
$$
Some algebra reduces this to the condition on the left in \eqref{E:imp5mod}.  Thus, we have established that the left inequality in \eqref{E:imp5mod} suffices to imply \eqref{E:imp1}, \eqref{E:imp2}, from which it follows from Prop. \ref{P:Duhamel} that the first line of \eqref{E:Duhamel1} holds.

By Prop \ref{P:Duhamel},  we have the second line of \eqref{E:Duhamel1} holds provided
\begin{equation}
\label{E:imp3}
\min(\frac{\tilde \sigma_1}{\nu}, \tilde \sigma_2, \frac45(\sigma_2-\frac{9}{4})) \geq \frac{\sigma_2}{3}+r
\end{equation}
holds, where
$$
\tilde \sigma_j = \min( \sigma_j - \tfrac94, \tfrac23\sigma_j-\tfrac5{12}).
$$
Since we assume that $\sigma_j\geq \frac{11}{2}$ for $j=1,2$ we have $\tilde \sigma_j = \tfrac23\sigma_j-\frac{5}{12}$.
We observe that \eqref{E:imp3} holds provided \eqref{E:imp4mod} and the second inequality of \eqref{E:imp5mod} holds.
\end{proof}

\section{Nonlinear estimate}\label{S-12}

Now we return to the problem of estimating $\eta$.  Recall that $\eta$ solves
\begin{equation}
\label{E:eta10b}
\partial_t \eta - \partial_{x} [(-\Delta  +  x_t)\eta] = F \,, \qquad F= f_1 + \partial_{x} f_2,
\end{equation}
where
\begin{equation}
\label{E:eta11b}
\begin{aligned}
& f_1 = - (\lambda^{-1})_t \partial_{\lambda^{-1}} \tilde Q  \\
& f_2 = + (x_t -1) \tilde Q -3\tilde Q^2 \eta - 3 \tilde Q \eta^2 - \eta^3.
\end{aligned}
\end{equation}
Here, $\tilde Q(x,y) = \lambda^{-1} Q(\lambda^{-1} (x + K), \lambda^{-1} y)$.  Note that since $|Q(y_1,y_2)| \lesssim \la \vec y \ra^{-1/2} e^{-|\vec y|}$, we have $|\tilde Q(x,y)| \leq e^{-K/2}$ for $x>0$ (see Remark \ref{R:K}).   We know that for all $t>0$,
$$
\| \eta(t) \|_{H_{xy}^1} \lesssim \delta
$$
and
$$
|\lambda(t) -1 |\lesssim \delta, \qquad |\lambda_t| \lesssim \delta, \qquad |x_t-1| \lesssim \delta \quad \mbox{or} \quad (1-\delta) t \lesssim x(t) \lesssim (1+\delta)t.
$$
Furthermore, we know that $\phi(x,y) = \eta(0,x,y)$ satisfies, for $x>0$, $y\in \mathbb{R}$,
$$
|\phi(x,y)| \leq \delta \la x \ra^{-\sigma}.
$$
Finally, we know that for any $T>0$, $\eta$ is the \emph{unique} solution of \eqref{E:eta10b} in $C([0,T];H_{xy}^1)$ such that $\eta(t, x+x(t),y) \in L_x^4L_{yT}^\infty.$
Our goal is to show that for $x>0$ and $y\in \mathbb{R}$,
\begin{equation}
\label{E:etaboundsb}
| \eta(t,x,y)| \lesssim \delta
\begin{cases} t^{-7/12} \la x \ra^{-\sigma+\frac74} & \text{if }0 \leq t \leq 1 \\
\la x \ra^{-\frac23\sigma+\frac34} & \text{if } t\geq 1.
\end{cases}
\end{equation}

\begin{proposition}
\label{P:decay}
There exists $\delta_0>0$ (small), $K>0$ (large), and $\sigma_0 >0$  (large) such that the following holds true.   Suppose that $\sigma\geq \sigma_0$, $0<\delta \leq \delta_0$, $\phi \in H^1$ with $\|\phi\|_{H^1} \leq \delta$, and
$$
\text{for }x>0 \,, \qquad |\phi(x,y)| \leq \delta \la x \ra^{-\sigma}.
$$
Then the unique solution $\eta(t,x,y)$ solving \eqref{E:eta10b} for all $t$ satisfies
\begin{equation}
\label{E:targetdecay}
\text{for }x>0 \,, \qquad |\eta(t,x,y) | \lesssim \delta
\begin{cases}
t^{-7/12} \la x \ra^{-\sigma+\frac74} & \text{if }0<t\leq 1\\
\la x \ra^{-\frac23\sigma+\frac34} & \text{if }t\geq 1.
\end{cases}
\end{equation}
\end{proposition}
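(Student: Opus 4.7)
The plan is to combine the Duhamel representation with a continuity/bootstrap argument, using the linear decay estimate (Proposition \ref{P:linear-decay}) for the homogeneous part and the nonlinear Duhamel estimate (Corollary \ref{C:Duhamel}) for the inhomogeneous part. Specifically, I introduce the weight
$$
w(t,x) = \begin{cases} t^{-7/12}\langle x\rangle^{-\sigma+\frac74} & 0<t\leq 1,\\ \langle x\rangle^{-\frac23\sigma+\frac34} & t\geq 1,\end{cases}
$$
and consider, for $T>0$,
$$
N(T) \defeq \sup_{0<t\leq T}\sup_{x>0,\, y\in\R} \frac{|\eta(t,x,y)|}{\delta\, w(t,x)}.
$$
I will prove that there is a universal constant $C_\ast>0$ such that, for $\delta\leq \delta_0$ (small) and $\sigma\geq \sigma_0$ (large), the bootstrap implication ``$N(T)\leq 2C_\ast \Longrightarrow N(T)\leq C_\ast$'' holds; since $N$ is continuous in $T$ and $N(0^+)$ is controlled by the linear estimate alone, a standard continuity argument yields $N(T)\leq 2C_\ast$ for all $T>0$, which is \eqref{E:targetdecay}.

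To carry this out, I apply Duhamel's formula $\eta(t) = S(t,0)\phi + \int_0^t S(t,t')F(\cdot,\cdot,t')\,dt'$. For the linear piece $S(t,0)\phi$, I invoke Proposition \ref{P:linear-decay} with initial data $\phi$, using the hypothesis $|\phi(x,y)|\lesssim\delta\langle x\rangle^{-\sigma}$ for $x>0$ and the bound $\|\langle x\rangle^{-1}\phi\|_{L^2_{y\in\R,\, x<0}} \lesssim \|\phi\|_{L^2}\lesssim\delta$ (for $x<0$). This produces a term whose pointwise bound is precisely $\delta\, w(t,x)$ (up to a universal constant), giving the base $C_\ast$ in the bootstrap. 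For the Duhamel piece, I split $F = f_1 + \partial_x f_2 = f_1 + \partial_x[(x_t-1)\tilde Q - 3\tilde Q^2\eta - 3\tilde Q\eta^2 - \eta^3]$. The first four contributions all carry at least one factor of $\tilde Q$, which decays exponentially for $x>0$ thanks to $K$ chosen large (Remark \ref{R:K}); hence, the hypotheses of Corollary \ref{C:Duhamel} hold for those pieces with $\sigma_1,\sigma_2$ \emph{arbitrarily large} and $\nu=0$, and the prefactor $C_2$ is a product of a smallness parameter ($|x_t-1|$, $|(\lambda^{-1})_t|$, $\|\tilde Q\|$, or $\|\eta\|_{L^\infty_x}$ via Sobolev) with the bootstrap constant. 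Choosing $r$ in Corollary \ref{C:Duhamel} so that $\frac13\sigma_2+r \geq \frac23\sigma-\frac34$ suffices.

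The crux is the cubic self-interaction term $\partial_x\eta^3$. Under the bootstrap $N(T)\leq 2C_\ast$, the pointwise bound on $\eta^3$ is
$$
|\eta(t,x,y)^3| \lesssim (2C_\ast)^3\, \delta^3 \begin{cases} t^{-7/4}\langle x\rangle^{-3(\sigma-\frac74)} & 0<t\leq 1,\\ \langle x\rangle^{-(2\sigma-\frac94)} & t\geq 1,\end{cases}
$$
so Corollary \ref{C:Duhamel} applies with $\nu=\frac74$, $\sigma_1=3\sigma-\frac{21}{4}$, $\sigma_2=2\sigma-\frac94$, and (for the $t\geq1$ output) $r=\frac13\sigma$. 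The resulting contribution to $\eta$ is $\lesssim (2C_\ast)^3\delta^3\, w(t,x)$, which, for $\delta$ small enough (relative to $C_\ast$), can be absorbed into $\tfrac12 C_\ast\delta\, w(t,x)$. The auxiliary $L^2_{y,\,x<0}$ and $L^1_{y,\,x<0}$ hypotheses on $F$ and $\partial_x F$ follow from the uniform $H^1$-bound $\|\eta\|_{H^1_{xy}}\lesssim\delta$ and from the exponential tails of $\tilde Q$, using straightforward Sobolev embeddings.

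The main obstacle is bookkeeping: one must verify that the parameter constraints \eqref{E:imp4mod}--\eqref{E:imp5mod} in Corollary \ref{C:Duhamel} can be satisfied simultaneously for all the choices $(\nu,\sigma_1,\sigma_2,r)$ arising from the pieces of $F$, and in particular, for the cubic term with $\nu=\frac74 > \frac54$. Plugging in $\sigma_1=3\sigma-\frac{21}{4}$, $\sigma_2=2\sigma-\frac94$, and $r=\frac13\sigma$, both inequalities of \eqref{E:imp5mod} reduce to linear inequalities of the form $\sigma\gtrsim \text{const}$, and likewise \eqref{E:imp4mod} becomes $2\sigma-\frac94\geq \frac{27}{7}+\frac{5}{7}\sigma$, i.e., $\sigma \gtrsim \text{const}$. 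Thus, all constraints are met for $\sigma \geq \sigma_0$ sufficiently large, fixing $\sigma_0$ once and for all. The remaining check, that the short-time decay profile ($t^{-7/12}$) produced by the linear part is consistent with, and absorbs, the Duhamel contribution on $0<t\leq 1$, is precisely the content of the first case of Corollary \ref{C:Duhamel} together with the power arithmetic $t^{-\nu/3}=t^{-7/12}$ when $\nu=\frac74$ and $\sigma_1/3 = \sigma-\frac74$. This closes the bootstrap and yields \eqref{E:targetdecay}.
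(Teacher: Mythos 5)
Your overall scheme (Duhamel representation, Proposition \ref{P:linear-decay} for the free part, Corollary \ref{C:Duhamel} for the cubic term with $\nu=\frac74$, $\sigma_1=3(\sigma-\frac74)$, and a continuity-in-time bootstrap) is the same as the paper's, and your parameter bookkeeping for the cubic self-interaction does close. But there is a genuine gap at the initialization of the bootstrap. You assert that ``$N$ is continuous in $T$ and $N(0^+)$ is controlled by the linear estimate alone.'' Neither claim is available: $N(T)$ is a weighted \emph{pointwise} sup of the full solution $\eta$, and the only a priori information on $\eta$ is $\eta\in C([0,T];H^1_{xy})$, which gives no control on $\sup_{x>0}|\eta(t,x,y)|\,\la x\ra^{\sigma-\frac74}$. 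In particular you do not know that $N(T)<\infty$ for \emph{any} $T>0$, so the set $\{T:\,N(T)\le 2C_*\}$ cannot be shown nonempty, and the Duhamel term cannot be estimated at small times without already assuming the weighted bound on $\eta$ --- the argument is circular at $t=0^+$. The paper resolves exactly this point with Lemma \ref{L:decay1}: it runs a \emph{contraction mapping} on $[0,1]$ in a space $Y$ whose norm includes $\|\eta\, t^{7/12}\la x\ra^{\sigma-\frac74}\|_{L^\infty_{Ty,x>0}}$, so the fixed point carries the weighted decay by construction, and then invokes the conditional uniqueness of Theorem \ref{T:lwp-review} to identify that fixed point with the given $H^1$ solution. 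Your proof needs this step (or an equivalent approximation argument); without it the continuity method does not start.

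Two smaller points. First, the continuation past a putative maximal time also needs care: the paper takes $T_*$ to be the sup of times where \eqref{E:targetdecay} holds, re-applies the short-time lemma from $T_*-\frac12$ to obtain the \emph{weaker} decay $\la x\ra^{-\frac23\sigma+\frac52}$ on $[T_*,T_*+\frac12]$, and only then upgrades to the target via Corollary \ref{C:Duhamel} (with $\sigma_2=3(\frac23\sigma-\frac52)$, $r=\frac74$); your constant-doubling bootstrap instead requires closedness of $\{N\le 2C_*\}$, which again rests on the unproved continuity of the weighted sup. Second, Corollary \ref{C:Duhamel} is stated for $\nu>\frac54$ and for source terms entering under $\partial_x$; the term $f_1$ enters without a derivative and should be handled by Proposition \ref{P:linear-decay} under the time integral (as the paper does), and your $\nu=0$ applications should be replaced by $\nu=\frac74$ (harmless, since $t^{-7/4}\ge 1$ on $(0,1]$). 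These are minor; the missing contraction/uniqueness step is the substantive issue.
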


The proof consists of the following steps.  The following lemma provides a key short-time step result.

\begin{lemma}
\label{L:decay1}
There exists $\sigma_0>0$ (large), $K>0$ (large), and $\delta_0>0$ (small) such that if $\sigma\geq \sigma_0$, $0< \delta\leq \delta_0$, then the unique solution $\eta(t,x,y)$ solving \eqref{E:eta10b} satisfies, for all $0\leq t\leq 1$,
\begin{equation}
\label{E:dec1}
\text{for }x>0 \,, \qquad |\eta(t,x,y) | \lesssim \delta \,  t^{-7/12} \la x \ra^{-\sigma+\frac74}.
\end{equation}
\end{lemma}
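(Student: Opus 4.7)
I would argue by a continuity/bootstrap scheme. Write $\eta$ through Duhamel's formula as
$$
\eta(t)=S(t,0)\phi+\int_{0}^{t}S(t,t')\bigl[f_{1}(t')+\partial_{x}f_{2}(t')\bigr]\,dt'.
$$
Since by hypothesis $|\phi(x,y)|\leq \delta\la x\ra^{-\sigma}$ for $x>0$ and $\|\phi\|_{H^{1}}\leq \delta$ (which in particular gives the weighted $L^{2}_{y,\,x<0}$ bound needed), Proposition~\ref{P:linear-decay} yields immediately, with a universal $C_{0}>0$,
$$
|S(t,0)\phi(x,y)|\leq C_{0}\,\delta\,t^{-7/12}\la x\ra^{-\sigma+7/4},\qquad x>0,\ 0<t\leq 1,
$$
which already matches the target for the linear piece. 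Define
$$
T^{\ast}=\sup\Bigl\{T\in(0,1]\,:\,|\eta(t,x,y)|\leq 2C_{0}\delta\,t^{-7/12}\la x\ra^{-\sigma+7/4}\ \text{for all}\ x>0,\,0<t\leq T\Bigr\}.
$$
The goal is to recover this bound on $[0,T^{\ast}]$ with constant $C_{0}+o(1)$, forcing $T^{\ast}=1$.

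\textbf{Source terms carrying a factor of $\tilde Q$.} Every component of $f_{1}$ and of $f_{2}$ except the pure cubic $-\eta^{3}$ contains $\tilde Q$, $\tilde Q^{2}$, or a derivative of $\tilde Q$ as a factor. Because of the shift by $K$ in the definition of $\tilde Q$ together with \eqref{prop-Q} and Remark~\ref{R:K}, one has $|\tilde Q|+|\partial \tilde Q|\lesssim e^{-K/2}e^{-(x+|y|)/4}$ on $\{x>0\}$. Combining this with the modulation bounds $|(\lambda^{-1})_{t}|+|x_{t}-1|\lesssim \delta$, the global bound $\|\eta\|_{H^{1}}\lesssim \delta$, and the bootstrap ansatz for the $\tilde Q\eta^{2}$ term, each of these sources is dominated pointwise by $\delta^{2}e^{-K/2}$ times an exponentially decaying profile, and satisfies the auxiliary $L^{2}$/$L^{1}$ conditions of Corollary~\ref{C:Duhamel} with constants of the same order. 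Applying that corollary (or, equivalently, inserting the pointwise kernel estimate of Proposition~\ref{P:dFS-est} against the exponentially weighted input), the total Duhamel contribution of these terms is bounded by $C\delta^{2}e^{-K/2}t^{-7/12}\la x\ra^{-\sigma+7/4}$, which by Remark~\ref{R:K} is $\leq C\delta_{0}\cdot\delta t^{-7/12}\la x\ra^{-\sigma+7/4}$.

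\textbf{The cubic term $\partial_{x}(\eta^{3})$ is the main obstacle.} Here one uses the bootstrap ansatz, which yields on $[0,T^{\ast}]$
$$
|\eta^{3}(t,x,y)|\leq (2C_{0}\delta)^{3}\,t^{-7/4}\la x\ra^{-3(\sigma-7/4)},\qquad x>0.
$$
The time exponent $\nu=7/4$ lies strictly above the threshold $\nu>5/4$ required by Corollary~\ref{C:Duhamel}, applied with $\sigma_{1}=3\sigma-21/4$ and a suitably chosen $\sigma_{2}\leq\sigma_{1}$ meeting \eqref{E:imp4mod}. The hypotheses on $\|\la x\ra^{-1}F\|_{L^{\infty}_{t}L^{2}_{y,x<0}}$ and $\|\partial_{x}F\|_{L^{\infty}_{t}L^{1}_{y,x>0}}$ follow from $\|\eta\|_{H^{1}}\lesssim\delta$ via Sobolev embedding $H^{1}\hookrightarrow L^{6}$ in two dimensions (giving $\|\eta^{3}\|_{L^{2}_{xy}}\lesssim\delta^{3}$ and the companion $L^{1}$-estimate on $\partial_{x}(\eta^{3})$ via the product rule and Hölder). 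The exponent constraints \eqref{E:imp5mod} reduce to $\sigma\geq C(\nu)$ for an explicit numerical constant and are satisfied once $\sigma_{0}$ is fixed large enough. The corollary then returns
$$
\Bigl|\int_{0}^{t}\partial_{x}S(t-t')\eta^{3}(t')\,dt'\Bigr|\leq C\delta^{3}\,t^{-\nu/3}\la x\ra^{-\sigma_{1}/3}=C\delta^{3}\,t^{-7/12}\la x\ra^{-\sigma+7/4},
$$
i.e.\ precisely the target profile, with an extra factor $\delta^{2}$ gained from cubing.

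\textbf{Closure and main difficulty.} Adding the three contributions gives, on $[0,T^{\ast}]$,
$$
|\eta(t,x,y)|\leq \bigl(C_{0}+C\delta_{0}+C\delta_{0}^{2}\bigr)\delta\,t^{-7/12}\la x\ra^{-\sigma+7/4},\qquad x>0,
$$
for a universal $C$. Choosing $\delta_{0}$ so that $C(\delta_{0}+\delta_{0}^{2})<C_{0}$ strictly improves the bootstrap constant from $2C_{0}$, yielding $T^{\ast}=1$ by standard continuity and hence \eqref{E:dec1}. The delicate point is the perfect balance in the cubic estimate: the bootstrap ansatz $t^{-7/12}\la x\ra^{-\sigma+7/4}$ is precisely the fixed profile that, after being cubed and passed through Corollary~\ref{C:Duhamel} (with the one-third gain in both time and space exponents), reproduces itself. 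Any weaker spatial decay or less singular time profile would break this self-consistency, which is why the specific exponents $7/12$ and $7/4$ appear in the statement.
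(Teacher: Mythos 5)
Your core estimates are exactly the ones the paper uses: Proposition \ref{P:linear-decay} for the free evolution and for the $f_1$-type sources, and Corollary \ref{C:Duhamel} with $\nu=\tfrac74$, $\sigma_1=3(\sigma-\tfrac74)$ for the cubic (and $\tilde Q$-weighted) terms, with the self-reproducing profile $t^{-7/12}\la x\ra^{-\sigma+7/4}$ closing the loop with a gain of $\delta_0$ or $\delta^2$. That part is right and matches the paper's computation line by line.

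The genuine gap is in the bootstrap framing. You define $T^{\ast}$ as the supremum of times up to which the pointwise weighted bound holds for the \emph{given} solution $\eta$, but $\eta$ is only known a priori to lie in $C([0,T];H^1_{xy})\cap L^4_xL^\infty_{yT}$; in two dimensions $H^1$ does not embed into $L^\infty$, so there is no reason the set of admissible $T$ is nonempty, and your continuity argument never establishes $T^{\ast}>0$ (nor the openness/closedness of the bootstrap set for a quantity that is a weighted $L^\infty_{x>0,y}$ norm of an $H^1$ function). The paper avoids this by running a \emph{contraction} in the norm $\|\eta\|_Y$ that includes the component $\|\eta\,t^{7/12}\la x\ra^{\sigma-7/4}\|_{L^\infty_{Ty,x>0}}$ alongside the $H^1$ and maximal-function norms: the fixed point automatically carries the decay, and the conditional uniqueness of Theorem \ref{T:lwp-review} identifies that fixed point with the given $\eta$. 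Your argument can be repaired by switching to this contraction formulation (or by running your bootstrap on the Picard iterates and passing to the limit), but as written the initialization of the continuity argument is missing, and it is precisely the step the paper's structure is designed to supply.
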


\begin{proof}
This is done using a contraction argument and the available decay estimates, and the Duhamel estimate Corollary \ref{C:Duhamel}, as follows.  Take $T=1$ and define the $Y$ norm as follows
$$
\|\eta\|_Y = \|\eta(t,x,y)\|_{L_T^\infty H_{xy}^1} + \|\eta(t, x+x(t),y)\|_{L_x^4 L_{yT}^\infty} + \|\eta(t,x,y) t^{7/12} \la x \ra^{\sigma-\frac74} \|_{L_{Ty, x>0}^\infty}.
$$
Let $\Lambda$ be defined on $Y$ by
$$
\Lambda \eta = S(t,0) \phi + \int_0^t  S(t,t') f_1(\bullet,\bullet,t') \, dt'+ \int_0^t \partial_x S(t,t') f_2(\bullet,\bullet,t') \, dt'.
$$
By Proposition \ref{P:linear-decay}  with $C_1=\delta$, we obtain that for $x>0$ and $y\in \mathbb{R}$, $0\leq t \leq 1$,
\begin{equation}
\label{E:eta20}
|[S(t,0)\phi](x,y)| \leq \tfrac14 C_3 \delta t^{-7/12} \la x \ra^{-\sigma+\frac74}
\end{equation}
for some absolute constant $C_3$ (which for convenience in writing below, we will take $\geq 1$).

Since $f_1 = \lambda^{-3}\lambda_t (Q + (x+K)Q_x + yQ_y)$, where $Q$, $Q_x$, and $Q_y$ are evaluated at $(\lambda^{-1}(x+K),\lambda^{-1}y)$, we have that  $|f_1(x,y,t)|  \lesssim \delta \delta_0 \la x \ra^{-\sigma}$ for all $t$ and $x>0$.  Thus by Prop. \ref{P:linear-decay}, for $x>0$,
$$\left| \int_0^t S(t,t') f_1(\bullet,\bullet,t') \, dt' \right| \lesssim \delta \delta_0 \la x \ra^{-\sigma+\frac74}\leq \tfrac14 C_3 \delta t^{-7/12} \la x \ra^{-\sigma+\frac74}$$

Suppose that $\| \eta\|_Y \leq C_3\delta$.  Then, requiring $K$ large enough so that $\la K \ra^{-1} \leq \delta_0$ (which also implies that $e^{-K/2} \leq \delta_0$, see our Remark \ref{R:K}), we have for $x>0$, $y\in \mathbb{R}$,
$$
|f_2(t,x,y)| \leq |x_t-1| \tilde Q + 3|\eta| \tilde Q^2+ 3|\eta|^2\tilde Q + |\eta|^3 \lesssim  C_3^3\delta_0 \delta \, t^{-7/4} \la x \ra^{-3(\sigma-\frac74)}.
$$
Moreover, by Sobolev
$$
\| \la x\ra^{-1} \partial_x f_2 \|_{L_T^\infty L_{y\in \mathbb{R}, x<0}^1} +\| \partial_x f_2 \|_{L_T^\infty L_{y\in \mathbb{R}, x>0}^1} + \| \la x \ra^{-1} f_2 \|_{L_T^\infty L_{y\in \mathbb{R}, x<0}^2} \lesssim C_3^3 \delta_0 \delta .
$$
Thus, in the hypothesis of Corollary \ref{C:Duhamel}, we can take $C_2=C_3^3\delta_0\delta$ and $\sigma_1=3(\sigma-\frac74)$, and conclude that for $x>0$ and $y \in \mathbb{R}$,
\begin{equation}
\label{E:eta21}
\left| \int_0^t [\partial_x S(t,t') f_2(\bullet, \bullet, t')](x,y) \, dt' \right| \leq C_4 C_3^3\delta_0 \delta \, t^{-7/12} \la x \ra^{-\sigma+\frac74}
\end{equation}
for some absolute constant $C_4>0$.  Taking $\delta_0>0$ sufficiently small so that $C_4C_3^2 \delta_0 \leq \frac12$, we obtain from \eqref{E:eta20} and \eqref{E:eta21} that for $x>0$ and $y\in \mathbb{R}$, $0\leq t\leq 1$,
\begin{equation}
\label{E:eta22}
|(\Lambda\eta)(t,x,y)| \leq \frac12 \,C_3 \delta t^{-7/12} \la x \ra^{-\sigma+\frac74}.
\end{equation}
Moreover, by the estimates in Lemmas \ref{L:linhom} and \ref{L:lininhom}, if $\|\eta\|_{L_T^\infty H_{xy}^1} \leq \frac14 \,C_3 \delta$ and $\|\eta(t,x+x(t),y)\|_{L_x^4L_{yT}^\infty} \leq \frac14 \, C_3 \delta$, then
\begin{equation}
\label{E:eta23}
\|\Lambda \eta \|_{L_T^\infty H_{xy}^1} + \| \Lambda \eta(t,x+x(t),y) \|_{L_x^4 L_{yT}^\infty} \leq \frac12 \, C_3 \delta
\end{equation}
as in the discussion following Theorem \ref{T:lwp-review} reviewing the local well-posedness (with a possible adjust to $C_3$ and $\delta_0$, as required by the absolute constants in those estimates).  Combining \eqref{E:eta22} and \eqref{E:eta23}, we obtain
\begin{equation}
\label{E:eta24}
\| \Lambda \eta \|_Y \leq C_3 \delta.
\end{equation}
Moreover, it also follows similarly from these estimates that
\begin{equation}
\label{E:eta25}
\| \Lambda \eta_2 - \Lambda \eta_1 \|_Y \leq \frac12 \|\eta_2-\eta_1\|_Y
\end{equation}
for two $\eta_1, \eta_2 \in Y$ such that $\eta_1(0,x,y)=\eta_2(0,x,y)=\phi$. Hence, $\Lambda$ is a contraction and the fixed point solves \eqref{E:eta10b}.  By the uniqueness in Theorem \ref{T:lwp-review}, this fixed point is the unique solution in the function class stated in that Theorem.
\end{proof}

Now the proof proceeds as follows:
\begin{itemize}
\item
Let $T_*\geq 0$ be the sup of all times for which \eqref{E:targetdecay} holds.
\item
By Lemma \ref{L:decay1}, $T_*\geq 1$.
\item
If $T_*<\infty$, then we will obtain a contradiction in the following series of steps. First, we know that at $T_1\defeq T_*-\frac12$,
$$
\text{for }x>0 \,, \qquad |\eta(T_1,x,y) | \lesssim \delta  \la x \ra^{-\frac23\sigma+\frac34}.
$$
\item
Apply Lemma \ref{L:decay1} with $t=0$ replaced by $t=T_1$ to obtain that $\eta$ satisfies, for all $T_*-\frac12\leq t \leq T_*+\frac12$, the estimate
$$
\text{for }x>0 \,, \qquad |\eta(t,x,y) | \lesssim \delta (t-T_1)^{-7/12} \la x \ra^{-\frac23\sigma+\frac52}.
$$
Restricting to $T_*\leq t \leq T_*+\frac12$, this is simplifies to
$$
\text{for }x>0 \,, \qquad |\eta(t,x,y) | \lesssim \delta  \la x \ra^{-\frac23\sigma+\frac52}.
$$
\item
Now we know that for $0\leq t\leq T_*+\frac12$,
\begin{equation}
\label{E:targetdecayinput}
\text{for }x>0 \,, \qquad |\eta(t,x,y) | \lesssim \delta
\begin{cases} t^{-7/12} \la x \ra^{-\sigma+\frac74} \\
\la x \ra^{-\frac23\sigma+\frac52}
\end{cases}
\end{equation}
holds, which is slightly weaker than \eqref{E:targetdecay}.
\item
We know that, on $0 \leq t \leq T_*+\frac12$, $\eta$ satisfies
$$
\eta = S(t,0) \phi + \int_0^t  S(t,t') f_1(\bullet,\bullet,t') \, dt'+ \int_0^t \partial_x S(t,t') f_2(\bullet,\bullet,t') \, dt'.
$$
Apply the estimates in Proposition \ref{P:linear-decay} and Corollary \ref{C:Duhamel} to show that \eqref{E:targetdecayinput} suffices to conclude \eqref{E:targetdecay} holds on $0\leq t\leq T_*+\frac12$, which is a contradiction to the definition of $T_*$.  Indeed, we apply Corollary \ref{C:Duhamel} with $\nu=\frac74$, $\sigma_1= 3(\sigma-\frac74)$, $\sigma_2 = 3(\frac23\sigma-\frac52)$, and $r=\frac74$.  Then $\frac13\sigma_2+r=\frac23\sigma-\frac34$, so that \eqref{E:targetdecay} is obtained.
\end{itemize}

\section{$H^1$-instability of $Q$ for the critical gZK}\label{S-13}

We are now ready to prove our main result, Theorem \ref{Theo-Inst}.

\begin{proof}[Proof of Theorem \ref{Theo-Inst}]
For $n\in \mathbb{N}$ to be chosen later, let
$$
u^n_0=Q+\ep^n_0,
$$
where
\begin{equation}\label{ep_0}
\ep^n_0=\frac{1}{n}\left(Q+a\chi_0\right),
\end{equation}
and $a\in \R$ is such that $\ep^n_0 \perp \chi_0$, that is,
$$a=\displaystyle -\frac{\int \chi_0 Q}{\|\chi_0\|_2^2}.$$
From Theorem \ref{L-prop}, we have that for every $n\in \mathbb{N}$
$$
\ep^n_0 \perp \{Q_{y_1}, Q_{y_2}, \chi_0\}.
$$
Denote by $u^n(t)$ the solution of \eqref{gZK} associated to $u^n_0$.

Assume by contradiction that $Q$ is stable. Then, for  $\alpha_0<\overline{\alpha}$, where $\overline{\alpha}>0$ is given by Proposition \ref{ModThI}, if $n$ is sufficiently large, we have $u^n(t)\in U_{\alpha_0}$ (recall \eqref{tube}). Thus, from Definition \ref{eps}, there exist functions $\lambda^n(t)$ and $x^n(t)$ such that $\ep^n(t)$, defined in \eqref{eq-ep2}, satisfies
\begin{equation*}
\ep^n(t) \perp \{Q_{y_1}, Q_{y_j}, \chi_0\},
\end{equation*}
and also $\lambda^n(0)=1$ and $x^n(0)=0$.

To simplify the notation we drop the index $n$ in what follows. Rescaling the time $t \mapsto s$ by  $\frac{ds}{dt} = \frac1{\lam^3}$ and taking $\alpha_0<\alpha_1$, where $\alpha_1>0$ is given by Lemma \ref{Lemma-param}, we have that $\lambda(s)$ and $x(s)$ are $C^1$ functions, and that $\ep(s)$ satisfies equation \eqref{Eq-ep}. Moreover, from Proposition \ref{ModThI}, since $u(t)\in U_{\alpha_0}$,  we have
\begin{equation}\label{Bound_ep}
\|\ep(s)\|_{H^1}\leq C_1 \alpha_0 \quad \textrm{and} \quad |\lambda(s)-1|\leq C_1\alpha_0,
\end{equation}
thus, taking $\alpha_0<(2C_1)^{-1}$, we obtain
\begin{equation}\label{Bound}
\|\ep(s)\|_{H^1}\leq 1 \quad \textrm{and} \quad \frac12\leq \lambda(s)\leq \frac32, \quad \textrm{for all} \quad s\geq 0.
\end{equation}

Furthermore, in view of \eqref{ControlParam}, if $\alpha_0>0$ is small enough, we deduce
$$
\left|\frac{\lambda_s}{\lambda}\right|+\left|\frac{x_s}{\lambda}-1\right|\leq C_2\|\ep(s)\|_2\leq C_1C_2\alpha_0.
$$
Since $ x_t=x_s/\lambda^3$, we conclude that
$$
\frac{1-C_1C_2\alpha_0}{(1+C_1\alpha_0)^2}\leq \frac{1-C_1C_2\alpha_0}{\lambda^2}\leq x_t\leq \frac{1+C_1C_2\alpha_0}{\lambda^2}\leq \frac{1+C_1C_2\alpha_0}{(1-C_1\alpha_0)^2}
$$
Hence, we can choose $\alpha_0>0$, small enough, such that
$$
\frac{3}{4}\leq x_t \leq \frac{5}{4}.
$$
The last inequality implies that $x(t)$ is increasing and by the Mean Value Theorem
$$
x(t_0)-x(t)\geq \frac{3}{4}(t_0-t)
$$
for every $t_0, t\geq 0$ with $t\in [0,t_0]$.
Also, recalling $x(0)=0$, another application of the Mean Value Theorem yields
$$
x(t)\geq \frac{1}{2}t
$$
for all $t\geq 0$. Finally, by assumption \eqref{ep_0} and properties of $Q$, we have
$$
|u_0(\vec{x})|\leq ce^{-{\delta}|\vec{x}|},
$$
for some $c>0$ and ${\delta}>0$.

From the monotonicity properties in Section \ref{S-7}, we obtain the $L^2$ exponential decay on the right for $\ep(s)$.
\begin{corollary}\label{AM3}
Let $M\geq 4$. If $\alpha_0>0$ is sufficiently small, then there exists $C=C(M,\delta)>0$ such that for every $s\geq 0$ and $y_0>0$
$$
\int_{\R}\int_{y_1>y_0}\ep^2(s, y_1, y_2)dy_1dy_2\leq Ce^{-\frac{y_0}{2M}}.
$$
\end{corollary}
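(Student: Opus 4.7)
The plan is to reduce the corollary to Lemma \ref{AM2} applied to the original solution $u(t)$, by inverting the canonical rescaling given in Definition \ref{eps}. The hypotheses of Lemma \ref{AM2} have essentially been verified in the text immediately preceding the corollary: the preceding paragraph established $x_t \in [\tfrac34, \tfrac54]$ (hence $x(t_0)-x(t)\geq \tfrac34 (t_0-t)$ and $x(t)\geq \tfrac12 t$), Proposition \ref{ModThII} gives $x_2(t)\equiv 0$, the closeness hypothesis \eqref{u-Q} follows from $u(t)\in U_{\alpha_0}$ with $\alpha_0$ small, and the exponential tail bound on $u_0^n = Q+\ep_0^n$ follows from the decay estimates \eqref{prop-Q} on $Q$ and \eqref{chi_0} on $\chi_0$ (the resulting decay rate $\delta>0$ is independent of $n$).

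First, I would use the defining relation $\ep(s,y_1,y_2) = \lambda(s)\, u(t, \lambda(s) y_1 + x(t), \lambda(s) y_2) - Q(y_1,y_2)$ and the elementary inequality $\ep^2 \leq 2\lambda^2 u^2 + 2Q^2$ to split
\[
\int_{\R}\int_{y_1>y_0}\ep^2(s,y_1,y_2)\,dy_1dy_2 \leq 2\lambda^2\!\!\int_{\R}\!\int_{y_1>y_0}\! u^2(t,\lambda y_1+x(t),\lambda y_2)\,dy_1dy_2 + 2\!\int_{\R}\!\int_{y_1>y_0}\! Q^2\,dy_1dy_2.
\]
The $Q$-term is controlled immediately by \eqref{prop-Q}, which gives decay like $e^{-2 y_0}$, better than the desired $e^{-y_0/(2M)}$ for $M\geq 4$.

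For the main term, I would change variables $x_1 = \lambda y_1$, $x_2 = \lambda y_2$ (using $\lambda(s)\in [\tfrac12,\tfrac32]$ from \eqref{Bound}), which transforms it into
\[
\int_{\R}\int_{x_1>\lambda y_0} u^2(t,x_1+x(t),x_2)\,dx_1\,dx_2.
\]
Applying Lemma \ref{AM2} (with the parameter $M$ there taken as $\max\{M, 2/\delta\}$, absorbing the extra dependence into the constant $C(M,\delta)$) at threshold $x_0 = \lambda y_0$ yields a bound of the form $C\, e^{-\lambda y_0/M}$. Since $\lambda(s)\geq \tfrac12$, this is dominated by $C\, e^{-y_0/(2M)}$, as desired.

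The only mild technical point is the interplay between the rescaling constant $\lambda(s)$ and the monotonicity decay rate: the factor $\lambda \geq \tfrac12$ costs us the factor of $2$ in the exponent $1/(2M)$ appearing in the statement, and the decay rate $\delta$ from the initial data enters the constant rather than the exponent. Both are harmless because $\lambda$ is uniformly bounded from above and below on the orbit by \eqref{Bound}, and $\delta>0$ is a fixed constant determined by $Q$ and $\chi_0$, independent of $n$.
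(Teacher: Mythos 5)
Your proof is correct and follows essentially the same route as the paper: reduce to Lemma \ref{AM2} via the rescaling relating $\ep$ to $u$, split off the exponentially decaying $Q$-contribution, and use $\lambda(s)\geq\tfrac12$ to convert the rate $e^{-\lambda(s)y_0/M}$ into $e^{-y_0/(2M)}$. The only quibble is your remark that replacing $M$ by $\max\{M,2/\delta\}$ in Lemma \ref{AM2} can be ``absorbed into the constant'': a slower exponential \emph{rate} cannot be hidden in a multiplicative constant, though the paper's own proof elides the same point and the corollary is only used qualitatively later on, so nothing breaks.
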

\begin{proof}
Applying Lemma \ref{AM2}, for a fixed $M\geq 4$, there exists $C=C(M)>0$ such that for all $t\geq 0$ and $x_0>0$ we have
$$
\int_{\R}\int_{x_1>x_0}u^2(t, x_1+x(t), x_2)dx_1dx_2\leq Ce^{-\frac{x_0}{M}}.
$$
From the definition of $\ep(s)$, we have that
$$
\frac{1}{\lambda(s)}\ep\left(s,\frac{y_1}{\lambda(s)}, \frac{y_2}{\lambda(s)}\right)=u(s,y_1+x(s), y_2)-\frac{1}{\lambda(s)}Q\left(\frac{y_1}{\lambda(s)}, \frac{y_2}{\lambda(s)}\right).
$$
Moreover, if $\alpha_0<(2C_1)^{-1}$, we have $1/2\leq \lambda(s)\leq 3/2$, and using \eqref{Q-decay}, we get
\begin{equation}\label{Q-decay}
\frac{1}{\lambda(s)}Q\left(\frac{y_1}{\lambda(s)}, \frac{y_2}{\lambda(s)}\right)\leq \frac{c}{\lambda(s)}e^{-\frac{\left| \vec{y}\right|}{\lambda(s)}}\leq 2c \, e^{-\frac{2}{3} |\vec{y}|}\leq c \, e^{-\frac{|\vec{y}|}{M}},
\end{equation}
since $M\geq 3/2$.

Therefore, we deduce that
\begin{align*}
\int_{\R}\int_{y_1>y_0}\frac{1}{\lambda^2(s)}\ep^2\left(s,\frac{y_1}{\lambda(s)}, \frac{y_2}{\lambda(s)}\right)dy_1dy_2\leq &2\int_{\R}\int_{y_1>y_0}u^2(s,y_1+x(s),y_2)dy_1dy_2\\
&+2\int_{\R}\int_{y_1>y_0}\frac{1}{\lambda^2(s)}Q^2\left(\frac{y_1}{\lambda(s)}, \frac{y_2}{\lambda(s)}\right)dy_1dy_2\\
\leq & 2ce^{-\frac{y_0}{M}}+2c\int_{\R}\int_{y_1>y_0}e^{-\frac{|\vec{y}|}{M}}dy\\
\leq & {C}e^{-\frac{y_0}{M}}
\end{align*}
for some ${C}={C}(M)>0$.

Finally, by the scaling invariance of the $L^2$-norm, we get
$$
\int_{\R}\int_{y>y_0}\ep^2(s, y_1,y_2)dy_1dy_2 =\int_{\R} \int_{y>\lambda(s)y_0}\frac{1}{\lambda^2(s)}\ep^2\left(s,\frac{y_1}{\lambda(s)},\frac{y_2}{\lambda(s)}\right)\, dy_1dy_2
\leq C\, e^{-\frac{\lambda(s)y_0}{M}}
\leq C\, e^{-\frac{y_0}{2M}},
$$
since $\lambda(s)\geq 1/2$.
\end{proof}

Next, we define a rescaled and shifted quantity of the virial-type.
Recall the definition of $J_A$ in \eqref{def-JA} and let
$$
K_A(s)=\lambda(s)(J_A(s)-\kappa).
$$
(We remark that this quantity is similar to the corresponding one in Martel-Merle \cite{MM-KdV-instability}.)

From \eqref{Bound-JA} and \eqref{Bound}, it is clear that
\begin{equation}\label{Bound-KA}
|K_A(s)|\leq c\left((1+A^{1/2})\|\ep(s)\|_{2}+\kappa\right)<+\infty,
\end{equation}
for all $s\geq 0$.

Moreover, using Lemma \ref{J'_A}, we also have
\begin{align}\label{K'_A}
\frac{d}{ds}K_A =& \lambda_s\left(J_A-\kappa\right)+ \lambda \frac{d}{ds}J_A \nonumber\\
=& \lambda \left(\frac{d}{ds}J_A+\frac{\lambda_s}{\lambda}\left(J_A-\kappa\right)\right) \nonumber \\
=&\lambda \left(2\left(1-\frac12\left(\frac{x_s}{\lambda}-1\right)\right)\int\ep Q +R(\ep,A)\right).
\end{align}

In the next result we obtain a strictly positive lower bound for $\frac{d}{ds}K_A(s)$ for a certain choice of $\alpha_0>0$, $n\in \mathbb{N}$ and $A\geq 1$.

\begin{theorem}\label{Lemma-K'_A}
There exist $\alpha_0>0$ sufficiently small, $n_0\in \mathbb{N}$ and $A\geq 1$ sufficiently large such that
\begin{equation}\label{Bound-K'_A}
\frac{d}{ds}K_A(s)\geq \frac{b}{2n_0} >0, \quad \textrm{for all} \quad s\geq 1,
\end{equation}
where
$$
b=\int (Q+a\chi_0)Q=\|Q\|_2^2-\frac{\left(\int Q\chi_0\right)^2}{\|\chi_0\|_2^2}.
$$
\end{theorem}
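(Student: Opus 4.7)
My plan is to start from the identity \eqref{K'_A}, which writes $\frac{d}{ds}K_A$ as $\lambda(s)$ times $2(1-\tfrac12(\tfrac{x_s}{\lambda}-1))\int \ep Q + R(\ep,A)$, and then prove that the principal term is essentially $2b/n_0$ while the remainder is negligible after tuning $A$, $\alpha_0$, $n_0$. For the main term: since $\ep_0 \perp \chi_0$ by construction, a direct computation gives $\int Q\,\ep_0 = \tfrac{1}{n_0}\bigl(\|Q\|_{L^2}^2 + a\int Q\chi_0\bigr) = b/n_0$, and the mass conservation \eqref{m-ep} yields $\int Q\,\ep(s) = b/n_0 + \tfrac12(\|\ep_0\|_{L^2}^2 - \|\ep(s)\|_{L^2}^2)$. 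Since $\|\ep_0\|_{H^1}^2 \lesssim 1/n_0^2$, Lemma~\ref{H^1-control} gives $\|\ep(s)\|_{H^1}^2 \lesssim C_5(\alpha_0 b/n_0 + 1/n_0^2)$, whence $\int Q\,\ep(s) = (b/n_0)(1+O(\alpha_0+1/n_0))$. Combined with $|x_s/\lambda-1| \leq C_2\|\ep\|_{L^2}$ from Lemma~\ref{Lemma-param}, the principal term is bounded below by $(2 - C\alpha_0^{1/2})\,b/n_0$.

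The bulk of the work is to control $R(\ep,A)$. The ``classical'' pieces in \eqref{R}---$\|\ep\|_{L^2}^2(1+\|\ep\|_{H^1})$, $A^{-1/2}\|\ep\|_{L^2}$, and $(|x_s/\lambda-1|+|\lambda_s/\lambda|)(A^{-1}+\|\ep\|_{L^2})$---follow directly from Lemmas~\ref{Lemma-param} and~\ref{H^1-control}, producing errors of the form $C(\alpha_0 + 1/n_0 + A^{-1/2}(\alpha_0/n_0)^{1/2})\,b/n_0$. The spatial tail $A^{1/2}\|\ep\|_{L^2(y_1\geq A)}$ is absorbed by the monotonicity Corollary~\ref{AM3}, which furnishes $A^{1/2}\|\ep\|_{L^2(y_1\geq A)} \lesssim A^{1/2}e^{-A/(4M)}$, arbitrarily small for $A$ large. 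The central obstacle---and the reason pointwise decay is needed at all---is the genuinely two-dimensional term $|\int y_2 F_{y_2}\,\ep\,\varphi_A|$: monotonicity alone yields only boundedness, not smallness. I split the integral at $y_1 = y_*$ for some threshold $y_*$ chosen comparable to the large constant $K$ from Lemma~\ref{Point-Decay}. On $\{y_1 \leq y_*\}$, the weight $y_2 F_{y_2}$ lies in $L^2(\R^2)$ thanks to the exponential decay of $F_{y_2}$ in $y_2$ and of $F$ as $y_1 \to -\infty$ (via \eqref{F2}), so Cauchy--Schwarz bounds this piece by $\lesssim \|\ep\|_{L^2} \lesssim 1/n_0$. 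On $\{y_* \leq y_1 \leq 2A\}$, Proposition~\ref{P:decay} supplies $|\ep(s,y_1,y_2)| \lesssim (1/n_0)\la y_1\ra^{-\frac23\sigma+\frac34}$ for all $s \geq 1$; combined with the uniform bound $\int|y_2 F_{y_2}(y_1,y_2)|\,dy_2 \lesssim 1$, this integrates to $\lesssim (1/n_0)\la y_*\ra^{\frac74-\frac23\sigma}$ once $\sigma>21/8$. Multiplication by $|\lambda_s/\lambda| \lesssim \|\ep\|_{L^2}$ renders the whole contribution $O(1/n_0^2)$.

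To conclude, I fix $M\geq 4$ and take $\sigma$ large enough for Proposition~\ref{P:decay} to apply. I then choose $A$ large so that the $A^{-1/2}$, $A^{-1}$, and $A^{1/2}e^{-A/(4M)}$ terms each contribute less than $b/(16n_0)$ to $|R(\ep,A)|$; next $\alpha_0$ small so that $\lambda(s)\in[1-\eta,1+\eta]$ and the $\alpha_0^{1/2}$-type corrections in the main-term estimate are negligible; finally $n_0$ large so that the residual $1/n_0$-corrections are absorbed. Putting everything together yields
\begin{equation*}
\frac{d}{ds}K_A(s) \geq \lambda(s)\Bigl[(2-o(1))\frac{b}{n_0} - o\Bigl(\frac{b}{n_0}\Bigr)\Bigr] \geq \frac{b}{2n_0}
\end{equation*}
uniformly for $s \geq 1$. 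The restriction $s\geq 1$ enters precisely through the long-time branch of Proposition~\ref{P:decay} in the estimate of the two-dimensional term; for $s<1$, only the short-time bound $t^{-7/12}\la y_1\ra^{-\sigma+7/4}$ is available, which is non-integrable near $t=0$ and does not directly yield the required smallness.
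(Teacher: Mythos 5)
Your proposal follows the paper's proof essentially step for step: the main term comes from $\int \ep Q$ via mass conservation and the orthogonality $\ep_0\perp\chi_0$ (the paper phrases this through $M_0\geq 2b/n$ and pushes $-\int\ep^2$ into a modified remainder $\widetilde R$, which is equivalent), the classical remainder pieces are controlled by Lemmas \ref{Lemma-param} and \ref{H^1-control}, the tail $A^{1/2}\|\ep\|_{L^2(y_1\geq A)}$ by Corollary \ref{AM3}, and the genuinely two-dimensional term is split at $y_1=K$ with Cauchy--Schwarz on the left and the pointwise decay of Lemma \ref{Point-Decay} on the right, which is exactly where the restriction $s\geq 1$ enters. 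Two small corrections: the parameters must be fixed in the order $\alpha_0$, $n_0$ first and only then $A$ (your condition on $A$ explicitly references $n_0$, and the smallness of $A^{1/2}\|\ep(s)\|_{L^2(y_1\geq A)}$ is measured against $(b/n_0)^{1/2}$ via Corollary \ref{AM3}, so $A$ cannot be chosen before $n_0$), and the pointwise bound from Lemma \ref{Point-Decay} carries the constant $\delta\sim(\alpha_0+1/n_0)^{1/2}(b/n_0)^{1/2}$ dictated by \eqref{Bounds} rather than $1/n_0$, so after multiplying by $|\lambda_s/\lambda|\lesssim\|\ep\|_2$ the two-dimensional term contributes $O\bigl((\alpha_0+1/n_0)\,b/n_0\bigr)$ rather than $O(1/n_0^2)$ --- still absorbable by taking $\alpha_0$ small and $n_0$ large.
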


\begin{remark}
Note that $b>0$, since $Q\notin \textrm{span}\left\{\chi_0\right\}$.
\end{remark}

\begin{proof}
In view of \eqref{Bound_ep}, let $\alpha_0<\min\{\alpha_1(C_1)^{-1}, \alpha_2(C_1)^{-1}, (2C_1)^{-1}, 1/2\}$ so that we can apply Lemmas \ref{Lemma-param} and \ref{H^1-control}. From \eqref{K'_A} and the definition of $M_0$ (see \eqref{M_0}), we have
\begin{equation}\label{K'AM_0}
\frac{d}{ds}K_A(s) = \lambda \left(2\left(1-\frac12\left(\frac{x_s}{\lambda}-1\right)\right)M_0 +\widetilde{R}(\ep,A)\right),
\end{equation}
where $\widetilde{R}(\ep,A)={R}(\ep,A)- \left(1-\frac12\left(\frac{x_s}{\lambda}-1\right)\right) \int \ep^2$.

Since $\alpha_0<(2C_1)^{-1}$, we have $1/2\leq \lambda(s)\leq 3/2$, and using \eqref{ControlParam2}, we obtain
$$
\lambda \left(1-\frac12\left(\frac{x_s}{\lambda}-1\right)\right)\geq \frac12 \cdot  \frac12=\frac14.
$$
Moreover, from the definition of $M_0$, we also get
$$
M_0=2\int \ep_0 Q +\int \ep_0^2\geq 2\int \ep_0Q=\frac{2b}{n}.
$$
Therefore,
\begin{equation}\label{M_0term}
2\lambda \left(1-\frac12\left(\frac{x_s}{\lambda}-1\right)\right)M_0\geq \frac{b}{n}.
\end{equation}
On the other hand, by Lemma \ref{Lemma-param}, we have
$$
\left|\frac{\lambda_s}{\lambda}\right|+\left|\frac{x_s}{\lambda}-1\right|\leq C_2\|\ep(s)\|_2.
$$
Therefore, using the inequalities \eqref{R} and \eqref{Bound},
there exists a universal constant $C_6>0$, such that for $A\geq 1$ we have
\begin{equation}\label{Rtilda}
\lambda \widetilde{R}(\ep, A)\leq C_6\|\ep(s)\|_2\left(\|\ep(s)\|_2+A^{-1/2}+A^{1/2}\|\ep(s)\|_{L^2(y_1\geq A)}+\left|\int_{\R^2}y_2F_{y_2}\ep\varphi_A\right|\right).
\end{equation}
Moreover, by Lemma \ref{H^1-control}, we deduce
$$
\|\ep(s)\|^2_{H^1}\leq C_5\left(C_1\alpha_0 \left|\int \ep_0 Q\right|+\|\ep_0\|^2_{H^1}\right),
$$
and thus, the assumption \eqref{ep_0} yields
\begin{align}\label{ep(s)2}
\|\ep(s)\|^2_{H^1}\leq& C_5\left(C_1\alpha_0 \left(\frac{b}{n}\right)+\frac{d}{n^2} \right)\nonumber \\
\leq & C_5\left(C_1+\frac{d}{b}\right) \left(\alpha_0+\frac{1}{n}\right)\left(\frac{b}{n}\right),
\end{align}
where $d=\|Q+a\chi_0\|_{H^1}$.

Set $C_7= C_5\left(C_1+\frac{d}{b}\right) $. Collecting \eqref{Rtilda}-\eqref{ep(s)2}, we obtain
\begin{align*}
\lambda \widetilde{R}(\ep, A)\leq & C_7C_6\left(\alpha_0+\frac{1}{n}\right)\left(\frac{b}{n}\right)+\\
&+ \sqrt{C_7}C_6\left(A^{-1/2}+A^{1/2}\|\ep(s)\|_{L^2(y_1\geq A)}\right)\left(\alpha_0+\frac{1}{n}\right)^{1/2}\left(\frac{b}{n}\right)^{1/2}\\
&+\sqrt{C_7}C_6\left|\int_{\R^2}y_2F_{y_2}\ep\varphi_A\right| \left(\alpha_0+\frac{1}{n}\right)^{1/2}\left(\frac{b}{n}\right)^{1/2}.
\end{align*}
Let $K\geq 1$ satisfy \eqref{Kdelta0} and split the integral on the right hand side of the last inequality into two parts
$$
\int_{\R^2}y_2F_{y_2}\ep\varphi_A=\int_{\R}\int_{y_1<K}y_2F_{y_2}\ep\varphi_A \, dy_1dy_2
+\int_{\R}\int_{y_1>K}y_2F_{y_2}\ep\varphi_A \, dy_1dy_2.
$$
From \eqref{F1}-\eqref{F2} we have, for every $A> K\geq 1$, that
\begin{align}\label{y_2F_y_21}
\nonumber
\int_{\R}\int_{y_1<K}y_2F_{y_2}\ep\varphi_Ady_1dy_2\leq &c\left(\int_{\R}\int_{y_1<K} |y_2F_{y_2}|^2dy_1dy_2\right)^{1/2}\|\ep(s)\|_2\\
\nonumber
\leq & c K^{1/2}\|\ep(s)\|_2\\
\leq & c\sqrt{C_7}\left(\alpha_0+\frac{1}{n}\right)^{1/2}\left(\frac{b}{n}\right)^{1/2},
\end{align}
where in the last line we used \eqref{ep(s)2}.\\
To bound the second part we use Lemma \ref{Point-Decay} with $n$ large such that
$$
\delta=\sqrt{C_7}\left(\alpha_0+\frac{1}{n}\right)^{1/2}\left(\frac{b}{n}\right)^{1/2}<\delta_0.
$$
Indeed, since $\ep_0$ given by \eqref{ep_0} has an exponential decay, the relation \eqref{Idecay} is satisfied for any $\sigma>\frac{21}8$. Therefore, $\sigma^{\ast}=-\frac23\sigma+\frac34<-1$ and for every $s\geq 1$ and $A> K\geq 1$, that
\begin{align*}
\int_{\R}\int_{y_1>K}y_2F_{y_2}\ep\varphi_Ady_1dy_2\leq & \int_{\R}\sup_{y_1}|y_2F_{y_2}|\left(\int_{y_1>K}|\ep|dy_1\right)dy_2\\
\leq &c(\sqrt{C_7}+1)\left(\frac{1}{n}+\left(\alpha_0+\frac{1}{n}\right)^{1/2}\left(\frac{b}{n}\right)^{1/2}\right), \end{align*}
where we also used \eqref{F1} in the last line.
Now, there exists a constant $C_8>0$ such that
\begin{equation}\label{y_2F_y_22}
\int_{\R}\int_{y_1>K}y_2F_{y_2}\ep\varphi_Ady_1dy_2\leq C_8\left(\frac{1}{n}+\left(\alpha_0+\frac{1}{n}\right)^{1/2}\left(\frac{b}{n}\right)^{1/2}\right).
\end{equation}
Collecting \eqref{y_2F_y_21} and \eqref{y_2F_y_22}, for every $s\geq 1$ and $A\geq 1$, we have
$$
\left|\int_{\R^2}y_2F_{y_2}\ep\varphi_A\right|\leq C_9\left(\frac{1}{b^{1/2}n^{1/2}}+\left(\alpha_0+\frac{1}{n}\right)^{1/2}\right)\left(\frac{b}{n}\right)^{1/2}
$$
for some constant $C_9>0$.

Now, we choose $\alpha_0>0$ sufficiently small and $n_0\in \mathbb{N}$ sufficiently large such that
$$
\sqrt{C_7}C_6\left(\alpha_0+\frac{1}{n_0}\right)^{1/2}\max\left\{\sqrt{C_7}\left(\alpha_0+\frac{1}{n_0}\right)^{1/2}, 1, C_9\left(\frac{1}{b^{1/2}n_0^{1/2}}+\left(\alpha_0+\frac{1}{n_0}\right)^{1/2}\right)\right\}<1/6.
$$
For fixed $\alpha_0$ and $n_0$, satisfying the previous inequality, we choose $A\geq 1$ such that
$$
A^{-1/2}+A^{1/2}\|\ep(s)\|_{L^2(y_1\geq A)}\leq \left(\frac{b}{n_0}\right)^{1/2},
$$
which is possible due to Corollary \ref{AM3}.

Therefore, we finally deduce that
$$
\lambda \widetilde{R}(\ep, A)\leq \frac{b}{2n_0},
$$
which implies from \eqref{K'AM_0} and \eqref{M_0term} that
$$
\frac{d}{ds}K_A(s)\geq \frac{b}{2n_0}>0, \quad \textrm{for all} \quad s\geq 1.
$$
\end{proof}

Now we have all the ingredients to finish the proof of our main result.

{\bf Last Step in the proof of Theorem \ref{Theo-Inst}.}

Integrating in $s$ variable both sides of inequality \eqref{Bound-K'_A}, we get
$$
K_A(s)\geq s\left(\frac{b}{2n_0}\right) + K_A(0), \quad \textrm{for all} \quad s\geq 1.
$$
Therefore,
$$
\lim_{s\rightarrow \infty}K_A(s)=\infty,
$$
which is a contradiction to \eqref{Bound-KA}. Hence, our original assumption that $Q$ is stable is not valid and we conclude the proof of the theorem.
\end{proof}

\bibliographystyle{amsplain}

\end{document}